\documentclass[11pt, oneside]{amsart}
\usepackage{amsfonts, amstext, amsmath, amsthm, amscd, amssymb}
\usepackage{manfnt}
\usepackage{mathrsfs}
\usepackage{url}
\usepackage[dvipsnames]{xcolor}
\usepackage[paper=a4paper, text={138mm,208mm},centering, marginparwidth=2.5cm]{geometry}

\definecolor{darkblue}{rgb}{0,0,0.6}
\usepackage{graphicx, pinlabel, color}
\usepackage[all,graph]{xy}
\usepackage{tikz-cd}
\usepackage[breaklinks, pdftex, ocgcolorlinks,colorlinks=true, citecolor=darkblue, filecolor=darkblue, linkcolor=darkblue, urlcolor=black]{hyperref}
\usepackage{enumerate}
\usepackage{mathtools}
\usepackage[capitalize,noabbrev]{cleveref}
\usepackage{comment}
\usepackage{faktor} 

\usepackage{tikz} 
\usetikzlibrary{calc}

\newtheorem{theorem}[equation]{Theorem}
\newtheorem{lemma}[equation]{Lemma}
\newtheorem{proposition}[equation]{Proposition}
\newtheorem{corollary}[equation]{Corollary}
\newtheorem{conjecture}[equation]{Conjecture}

\newtheorem*{claim*}{Claim}

\theoremstyle{definition}
\newtheorem{example}[equation]{Example}
\newtheorem{remark}[equation]{Remark}
\newtheorem{definition}[equation]{Definition}

\numberwithin{equation}{section}

\newcommand{\bp}{\begin{pmatrix}}
\newcommand{\ep}{\end{pmatrix}}
\newcommand{\be}{\begin{equation}}
\newcommand{\ee}{\end{equation}}
\newcommand{\ba}{\begin{array}}
\newcommand{\ea}{\end{array}}

\makeatletter
\newenvironment{step}[1][\proofname]{\par
  \normalfont \topsep6\p@\@plus6\p@\relax
  \trivlist
  \item[\hskip\labelsep
        \itshape
    #1\@addpunct{.}]\ignorespaces
}{
  \endtrivlist\@endpefalse
}

\AddToHook{env/proposition/begin}{\crefalias{equation}{proposition}}
\AddToHook{env/theorem/begin}{\crefalias{equation}{theorem}}
\AddToHook{env/corollary/begin}{\crefalias{equation}{corollary}}
\AddToHook{env/definition/begin}{\crefalias{equation}{definition}}
\AddToHook{env/lemma/begin}{\crefalias{equation}{lemma}}
\AddToHook{env/question/begin}{\crefalias{equation}{question}}
\AddToHook{env/example/begin}{\crefalias{equation}{example}}
\AddToHook{env/conjecture/begin}{\crefalias{equation}{conjecture}}
\AddToHook{env/remark/begin}{\crefalias{equation}{remark}}

\crefname{theorem}{Theorem}{Theorems}
\crefname{proposition}{Proposition}{Propositions}
\crefname{corollary}{Corollary}{Corollaries}
\crefname{definition}{Definition}{Definitions}
\crefname{lemma}{Lemma}{Lemmas}
\crefname{question}{Question}{Questions}
\crefname{example}{Example}{Examples}
\crefname{conjecture}{Conjecture}{Conjectures}
\crefname{remark}{Remark}{Remarks}

\def\Z{\mathbb Z}
\def\R{\mathbb R}
\def\Q{\mathbb Q}

\def\C{\mathbb C}

\def\wt{\widetilde}
\def\p{\partial}
\def\sm{\setminus}
\def\S{\Sigma}

\def\ol{\overline}

\def\RP{\mathbb{RP}}

\def\G{\Gamma}
\def\xra{\xrightarrow}

\DeclareMathOperator\Int{Int}

\DeclareMathOperator\coker{coker}
\DeclareMathOperator\im{Im}

\DeclareMathOperator\Id{Id}
\DeclareMathOperator\GL{GL}
\DeclareMathOperator\SO{SO}

\DeclareMathOperator\Homeo{Homeo}

\DeclareMathOperator\Aut{Aut}
\DeclareMathOperator\hAut{hAut}
\DeclareMathOperator\id{Id}

\DeclareMathOperator\PL{PL}
\DeclareMathOperator\Diff{Diff}

\DeclareMathOperator\PDiff{PDiff}
\DeclareMathOperator\BDiff{BDiff}
\DeclareMathOperator{\Out}{Out}
\DeclareMathOperator{\Inn}{Inn}
\DeclareMathOperator{\Emb}{Emb}
\DeclareMathOperator{\Isom}{Isom}
\DeclareMathOperator{\Image}{Im}
\DeclareMathOperator{\Norm}{Norm}
\DeclareMathOperator{\Twist}{Twist}
\DeclareMathOperator{\Twistns}{Twist_{\text{ns}}}

\def\H{\mathbb H}
\DeclareMathOperator{\SL}{SL}
\DeclareMathOperator{\PSL}{PSL}
\DeclareMathOperator{\vol}{vol}

\DeclareMathOperator{\isom}{Isom}

\begin{document}

\title{A survey on mapping class groups of 3-manifolds}

\author{Philipp Bader}
\email{p.bader.1@research.glasgow.ac.uk}

\author{Rachael Boyd}
\email{rachael.boyd@glasgow.ac.uk}

\author{Giulia Carfora}
\email{g.carfora.1@research.gla.ac.uk}

\author{Gabriel Corrigan}
\email{g.corrigan.1@research.gla.ac.uk}

\author{Daniel Galvin}
\email{daniel.galvin@austin.utexas.edu}

\author{Csaba Nagy}
\email{nagy@mpim-bonn.mpg.de}
\address{Max-Planck-Institut f\"{u}r Mathematik, Bonn, Germany}

\author{John Nicholson}
\email{john.nicholson@glasgow.ac.uk}

\author{Weizhe Niu}
\email{weizheniu@mail.tsinghua.edu.cn}
\address{Yau Mathematical Sciences Center, Tshinhua University, China}

\author{Isacco Nonino}
\email{2754452n@student.gla.ac.uk}

\author{Mark Pencovitch}
\email{m.pencovitch.1@research.gla.ac.uk}

\author{Mark Powell}
\email{mark.powell@glasgow.ac.uk}

\address{School of Mathematics and Statistics, University of Glasgow, United Kingdom}

\begin{abstract}
    We survey computations and tools concerning the mapping class group of a compact, oriented, connected 3-manifold $M$. We provide a guide to the literature and sketch proofs for various families of irreducible and geometric 3-manifolds. We also consider JSJ and prime decompositions of 3-manifolds, and consequences for their mapping class groups.
\end{abstract}
\maketitle

\setcounter{tocdepth}{1}
\tableofcontents

\section{Introduction}
Famously, as a result of Thurston's~\cite{Thurston-82,Thurston-Levy-book} and Perelman's~\cite{Perelman:2002-1,Perelman:2003-1,Perelman:2003-2} work, compact, orientable 3-manifolds are considered to have been classified. Apart from lens spaces (which were classified early on~\cite{Threlfall-Seifert,Reidemeister1935HomotopieringeUL,Whitehead-incidence,Moise}) such 3-manifolds are determined up to diffeomorphism by their fundamental group; see e.g.~\cite[Thm.~2.1.3]{AFW2015} for a detailed statement.  Alternatively, one take the prime decomposition of Kneser~\cite{Kneser1929} and Milnor~\cite{Milnor62}, then cut along JSJ tori~\cite{JacoShalen1976,Johannson-book} and classify the pieces: they are Seifert fibred or hyperbolic. Geometrisation provides a classification of the six Seifert fibred geometries. 

The question of determining mapping class groups of closed, orientable 3-manifolds is very well understood. However, while much has been written on the classification problem, heretofore knowledge on the computation of mapping class groups of 3-manifolds has been contained in multiple sources. We aim to provide a guide to the literature, and, by stating the key results with context, we hope to equip readers with the tools to understand the mapping class groups of closed, orientable 3-manifolds that they encounter. We will also give some information for compact 3-manifolds with nonempty boundary. We do not consider nonorientable 3-manifolds.

The results stated in our survey are not new, except for in \cref{sec: JSJ}, in which we give some methods for obtaining the mapping class group of an irreducible 3-manifold from the mapping class groups of its JSJ pieces. We are not aware of this material having made a prior appearance in the literature.

\subsection{Symmetries of $M$} \label{ss:sym-M}

Let~$M$ be a compact, oriented 3-manifold, always assumed henceforth, without further comment, to be smooth and connected. We can consider smooth, piecewise linear (PL), and continuous homeomorphisms~$M\to M$, and these form topological groups which fit together in the following sequence:
\begin{equation}\label{eq:3-term-SES}
\Diff(M)\to \PL(M) \to \Homeo(M).
\end{equation}
Here we consider~$\Diff(M)$ as a topological group, equipped with the $C^\infty$ or Whitney topology, and
we consider $\Homeo(M)$ equipped with the $C^0$ or compact-open topology.
The space $\PL(M)$ is defined as the geometric realisation of the simplicial space of $\PL$-homeomorphisms of $M$.

Note that the left hand map of \eqref{eq:3-term-SES} exists but it is not obvious how to define it. To do so, one must introduce the notion of a piecewise differentiable map. These form a group~$\PDiff(M)$ and there are maps~$\Diff(M)\to \PDiff(M)$ and $\PL(M) \overset{\simeq}{\to} \PDiff(M)$ induced by inclusion. The map~$\Diff(M)\to \PL(M)$ is then the composite of the first map with the homotopy inverse of the second map.

The resulting map $\Diff(M)\to \PL(M)$ is a homotopy equivalence by smoothing theory~\cite{Hirsch-Mazur}, and Hatcher's proof of the Smale conjecture \cite{Hatcher1983}.
Hence we will ignore $\PL(M)$ for the remainder of this discussion.

The map $\Diff(M) \to \Homeo(M)$ is a weak equivalence by work of Cerf~\cite{Cerf1959} (see \cite[Essay V]{KirbySiebenmann1977} for more details).

If $\partial M \neq \emptyset$ then we denote by $\Diff_\partial(M)$ the subgroup of $\Diff(M)$ consisting of diffeomorphisms which restrict to the identity on $\partial M$. In addition for $K\subseteq M$ a submanifold we introduce notation for the subgroup of diffeomorphisms fixing $K$ pointwise and setwise:
\begin{align*}
    \Diff_K(M) &= \{ f\in \Diff(M) \mid f|_K=\id_K\}\\
    \Diff(M,K) &= \{f\in \Diff(M) \mid f(K)=K  \}.
\end{align*}

Recall  $f$ is isotopic to $g$ if there exists a smooth map~$F \colon M \times I \to M$ such that $F_0=f$, $F_1=g$ and $F_t\colon M \to M \in \Diff(M)$ for all $t \in I$. We denote by $\sim$ and $\simeq$ the equivalence relations induced by isotopy and homotopy of maps respectively.

\begin{definition}
    The \emph{mapping class group} of $M$ is:
    \[
        \pi_0\Diff(M) = \Diff(M)/\sim.
    \]
\end{definition}
Let $\Diff_0(M)$ be the connected component of~$\Diff(M)$  that contains the identity (i.e.~all diffeomorphisms of~$M$ which are isotopic to the identity). There is a short exact sequence of groups:
\begin{equation*}
    \Diff_0(M) \to \Diff(M) \to \pi_0\Diff(M).
\end{equation*}
We can also consider the \emph{orientation-preserving} (o.p.) diffeomorphisms $\Diff^+(M)$ of $M$, and the corresponding o.p.\ mapping class group \[\pi_0 \Diff^+(M) := \Diff^+(M)/\sim.\]

Replacing $\Diff(M)$ with $\Homeo(M)$ leads to the analogous notion of \emph{topological mapping class group} $\pi_0 \Homeo(M)$.
There are two more groups of symmetries that we will study. These are as follows.
\begin{itemize}
    \item $\hAut(M)\coloneqq \{f \colon M\to M \mid f \text{ is a homotopy equivalence} \}$, the monoid of homotopy self-equivalences of~$M$. {This will be viewed as a space equipped with the $C^0$ topology, and with respect to this we have a continuous inclusion map $\Homeo(M)\hookrightarrow \hAut(M)$.}
    \item $\Isom(M)\coloneqq {\{f \colon M\to M \mid f \text{ is an isometry} \}}$ where, if $M$ has a Riemannian metric $g$, then $f\in \Diff(M)$ is an \emph{isometry} if $f_*(g)=g$. This has the consequence that $f$ is \emph{distance preserving} with respect to $g$. It follows $\Isom(M)\hookrightarrow \Diff(M)$ {and we take $\Isom(M)$ to have the subspace topology with respect to this inclusion}.
\end{itemize}

Each of these of course also have o.p.\ versions, $\Homeo^+(M)$, $\hAut^+(M)$, and $\Isom^+(M)$.
Putting together all the inclusions, we get the following sequence
\begin{equation}\label{eq:inclusion-sequence}
    \Isom(M)\hookrightarrow \Diff(M) \rightarrow \Homeo(M)\hookrightarrow \hAut(M).
\end{equation}
The first and third maps are subspace inclusions; the second is not.
The second map in \eqref{eq:inclusion-sequence} is a homotopy equivalence for every 3-manifold $M$.
The first and third maps are not homotopy equivalences in general, and indeed need not induce isomorphisms on $\pi_0$.

Taking connected components in \eqref{eq:inclusion-sequence}, we obtain a sequence
\begin{equation}\label{eq:pi_0-inclusion-sequence}
    \underbrace{\pi_0 \Isom(M)}_{\Isom(M)/\sim} \to \underbrace{\pi_0 \Diff(M)}_{\Diff(M) / \sim} \xrightarrow{\cong} \underbrace{\pi_0 \Homeo(M)}_{\Homeo(M)/\sim} \to \underbrace{\pi_0 \hAut(M)}_{\hAut(M)/\simeq}.
\end{equation}
In order from left to right, these sets correspond to the set of isometries up to isotopy through isometries, diffeomorphisms up to smooth isotopy, homeomorphisms up to topological isotopy, and self homotopy equivalences up to homotopy.
The first map is not always an isomorphism, e.g.\ it fals to be surjective for $S^1 \times S^2$~\cite{Gluck1962} and for $T^3$. The second map is an isomorphism, since it is induced by a homotopy equivalence. The third map is not an isomorphism in general.  First, it is not injective for certain reducible 3-manifolds, due to Friedman--Witt~\cite{FriedmanWittHomotopyNotIsotopy86}.
The third map is also not surjective in general; we explain why for $S^1 \times S^2$ in \cref{example:hom-equiv-not-homotopic-to-homeo}.

On the other hand, when $M$ is closed and prime, the third map is injective.
This follows from \cref{thm:kernel-rho} below together with the fact that neither the reflections of $S^3$ and $\#^k \RP^3$ nor the Gluck twist on $S^1 \times S^2$ are not homotopic to the identity (\cref{sec-S^1xS^2}).
If $M$ is moreover aspherical, then the third map is an isomorphism~\cite[Theorem~2.1.2]{AFW2015}.

\begin{example}\label{example:hom-equiv-not-homotopic-to-homeo}
Here is an example of a homotopy self-equivalence of $S^1 \times S^2$ that is not homotopic to a homeomorphism. Pinch off a 3-sphere, and then map it to $\{p\} \times S^2$ via the Hopf map $\eta$, to obtain a composition $f \colon S^1 \times S^2 \to (S^1 \times S^2) \vee S^3 \xrightarrow{\Id \vee \eta} S^1 \times S^2$. This acts as the identity on all homotopy groups $\pi_i(S^1 \times S^2)$, and so is a homotopy equivalence. For $i \geq 2$, to see this note that the inclusion $S^2 \to \{q\} \times S^2 \to S^1 \times S^2$ induces an isomorphism $\pi_i(S^2) \cong \pi_i(S^1 \times S^2)$. To see this is not homotopic to a homeomorphism, consider the inverse image of two points in $\{p\} \times S^2$. This consists of those two points again, disjoint union a Hopf link. If $f$ were homotopic to a homeomorphism, again by taking transverse inverse images, the Hopf link would bound disjoint discs in $S^1 \times S^2 \times [0,1]$.
\end{example}

\subsection{The map to outer automorphisms of \texorpdfstring{$\pi_1(M)$}{the fundamental group}}

Recall that $\Out(\pi_1(M))$ is the group of \emph{outer automorphisms} of $\pi_1(M)$, given by automorphisms modulo conjugation. We will  make use of the map
\begin{equation} \label{eqn-MGC to out}
 \rho \colon   \pi_0\Diff(M) \to \Out(\pi_1(M)).
\end{equation}
obtained by choosing a basepoint fixing representative of the isotopy class (which we may do since $M$ is connected), and then taking the induced map on $\pi_1(M)$.  This map is not necessarily surjective nor injective, but we will see situations in which it is either, and others when it is  an isomorphism.
To define the map in \eqref{eqn-MGC to out} more precisely, the long exact sequence of homotopy groups associated to the fibration
\[
\Diff_p(M)\to \Diff(M) \to \Emb(\{x\},M) \simeq M,
\]
where $\{x\}$ is the one point space, and~$p$ the basepoint of~$M$, yields the top row in the diagram
\[
\begin{tikzcd}
\pi_1(M) \ar[r] \ar[d] & \pi_0 \Diff_{p}(M) \ar[r] \ar[d] & \pi_0\Diff(M) \ar[r] \ar[d,dashed,"\rho"] & \{p\} \\
\Inn (\pi_1 (M)) \ar[r] &\Aut(\pi_1(M))  \ar[r] & \Out(\pi_1(M)). &
\end{tikzcd}
\]
From a choice of lift of $[f] \in \pi_0\Diff(M)$ to $\pi_0 \Diff_{p}(M)$, we obtain $f_* \in \Aut(\pi_1(M))$. Two lifts are related by point-pushing along an element $g \in \pi_1(M)$, whose effect on $f_*$ is precisely composition with the inner automorphism $x \mapsto gxg^{-1}$. Factoring out by this indeterminacy we obtain the dashed map $\pi_0\Diff(M) \to \Out(\pi_1(M))$, as desired.

In \cref{sec-reducible}, building on the work in previous sections, we will explain the following theorem. The result is proven in Hatcher--Wahl~\cite[Proposition~2.1]{Hatcher_Wahl_MCG3Manifold}, but relies on the work of many authors, in particular Waldhausen~\cite{Waldhausen1968}, Gabai~\cite{Gabai1997}, Boileau--Otal~\cite{BoileauOtalHeegardSplittings91}, and McCullough~\cite{McCulloughTopAlgAuts3mfds90,McCullough2002}.

\begin{theorem}[\cite{Hatcher_Wahl_MCG3Manifold}]\label{thm:kernel-rho}
Let $M$ be a closed, orientable 3-manifold that is not $S^3$ nor a connected sum $\#^k\RP^3$. Then $\ker \rho$ is generated by generalised Dehn twists along embedded spheres in $M$. If $M$ is irreducible, then $\rho$ is injective.
\end{theorem}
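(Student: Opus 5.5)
The plan is to first prove the irreducible case, and then reduce the general case to it using the prime decomposition and a sphere-system argument.

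\textbf{Irreducible, not $S^3$.} By geometrisation, such an $M$ is either spherical (finite $\pi_1$), or aspherical: Haken, or a small Seifert fibred space, or closed hyperbolic.
\begin{itemize}
\item If $M$ is Haken, Waldhausen~\cite{Waldhausen1968} shows that homotopic diffeomorphisms are isotopic. Since $M$ is aspherical, a diffeomorphism inducing an inner automorphism on $\pi_1$ is homotopic to the identity. Hence $\ker\rho$ is trivial.
\item If $M$ is closed hyperbolic, Gabai~\cite{Gabai1997} shows that homotopy implies isotopy, and asphericity again gives injectivity.
\item If $M$ is a non-Haken Seifert fibred space, I would use Boileau--Otal~\cite{BoileauOtalHeegardSplittings91} together with Scott's rigidity results.
\item If $M$ is spherical, I would use the elliptic case of McCullough~\cite{McCullough2002} (the generalised Smale conjecture cases). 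There $\pi_0\Diff(M)\cong\pi_0\Isom(M)$, which one checks injects into $\Out(\pi_1 M)$.
\end{itemize}
$\RP^3$ is excluded in the statement; there the reflection-type elements are the issue, and indeed $\rho$ has target $\Out(\Z/2)=1$.

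\textbf{Reducible case.} Write $M=P_1\#\cdots\#P_n\#^m(S^1\times S^2)$. Let $f\in\ker\rho$. The plan is:
\begin{enumerate}
\item Fix a maximal system $\mathcal S$ of disjoint essential reducing spheres.
\item Following Laudenbach and César de Sá--Rourke, isotope $f$ so that $f(\mathcal S)=\mathcal S$, up to composing with slide diffeomorphisms and Dehn twists along spheres. Slides of a summand along a loop act on $\pi_1$ by partial conjugations, so they are visible in $\Out$.
\item Use $\rho(f)=1$ and the structure of $\Aut$ of a free product (McCullough~\cite{McCulloughTopAlgAuts3mfds90}) to show that the slides and the permutation of summands contribute trivially. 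This leaves $f$ preserving each punctured summand $P_i^\circ$, up to sphere twists.
\item On each $P_i^\circ$, $f$ induces an inner automorphism of $\pi_1P_i$, so by the irreducible case $f|_{P_i}$ is isotopic to the identity. Rel the puncture, the remaining ambiguity is exactly a twist along the boundary sphere.
\item On the $S^1\times S^2$ summands, the remaining elements are the rotation twists along $\{pt\}\times S^2$. These are generalised Dehn twists along spheres.
\end{enumerate}

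\textbf{Main obstacle.} The hard step is step 3: controlling the kernel of $\rho$ restricted to the subgroup generated by slides, permutations and summand automorphisms. This requires a presentation of $\Aut(\ast_i G_i\ast F_m)$ (Fouxe-Rabinovitch) and matching each generator with a geometric move. I would first try to show that the geometric generators surject onto the image of $\rho$, with the same relations up to sphere twists.

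The exclusions in the statement reflect the two places where this argument breaks down. For $S^3$, the orientation-reversing mapping class lies in $\ker\rho$ and is not a sphere twist. For $\#^k\RP^3$, the orientation-reversing maps act trivially on $\pi_1$ up to conjugation.
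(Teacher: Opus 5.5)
Your irreducible case uses the same case split as the paper, so that part is fine: Waldhausen for Haken, Boileau--Otal and Scott for small Seifert fibred spaces, and McCullough's $\pi_0$-Smale theorem for spherical manifolds. For closed hyperbolic $M$ you need Gabai--Meyerhoff--Thurston as well, since \cite{Gabai1997} alone assumes a geodesic with a non-coalescable insulator family.

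\emph{The gap: step~3 of the reducible case is unproved, and it is essentially the whole content of that case.} To deduce from $\rho(f)=1$ that the product of slides, permutations and summand mapping classes is a product of twists, you would need two things. First, a presentation of the image of $\rho$. In general this is a proper subgroup of $\Out(\ast_i G_i\ast F_m)$: only automorphisms of $G_i$ realised by diffeomorphisms of $P_i$ occur, and only permutations of diffeomorphic summands (\cref{example:lens-space-switching}). Second, a geometric lift, modulo twists, of every relator. The proposal only hopes this works, and step~2 is likewise just asserted.

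The paper does not carry this out either; it cites \cite[Theorem~1.5]{McCulloughTopAlgAuts3mfds90} or the appendix of \cite{Hatcher_Wahl_MCG3Manifold}. The idea that makes a direct proof feasible, and removes slides and permutations entirely, is to use $\pi_2$:
\begin{enumerate}
\item After an isotopy and a point-push, $f$ fixes $x_0$ and is the identity on $\pi_1(M,x_0)$.
\item If $f$ is orientation-preserving, it is then also the identity on $\pi_2(M,x_0)$ (Laudenbach; cf.\ \cite{BrendleBroaddusPutman2023} and the appendix of \cite{Hatcher_Wahl_MCG3Manifold}).
\item Hence each sphere of your system is sent to a homotopic sphere, which by \cite{Laudenbach1974} is isotopic to it. So $f$ can be isotoped to preserve the system and each complementary piece.
\end{enumerate}

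Even then, step~4 is incomplete. Rel the puncture, the ambiguity is not only the boundary twist: the kernel of $\pi_0\Diff_{D_i}(P_i)\to\pi_0\Diff(P_i)$ also contains point-pushes. These act on $\pi_1(P_i)$ by inner automorphisms, so your local hypothesis cannot see them. They must be controlled using the based action on all of $\pi_1(M)$. Point-pushes along central loops do lie in $\ker\rho$, and must be identified with twists, e.g.\ via a circle action on $P_i$.

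\emph{Orientation-reversing classes.} With $\rho$ defined on all of $\pi_0\Diff(M)$, your step~5 is false for $M=S^1\times S^2$, which the statement does not exclude; so is the claim being proved. The map $a=\Id_{S^1}\times(\text{antipodal map})$ from \cref{thm:S^1xS^2_MCG} acts trivially on $\pi_1\cong\Z$ but reverses orientation. Sphere twists are orientation-preserving, so $a$ is not a product of them. Your closing discussion of the exceptions misses this case. What the argument can give is the statement for $\rho^+$, i.e.\ item~(iv) of \cref{prop:generators of kernel of action of MCG on Out(pi1)}, which is what Hatcher--Wahl prove. Orientation-reversing elements of $\ker\rho$ need separate treatment, which the paper attributes to \cite{HongMcCullough2013}.
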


\begin{remark}\leavevmode
    \begin{enumerate}
        \item We have to exclude $M = S^3$ and $M = \#^k \RP^3$ because they admit (orientation-reversing) reflections that act as the identity on $\pi_1(M)$.
        \item The generalised Dehn twists in \cref{thm:kernel-rho} are called \emph{sphere twists} in this context, and are defined in \cref{def:sphere twist}. For reducible 3-manifolds, it is not straightforward to determine which sphere twists along separating spheres are trivial in the mapping class group.
One strategy is to consider circle actions on the prime summands.
\item It can also be challenging to determine the image of $\rho$, in general.  It is surjective for closed, Haken 3-manifolds (\cref{sec-Haken}), but is not always so.
    \end{enumerate}
\end{remark}

\begin{example}
    For $M = N \# (S^1 \times S^2)$, where $N$ is aspherical, the sphere twist about the nonseparating $S^2$ in $S^1 \times S^2$ is the Gluck twist, and is nontrivial, as we will show in \cref{sec-S^1xS^2}. The sphere twist about the separating 2-sphere is trivial, because $S^1 \times S^2 \sm \mathring{D}^3$ admits a circle action whose restriction to the boundary 2-sphere is rotation by $2\pi$.
\end{example}

\begin{example}\label{example:lens-space-switching}
    Let $L(p,q_1)$ and $L(p,q_2)$ be lens spaces that are not diffeomorphic, and let $M := L(p,q_1) \# L(p,q_2)$. The fundamental group is $\Z/p *\Z/p$. The automorphism of $\pi_1(M)$ that switches the factors is not represented by any diffeomorphism, because by uniqueness of prime decompositions, that would entail a diffeomorphism between $L(p,q_1)$ and $L(p,q_2)$.
\end{example}

Despite the general nature of \cref{thm:kernel-rho}, 3-manifolds come with interesting decompositions and often with extra structure. This leads to alternative ways to understand their mapping class groups, which could in some cases be more tractable than $\Out(\pi_1(M))$, or can reduce the computation to smaller pieces, for which either $\Out(\pi_1(M))$ is more computable, or that are geometric, and we need to understand the isometries instead.

\subsection{Geometrisation}
Suppose that $M$ is oriented and irreducible, i.e.~every embedded 2-sphere in $M$ bounds a 3-ball (or equivalently, by the sphere theorem, $\pi_2(M)=0$).
A Riemannian metric $g$ on $M$ is \emph{locally homogenous} if for all $x, y \in M$, there exist neighbourhoods $x\in U_x$, $y\in U_y$, and an isometry $U_x\to U_y$. If $M$ admits such a $g$, then it follows that $\Isom(\widetilde{M})$ acts transitively on~$\widetilde{M}$ with the metric inherited from $(M,g)$. We say that $M$ ``admits a geometric structure modelled on $\widetilde{M}$''.

Moreover, the possible geometric structures are classified. There are eight of them: six are Seifert fibred and there are also hyperbolic and Sol manifolds. Of these eight, the Sol manifolds are the only ones with nontrivial JSJ decomposition.

\begin{definition}
We say that $M$ is \emph{geometric} if its interior admits one of the eight geometric structures.
\end{definition}

Thurston conjectured that each oriented irreducible manifold can be decomposed along a collection of tori in such a way that each of the resulting pieces is geometric and of finite volume. Thurston proved this for Haken manifolds \cite{Thurston86} and the general case was proved by Perelman~\cite{Perelman:2002-1,Perelman:2003-1,Perelman:2003-2}.

A key results toward the proof of the geometrisation theorem is  the JSJ decomposition theorem, proved independently by Jaco--Shalen~\cite{JacoShalen1976} and Johannson \cite{Johannson1979b}, which gives an essentially unique collection of embedded tori in $M$ that decompose $M$ into Seifert fibred and atoroidal pieces (atoroidal means that the only embedded tori are boundary parallel). Moreover, Hatcher
showed that if $M$ is not a $T^2$ bundle over $S^1$ with Anosov monodromy, then the path component of the space of submanifolds of $M$ containing the isotopy class of the JSJ tori is a contractible space \cite[Thm 1(a)]{Hatcher1999}; see also \cite[Corollary 5.21]{BoydBregmanSteinebrunnerModuliSpacesFinite24}.

Combining this with Perelman's elliptisation and hyperbolisation theorems (see e.g.~\cite[Thm 1.7.3, Thm 1.7.5]{AFW2015}), which elucidate the potential geometries on the atoroidal pieces, we arrive at the geometrisation theorem (see e.g.~\cite[Thm 1.7.6]{AFW2015}).

\begin{theorem}[Geometrisation theorem]\label{thm: geometrisation}
After cutting~$M$ along a canonical system of tori and annuli $($JSJ decomposition$)$, each piece admits a geometric structure, that is either Seifert fibred or hyperbolic.
\end{theorem}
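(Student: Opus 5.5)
This is a survey statement, so the plan is to assemble the geometrisation theorem from the results already quoted rather than reprove them. Throughout, $M$ is compact, oriented and irreducible, as in the paragraph preceding \cref{thm: geometrisation}. The argument has three steps: produce the decomposing tori, split the pieces into Seifert fibred and atoroidal ones, and then geometrise the atoroidal pieces.

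\textbf{Step 1: the decomposing system.} I would invoke the JSJ decomposition theorem of Jaco--Shalen~\cite{JacoShalen1976} and Johannson~\cite{Johannson1979b}. It gives a minimal collection $\mathcal{T}$ of disjoint, incompressible, embedded tori, unique up to isotopy, such that each component of $M$ cut along $\mathcal{T}$ is either Seifert fibred or atoroidal. When $\partial M$ contains tori, the characteristic submanifold formulation also involves essential annuli, which is why the statement allows ``tori and annuli''. Canonicity of the system, in the strong sense of a contractible space of such systems, is Hatcher's result~\cite[Thm 1(a)]{Hatcher1999}. The only exception is the Anosov torus bundle, where the JSJ system is a single fibre torus and the sole piece is $T^2\times I$, which is Seifert fibred.

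\textbf{Step 2: the Seifert fibred pieces.} These need no further work. Every compact Seifert fibred space admits one of the six Seifert geometries, with the choice determined by the Euler characteristic of the base orbifold and the Euler number of the fibration. Their interiors are therefore geometric in the sense of the definition above.

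\textbf{Step 3: the atoroidal pieces (main obstacle).} All the depth is here. I would split into cases.
\begin{itemize}
\item \emph{Finite $\pi_1$.} The piece is closed, so it equals $M$, and Perelman's elliptisation theorem~\cite[Thm 1.7.3]{AFW2015} gives a spherical structure. Spherical geometry is Seifert fibred.
\item \emph{Infinite $\pi_1$ and no $\Z^2$ subgroup beyond the peripheral ones.} Perelman's hyperbolisation theorem~\cite[Thm 1.7.5]{AFW2015} gives a complete finite-volume hyperbolic structure on the interior. For Haken pieces this case is Thurston's theorem~\cite{Thurston86}.
\item \emph{Remaining atoroidal pieces with infinite $\pi_1$ but containing a $\Z^2$ subgroup.} By the torus theorem and the Seifert fibred space theorem, these are Seifert fibred, so they fall back into Step 2.
\end{itemize}

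Since the deep inputs are the quoted theorems, the real work in the proof is checking that these cases are exhaustive. Finally, I would remark that the Sol manifolds are the one place where a geometric $M$ has nontrivial JSJ tori. Their pieces are still Seifert fibred, so the statement holds there too. For a full reference one can cite~\cite[Thm 1.7.6]{AFW2015}.
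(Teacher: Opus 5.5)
Your overall route is the paper's. You take the JSJ tori, use elliptisation when $\pi_1$ is finite and hyperbolisation otherwise, and check that everything non-hyperbolic is Seifert fibred; this is how \cref{sec: JSJ} deduces the statement from its quoted Hyperbolisation theorem.

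However, the second bullet of your Step~3 misstates hyperbolisation, and as written that step fails. Consider $S^1\times D^2$ (which has no $\Z^2$ subgroup at all), $T^2\times I$, and the twisted $I$-bundle over the Klein bottle. Each is compact and irreducible with toroidal boundary and infinite $\pi_1$, and every $\Z^2$ subgroup of its fundamental group is peripheral. Yet none admits a finite-volume hyperbolic structure on its interior, since their fundamental groups are virtually abelian. These are exactly the exceptions listed in the paper's Hyperbolisation theorem and in \cite[Thm 1.7.5]{AFW2015}.

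They also genuinely occur among the atoroidal pieces. $M$ itself may be $S^1\times D^2$ or $T^2\times I$. The Anosov-bundle case you mention in Step~1 has the piece $T^2\times I$. Sol manifolds obtained by suitably gluing two twisted $I$-bundles over the Klein bottle have these bundles as their JSJ pieces. The repair is the paper's remark that all three exceptions are Seifert fibred, so you must send them back to Step~2. Equivalently, restrict Step~3 to atoroidal pieces that are not Seifert fibred.

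On the other hand, your third bullet is more careful than the paper's one-line proof. With the paper's geometric definition of atoroidal (every incompressible torus is boundary parallel), small Seifert fibred spaces such as the Brieskorn sphere $\Sigma(2,3,7)$ are atoroidal with infinite $\pi_1$ but are not hyperbolic, and they are not among the three listed exceptions. Your split according to whether $\pi_1$ contains a non-peripheral $\Z^2$, with the torus theorem and the Seifert fibred space theorem handling that case, correctly places such pieces among the Seifert fibred ones.
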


These theorems are discussed in more detail in \cref{sec: JSJ}.

\subsection{The Smale conjecture}

The original Smale conjecture was the following statement, proved by Hatcher in the 1980s.

\begin{theorem}[Smale conjecture, \cite{Hatcher1983}]\label{thm-hatcher smale}
The inclusion of $\Isom(S^3)$ into $\Diff(S^3)$ induces a homotopy equivalence $\Isom(S^3) \simeq \Diff(S^3)$, and thus $\Diff(S^3)  \simeq \operatorname{O}(4)$.
\end{theorem}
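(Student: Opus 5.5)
Hatcher's proof of the Smale conjecture is long, so I only sketch the strategy I would follow. The first step is to reduce $\Diff(S^3)\simeq \operatorname{O}(4)$ to the statement $\Diff_\partial(D^3)\simeq *$. Restricting to the orthogonal group $\operatorname{O}(4)=\Isom(S^3)$ and fixing a point and its tangent frame, we obtain the fibration
\[
\Diff_{D^3}(S^3)\to \Diff(S^3)\to \Emb(D^3,S^3),
\]
where the map evaluates on a small round disc $D^3$. The space $\Emb(D^3,S^3)$ deformation retracts (by shrinking the disc to its centre and taking the derivative) onto the frame bundle of $S^3$, which is homotopy equivalent to $\operatorname{O}(4)$. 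This identification is compatible with the inclusion $\Isom(S^3)\hookrightarrow\Diff(S^3)$. The fibre $\Diff_{D^3}(S^3)$ is identified with $\Diff_\partial(D^3)$ by restricting to the complementary disc. Hence the theorem is equivalent to the contractibility of $\Diff_\partial(D^3)$.

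The second step is to reduce further. By a second restriction fibration (restricting to a neighbourhood of a boundary point), contractibility of $\Diff_\partial(D^3)$ is equivalent to contractibility of the space $\mathcal E$ of smooth embedded $2$-spheres in $\R^3$ (equivalently of discs in $D^3$ with fixed boundary). Alternatively, by the Cerf--Palais fibration
\[
\Diff_\partial(D^3)\to \Diff_\partial(D^2\times I)\to \mathcal{D},
\]
one can pass to the space $\mathcal D$ of smooth discs in $D^2\times I$ with boundary $\partial D^2\times\{0\}$. So it suffices to show that every family of such discs $\{S_t\}_{t\in D^k}$, with $\partial D^k$ mapping to flat discs, can be deformed rel $\partial D^k$ to the constant family at the flat disc $D^2\times\{0\}$.

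The core of the argument, which I expect to be the main obstacle, is a parametrised version of Alexander's theorem that every $2$-sphere in $\R^3$ bounds a ball. I would follow Hatcher's approach.
\begin{enumerate}
\item Put the family in general position with respect to the height function $D^2\times I\to I$. The discs $S_t$ then meet the horizontal levels $D^2\times\{s\}$ in $1$-manifolds with only Morse or birth--death singularities, as predicted by Cerf theory for $k$-parameter families.
\item Eliminate the circles of intersection one at a time, by innermost-disc surgeries (\emph{compressions}). Each such circle bounds a disc in its level plane, and surgering $S_t$ along this disc is an isotopy that can be performed continuously in $t$.
\item The hard part is doing these surgeries coherently in families: the order of the surgeries and the nesting of the circles change as $t$ varies. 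To handle this, use a parametrised ``Cerf'' ordering of the intersection curves, together with a careful combinatorial scheme to make choices over a cover of $D^k$. Concretely, I would work with the poset of circle families over a subdivision of $D^k$ and show that it has contractible realisation.
\end{enumerate}
Once all intersection circles are removed, each $S_t$ becomes a graph over $D^2$, and a straight-line homotopy finishes the deformation.

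I expect the parametrised compression step (3) to be the main difficulty, particularly managing how the birth--death points vary with the parameter $t$. This is where Hatcher's long combinatorial argument is needed.
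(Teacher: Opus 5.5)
Your first step is the reduction the paper makes. The paper obtains $\Diff(S^n)\simeq\operatorname{O}(n+1)\times\Diff_\partial(D^n)$ from evaluation at a point followed by the derivative, with the orthogonal group providing a section. Your fibration over $\Emb(D^3,S^3)\simeq\operatorname{O}(4)$ is the same argument packaged in one step, including compatibility with the inclusion of $\Isom(S^3)$.

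Your second step is a standard equivalent form, parallel to the paper's reformulation as surjectivity of $\pi_k\Emb(D^3,\R^3)\to\pi_k\Emb(S^2,\R^3)$ for all $k$. However, the disc fibration you write has the wrong fibre. The stabiliser of a disc with fixed boundary is, up to homotopy, $\Diff_\partial(D^3)\times\Diff_\partial(D^3)$: one factor for each side, using $\Diff_\partial(D^2)\simeq *$. So one gets $\pi_k\mathcal D\cong\pi_{k-1}\Diff_\partial(D^3)$, which still gives the equivalence you want. Beyond this the paper proves nothing; it cites \cite{Hatcher1983} and the Ricci flow proof of \cite{BamlerKleiner2019}.

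So everything rests on your step (3), and the sketch you give contains steps that fail as stated.

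First, for $k\ge 2$ a generic $k$-parameter family of height functions is not restricted to Morse and birth--death singularities. Already for $k=2$ one meets $A_3$ points $x^4\pm y^2$ and coincident critical values. A parametrised Alexander argument should only use that transversality of $S_t$ to a fixed level is an open condition in $t$, so that finitely many levels suffice over each set of a finite cover of $D^k$.

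Second, the innermost-disc move is not an isotopy by itself. On a sphere it is a surgery producing two spheres. On a disc, replacing the subdisc $D_c$ bounded by $c$ with a push-off of $E$ means pushing across the ball bounded by $D_c\cup E$. Doing this continuously in $t$ needs a continuous family of such balls, i.e.\ a parametrised Alexander theorem for the spheres $D_c(t)\cup E(t)$, which is exactly what you are trying to prove. Moreover, circles are born, die and change nesting as $t$ varies, so ``one circle at a time, continuously in $t$'' has no global meaning. Resolving this is the content of Hatcher's argument; the ``poset of circle families with contractible realisation'' is a placeholder for it, not a proof.

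Third, the end state you describe is not right. You cannot remove all intersection circles: a disc missing every level $D^2\times\{s\}$ with $s>0$ is already flat. A disc meeting each level in a single circle need not be a graph over $D^2$, so no straight-line homotopy is available. Straightening such simple pieces in families requires the $2$-dimensional input, Smale's theorem $\Diff^+(S^2)\simeq\SO(3)$ (equivalently $\Diff_\partial(D^2)\simeq *$), which your sketch never invokes.
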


One can ask whether the Smale conjecture generalises.

\begin{conjecture}[Generalised Smale conjecture]\label{conj - generalised smale}
For a geometric 3-manifold $M$, the inclusion $\Isom(M)\hookrightarrow \Diff(M)$ is a homotopy equivalence.
\end{conjecture}

There are many cases where the generalised Smale conjecture holds, and  we will remark on several of these in the individual sections of this survey. However it does not hold in general: for example Hatcher showed \cite{Hatcher1981} that $\Diff(S^1 \times S^2)\simeq \rm{O}(3) \times \rm{O}(2) \times \Omega \rm{O}(3)$, and $\Isom(S^1 \times S^2)\simeq \rm{O}(3) \times \rm{O}(2)$. Another example is given by $T^3$, which satisfies $\pi_0\Diff(T^3)\cong \Out(\Z^3)\cong \GL(3, \Z)$ by \cite{Waldhausen1968}. However for any flat metric $\pi_0\Isom(T^3)$ is finite, so the generalised Smale conjecture does not hold for $T^3$.

We can instead consider the following diagram
\[
\xymatrix @R-0.25cm{
    \Diff_0(M) \ar[r] &  \Diff(M) \ar[r] & \pi_0\Diff(M)\\
    \Isom_0(M) \ar[u]\ar[r] &  \Isom(M)\ar[u] \ar[r] & \pi_0\Isom(M)\ar[u]
}
\]
where $\Isom_0(M)$ is the connected component of the isometry group $\Isom(M)$ containing the identity. Restricting our attention to these connected components, a version of the Smale conjecture does hold in general.

\begin{theorem}[Weak generalised Smale conjecture] \label{thm-weak Smale}
    If $M$ is geometric, then $\Isom_0(M)\hookrightarrow \Diff_0(M)$ is a homotopy equivalence. 
\end{theorem}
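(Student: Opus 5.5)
This is a survey statement, so the proof has to assemble known results geometry by geometry. The plan is to split by the eight geometries and, within each, identify the homotopy type of $\Diff_0(M)$ with the known homotopy type of $\Isom_0(M)$. The uniform strategy has two steps. First, compute $\Isom_0(M)$ from the geometry; it is a compact Lie group. Second, compare it with $\Diff_0(M)$ via the long exact sequences of suitable fibrations, together with the known contractibility results for spaces of incompressible surfaces and of fibrations.

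\textbf{Hyperbolic manifolds.} If $M$ is closed hyperbolic, Mostow rigidity gives that $\Isom(M)$ is finite, so $\Isom_0(M)$ is trivial. In this case I would cite Gabai's theorem that $\Diff_0(M)$ is contractible.

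\textbf{Haken and aspherical manifolds.} For Haken manifolds (including Sol manifolds and many Seifert fibred ones), Hatcher and Ivanov show that $\Diff_0(M)$ is contractible unless the centre of $\pi_1(M)$ is nontrivial. In that case $\Diff_0(M)\simeq (S^1)^k$, where $k$ is the rank of the centre. On the isometry side, $\Isom_0(M)$ is the torus of translations generated by the Seifert circle action (or the $T^3$ translations for flat $T^3$). This torus has the same rank, so the inclusion realises the equivalence.

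\textbf{Non-Haken Seifert fibred manifolds with infinite $\pi_1$.} I would reduce these to the fibre-preserving diffeomorphism group. The key inputs are:
\begin{itemize}
\item every diffeomorphism is isotopic to a fibre-preserving one (Scott, together with Boileau--Otal);
\item the space of Seifert fibrations in a given isotopy class is contractible.
\end{itemize}
The fibre-preserving group then fibres over the diffeomorphism group of the base orbifold. Its fibre is the group of vertical diffeomorphisms, which is homotopy equivalent to $S^1$ or is contractible. One then matches this with $\Isom_0$ of the $\widetilde{\SL_2}$, Nil or $\H^2\times\R$ structure.

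\textbf{$S^2\times\R$ geometry.} This case is only $S^1\times S^2$ and $\RP^3\#\RP^3$. For $S^1\times S^2$ I would use Hatcher's computation quoted above, $\Diff(S^1\times S^2)\simeq \mathrm{O}(3)\times \mathrm{O}(2)\times\Omega \mathrm{O}(3)$. Restricting to identity components, one needs $\Omega_0 \mathrm{O}(3)$ to be contractible, and I expect this to fail: $\pi_1$ of $\Omega\SO(3)$ is $\pi_2\SO(3)=0$, but $\pi_3$ is nontrivial. So the statement must be interpreted with care for this case; I would check the survey's conventions and the literature here, and possibly exclude it. For $\RP^3\#\RP^3$ I would use the analogous result of Hatcher.

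\textbf{Spherical manifolds.} This is the main obstacle: the generalised Smale conjecture for spherical space forms. For $S^3$ it is \cref{thm-hatcher smale}. For lens spaces it is due to Hong--Kalliongis--McCullough--Rubinstein. For the remaining cases I would cite Bamler--Kleiner's Ricci flow proof, which settles all spherical space forms. In these cases the full inclusion $\Isom(M)\to\Diff(M)$ is an equivalence, and restricting to identity components gives the claim.
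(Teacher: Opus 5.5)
Your overall strategy, assembling the literature geometry by geometry, is the same as the paper's. The paper gives no argument of its own and simply cites Hatcher, Ivanov, Hong--Kalliongis--McCullough--Rubinstein, Gabai, McCullough--Soma and Bamler--Kleiner.

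Your worry about $S^1\times S^2$ is justified, and you should state it as a fact rather than a hedge. Hatcher's computation gives $\Diff_0(S^1\times S^2)\simeq \SO(2)\times\SO(3)\times\Omega_0\SO(3)$, so $\pi_2\Diff_0(S^1\times S^2)\cong\pi_3\SO(3)\cong\Z$. On the other side, $\Isom_0$ of any metric is a compact Lie group and so has $\pi_2=0$. The statement is therefore false for $S^1\times S^2$. It has to be read with the standing hypothesis of the geometrisation subsection that $M$ is irreducible. That hypothesis removes the whole $S^2\times\R$ geometry, since neither $S^1\times S^2$ nor $\RP^3\#\RP^3$ is irreducible. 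So you need no separate argument for $\RP^3\#\RP^3$.

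\textbf{The genuine gap is in your non-Haken Seifert fibred case.} You take ``the space of Seifert fibrations in a given isotopy class is contractible'' as an available input and reduce to fibre-preserving diffeomorphisms.
\begin{itemize}
\item For $\H^2\times\R$ and $\widetilde{\SL_2}$ manifolds, this contractibility is itself a theorem of McCullough--Soma. It is obtained together with, and is essentially equivalent to, the weak Smale statement for those manifolds, and their argument relies on the base orbifold being hyperbolic. You should cite it as such, not treat it as a free ingredient.
\item For the non-Haken Nil manifolds (Seifert fibred over $S^2(3,3,3)$, $S^2(2,4,4)$ or $S^2(2,3,6)$ with nonzero Euler number), no such classical input exists. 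McCullough--Soma leave exactly these open, and they were the last outstanding case of the whole statement, settled by Bamler--Kleiner.
\end{itemize}
So your proposed reduction does not close the Nil case. You must invoke Bamler--Kleiner there, as the paper's citation list does.

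A smaller point in your Haken case: equal rank of the two tori is not enough. You need the inclusion $\Isom_0(M)\to\Diff_0(M)$ to be an isomorphism on $\pi_1$. This follows because evaluation at a basepoint identifies $\pi_1\Diff_0(M)$ with $Z(\pi_1M)$ (Hatcher--Ivanov). You then check that the orbit loops of $\Isom_0(M)$ map onto generators of that centre. These are the regular fibre for $\H^2\times\R$, $\widetilde{\SL_2}$ and Nil, and the holonomy-invariant translation lattice in the flat case.
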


Note that $M$ is permitted nonempty boundary in \cref{thm-weak Smale}.
The theorem is due to many people, in addition to Hatcher~\cite{Hatcher1983}, in particular it uses the work of Hatcher~\cite{Hatcher1976}, Ivanov~\cite{Ivanov1976}, Hong--Kalliongis--McCullough--Rubinstein~\cite{HKMR2012}, Gabai~\cite{Gabai2001}, McCullough-Soma~\cite{McCulloghSoma2013}, and Bamler--Kleiner~\cite{BamlerKleiner2019,BamlerKleiner2023a,BamlerKleiner2023b}.

See \cite[Section 4.1]{BoydBregmanSteinebrunnerModuliSpacesFinite24} for a more detailed overview of the literature and a computation of the homotopy types appearing in the weak generalised Smale conjecture.

The upshot of this theorem is that for geometric 3-manifolds we can understand $\Diff(M)$ in terms of the mapping class group~$\pi_0\Diff(M)$ and the connected component $\Diff_0(M) \simeq \Isom_0(M)$. Sometimes the map on the right of the above diagram, $\pi_0\Isom(M)\to \pi_0\Diff(M)$, can also be shown to be an isomorphism.

\subsection{Overview of this survey}
The following subsections introduce each section of our survey and the results within.

\subsubsection{The 3-manifold $S^3$}
By work of Cerf, $\pi_0\Diff(S^3)\cong \Z/2$ \cite{Cerf1968} (\cref{thm:MCGS3}). This is generated by a reflection, so the orientation-preserving subgroup $\pi_0\Diff^+(S^3)$  is trivial. We also know the Smale conjecture $\Diff(S^3)\simeq \Isom(S^3) \simeq \rm{O}(3)$ by Hatcher \cite{Hatcher1983}. In \cref{section-S^3} we give an overview of Cerf's proof, show that it implies his celebrated $\Gamma_4=0$ theorem (\cref{theorem:Cerf}), and comment on Hatcher's proof~\cite{Hatcher1983}  of the Smale conjecture (\cref{thm-hatcher smale}).

\subsubsection{The 3-manifold $S^1 \times S^2$}
By work of Gluck \cite{Gluck1961, Gluck1962}, $\pi_0\Diff(S^1 \times S^2)\cong (\Z/2)^3$. In \cref{sec-S^1xS^2} we state this theorem (\cref{thm:S^1xS^2_MCG}) and give an overview of the proof.
The orientation-preserving mapping class group $\pi_0\Diff^+(S^1 \times S^2)$ is the index two subgroup $(\Z/2)^2$ generated by the Gluck twist and simultaneous reflection in both factors.
From later work of Hatcher \cite{Hatcher1981}, we also know the homotopy type of the diffeomorphism group: $\Diff(S^1 \times S^2)\simeq \rm{O}(2)\times \rm{O}(3)\times \Omega \rm{O}(3)$.

\subsubsection{Lens spaces}
The mapping class groups $\pi_0\Diff(L(p,q))$ of lens spaces were computed by Bonahon \cite{Bonahon1983}. In \cref{sec-lens spaces} we present his results, specifically in \cref{thm-MCG lens spaces}. This computation relies on work of Schubert \cite{Schubert1956}, who showed that up to isotopy there is a unique Heegaard torus  splitting each lens space into two solid tori (\cref{thm: UniqueTorus}).
We describe three generators $\tau$, $\sigma_+$, and $\sigma_-$ (which might be trivial, depending on $p$ and $q$ -- see \cref{thm-MCG lens spaces}). The orientation-preserving mapping class group $\pi_0\Diff^+(L(p,q))$ is in each case the subgroup generated by $\tau$ and $\sigma_+$.

The generalised Smale \cref{conj - generalised smale} is also true for lens spaces by work of Hong--McCullough--Rubinstein~\cite{HongMcCulloughRubinstein04}, which was ultimately published as part of \cite{HKMR2012}.  A new proof was more recently obtained by Ketover--Liokumovich~\cite[Thm~2.8]{KetoverLiokumovich2023}.
The group of path components of the isometry groups of lens spaces are therefore in bijection with the mapping class groups, and these are shown in Tables 1 and 2 of \cite{HKMR2012}.

\subsubsection{Elliptic 3-manifolds}
\begin{definition}\label{defn-elliptic}
  A closed 3-manifold  $M$ is \emph{elliptic} if it admits spherical geometry (the universal cover $\widetilde{M}$ is isometric to $S^3$ with the standard spherical metric). This is true if and only if $\pi_1(M)$ is finite \cite[Thm 1.7.3]{AFW2015}.
\end{definition}

Elliptic 3-manifolds represent one of the six Seifert fibred geometries. Note that lens spaces are elliptic. Our main reference for elliptic 3-manifolds is the book of Hong--Kalliongis--McCullough--Rubinstein \cite{HKMR2012}, where they prove the generalised Smale conjecture \ref{conj - generalised smale} in detail for the elliptic case. Tables of $\pi_0\Diff(M)\cong \pi_0\Isom(M)$ appear in their introduction, and we will reproduce them here.  Note that, apart from lens spaces, elliptic 3-manifolds do not admit orientation-reversing mapping classes, and so $\pi_0\Diff(M) \cong \pi_0\Diff^+(M)$ in this case.

In \cref{sec-elliptic} we cover the classification of elliptic manifolds and the computation of their isometry groups. Since the generalised Smale conjecture is known, this gives the mapping class groups (see \cref{thm: MCG elliptic manifolds}).

\subsubsection{Haken 3-manifolds}

For \cref{sec-Haken}, the 3-manifolds $M$ under consideration need not be closed, but we continue to restrict to compact, orientable 3-manifolds.

\begin{definition}\label{defn-Haken}
  An irreducible 3-manifold  $M$ is \emph{Haken} if $M$ contains an incompressible surface $\Sigma \subseteq M$, i.e.~$\pi_1(\Sigma) \hookrightarrow \pi_1(M)$ is injective.
\end{definition}

Every 3-manifold with nonempty boundary is Haken.
Note that the definition of Haken does not interact nicely with the eight geometries -- in particular a Haken manifold may have nontrivial JSJ decomposition, and need not be geometric. In older literature, Haken 3-manifolds are called \emph{sufficiently large}.

For $M$ closed and Haken, Waldhausen showed that $\pi_0\Diff(M) \to \pi_0\operatorname{hAut}(M)$ is an isomorphism \cite{Waldhausen1968} (see also \cite{Scott1972}). Since $M$ is aspherical (\cref{lemma:kpi1}), it turns out that the latter group coincides with $\operatorname{Out}(\pi_1(M))$, so that \[\rho \colon \pi_0\Diff(M) \xrightarrow{\cong} \Out(\pi_1(M))\] is an isomorphism.
It may not be straightforward to decide which outer automorphisms correspond to the orientation-preserving subgroup $\pi_0\Diff^+(M)$.

Moreover, Hatcher \cite{Hatcher1976} and Ivanov \cite{Ivanov1976}, showed that, for $M$ Haken, $\Diff(M) \to \operatorname{hAut}(M)$ is a homotopy equivalence. Later, Johannson \cite{Johannson1979} showed that `simple' Haken 3-manifolds (those with trivial JSJ decomposition) have finite mapping class groups.

In \cref{sec-Haken} we state these results, and their generalisations when $\partial M \neq \emptyset$. We give an overview of the proof of surjectivity in Waldhausen's \cref{thm:Waldhausen}, which necessitates introducing Haken's notion of \emph{hierarchies}.

\subsubsection{Hyperbolic 3-manifolds}

\cref{sec-hyperbolic} focuses on closed, orientable 3-manifolds. This is not a meaningful restriction, since if $\partial M \neq \emptyset$ then $M$ is Haken, and the results of \cref{sec-Haken} apply.

\begin{definition}\label{defn-hyperbolic}
  A closed, orientable $3$-manifold $M$ is \emph{hyperbolic} if it admits hyperbolic geometry ($\widetilde{M}$ is isometric to $\mathbb{H}^3$ with the standard hyperbolic metric).
\end{definition}

For finite volume hyperbolic manifolds, Mostow rigidity \cite{Mostow1968} (\cref{thm-Mostow rigidity}) tells us that
\[ \pi_0\Isom(M) \to \pi_0\hAut(M)\cong \Out(\pi_1(M))
\]
where the first map is a surjective map, which factors through~$\pi_0\Diff(M)$.

For Haken hyperbolic 3-manifolds, Waldhausen's results \cite{Waldhausen1968} compute the mapping class groups. For general hyperbolic 3-manifolds, Gabai \cite{Gabai1997, Gabai2001} showed that $\Diff_0(M)\simeq \{\ast\}$, and Gabai--Meyerhoff--Thurston \cite{GabaiMeyerhoffThurston2003} showed that $\pi_0\Diff(M) \hookrightarrow \pi_0\hAut(M)$ is injective (\cref{thm:GMT}). It follows that $\pi_0\Diff(M)\cong \Out(\pi_1(M))$. Gabai further showed \cite{Gabai2001} that the strong generalised Smale conjecture holds for hyperbolic manifolds (\cref{thm-strong Smale for hyperbolic}).

In \cref{sec-hyperbolic} we discuss properties of hyperbolic 3-manifolds, and give an overview of the above results and the steps in their proofs.

\subsubsection{Seifert fibred 3-manifolds}

In \cref{sec-Seifert fibred} we discuss mapping class groups of compact, orientable Seifert fibred 3-manifolds, which necessarily have toroidal boundary.

\begin{definition}\label{defn-Seifert fibered}
    A compact, orientable \(3\)-manifold \(M\) with toroidal boundary is called \textit{Seifert fibred} if it is a circle bundle over a \(2\)-dimensional orbifold, such that each fibre has a tubular neighbourhood that is equivalent to a standard fibred torus for interior fibres, or standard fibred half-torus $S^1 \times D^2_+$ for boundary fibres.
\end{definition}

Recall that for $M$ a geometric manifold, six of the eight geometries are Seifert fibred, with the exception of hyperbolic and Sol manifolds.  We discuss the definition of Seifert fibred 3-manifolds in \cref{subsec:defn-of-SFS}, their construction and their description in terms of Seifert symbolds in \cref{subsec:Seifert-symbols}, and the relationship between Seifert fibred and Haken 3-manifolds in \cref{subsec:SFS-vs-Haken}.

In \cref{subsec:fibre-pres-diffeos} we discuss the space of fibre-preserving diffeomorphisms of a Seifert fibred 3-manifold. If $M$ is not a lens space, and not one of a list of exceptional cases, every diffeomorphism is isotopic to a fibre-preserving one.  In the closed, Haken case, any two fibre-preserving diffeomorphisms that are isotopic, are in fact isotopic through fibre-preserving diffeomorphisms.

For the closed, Haken case, we know the mapping class group is isomorphic to $\Out(\pi_1(M))$, and there is a refined version discussed in \cref{sec-Haken} for 3-manifolds with boundary. The reduction to fibre-preserving diffeomorphisms yields some tools for understanding $\Out(\pi_1(M))$ in terms of automorphisms of the orbifold fundamental group of the base, which we describe in \cref{subsec:Haken-SFS}.

In the non-Haken case one only has to consider Seifert fibrations with base orbifold $S^2$ and three exceptional fibres. Each such 3-manifold $M$ that is not Haken is either elliptic, and so $\pi_0\Diff(M)$ is $\pi_0 \Isom(M)$ by \cref{sec-elliptic}, or has $\mathbb{H} \times \R$ or $\SL(2,\R)$ geometry, in both cases with $\pi_0 \Isom(M) \cong \pi_0\Diff(M) \cong \Out(\pi_1(M))$. So the mapping class group is in principle computable.  In \cref{subsec:SFS-non-Haken-case} we present the outcome of explicit calculations for an infinite family of such 3-manifolds.

\subsubsection{JSJ decompositions and mapping class groups}

In \cref{sec: JSJ}, the 3-manifold $M$ is assumed to be compact, orientable, and irreducible, with boundary a (possibly empty) disjoint union of tori, and we restrict to the orientation-preserving mapping class group $\pi_0\Diff^+(M)$.
As discussed above, the {JSJ} decomposition of Jaco--Shalen~\cite{JacoShalen1976} and Johannson \cite{Johannson1979b} decomposes a 3-manifold into Seifert fibred and atoroidal pieces (see e.g.~\cite{Jaco1980}).
In \cref{sec: JSJ} we will first introduce the JSJ theorem and geometrisation theorem in further detail.

Then we derive some relationships between the orientation-preserving mapping class groups of the JSJ components of a manifold $M$ and $\pi_0\Diff^+(M)$.   To our knowledge these relationships have not been recorded elsewhere in the literature.

Assuming a nontrivial JSJ decomposition, hyperbolic JSJ components are Haken and so have their mapping class groups determined by the induced action on the fundamental group (\cref{sec-Haken,sec-hyperbolic}). The Seifert fibred pieces can also be Haken; either way their mapping class groups are studied in \cref{sec-elliptic,sec-Seifert fibred}.

\subsubsection{Reducible 3-manifolds}
Recall a compact 3-manifold~$M$ is \emph{prime} if whenever we decompose~${M}$ as a nontrivial connected sum~${M}=M_1\# M_2$, then at least one of~$M_1$ or~$M_2$ is diffeomorphic to~$S^3$. Recall that $S^1\times S^2$ is the unique orientable 3-manifold which is prime but not irreducible. 
By work of Kneser \cite{Kneser1929}, every compact, oriented 3-manifold admits a connected sum decomposition $M=P_1\#\cdots\# P_n$ where the \emph{prime factors} $P_i$ are prime and not diffeomorphic to~$S^3$. Milnor later showed that the oriented prime factors appearing in this decomposition are uniquely determined up to reordering~\cite{Milnor62}.

Up to this point, we will have considered geometric 3-manifolds, irreducible 3-manifolds, and $S^1\times S^2$. In \cref{sec-reducible} we turn to considering reducible 3-manifolds ($\neq S^1 \times S^2$), i.e.~3-manifolds with nontrivial prime decomposition.

We explain in \cref{subsec-reducible-in-terms-summands} what is known about the mapping class group of a reducible 3-manifold as a function of the mapping class groups of its prime summands.

Then we state \cref{prop:generators of kernel of action of MCG on Out(pi1)}, which extends \cref{thm:kernel-rho} to the case of nonempty boundary. This applies to reducible 3-manifolds $M$, and allows one to in principle compute the mapping class group, for $M$ closed, if one can determine the fate of the sphere twists and the image of $\rho$.

In the particular case of $M\cong (S^1\times S^2)^{\#n}$ we survey work of \cite{Laudenbach1973, Laudenbach1974, BrendleBroaddusPutman2023} which focuses on the Laudenbach sequence arising from study of the map to $\Out(\pi_1(M))$ in \eqref{eqn-MGC to out}. In particular Brendle--Broaddus--Putman show that this map admits a splitting (\cref{cor-BBP splitting}). In fact their proof can be generalised to show that, for $M$ closed and oriented, $\pi_0\Diff(M)$ decomposes as a semi-direct product, albeit with one of the factors somewhat mysterious, and we present the result in this level of generality (\cref{thm: generalised version of BBP23 main thm}).

\subsubsection{Finiteness properties}

In \cref{section:finiteness-properties} we consider results on finiteness of mapping class groups of 3-manifolds. For Haken manifolds, McCullough \cite{McCullough1991} showed that the mapping class groups are finitely presented, and investigated other finiteness properties. We also cover finiteness results for $\BDiff(M)$.

Then we outline a proof of a result of Hatcher--McCullough \cite{HatcherMcCullough1990} which states that the mapping class group $\pi_0\Diff(M)$ of a compact, orientable 3-manifold, possibly with nonempty boundary, is finitely presented if the mapping class groups of its irreducible prime components are (\cref{thm-finite presentation}).

\subsubsection{Further topics}

We finish by briefly surveying some other interesting results on mapping class groups of 3-manifolds. There are many topics which could be included here and we present three.

In \cref{subsec-HS of MCGs} we give an overview of Hatcher and Wahl's results on homological stability for mapping class groups of 3-manifolds \cite{Hatcher_Wahl_MCG3Manifold}.

Finally in \cref{subsec-Nielsen} we collect known results on Nielsen realisation for mapping class groups of 3-manifolds, which is a topic that was popular in the 1970s and 80s in the irreducible setting and has recently seen progress for reducible 3-manifolds.

 \subsection{Guide to computing mapping class groups}

We give a guide to the methods one can try for computing the mapping class group of a closed, orientable 3-manifold, noting that there is no guarantee of success in all situations.

\begin{itemize}
    \item If $M$ is $S^3$ or $S^1 \times S^2$, see \cref{section-S^3,sec-S^1xS^2} respectively: $\pi_0 \Diff(S^3) \cong \Z/2$ and $\pi_0 \Diff(S^1 \times S^2) \cong (\Z/2)^3$.
    \item Compute the prime decomposition of $M$.
    \item Appealing to \cref{thm:kernel-rho} may yield useful information. Compute the image of $\rho \colon \pi_0 \Diff(M) \to \Out(\pi_1(M))$, and determine which sphere twists are nontrivial.
    \item If that is insufficient, we can consider additional structure on $M$, and piece the mapping class group together from that of its JSJ pieces.  For each prime summand, try the following.
    \begin{enumerate}[(a)]
        \item Decide whether it is Haken. If so, see \cref{sec-Haken}.
        \item Decide whether it is Seifert fibred. If so, \cref{sec-Seifert fibred} may provide useful information.
        \item Compute the JSJ decomposition.
    \end{enumerate}
\item For each JSJ piece, consider the following.
\begin{enumerate}[(a)]
    \item Is it hyperbolic? If so, see \cref{sec-Haken}.
    \item Is it elliptic? If so, see \cref{sec-lens spaces,sec-elliptic}.
    \item If neither hold, see \cref{sec-Seifert fibred}.
\end{enumerate}
Caveat: there may be an intermediate Seifert fibred 3-manifold that is a union of JSJ pieces, that is more convenient to work with.
\item Assuming we now know the mapping class groups of the pieces, use \cref{sec: JSJ} to assemble the mapping class group of each irreducible summand from that of its pieces.
\item Use \cref{sec-reducible} to assemble the mapping class group of $M$ from that of its summands. By \cref{subsec-reducible-in-terms-summands}, the mapping class group is generated by slide diffeomorphisms, flips, Gluck twists, transpositions of prime factors, and the mapping class groups of the factors rel.\ a 3-disc.
\end{itemize}

\subsection*{Conventions}
Throughout the article, $M$ will denote a compact, oriented 3-manifold.
For $K \subseteq M$ a submanifold, or for $K = \partial M$, 
we write $\Diff_K(M)$ for the diffeomorphisms of $M$ that fix $K$ setwise, and $\Diff(M,K)$ for the diffeomorphisms that fix $M$ setwise.

We exclude $S^3$ and $S^1 \times S^2$ from the list of lens spaces in this article.

\subsection*{Acknowledgements}
This survey is the result of a seminar at the University of Glasgow organised by Rachael Boyd and Mark Powell. The webpage for the seminar is \url{https://www.maths.gla.ac.uk/~mpowell/3-manifolds-seminar.html}. The speakers in the seminar are now the authors of this survey. The authors would like to collectively thank the other members of the geometry and topology group at Glasgow who participated in the seminar. In particular we would like to thank Riccardo Giannini who contributed to an earlier version of these notes.

Boyd would like to thank Corey Bregman and Bena Tshishiku for helpful conversations, especially with regards to navigating the literature. Corrigan would like to thank Tara Brendle and Simeon Hellsten for helpful conversations and clarifications.

Finally we would like to thank the organisers of the Georgia Topology Conference 2025 for encouraging us to publish this survey in their proceedings.

Bader was supported by the Additional Funding Programme for Mathematical Sciences, delivered by EPSRC (EP/V521917/1) and the Heilbronn Institute for Mathematical Research.
Boyd was supported by EPSRC Fellowship No.~EP/V043323/2.
Nagy was supported by EPSRC New Investigator grant EP/T028335/2.
Nonino was supported by EPSRC Studentship No. ~EP/W524359/1.
Powell was partially supported by EPSRC New Investigator grant EP/T028335/2.

\section{The mapping class group of \texorpdfstring{$S^3$}{the 3-sphere}}\label{section-S^3}

In this section we provide an overview of the study of $\Diff({S^3})$.
We will show that the orientation preserving mapping class group of $S^3$ is trivial, leading to the following theorem.

\begin{theorem}[\cite{Cerf1968}]\label{thm:MCGS3}
The mapping class group of $S^3$ is $\pi_0\Diff(S^3)=\mathbb{Z}/2$.
\end{theorem}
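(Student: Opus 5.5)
We prove \cref{thm:MCGS3} by splitting along orientation. First, $\pi_0\Diff(S^3)\to\Z/2$, recording whether a diffeomorphism preserves orientation, is a well-defined surjective homomorphism. It is well defined because isotopic diffeomorphisms have the same degree. It is surjective because a reflection $(x_1,x_2,x_3,x_4)\mapsto(-x_1,x_2,x_3,x_4)$ reverses orientation. The kernel is $\pi_0\Diff^+(S^3)$, so it suffices to show that every orientation-preserving diffeomorphism $f$ of $S^3$ is isotopic to the identity. The extension then splits, because the reflection has order two, and this gives $\pi_0\Diff(S^3)\cong\Z/2$.

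The reduction to a disc goes as follows.
\begin{enumerate}[(1)]
\item Pick a point $p$. Since $S^3$ is connected, isotope $f$ so that $f(p)=p$.
\item Since $f$ preserves orientation, $df_p\in\GL^+(3,\R)$, which is connected. Using a local linear isotopy near $p$, arrange that $f$ is the identity on a small ball $B$ around $p$.
\end{enumerate}
Then $f$ restricts to an element of $\Diff_\partial(D^3)$, where $D^3=S^3\sm\mathring B$. So it suffices to prove Cerf's theorem that $\pi_0\Diff_\partial(D^3)=0$, that is, that every diffeomorphism of $D^3$ fixing the boundary is isotopic to the identity rel boundary.

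To prove $\pi_0\Diff_\partial(D^3)=0$, I would follow Cerf's strategy using the standard height function, equivalently a foliation of $D^3$ by parallel discs $D_t=\{x_3=t\}$.
\begin{enumerate}[(1)]
\item Given $g\in\Diff_\partial(D^3)$, consider the family of embedded discs $g(D_t)$. We want to isotope $g$ so that it preserves every level, that is, so that $g(D_t)=D_t$ for all $t$.
\item If this is achieved, then $g$ becomes a one-parameter family of elements of $\Diff_\partial(D^2)$, namely a path in $\Diff_\partial(D^2)$ based at the identity.
\item By Smale's theorem $\Diff_\partial(D^2)\simeq *$, and a standard disc-pushing argument gives an explicit contraction. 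This path can therefore be contracted, which yields the isotopy from $g$ to the identity.
\end{enumerate}

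The main obstacle is step (1), making $g$ level-preserving. This is Cerf's analysis of the one-parameter family of functions $x_3\circ g^{-1}$ joining $x_3$ to itself. I would try the following.
\begin{enumerate}[(1)]
\item Put the path of functions in general position, so that only Morse births and deaths and handle slides occur.
\item Use the irreducibility of $D^3$, via Alexander's theorem that every embedded $2$-sphere in $\R^3$ bounds a ball, to cancel critical points pairwise. The key point is that each $2$-sphere formed by a cancelling pair bounds a ball, across which one can isotope.
\item By induction on the number of critical points, reduce to a family with none, which is exactly a level-preserving diffeomorphism.
\end{enumerate}
An alternative I would keep in reserve is to quote Hatcher's Smale conjecture (\cref{thm-hatcher smale}), which already gives $\Diff(S^3)\simeq\operatorname{O}(4)$ and hence $\pi_0\Diff(S^3)\cong\pi_0\operatorname{O}(4)\cong\Z/2$ immediately. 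However, that route is circular in spirit, so Cerf's direct argument is the intended one.
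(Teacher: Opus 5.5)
Your reduction to $\pi_0\Diff_\partial(D^3)=0$ is correct; it is a hands-on version of the paper's splitting $\Diff^+(S^n)\simeq\SO(n+1)\times\Diff^+_{S^{n-1}}(D^n)$. So is the observation that, since $\Diff_\partial(D^2)\simeq *$, it suffices to connect $x_3\circ g^{-1}$ to $x_3$ through functions on $D^2\times I$ that have no critical points and are standard near the boundary.

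The gap is the step that is supposed to do this, and that step carries the whole content of Cerf's theorem. What you describe is a generic path of functions from $x_3$ to $x_3\circ g^{-1}$, followed by elimination of its critical points. This is the $3$-dimensional pseudo-isotopy problem, and your proposed mechanism does not solve it:
\begin{itemize}
\item In a one-parameter family there is no fixed set of critical points to induct on. Index $1$--$2$ pairs are born and die at different parameter values, possibly with different partners.
\item Cancelling such a pair requires the ascending circle of the index-$1$ point and the descending circle of the index-$2$ point to meet transversely in a single point on an intermediate level surface, continuously in the parameter. There is no Whitney trick for curves on a surface to arrange this.
\item Alexander's theorem does not supply this either. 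It concerns a single sphere: it says the space of embedded $2$-spheres in $\R^3$ is connected. Your step needs the one-parameter version.
\end{itemize}
By the fibration $\Diff(D^3)\to\Emb(D^3,\R^3)\to\Emb(S^2,\R^3)/\Diff(S^2)$, with Schoenflies identifying the base, simple connectivity of the space of embedded $2$-spheres is equivalent to $\pi_0\Diff_\partial(D^3)=0$ itself. So your step (2) presupposes what is to be proved.

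This is why the paper, following Cerf, works one dimension down. It identifies $\pi_0\Diff^+(D^3)$ with the fibre of the covering $\Emb(D^3,\R^3)/\Diff_0^+(D^3)\to\Emb(S^2,\R^3)/\Diff^+(S^2)$. Cerf proves this covering is trivial by constructing a section. He stratifies the space of spheres by the singularities of the height function restricted to the sphere:
\begin{itemize}
\item first the stratum where it is Morse with distinct critical values;
\item then the codimension-one strata, with two critical points at one level or a single birth--death point;
\item then he extends the section over the rest.
\end{itemize}
There the functions live on a surface, so the local analysis at each stratum is two-dimensional. Your sketch contains no substitute for this section construction.

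Finally, your reserve route is not circular. The paper remarks that passing to $\pi_0$ in \cref{thm-hatcher smale} recovers the theorem, and the Smale conjecture also has an independent Ricci-flow proof by Bamler--Kleiner. It is a legitimate, much heavier way to conclude, but it is not the proof of Cerf's theorem you set out to give.
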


\begin{remark}\label{rmk:triviality_orientation_preserving}~
  Note that two diffeomorphisms of $S^3$ are homotopic only if they are both orientation preserving or orientation reversing. Hence \cref{thm:MCGS3} implies that every orientation-preserving diffeomorphism of $S^3$ is isotopic to the identity, i.e.\ $\pi_0 \Diff^+(S^3) = \{0\}$.
\end{remark}

\begin{remark}
A proof that $\pi_0 \Homeo(S^3) \cong \Z/2$ was also given by Fisher~\cite{Fisher}. This also proves \cref{thm:MCGS3} when combined with the isomorphism $\pi_0 \Diff (S^3) \cong \pi_0 \Homeo (S^3)$.
\end{remark}

Next, we discuss the implications of this result for the potential construction of  homotopy 4-spheres.
Then we outline the proof of \cref{thm:MCGS3}.
Lastly, we will discuss \cref{thm-hatcher smale} \cite{Hatcher1983}. The statement $\Diff({S^3}) \simeq \rm{O}(4)$ gives a complete answer to the original problem of understanding the homotopy type of $\Diff({S^3})$.

\subsection{{Triviality of {${\pi_0\Diff^+(S^3)}$}}}
 We discuss how to prove \cref{thm:MCGS3} through a series of equivalent reformulations of the problem.
We start with the following key lemma.

\begin{lemma}\label{lem:splitting_hokmotopy_groups}
We have the following homotopy equivalence
$$\Diff^+({S^n}) \simeq \SO(n+1) \times \Diff^+_{ \partial S^{n-1}}(D^n),$$
which implies that
    \begin{equation}\label{eq:second}
    \pi_i\Diff^+({S^n}) \cong \pi_i\Diff_{S^{n-1}}^+{(D^{n})} \times \pi_i\SO(n+1)
\end{equation}
for $i \ge 0$.
\end{lemma}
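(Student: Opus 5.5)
The plan is to build a chain of fibrations, each with a contractible or $\SO(n+1)$-like base, and then use the linear group $\SO(n+1)\subset\Diff^+(S^n)$ to split off a factor. Fix a basepoint $p\in S^n$ and a frame at $p$.

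First I would consider the evaluation map $\Diff^+(S^n)\to \Emb(\{p\},S^n)\cong S^n$, which is a fibration with fibre $\Diff^+_p(S^n)$. Next I would pass to derivatives at $p$. The map $\Diff^+_p(S^n)\to \GL^+(T_pS^n)\simeq \SO(n)$ is a fibration (by the standard Cerf--Palais argument) with fibre the diffeomorphisms fixing $p$ and with derivative the identity at $p$. Combining these two steps, the map $\Diff^+(S^n)\to \mathrm{Fr}^+(S^n)$, sending $f$ to the image of the fixed frame, is a fibration. Its base is the oriented frame bundle, which is diffeomorphic to $\SO(n+1)$.

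The fibre $F$ consists of diffeomorphisms fixing $p$ and fixing $T_pS^n$ pointwise. I would identify $F$ up to homotopy with $\Diff^+_{\partial}(D^n)$, where $D^n$ is the complement of a small open disc $U$ around $p$. The subgroup of diffeomorphisms that are the identity on $U$ includes into $F$. I claim this inclusion is a homotopy equivalence, and the argument is the main obstacle. The tool is a linearisation (Alexander-trick style) deformation: conjugate by the radial rescalings $x\mapsto tx$ in a chart centred at $p$. As $t\to 0$ this deforms a germ with identity derivative to the identity germ, and then a bump-function interpolation makes the map the identity near $p$. One must check that this is continuous in the $C^\infty$ topology and compatible with families. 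Equivalently, one shows that the space of germs at $p$ with identity $1$-jet is contractible. After this, the subgroup is exactly $\Diff_{\partial}(D^n)$, since the orientation condition is automatic.

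Finally, the fibration $F\to \Diff^+(S^n)\to \SO(n+1)$ has a section, namely the inclusion $\SO(n+1)\hookrightarrow\Diff^+(S^n)$ as rotations, because rotations act simply transitively on oriented frames. Since $\Diff^+(S^n)$ is a group, the map $\SO(n+1)\times F\to \Diff^+(S^n)$, $(A,g)\mapsto A\circ g$, is a homeomorphism: its inverse sends $f$ to $(A_f, A_f^{-1}f)$, where $A_f$ is the rotation carrying the fixed frame to $f$ applied to it. Combining this with $F\simeq \Diff_\partial(D^n)$ gives the stated homotopy equivalence. Taking $\pi_i$ then yields \eqref{eq:second}, for $i=0$ as a bijection of sets, and in fact a group isomorphism because $\SO(n+1)$ is connected.
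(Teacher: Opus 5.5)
Your route is the same as the paper's. The evaluation and derivative maps at $p$ assemble into a fibration of $\Diff^+(S^n)$ over a space of frames, the fibre is identified with $\Diff^+_{S^{n-1}}(D^n)$, and the rotation subgroup $\SO(n+1)$ splits off a factor. Your rescaling argument for the fibre actually gives more detail than the paper, which only asserts that identification.

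The last step, however, fails as written. Since $\mathrm{D}f_p$ need not be orthogonal, the base of $f\mapsto \mathrm{D}f_p(\text{fixed frame})$ is the bundle $\mathrm{Fr}^+(S^n)$ of all oriented frames. This has dimension $n+n^2$, so it is only homotopy equivalent to $\SO(n+1)$ (via the inclusion of orthonormal frames), not diffeomorphic to it. Rotations act simply transitively only on orthonormal frames. So your $A_f$ does not exist when $\mathrm{D}f_p$ is not an isometry, and $(A,g)\mapsto A\circ g$ is not surjective: its image is exactly the set of $f$ with $\mathrm{D}f_p$ an isometry.

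There are two easy repairs.
\begin{enumerate}
\item As in the paper, post-compose with Gram--Schmidt, so the base is literally $\SO(n+1)$. The fibre then becomes $F'=\{f : f(p)=p,\ \mathrm{D}f_p\in T\}$, where $T$ is the group of upper-triangular matrices with positive diagonal with respect to the fixed frame. Your formula $f\mapsto (A_f,A_f^{-1}f)$, with $A_f$ now the Gram--Schmidt rotation, really is an inverse to $\SO(n+1)\times F'\to\Diff^+(S^n)$, because Gram--Schmidt commutes with left multiplication by orthogonal matrices. Finally $F'\simeq F$ since $T$ is contractible.
\item Keep $F$, and observe that $(A,g)\mapsto A\circ g$ is a map of fibrations from the projection $\SO(n+1)\times F\to\SO(n+1)$ to $\Diff^+(S^n)\to\mathrm{Fr}^+(S^n)$. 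It covers the homotopy equivalence $\SO(n+1)\hookrightarrow\mathrm{Fr}^+(S^n)$ and restricts to the identity on fibres. The five lemma on the long exact sequences of homotopy groups then shows it is a weak equivalence, which is all the statement needs.
\end{enumerate}
With either fix, your argument goes through, including the consequence for $\pi_i$.
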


\begin{proof}
    We give an overview of the proof. A good reference for more details is \cite{Antonelly-Burghelea-Kahn}.
    First, for $p\in S^n$, the evaluation map $${\rm ev}_p\colon \Diff^+ (S^n) \to S^n, \phi \mapsto\phi(p)$$
defines a transitive action $\Diff^+(S^n) \times S^{n} \to S^n$. The stabiliser of a point $p \in S^n$ is $\Diff^+(S^n,p)$, the diffeomorphisms fixing $p$. This action defines a locally trivial fibration \cite{PalaisLocalTriviality60}. We can say more: this fibration admits a global section $S^n \to \Diff^+(S^n)$ with image in the special orthogonal group $\SO(n+1) \subseteq \Diff^+(S^n)$. We can restrict the action of $\Diff^+(S^n)$ on $S^n$ to $\SO(n+1)$, to obtain a transitive action by rotations on $S^n$ that also determines a locally trivial fibration $\SO(n) \to \SO(n+1) \to S^n$.   In fact, $S^n \cong \SO(n+1)/\SO(n)$.
Consider the derivative map at the point $p$,
\[ {\rm D}_p\colon \Diff^+(S^n,p) \to \GL(T_pS^n)\cong \GL(n,\mathbb R) \xrightarrow{\simeq} \SO(n).\]
 The kernel of this map is homotopy equivalent to $\Diff^+_{\partial D^n}(D^n)$ or, equivalently, to $\Diff_{S^{n-1}}^+(D^n)$. Thus we have a fibration
$$ \Diff^+_{S^{n-1}}(D^n) \to \Diff^+(S^n,p)\xrightarrow{{\rm D}_p}\SO(n)$$ that admits a global section.

Define a map $\Diff^+(S^n) \to \SO(n+1)$, as follows. Let $F \in \Diff^+(S^n)$. Let $e_1,\dots,e_{n+1}$ be the standard basis of $\R^{n+1}$. Note that we can consider $e_1$ as a point $p$ of $S^n$, and $e_2,\dots,e_{n+1}$ as a basis for the tangent space $T_{e_1}S^n$, using the identification $\langle e_1 \rangle^\bot \cong T_{e_1} S^n$. Similarly we can consider $dF(e_i) \in T_{F(e_1)}S^n$ as an element of $\R^{n+1}$ orthogonal to $F(e_1)$.  We define
\begin{align*}
   \Diff^+(S^n) &\to \GL(n+1,\R) \\
   F &\mapsto [F(e_1),dF(e_2),\ldots dF(e_{n+1})].
\end{align*}
The required map  $\Diff^+(S^n) \to \SO(n+1)$ arises from post-composing this with the homotopy equivalence $\GL(n+1,\R) \to \SO(n+1)$ from Gram-Schmidt.

We can fit everything together into the following commutative diagram.
\[\begin{tikzcd}[cramped]
	{\Diff_{S^{n-1}}^+(D^n)} \\
	{\Diff^+(S^n,p)} & {\Diff^+(S^n)} & {S^n} \ar[d,equals] \\
	{\SO(n)} & {\SO(n+1)} \ar[r] & S^n
	\arrow[hook, from=1-1, to=2-1]
	\arrow[hook, from=2-1, to=2-2]
	\arrow["{{\rm D}_p}", two heads, from=2-1, to=3-1]
	\arrow["{{\rm ev}_p}", two heads, from=2-2, to=2-3]
	\arrow[two heads, from=2-2, to=3-2]
	\arrow[hook, from=3-1, to=3-2]
\end{tikzcd}\]
One has to show that the central square is a pullback. It follows that we get a  fibration $$\Diff_{S^{n-1}}^+(D^n) \to \Diff^+(S^n) \to \SO(n+1),$$ where the inclusion of the special orthogonal group into $\Diff^+(S^n)$ gives a global section. Thus we have the homotopy equivalence $$\Diff^+({S^n}) \simeq \SO(n+1) \times \Diff^+_{ S^{n-1}}(D^n)$$ and the homotopy groups split as in \eqref{eq:second}.
\end{proof}

\begin{proof}[Proof of \cref{thm:MCGS3}]

Since we want to show $\pi_0\Diff^+({S^3})=0$ and we know that $\pi_0(\SO(4))=0$, by  \cref{lem:splitting_hokmotopy_groups} it suffices to prove  that $\pi_0\Diff_{S^2}^+({D^3})=0$.
The restriction map gives a locally trivial fibration
\begin{equation*}
    \Diff^+(D^3) \to \Diff^+(S^2)
\end{equation*}
whose fibre is $\Diff_{S^2}^+({D^3})$.
The long exact sequence in homotopy groups of the fibration yields an exact sequence
\begin{equation}\label{eq:third}
    \pi_1\Diff^+(D^3) \to \pi_1\Diff^+(S^2) \to \pi_0\Diff_{S^2}^+({D^3}) \to \pi_0\Diff^+(D^3) \to \pi_0\Diff^+(S^2) \to 0.
\end{equation}
By Smale \cite{Smale_diffeos_S^2} we know $\Diff^+(S^2)\simeq \SO(3)$ and the composition
\begin{equation*}\label{eq:fourth}
    \pi_i(\SO(3)) \to \pi_i\Diff^+(D^3) \to \pi_i\Diff^+(S^2)
\end{equation*}
is an isomorphism for all $i \ge 0$.
This implies that
\begin{enumerate}
    \item $\pi_0\Diff^+(S^2)=0$ and
    \item $ \pi_i\Diff^+(D^3) \to \pi_i\Diff^+(S^2)$ is surjective for all $i \ge 1$.
\end{enumerate}
Combining \eqref{eq:third} with the above implications we obtain an isomorphism
\begin{equation*}\label{eq:fifth}
  \pi_0\Diff_{S^2}^+({D^3}) \xrightarrow{\cong} \pi_0\Diff^+(D^3).
\end{equation*}
Triviality of $\pi_0\Diff_{S^2}^+({D^3})$ is hence equivalent to triviality of $\pi_0\Diff^+({D^3})$.

Consider the embedding spaces $\Emb(D^3, \R^3)$ and $\Emb(S^2,\R^3)$, and the identity component $\Diff^+_0(D^3)$.
The group $\Diff^+(D^3)$ acts on $\Emb(D^3,\R^3)$ on the right using pre-composition, i.e.~$j \phi= j \circ \phi$ for $\phi \in \Diff^+(D^3)$ and $j \in \Emb(D^3,\R^3)$. There is also a locally trivial fibration
\begin{equation}\label{eq:seventh}
    \Emb(D^3,\R^3)/\Diff_0^+(D^3)\to \Emb(D^3,\R^3)/\Diff^+(D^3)
\end{equation}
whose fibre is $\Diff^+(D^3)/\Diff_0^+(D^3)$. Note that $\Diff^+(D^3)/\Diff_0^+(D^3)$ is isomorphic to the orientation preserving mapping class group of $D^3$. Since $\Diff^+(D^3)$ is locally path-connected the quotient space is discrete. Since the fibration in \eqref{eq:seventh} is locally trivial with discrete fibre, it is a \emph{covering} of $\Emb(D^3,\R^3)/\Diff^+(D^3)$.
The strategy now is to show that this cover is trivial.

\begin{claim*}
    If the covering of \eqref{eq:seventh}  is trivial, then $\Diff^+(D^3)/\Diff_0^+(D^3)$ is contractible, and hence $\pi_0\Diff^+(D^3) = \{0\}$.
\end{claim*}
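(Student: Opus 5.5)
The idea is to use the covering in \eqref{eq:seventh}, which I will write as
\[
p\colon E:=\Emb(D^3,\R^3)/\Diff_0^+(D^3)\to B:=\Emb(D^3,\R^3)/\Diff^+(D^3).
\]
Its fibre is the discrete group $G:=\Diff^+(D^3)/\Diff_0^+(D^3)=\pi_0\Diff^+(D^3)$. This is a principal $G$-covering, because $G$ acts freely on $E$ by right multiplication, $[j]\cdot[\phi]=[j\circ\phi]$, and transitively on the fibres with quotient $B$. The claim should be read as saying that triviality of the covering, in the sense needed, forces $G$ to be trivial. A bare statement that the covering is trivial, i.e.\ $E\cong B\times G$, does not imply this. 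So the first step is to pin down the correct notion of triviality: I will take it to mean that $p$ is a homeomorphism, equivalently that the total space $E$ is connected and the covering has a single sheet. The plan is to show that connectedness of $E$, together with the structure of a principal covering, forces the fibre to be a point.

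The key steps, in order, are as follows.

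\textbf{Step 1.} Show that $\Emb(D^3,\R^3)$ is path-connected. Any orientation-compatible embedding can be isotoped, by shrinking towards the centre (the Alexander trick on the source side, $j_t(x)=j(tx)/t$ after translating so that $j(0)=0$), to its derivative at the origin, which is an element of $\GL^+(3,\R)$. Since $\GL^+(3,\R)$ is connected, all such embeddings are isotopic to the standard inclusion. This step uses only the derivative at a point, so it is elementary; as a consequence $E$, being a quotient of $\Emb(D^3,\R^3)$, is path-connected.

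\textbf{Step 2.} Deduce that the fibre is a point. Suppose the covering is trivial, so $E\cong B\times G$ over $B$ with $G$ discrete. Then the number of path components of $E$ is $|\pi_0(B)|\cdot|G|$. Step 1 says $E$ is path-connected, and $B$, as a quotient of $E$, is then also path-connected, so $|G|=1$. Consequently $\Diff^+(D^3)/\Diff_0^+(D^3)$ is a single point, hence contractible, and $\pi_0\Diff^+(D^3)=\{0\}$.

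The main obstacle is not the claim itself but what it defers. Establishing that the covering is trivial is exactly the hard content of Cerf's theorem. One would identify $B$ with the space of embedded balls, equivalently of embedded 2-spheres $\Emb(S^2,\R^3)/\Diff^+(S^2)$, and show it is simply connected; this is Cerf's "Palais–Alexander" analysis. Simple connectivity of $B$, combined with path-connectedness of $E$, gives a trivial, one-sheeted covering via the lifting criterion: $\pi_1(B)\to G$ is surjective onto the monodromy, since $E$ is connected. So inside the claim the only delicate point is the connectedness argument of Step 1, and the shrinking isotopy must be arranged to stay within embeddings, which is standard.
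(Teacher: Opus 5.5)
Your argument is correct and is essentially the paper's: the paper also notes that $\Emb(D^3,\R^3)$, and hence its quotient $\Emb(D^3,\R^3)/\Diff_0^+(D^3)$, is connected. A product decomposition of this total space then forces the discrete fibre $\Diff^+(D^3)/\Diff_0^+(D^3)$ to be connected, hence a point. Beyond that, your Step~1 supplies the shrinking-to-the-derivative justification for connectedness that the paper only asserts, and you rightly restrict to orientation-preserving embeddings, since the full embedding space has two components. Your opening worry that $E\cong B\times G$ alone is insufficient is unnecessary: your own Step~2 shows it suffices once $E$ is connected.
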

\begin{proof}[Proof of Claim]
    $\Emb(D^3,\R^3)$ is connected and hence $\Emb(D^3,\R^3)/\Diff_0^+(D^3)$ is connected as well, being its quotient. If the map in \eqref{eq:seventh} is a trivial cover then $$\Emb(D^3,\R^3)/\Diff_0^+(D^3) \cong \left((\Diff^+(D^3)/\Diff_0^+(D^3)\right) \times  \left(\Emb(D^3,\R^3)/\Diff^+(D^3)\right),$$ so we must have $\Diff^+(D^3)/\Diff_0^+(D^3)$ is connected. Since $\Diff^+(D^3)/\Diff_0^+(D^3)$ is discrete we conclude it is trivial.
\end{proof}

To summarise what we obtained so far: the triviality of $$\Emb(D^3,\R^3)/\Diff_0^+(D^3)\to \Emb(D^3,\R^3)/\Diff^+(D^3)$$ implies $\pi_0\Diff^+({D^3})=0$ which in turn implies $\pi_0\Diff^+(S^3)=0$.

The action of $\Diff^+(D^3)$ on $\Emb(D^3,\R^3)$ gives rise to a locally trivial fibration~\cite{PalaisLocalTriviality60, BoydBregmanSteinebrunnerModuliSpacesFinite24}
\begin{equation*}\label{eq:sixth}
    \Diff^+(D^3) \to \Emb(D^3,\R^3) \to \Emb(D^3,\R^3)/ \Diff^+(D^3)
\end{equation*}
where the final term is the space of 3-discs in $\R^3$.  We have a similar fibration with $S^2$ in place of $D^3$.
We can now consider the following commutative diagram, where the two horizontal rows are fibrations.

\[\begin{tikzcd}[cramped]
	 {\Diff^+(D^3)} & {\Emb(D^3,\R^3)} & {\Emb(D^3,\R^3)/\Diff^+(D^3)}  \\
 {\Diff^+(S^2)} & {\Emb(S^2,\R^3)} & {\Emb(S^2,\R^3)/\Diff^+(S^2)}
	\arrow[from=1-1, to=1-2]
	\arrow[from=1-1, to=2-1]
	\arrow[from=1-2, to=1-3]
	\arrow[from=1-2, to=2-2]
	\arrow[from=1-3, to=2-3]
	\arrow[from=2-1, to=2-2]
	\arrow[from=2-2, to=2-3]
\end{tikzcd}\]

The first two vertical maps are given by restriction, and the third one is defined to make the diagram commute.
Cerf \cite[Lemme 4]{Cerf62} notes that the right-most vertical map in the diagram is a homeomorphism if and only if the (weak) smooth Schoenflies conjecture for $S^2$ holds. The latter was proven by Alexander \cite{Alexander} and Morse-Baiada \cite{Morse-Baiada}, and reproven by Cerf in \cite{Cerf1968}.

We can therefore replace the base space $\Emb(D^3,\R^3)/\Diff^+(D^3)$ in \eqref{eq:seventh}, to obtain a covering $$\Emb(D^3,\R^3)/\Diff_0^+(D^3)\to \Emb(S^2,\R^3)/\Diff^+(S^2).$$
We need to show this is a trivial covering. To do this, Cerf finds a continuous section $p$ of the covering map. His argument is rather long and complicated and uses a stratification of the base space $\Emb(S^2,\R^3)/\Diff^+(S^2)$ involving Thom's Transversality theorems and the auxiliary space of differentiable functions $S^2 \to \R^3$ \cite{Cerf1968}.
Broadly speaking, Cerf first constructs a section over the space $\left(\Emb(S^2,\R^3)/\Diff^+(S^2)\right)^0$, which consists of those $S^2$-submanifolds for which the height function (given as the third coordinate in $\R^3$) is a nondegenerate $C^2$
function with all critical points at different levels. Then he extends it over the stratum $ \left(\Emb(S^2,\R^3)/\Diff^+(S^2)\right)^1$, which consists of $S^2$-submanifolds where the height function is either nondegenerate with exactly two critical points on the same level as some other critical point, or has exactly one degenerate critical point -- where the function looks locally like $x^3 +y^3$ -- and all other critical points are nondegenerate and at different levels. Lastly, Cerf argues that the section built over $\left(\Emb(S^2,\R^3)/\Diff^+(S^2)\right)^0 \cup \left(\Emb(S^2,\R^3)/\Diff^+(S^2)\right)^1$ extends over the whole space $\Emb(S^2,\R^3)/\Diff^+(S^2)$.
We will not discuss the specifics of the proof here. For the technical step of constructing the section over these two strata, we refer the reader to \cite{Cerf1968}.
\end{proof}

\subsection{$\Gamma_4=0$}
One of the consequences of \cref{thm:MCGS3} is another of Cerf's theorems, on $\Gamma_4$.

\begin{definition}\label{def:Gamma_n}
  Let $\alpha_n \colon \Diff^+({D^n}) \to \Diff^+({S^{n-1}})$ be the natural map given by the restriction to the boundary.
Define $\Gamma_n := \coker{\alpha_n}$.
\end{definition}

\begin{theorem}[Cerf, \cite{Cerf1968}]\label{theorem:Cerf}
    $\Gamma_4=0$.
\end{theorem}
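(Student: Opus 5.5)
To show $\Gamma_4=0$ we must show that $\alpha_4$ is surjective: every orientation-preserving diffeomorphism $f$ of $S^3$ extends to a diffeomorphism of $D^4$. The plan is to reduce this to the triviality of $\pi_0\Diff^+(S^3)$ established in \cref{thm:MCGS3}. By \cref{rmk:triviality_orientation_preserving}, $f$ is smoothly isotopic to $\id_{S^3}$. Choose such an isotopy $F\colon S^3\times[0,1]\to S^3$ with $F_0=\id$ and $F_1=f$.

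First reparametrise the isotopy in $t$ so that it is constant near $t=0$ and near $t=1$. This is done by precomposing with a smooth nondecreasing function $[0,1]\to[0,1]$ that is $0$ near $0$ and $1$ near $1$. The level-preserving map
\[
\Phi\colon S^3\times[0,1]\to S^3\times[0,1],\qquad (x,t)\mapsto (F_t(x),t),
\]
is then a diffeomorphism of the collar. Its inverse is $(y,t)\mapsto(F_t^{-1}(y),t)$, which is smooth because inversion in $\Diff(S^3)$ is continuous for the $C^\infty$ topology and a smooth isotopy has jointly smooth inverse. Moreover, $\Phi$ is the identity near $S^3\times\{0\}$ and equals $f\times\id$ near $S^3\times\{1\}$.

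Next, identify $D^4$ with the union of the smaller disc $D^4_{1/2}$ of radius $1/2$ and a collar $S^3\times[1/2,1]$, via polar coordinates $(x,r)$. Define $G\colon D^4\to D^4$ by $G=\id$ on $D^4_{1/2}$ and $G(x,r)=\Phi(x,2r-1)$ on the collar, rescaling $t\in[0,1]$ to $r\in[1/2,1]$. Since $\Phi$ is the identity near the inner boundary, the two pieces glue smoothly. The result is a diffeomorphism of $D^4$ with $G|_{\partial D^4}=F_1=f$, so $f$ lies in the image of $\alpha_4$, and $\coker\alpha_4=0$.

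There is no real obstacle once \cref{thm:MCGS3} is assumed; all the depth lies in Cerf's theorem. The only points needing care are smoothness, handled by making the isotopy stationary near its endpoints and by the smoothness of $(x,t)\mapsto F_t^{-1}(x)$, and orientation. We work throughout with $\Diff^+$, which is why the orientation-preserving statement in \cref{rmk:triviality_orientation_preserving} is the one invoked. Conceptually, the argument is the standard statement that $\Gamma_n$ is the quotient of $\pi_0\Diff^+(S^{n-1})$ by the isotopy classes that extend over $D^n$.
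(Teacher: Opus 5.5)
Your proposal is correct and is essentially the paper's argument: use \cref{thm:MCGS3} to isotope $f$ to the identity, turn the isotopy into a level-preserving diffeomorphism of a boundary collar of $D^4$, and extend by the identity over the inner disc. Your reparametrisation, which makes the isotopy stationary near its ends, supplies the smoothness of the gluing that the paper leaves implicit. Two small corrections: on the collar you mean $G(x,r)=(F_{2r-1}(x),r)$ rather than $\Phi(x,2r-1)$, and the smoothness of $\Phi^{-1}$ follows more directly from the inverse function theorem than from continuity of inversion.
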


\begin{proof}
   Let $\phi \in \Diff^+(S^3)$, and consider a collar $S^{3} \times I \hookrightarrow D^4$ of the boundary of $D^4$. By \cref{thm:MCGS3} and \cref{rmk:triviality_orientation_preserving}, $\phi$ is isotopic to the identity.  Use the isotopy $H \colon S^{3} \times I \to S^{3}$ to construct a diffeomorphism of the collar $\ol{H} \colon S^{3} \times I \to S^{3} \times I$; $\ol{H}(x,t) =(H(x,t),t)$. This restricts to $\phi$ on $S^{3} \times \{0\} = \partial D^n$, and to $\Id_{S^3}$ on $S^{3} \times \{1\}$, the interior boundary of the collar. Extend by the identity on the rest of $D^4$, to see that $\phi = 0 \in \Gamma_4$, as desired.
\end{proof}

\begin{remark}
    By work of Kervaire-Milnor~\cite{Kervaire_Milnor} on the groups $\theta_n$ of homotopy $n$-spheres up to $h$-cobordism and the (smooth input, topological output) generalised Poincar\'e Conjecture $(n \ge 5)$ \cite{Smale_poincare}, we know that $\Gamma_n \cong \theta_n$, and the latter corresponds to the group of oriented smoothings of $S^n$ (again, under the assumption $n \ge 5$).
The identification $\Gamma_n \cong \{\text{smoothings of } S^n\}/\text{isotopy}$ is given by sending $[F]$ to the smoothing $\mathcal{S}_{[F]}$ of $S^{n}$ obtained by gluing two $n$-discs along $S^{n-1}$ via a representative $F$.

 It is known that $\Gamma_5 = \Gamma_6=0$~\cite{Kervaire_Milnor}, which means that there are no exotic spheres in dimension five and six, whereas $\Gamma_7\cong \mathbb{Z}/28$, leading to exotic 7-spheres. On the other hand, \cref{theorem:Cerf} does not imply that there are no exotic spheres in dimension $4$.  It tells us that there are no exotic spheres that arise from the above construction, i.e.~ every manifold obtained from gluing two $4$-discs along a diffeomorphism of $S^3$ is diffeomorphic to $S^4$, because every such diffeomorphism is extendable. There might still be other ways to obtain an exotic 4-sphere.
\end{remark}

\begin{remark}
    It is worth mentioning that there is also a proof of \cref{theorem:Cerf} by Eliashberg using tools from contact geometry \cite{Eliashberg:20years} (see also \cite{GeigesZehmisch2010}).
\end{remark}

\subsection{Hatcher's Theorem}
As mentioned in the introduction of this section, the full scope of Hatcher's work is to determine the homotopy type of $\Diff(S^3)$. In \cite{Hatcher1983} Hatcher proved the Smale conjecture (\cref{thm-hatcher smale}) which states that $ \Diff{S^3} \simeq \rm{O}(4)$.

\begin{remark}
    We point out that by passing to connected components, this immediately recovers \cref{thm:MCGS3}.
\end{remark}

While Theorem \ref{thm-hatcher smale} is stated in a more concise and direct way, Hatcher's original formulation was rather different. For the sake of completion, we will state the original theorem.

\begin{theorem}[\cite{Hatcher1983}]\label{thm:hatcher_original}
    Let $g_t \colon S^2 \to \R^3$ be a smooth family of $C^{\infty}$ embeddings, $t \in S^k$. Then this extends to $\widehat{g}_t \colon D^3 \to \R^3$ for all $k \ge 0$.
\end{theorem}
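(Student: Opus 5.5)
The plan is to reduce the extension statement to the vanishing of the homotopy groups $\pi_k$ of the space $\mathcal{S} := \Emb(S^2,\R^3)/\Diff(S^2)$ of unparametrised smooth 2-spheres in $\R^3$, and then to prove that vanishing with Hatcher's parametrised Cerf-theory argument. The reduction runs as follows. By the weak smooth Schoenflies theorem used above (Alexander, Morse--Baiada, Cerf), every embedded sphere bounds a ball. Combined with \cite[Lemme 4]{Cerf62}, this identifies $\mathcal{S}$ with the space of embedded balls $\Emb(D^3,\R^3)/\Diff(D^3)$. Restriction to the boundary gives a map of fibrations
\[
\bigl(\Diff(D^3)\to\Emb(D^3,\R^3)\to\mathcal{S}\bigr)\longrightarrow\bigl(\Diff(S^2)\to\Emb(S^2,\R^3)\to\mathcal{S}\bigr),
\]
which is the identity on the base. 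A family $g_t$, $t\in S^k$, therefore extends to $\widehat g_t$ exactly when it lifts along $\Emb(D^3,\R^3)\to\Emb(S^2,\R^3)$. By the five lemma, and Smale's $\Diff^+(S^2)\simeq\SO(3)$, this lifting problem is equivalent to showing that $\mathcal{S}$ is weakly contractible. Indeed, $\Emb(D^3,\R^3)\simeq \R^3\times \GL(3,\R)\simeq \mathrm{O}(3)$ by shrinking the ball to its centre and taking the derivative there, so contractibility of $\mathcal{S}$ forces $\Emb(S^2,\R^3)\simeq \mathrm{O}(3)\simeq\Diff(S^2)$ compatibly. This also gives the equivalent reformulation $\pi_k\Diff_{S^2}(D^3)=0$, which by \cref{lem:splitting_hokmotopy_groups} yields \cref{thm-hatcher smale}.

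To show $\pi_k(\mathcal{S})=0$, I would take a family of spheres $S_t$, $t\in D^{k+1}$, that is standard over $\partial D^{k+1}$, and deform it rel boundary to the constant round sphere. Following Hatcher, I would work instead with families of spheres in $\R^3$ that are standard near $\infty$, or equivalently in $S^3$. Using parametrised transversality, I would perturb so that the height function $z|_{S_t}$ is a generic $(k+1)$-parameter family. I would then slice each $S_t$ by horizontal planes $z=c$ into families of circles, and on each level pick the innermost circles, which bound discs in that plane. The idea is to isotope $S_t$ by surgering along these horizontal discs, pushing the discs off and thereby reducing the number of saddle critical points. The end state is a sphere with exactly one maximum and one minimum, all of whose level sets are single circles. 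By the Schoenflies theorem in dimension 2, in the family version Smale's theorem, the space of such spheres is contractible.

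The main obstacle is carrying out the disc surgeries coherently over the whole parameter space. The choice of innermost disc jumps at parameters where the combinatorics of the level sets changes, at births and deaths and where critical values cross. The surgeries chosen at different parameters must also be made compatible. Hatcher handles this by a delicate inductive scheme. First he allows the surgeries to be done in a carefully controlled order. Second, he uses the contractibility of the spaces of choices (flat discs, collars) to glue local deformations together by a partition-of-unity or Cerf-stratification argument, in the spirit of Cerf's construction of the section described above. At this step I would first try a filtration of $D^{k+1}$ by the stratification of singularity types of the height function. I would extend the deformation stratum by stratum, checking at each codimension that the local choices form a contractible space. I expect the hardest point to be controlling non-generic tangencies of the spheres with horizontal planes in high parameter dimension, where Hatcher's "no interference" arguments are needed.
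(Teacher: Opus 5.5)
Your reduction is sound in the direction you need. If $\mathcal{S}=\Emb(S^2,\R^3)/\Diff(S^2)$ is weakly contractible, then $\Diff(S^2)\to\Emb(S^2,\R^3)$ is a weak equivalence. Smale's theorem and $\Emb(D^3,\R^3)\simeq\mathrm{O}(3)$ then make the restriction map $\Emb(D^3,\R^3)\to\Emb(S^2,\R^3)$ a weak equivalence. Homotopy lifts of $g_t$ are upgraded to genuine extensions $\widehat g_t$ by the homotopy lifting property of this restriction fibration, and you should say so explicitly.

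The paper instead gets injectivity on every $\pi_k$ from the triangle through $\GL(3,\R)$, and identifies the statement with $\Diff_{S^2}(D^3)\simeq *$. Your route uses more input (Schoenflies, Cerf's Lemme~4, Smale), but it lands on the space-of-spheres formulation, which is the natural one for a height-function argument. However, that equivalence is all the paper establishes: it explicitly does not prove the theorem and defers to \cite{Hatcher1983}. So the whole content of the statement lies in your second step, $\pi_k(\mathcal{S})=0$ for all $k$, and that step is not proved.

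The gap is the parametrised disc simplification, which you correctly name as the main obstacle and then leave open. An innermost-disc move replaces one of the two discs $E_1,E_2\subset S_t$ bounded by $c=\partial D$ with a push-off of the flat disc $D$. It is an isotopy only if you choose $E_i$ so that the ball bounded by $E_i\cup D$ has interior missing the other disc. That choice, the choice of innermost circle, and the choice of level all jump as $t$ varies.

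Your proposed remedy is to perturb to a generic $(k+1)$-parameter family of height functions and extend over a stratification by singularity type, checking that ``local choices form a contractible space''. That is Cerf's method, and it is precisely what stops at $\pi_0$, where one only needs a section of a covering over strata of codimension at most one. For general $k$ the required contractibility is not local at all. Innermost-ness and the side on which the ball lies depend on the global position of $S_t$ relative to every horizontal plane, so verifying it stratum by stratum is essentially the theorem itself, and you give no mechanism for it.

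Hatcher's proof is organised differently. Transversality of $S_t$ to a horizontal plane holds for almost every level and persists for nearby $t$. By compactness there are finitely many levels $a_i$ and an open cover $\{U_i\}$ of the parameter space with $S_t$ transverse to $\{z=a_i\}$ for $t\in U_i$, with no genericity assumption on the family. The substance of \cite{Hatcher1983} is showing that the surgeries along the resulting horizontal discs can be performed in a consistent order and reconciled on overlaps of the cover. Without an argument of this kind (or a different one, such as the Ricci flow proof of \cite{BamlerKleiner2019}), your proposal is a reformulation plus a citation of Hatcher rather than a proof.
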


We show that the two formulations are indeed equivalent. First we know that
\begin{equation}
 \Diff({S^n}) \simeq \rm{O}(n+1) \times \Diff_{ \partial D^n}(D^n)
\end{equation}
for all $n$. This follows from a similar proof as the orientation preserving case. The statement $\Diff({S^3}) \simeq \rm{O}(4)$ is hence equivalent to $\Diff_{S^2}(D^3) \simeq *$.
It therefore suffices to show that this last statement is equivalent to the one in Hatcher's original work.
Theorem \ref{thm:hatcher_original} says that the natural map $\rho \colon \Emb(D^3,\R^3) \to \Emb(S^2,\R^3)$ is surjective on $\pi_k$ for all $k \ge 0$. Consider the following commutative diagram
\begin{center}
\begin{tikzcd}
{\Emb(D^n,\R^n)} \arrow[rd, "\simeq"] \arrow[rr, "\rho"] &            & {\Emb(S^{n-1},\R^n)} \arrow[ld] \\
                                                        & {\GL(n,\R)} &
\end{tikzcd}
\end{center}
Here the lower left map is given by evaluating the derivative at a point (giving us a homotopy equivalence), and the lower right map is evaluating the derivative at $e_1$ and adjoining the normal vector of the image of $e_1$. From the diagram we see that $\rho$ is injective on $\pi_k$ for all $k$. Moreover, $\rho$ is a fibration whose fibre is $\Diff_{S^{n-1}}(D^n)$. This implies that $\Diff_{S^2}(D^3)$ is contractible if and only if $\rho$ is also surjective on all $\pi_k$. This shows that the two statements are indeed equivalent.
We will not prove Hatcher's result here. However, by explaining this equivalence of statements we hope the proof in \cite{Hatcher1983} becomes slightly more accessible.

Finally, note that Bamler and Kleiner provided an independent proof of Hatcher's theorem, using techniques from Ricci flow~\cite{BamlerKleiner2019}.

\section{The mapping class group of \texorpdfstring{$S^1\times S^2$}{the product of the circle and the sphere}}\label{sec-S^1xS^2}

In this section we will compute the mapping class group of $S^1\times S^2$.  This computation is due to Gluck \cite{Gluck1961,Gluck1962}.

\subsection{Statement of the theorem and overview of the proof}

First, let us describe some self-diffeomorphisms of the manifold in question, $S^1\times S^2$.  Let $a\colon S^2\to S^2$ be the antipodal map and let $s\colon S^1\to S^1$ be the conjugation map, i.e.~if we parametrise $S^1$ as the unit complex numbers, then $s$ is the map sending $z\in \C$ to its complex conjugate $z^*$.  In an abuse of notation, we will then define maps $a,s\colon S^1\times S^2\to S^1\times S^2$ as $\Id_{S^1}\times a$ and $s\times \Id_{S^2}$, respectively.  We define a final map, the so-called \emph{Gluck twist}, as \begin{align*}
    T\colon S^1\times S^2\to& S^1\times S^2, \\
    (\theta,x) \to& (\theta, r_\theta(x))
\end{align*}
where $r_\theta\colon S^2\to S^2$ is the map given by a positive rotation by angle $\theta$ about the vertical axis (from south pole to north pole).

We can now state the theorem, i.e.~the computation of the mapping class group of $S^1\times S^2$.

\begin{theorem}[\cite{Gluck1961,Gluck1962}]\label{thm:S^1xS^2_MCG}
    The mapping class group of $S^1\times S^2$ is
    \[
    \pi_0\Diff(S^1\times S^2)\cong \Z/2\times \Z/2 \times \Z/2,
    \]
    where the three generators of the $\Z/2$-factors are $a,s$ and $T$.
    The orientation-preserving subgroup is
     \[
    \pi_0\Diff^+(S^1\times S^2)\cong \Z/2\times \Z/2,
    \]
    generated by $a\circ s$ and $T$.
\end{theorem}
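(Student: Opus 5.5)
Let $M = S^1\times S^2$ and fix the nonseparating sphere $S = \{1\}\times S^2$. The plan has two halves: first show that $\pi_0\Diff(M)$ is generated by $a$, $s$ and $T$, and then show that these three classes are independent of order two.

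\emph{Generation.} Let $f\in\Diff(M)$. The class $[S]$ generates $H_2(M;\Z)\cong\Z$ and is represented by an embedded sphere, so $f(S)$ is an embedded nonseparating sphere.

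1. I will show that $f(S)$ is isotopic to $S$. Make $f(S)$ transverse to $S$ and remove intersection circles by innermost-disc surgery. Each surgery replaces a sphere by two spheres, one of which is nonseparating. Ambient isotopies in $M\setminus S\cong S^2\times(0,1)$ are controlled by the Alexander/Schoenflies theorem, which underlies Cerf's result used in \cref{thm:MCGS3}. Once $f(S)\cap S=\emptyset$, the sphere $f(S)$ is nonseparating in $S^2\times[0,1]$, so it is parallel to a boundary component, and hence isotopic to $S$.

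2. After this isotopy we may assume $f(S)=S$. Compose with $a$ if necessary so that $f|_S$ preserves the orientation of $S$. Then $f|_S$ is isotopic to the identity by Smale's theorem $\Diff^+(S^2)\simeq\SO(3)$, and extending this isotopy through a collar, we may assume $f|_S=\Id$.

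3. Compose with $s$ if necessary so that $f$ does not swap the two sides of $S$. Cutting along $S$, the map $f$ becomes a diffeomorphism of $S^2\times[0,1]$ that is the identity on both boundary components.

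4. I will classify such diffeomorphisms by the fibration obtained by restricting to one end,
\[
\Diff_{\partial}(S^2\times I)\to \Diff_{S^2\times 0}(S^2\times I)\to \Diff(S^2\times 1).
\]
Relative to the bottom, $S^2\times I$ looks like $D^3$ with a ball removed, so one identifies $\Diff_{S^2\times0}(S^2\times I)$ with $\Diff_\partial(D^3)$, up to the $\SO(3)$ freedom at the removed ball. This group is connected by \cref{thm:MCGS3} together with \cref{lem:splitting_hokmotopy_groups}.

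5. The long exact sequence then gives a surjection $\pi_1\SO(3)\cong\Z/2\to\pi_0\Diff_\partial(S^2\times I)$. The nontrivial loop maps to the rotation twist $(x,t)\mapsto(r_{2\pi t}(x),t)$. After regluing, this twist becomes $T$, so $f$ lies in the subgroup generated by $a,s,T$.

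\emph{Relations and independence.} The three maps $a$, $s$, $T$ commute up to isotopy, and each squares to a map isotopic to the identity. For $T^2$ this holds because the loop $2\cdot[r_\theta]$ is trivial in $\pi_1\SO(3)$. The group is therefore a quotient of $(\Z/2)^3$, and it remains to show the generators are independent.

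1. The induced action on $H_1(M)\cong\Z$ is $-1$ for $s$ and $+1$ for $a$ and $T$.

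2. The induced action on $H_2(M)\cong\Z$ is $-1$ for $a$ and $+1$ for $s$ and $T$.

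3. These two invariants detect $a$ and $s$. It remains to show that $T$ is not homotopic to the identity; this is the main obstacle.

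4. For $T$, I will use the framed Pontryagin–Thom invariant, or equivalently the homotopy class of the map $M\to S^2$ obtained by projection. Composing with projection to $S^2$, the map $T$ gives $M\to S^2$, $(\theta,x)\mapsto r_\theta(x)$. Take the preimage of a regular value $x\neq$ poles. Under the identity map this preimage is the circle $S^1\times\{x\}$ with its product framing. Under $T$ it is the same homology class, but the framing is changed by one full twist.

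5. For maps $M\to S^2$ in this homology class, framings are counted modulo $2\cdot(\text{degree on } S^2)=2$. So a single twist is a nontrivial $\Z/2$-invariant, and $T\not\simeq\Id$. Equivalently, one can argue with the action on $\pi_2$ together with the obstruction-theoretic class in $H^3(M;\pi_3 S^2)$ reduced modulo the indeterminacy.

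6. The same invariant separates $T$ from all products of $a$, $s$ and $T$ that are distinguished neither by the $H_1$ action nor by the $H_2$ action. This gives $(\Z/2)^3$.

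Finally, $a$ and $s$ each reverse orientation while $T$ preserves it. Hence the orientation-preserving subgroup is generated by $a\circ s$ and $T$ and is isomorphic to $(\Z/2)^2$.
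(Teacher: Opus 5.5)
Your proof is correct, but it takes a different route from the paper's in both halves.

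\textbf{Generation.} The paper restricts to $\ker\varphi$, the mapping classes acting trivially on $H_1$ and $H_2$. After arranging $f|_S=\Id$ as in your Steps 1--2, it builds the isotopy by hand:
\begin{enumerate}
\item the lightbulb trick fixes $\alpha=S^1\times\{N\}$;
\item Dehn's lemma fixes a tubular neighbourhood $C$ of $\alpha$;
\item Gluck's framing lemma makes $f|_{\partial C}$ an $n$-fold twist;
\item belt moves reduce $n$ modulo $2$;
\item Cerf's theorem finishes on the two $3$-cells cut out by $S\cup\partial C$.
\end{enumerate}
You instead cut along $S$ and read off $\pi_0\Diff_\partial(S^2\times I)$ as a quotient of $\pi_1\Diff^+(S^2)\cong\pi_1\SO(3)\cong\Z/2$. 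The belt trick then becomes the relation $2=0$ in $\pi_1\SO(3)$, and the lightbulb trick and framing lemma are absorbed into the fibration together with Cerf's theorem. Your route is shorter. The price is that it relies on local triviality of restriction maps, and on Smale's theorem at the level of $\pi_1$ rather than only $\pi_0$.

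One step is stated loosely: $\Diff_{S^2\times 0}(S^2\times I)$ is not literally $\Diff_\partial(D^3)$. The fibration $\Diff_\partial(S^2\times I)\to\Diff_\partial(D^3)\to\Emb^+(D^3,\mathring{D}^3)\simeq\SO(3)$ gives your surjection from $\pi_1\SO(3)$ in one step.

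\textbf{Nontriviality of $T$.} The paper uses the action of $T$ on spin structures. This is elementary, but it is only an isotopy invariant. Your Pontryagin--Thom argument for $p_2\circ T\colon S^1\times S^2\to S^2$ is close to Gluck's original Hopf-invariant proof. It needs Pontryagin's classification: maps pulling back the generator to a fixed $\xi\in H^2(M;\Z)$ form a torsor over $H^3(M;\Z)/(2\xi\smile H^1(M;\Z))\cong\Z/2$. In return it shows that $T$ is not even homotopic to the identity. That stronger statement is what the introduction uses for injectivity of $\pi_0\Diff(M)\to\pi_0\hAut(M)$ when $M$ is closed and prime.

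The framing comparison is cleanest at a pole $N$. Since $p_2\circ T$ is a submersion, the preimage of $N$ is literally $S^1\times\{N\}$, and the pulled-back framing rotates exactly once relative to the product framing. At a non-polar value $x$ the preimage is the helix $\theta\mapsto(\theta,r_{-\theta}(x))$. Comparing framings there requires an isotopy, which introduces an even ambiguity (the belt trick); this is exactly why only the parity of the twist is an invariant.
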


We now lay out the strategy of the proof of this theorem.  First, note that we have a homomorphism
\[
\varphi\colon \pi_0\Diff(S^1\times S^2) \to \Z/2 \times \Z/2
\]
which sends a representative diffeomorphism to its induced map on $H_1(S^1\times S^2)\cong \Z$ paired with its induced map on $H_2(S^1\times S^2)\cong \Z$ (identifying $\Aut(\Z) \cong \Z/2$).  Given a diffeomorphism, both of these induced maps are the $\pm 1$ maps on these homology groups and, since isotopic diffeomorphism must induce the same maps, this gives us the well defined homomorphism $\varphi$ above.

We will reduce proving \Cref{thm:S^1xS^2_MCG} to proving the following theorem.

\begin{theorem}\label{thm:ker_S^1xS^2}
    We have that $\ker\varphi\cong \Z/2$, generated by $T$.
\end{theorem}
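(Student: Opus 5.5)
We need two things: every element of $\ker\varphi$ is isotopic to either the identity or $T$, and $T$ is not isotopic to the identity.

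\emph{Step 1: normalise on a sphere.} Let $f$ represent an element of $\ker\varphi$ and let $S = \{1\}\times S^2$. Since $f$ acts trivially on $H_2$, the sphere $f(S)$ is a nonseparating sphere homologous to $S$. We show $f(S)$ is isotopic to $S$:
\begin{itemize}
\item Make $f(S)$ transverse to $S$ and remove intersection circles by innermost-disc surgeries.
\item Each surgery uses the smooth Schoenflies theorem in the form quoted in \cref{section-S^3}: a sphere in a ball bounds a ball.
\item Once $f(S)\cap S=\emptyset$, the sphere $f(S)$ lies in $S^1\times S^2$ cut along $S$, which is $S^2\times I$. A nonseparating sphere there is isotopic to a boundary component, by Schoenflies applied in $S^3$ after capping off.
\end{itemize}
By isotopy extension we may then assume $f(S)=S$. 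Because $f$ preserves the orientation of $S$ (trivial on $H_2$), the Smale result $\Diff^+(S^2)\simeq \SO(3)$ lets us isotope $f$ so that $f|_S=\Id$. Because $f$ acts trivially on $H_1$, it preserves the two sides of $S$, so we may also arrange that $f$ is the identity on a tubular neighbourhood $S\times[-\epsilon,\epsilon]$.

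\emph{Step 2: reduce to $S^2\times I$ rel boundary.} Cutting along $S$, the map $f$ becomes an element of $\Diff_{\partial}(S^2\times I)$. Its isotopy class is determined by an element of $\pi_1\Diff^+(S^2)\cong\pi_1\SO(3)\cong\Z/2$. To see this:
\begin{itemize}
\item Glue $D^3$ to one end of $S^2\times I$, so that $\Diff_\partial(S^2\times I)$ sits in the fibration coming from restricting $\Diff_{S^2}(D^3)$-type groups.
\item Equivalently, use the fibration $\Diff_\partial(S^2\times I)\to \Diff_{\partial}(D^3)\to \Emb(D^3_{\text{small}},\mathrm{int}\,D^3)\simeq \SO(3)$.
\item Combined with $\pi_0\Diff_{S^2}(D^3)=0$ and $\pi_1\Diff_{S^2}(D^3)=0$ (Cerf and Hatcher, as discussed in \cref{section-S^3}), this gives $\pi_0\Diff_\partial(S^2\times I)\cong \pi_1\SO(3)=\Z/2$.
\end{itemize}
The nontrivial class is the twist $(x,t)\mapsto (r_{2\pi t}(x),t)$, which regluing turns into $T$. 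Changes in the choices made in Step 1 (for example the isotopy of $f|_S$, whose ambiguity is $\pi_1\SO(3)$) only multiply by powers of $T$. So $\ker\varphi$ is generated by $T$, and $T^2\sim\Id$. The claim $T^2\sim\Id$ also follows directly, since $2\pi\cdot 2$ rotation loops are null-homotopic in $\SO(3)$, giving an explicit isotopy.

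\emph{Step 3: $T$ is nontrivial.} This is the main obstacle, since $T$ acts trivially on $\pi_1$ and on homology. The plan is a framing obstruction:
\begin{itemize}
\item Take the curve $\gamma = S^1\times\{p\}$ with $p$ the north pole, which is fixed by $T$.
\item $T$ changes the framing of its normal bundle by the generator of $\pi_1\SO(2)\to\pi_1\SO(3)=\Z/2$, that is, by one twist.
\item Consider the induced map on stable framings of $S^1\times S^2$, i.e.\ spin structures or framings of the tangent bundle modulo homotopy. An isotopy would give a homotopy of the framings of $T^*(\tau)$ and $\tau$, where $\tau$ is the product framing.
\item Their difference is the class $[S^1\to\SO(3)]$ nontrivial in $\pi_1\SO(3)$ along $\gamma$. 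This is detected on the $S^1$ factor as a nontrivial element of $H^1(S^1\times S^2;\Z/2)$, invariant under isotopy.
\end{itemize}
Alternatively, one could argue via Hatcher's $\Diff(S^1\times S^2)\simeq \rm{O}(2)\times \rm{O}(3)\times\Omega \rm{O}(3)$, where $T$ corresponds to the generator of $\pi_0\Omega \rm{O}(3)$. I expect making the framing comparison rigorous (the action on homotopy classes of trivialisations) to be the delicate step.
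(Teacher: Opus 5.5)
Your proposal is correct. The nontriviality half is essentially the paper's argument. The paper works with spin structures, i.e.\ trivialisations over the $2$-skeleton $\alpha\cup S$: $T$ fixes the trivialisation over $S$ and adds one full rotation along $\alpha=S^1\times\{N\}$. That is the same as your comparison of $T^*\tau$ with $\tau$, detected by $\pi_1\SO(3)\cong\Z/2$ along $S^1\times\{N\}$. The bookkeeping you flag as delicate is exactly what the twisting crossed homomorphism $\mathfrak{T}$ of \cref{sec-reducible} provides. Since $T$ is the sphere twist about the nonseparating sphere $S$, \cref{lem:twisting crossed homo on sphere twists} gives $\mathfrak{T}(T)$ Poincar\'e dual to $[S]\neq 0$.

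The generation half is where your route genuinely differs. After the common first step of isotoping $f$ to fix $S$, the paper follows Gluck:
\begin{enumerate}[(a)]
\item it isotopes $f$ to fix $\alpha$ (lightbulb trick), and then a tubular neighbourhood of $\alpha$ (Dehn's lemma);
\item it extracts an integer framing invariant $w(f)$ via a lemma whose proof is omitted, and reduces $w(f)$ modulo $2$ by belt moves;
\item only at the end does it apply Cerf's $\pi_0\Diff_\partial(D^3)=0$ to the resulting $3$-cells.
\end{enumerate}
You instead cut along $S$ at once and compute $\pi_0\Diff_\partial(S^2\times I)$ from the restriction fibration $\Diff_\partial(S^2\times I)\to\Diff_\partial(D^3)\to\Emb(D^3_{\mathrm{small}},\Int D^3)\simeq\SO(3)$. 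This is shorter and bypasses the lightbulb trick, Dehn's lemma and Gluck's framing lemma. It also shows conceptually where the single $\Z/2$ comes from: $w(f)$ and the belt moves are just the map $\pi_1\SO(2)\cong\Z\to\pi_1\SO(3)\cong\Z/2$.

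For the theorem you only need surjectivity of $\pi_1\SO(3)\to\pi_0\Diff_\partial(S^2\times I)$, and that uses Cerf alone. The injectivity coming from Hatcher's $\pi_1\Diff_\partial(D^3)=0$ is superfluous. It also could not by itself show $T\not\sim\Id$, because regluing $S^2\times I$ allows more isotopies, so your Step 3 is genuinely required. In exchange, the paper's route is more hands-on: it builds the isotopy by explicit cut-and-paste, close to Gluck's original topological argument, without parametrised isotopy extension.

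One small imprecision in your Step 1: the innermost-disc surgery is not literally Schoenflies in a ball, since $S^1\times S^2$ is not irreducible. Take an innermost disc $D\subseteq S$ and the two discs of $f(S)$ cut along $\partial D$; of the two resulting spheres, one is nonseparating. You must use the other one. It is separating, hence bounds a ball because $S^1\times S^2$ is prime. That ball cannot contain the rest of $f(S)$, since $f(S)$ is homologically essential.
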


The proof of the above theorem will constitute most of the section.  First, however, we will prove \Cref{thm:S^1xS^2_MCG}, assuming that \Cref{thm:ker_S^1xS^2} holds.

\begin{proof}[Proof of \Cref{thm:S^1xS^2_MCG}]
    We have the following short exact sequence.
    \[
    0\to \ker\varphi \to \pi_0\Diff(S^1\times S^2) \xrightarrow{\varphi} \Z/2\times \Z/2 \to 0
    \]
    where the last map is surjective since the maps $a$, $s$ and $a\circ s$ induce all of the non-trivial elements in $\Z/2\times \Z/2$.  In fact, this map is split, since $a$ and $s$ give us a splitting $\Z/2\times \Z/2\to \pi_0\Diff(S^1\times S^2)$. For this one must observe that $a$ and $s$ are indeed both order two, and that they commute.

    Since $\ker\varphi\cong \Z/2$ by \Cref{thm:ker_S^1xS^2}, we conclude that $\pi_0\Diff(S^1\times S^2)\cong \Z/2\times\Z/2\times\Z/2$ as required (note that since $\ker\varphi\cong \Z/2$ it supports no interesting actions and hence we obtain a direct product structure).
\end{proof}

\Cref{thm:ker_S^1xS^2} trivially follows from the following two propositions.

\begin{proposition}\label{prop:S^1xS^2prop1}
    Let $f\in \ker\varphi$ be a representative diffeomorphism.  Then $f$ is isotopic to $\Id$ or $T$.
\end{proposition}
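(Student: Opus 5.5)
Let $f$ be a diffeomorphism of $S^1\times S^2$ acting trivially on $H_1$ and $H_2$; in particular $f$ is orientation preserving. The plan is to isotope $f$ in stages until it fixes a sphere pointwise. After that, the remaining freedom should be exactly a loop in $\SO(3)$, which has only two homotopy classes.

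\emph{Step 1: fix a sphere setwise.} Let $S=\{1\}\times S^2$. Then $f(S)$ is an embedded non-separating sphere representing the same class in $H_2$. Make $f(S)$ transverse to $S$ and remove the intersection circles one at a time by innermost-disc surgery. An innermost circle on $S$ bounds a disc $D\subset S$ whose interior misses $f(S)$. Surgering $f(S)$ along $D$ produces two spheres. Since the universal cover is $\R\times S^2\subseteq S^3$, Alexander's Schoenflies theorem (the smooth version already used in the previous section) shows that every embedded sphere in $S^1\times S^2$ either bounds a ball or is isotopic to $S$. Using this, we can isotope $f(S)$ across a ball to remove the circle. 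Once the two spheres are disjoint and homologous, they cobound a product region $S^2\times I$, because cutting along $S$ gives $S^2\times I$ and we can apply Schoenflies in $\R^3$. So $f(S)$ is isotopic to $S$, and we may assume $f(S)=S$. Since the action on $H_2$ is trivial, $f|_S$ preserves orientation, and $f$ preserves the co-orientation of $S$ because it acts trivially on $H_1$.

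\emph{Step 2: fix the sphere pointwise and reduce to $S^2\times I$.} By Smale's theorem $\Diff^+(S^2)\simeq \SO(3)$, which is connected. So $f|_S$ is isotopic to the identity, and by the isotopy extension theorem we may assume $f|_S=\Id$ and, after a further adjustment, that $f$ is the identity near $S$. Cutting along $S$ then gives a diffeomorphism $g\in\Diff_{\partial}(S^2\times I)$. The key input is $\pi_0\Diff_\partial(S^2\times I)$. I would compute it from the restriction fibration
\[
\Diff_{\partial}(D^3)\to \Diff_{S^2\times\{0\}}(S^2\times I)\to \Emb(D^3, \Int(S^2\times I)\cup\cdots),
\]
or, more concretely, by viewing $S^2\times I$ as $D^3$ minus a small ball $B$. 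In that picture $\Diff_\partial(S^2\times I)$ fibres over the space of framed positions of $B$ inside $D^3$ fixing the outer boundary. That space is homotopy equivalent to $\SO(3)$, and the fibre is $\Diff_\partial(D^3)$, which is contractible by Hatcher's theorem (\cref{thm-hatcher smale}); for $\pi_0$, Cerf's \cref{thm:MCGS3} suffices. Hence $\pi_0\Diff_\partial(S^2\times I)$ is a quotient of $\pi_1\SO(3)\cong\Z/2$, and it is generated by the twist $(x,t)\mapsto(r_{2\pi t}(x),t)$.

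\emph{Step 3: reglue.} Regluing, $g$ corresponds to $\Id$ or to the twist, and the twist glues up to $T$. Isotopies rel boundary of $S^2\times I$ descend to isotopies of $S^1\times S^2$, so $f$ is isotopic to $\Id$ or to $T$.

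I expect Step 1 to be the main obstacle. The innermost-disc argument needs care: we must check that each surgery really allows an isotopy reducing the number of intersection circles, and not just a homological statement. The fallback is to work in the universal cover $\R\times S^2$ and use the Schoenflies theorem on lifts.
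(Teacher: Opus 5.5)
Your proof is correct in substance and takes a genuinely different route from the paper, but Step 2 has the fibration the wrong way round. Viewing $S^2\times I$ as $D^3\setminus\Int B$, restriction to $B$ gives
\[
\Diff_\partial(S^2\times I)\longrightarrow\Diff_\partial(D^3)\longrightarrow\Emb^+(B,\Int D^3)\simeq\SO(3),
\]
so $\Diff_\partial(S^2\times I)$ is the \emph{fibre}, not the total space. As you wrote it, with $\Diff_\partial(D^3)$ as the fibre over a connected base, the long exact sequence would give $\pi_0\Diff_\partial(S^2\times I)=0$, which would make $T$ isotopic to the identity. With the correct direction, $\pi_1\SO(3)\to\pi_0\Diff_\partial(S^2\times I)\to\pi_0\Diff_\partial(D^3)=0$ is exact. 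The connecting map sends a full rotation of $B$ to the twist $(x,t)\mapsto(r_{2\pi t}(x),t)$, and this reglues to $T$, which is your stated conclusion.

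In Step 1, the point worth writing down is why each surgery can be realised by an isotopy. Every embedded sphere has class $0$ or $\pm1$ in $H_2$, since a nonseparating one has a dual loop. Because $[f(S)]=[S]$, exactly one surgered sphere is null-homologous, so it bounds a ball (lift to $\R\times S^2\subseteq S^3$ and use Alexander). That ball lies on the side away from $f(S)$, since $f(S)$ is not null-homologous and so cannot sit inside a ball. Together with the thin region between $f(S)$ and the two surgered spheres, it therefore forms a product region from $f(S)$ to the other surgered sphere, which meets $S$ in fewer circles.

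Compared with the paper: both arguments begin by isotoping $f$ to fix $S$ pointwise. The paper simply asserts that $f(S)$ can be made disjoint from $S$, and your innermost-disc argument supplies the justification. The paper then follows Gluck:
\begin{enumerate}
\item it fixes the circle $\alpha$ by the lightbulb trick, then a tubular neighbourhood $C$;
\item it records a framing integer $w(f)$, via a lemma of Gluck that is not proved there;
\item it reduces $w(f)$ mod $2$ by belt moves;
\item it applies Cerf's theorem to the two $3$-cells left after cutting along $S\cup\partial C$.
\end{enumerate}
You replace steps (ii)--(vi) by a single computation: $\pi_0\Diff_\partial(S^2\times I)$ is a quotient of $\pi_1\SO(3)$. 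So the $\Z/2$ appears as $\pi_1\SO(3)$, the belt trick in algebraic form, rather than as a framing integer mod $2$. Your route needs the Cerf--Palais fibration and $\Emb^+(D^3,\Int D^3)\simeq\SO(3)$. In exchange it avoids the lightbulb trick and the framing lemma, and with Hatcher's theorem it upgrades to $\Diff_\partial(S^2\times I)\simeq\Omega\SO(3)$. The paper's route is more hands-on and makes the geometry of the twist explicit. Both ultimately rest on Cerf's $\pi_0\Diff_\partial(D^3)=0$ and Smale's theorem.
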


\begin{proposition}\label{prop:S^1xS^2prop2}
    The Gluck twist $T$ represents an order two element in the mapping class group of $S^1\times S^2$.  In particular, it is not isotopic to the identity.
\end{proposition}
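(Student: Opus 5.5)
Two things need proving: $T^2$ is isotopic to the identity, and $T$ itself is not.

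\emph{$T^2\sim \Id$.} We have $T^2(\theta,x)=(\theta,r_{2\theta}(x))$. The loop $\theta\mapsto r_{2\theta}$ in $\SO(3)$ is twice the generator of $\pi_1\SO(3)\cong\Z/2$, so it is null-homotopic. Choose a based null-homotopy $\gamma_t\colon S^1\to\SO(3)$ with $\gamma_0(\theta)=r_{2\theta}$ and $\gamma_1\equiv \Id$. Then $F_t(\theta,x)=(\theta,\gamma_t(\theta)(x))$ is a smooth isotopy through diffeomorphisms from $T^2$ to $\Id$. Smoothness can be arranged by smoothing the null-homotopy in the Lie group $\SO(3)$. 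This shows $T$ has order at most two.

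\emph{$T\not\sim\Id$.} Here we need an isotopy invariant that detects $T$. Any isotopy is in particular a homotopy, so it suffices to show $T$ is not homotopic to $\Id$. I would use a framed Pontryagin--Thom type argument.

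Let $\pi\colon S^1\times S^2\to S^2$ be the projection. If $T\simeq \Id$, then $\pi\circ T\simeq\pi$ as maps $S^1\times S^2\to S^2$. The map $\pi\circ T\colon(\theta,x)\mapsto r_\theta(x)$ has two properties.
\begin{enumerate}
\item It agrees with $\pi$ on the circle $S^1\times\{N\}$, since $r_\theta$ fixes the poles.
\item The preimage of a regular value $y$ is the circle $\{(\theta,r_\theta^{-1}(y))\}$. Its framing, pulled back from $T_yS^2$, differs from the product framing of $\pi^{-1}(y)=S^1\times\{y\}$ by one full twist.
\end{enumerate}
By the Pontryagin construction, homotopy classes of maps $S^1\times S^2\to S^2$ correspond to framed bordism classes of framed $1$-dimensional submanifolds. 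For the homotopy class of the underlying curve $S^1\times\{pt\}$, the framings form a torsor over $\pi_1\SO(2)=\Z$ modulo the relation induced by framed bordisms in $S^1\times S^2\times I$. I would compute this indeterminacy using the degree on $H_2$: a framed curve intersecting $\{pt\}\times S^2$ algebraically once can change its framing only by an even number of twists. This is the known computation $[S^1\times S^2,S^2]$, where maps of degree $d$ on $H_2$ with a fixed $\pi_1$-behaviour are classified by $\Z/2d$. With $d=1$ this gives $\Z/2$, and the one-twist difference shows $\pi\circ T\not\simeq\pi$.

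Alternatively, and probably more cleanly, I would run obstruction theory on the cell structure $S^1\times S^2=(S^1\vee S^2)\cup e^3$. Since $\pi_1(S^2)=0$, we may homotope so the maps agree on the $2$-skeleton. The difference class then lies in
\[
H^3(S^1\times S^2;\pi_3S^2)=\Z
\]
modulo the image of the self-homotopies of the $2$-skeleton. The difference cochain of $\pi\circ T$ and $\pi$ is the Hopf invariant of the loop of rotations, namely $\pm1$. The indeterminacy comes from homotopies on $S^1\times S^2$ and equals $2d\,\Z=2\Z$, via the Whitehead product $[\iota_2,\iota_2]=2\eta$. Hence the class is nonzero, and $T\not\simeq\Id$.

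\emph{Main obstacle.} The crux is computing that indeterminacy correctly, in particular verifying that it is exactly $2\Z$ rather than all of $\Z$. I would do this via the Whitehead product calculation, or by citing Pontryagin's classification of maps from $3$-complexes to $S^2$.
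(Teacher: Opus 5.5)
Your proof is correct. For $T^2\sim\Id$ it agrees with the paper: your explicit null-homotopy of $\theta\mapsto r_{2\theta}$ in $\SO(3)$ is exactly the belt move the paper invokes.

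For $T\not\sim\Id$ you take a genuinely different route. The paper uses spin structures. It takes the $2$-skeleton $\alpha\cup S$, notes that $T$ fixes it pointwise and is the identity near $S$, and observes that along $\alpha$ the derivative of $T$ is the loop of rotations generating $\pi_1\SO(3)$. So $T$ interchanges the two spin structures of $S^1\times S^2$, whereas isotopic diffeomorphisms act identically on them.

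You instead show $\pi\circ T\not\simeq\pi$ using Pontryagin's classification: degree-one maps $S^1\times S^2\to S^2$ form $H^3/(2\alpha\cup H^1)\cong\Z/2$, and the difference class is the Hopf map, of Hopf invariant $\pm1$. This is essentially Gluck's original Hopf-invariant argument, which the paper mentions but does not use.

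The two routes trade off as follows:
\begin{itemize}
\item The paper's argument is lighter. It needs only that spin structures form an $H^1(\,\cdot\,;\Z/2)$-torsor detected on the $1$-skeleton. It is also the $S^1\times S^2$ case of the twisting crossed homomorphism from the reducible section. But because it goes through the derivative, it is a priori only an isotopy invariant.
\item Your argument costs more: you must check that the indeterminacy is $2\Z$ (via $[\iota_2,\iota_2]=2\eta$, or by citing Pontryagin) and not all of $\Z$. In return it proves the stronger fact that $T$ is not even homotopic to the identity. That stronger fact is what the introduction relies on when it cites this section to claim that $\pi_0\Diff(M)\to\pi_0\hAut(M)$ is injective for closed prime $M$.
\end{itemize}

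Two small tidy-ups. First, since $\pi$ and $\pi\circ T$ are submersions, you can take the regular value $y=N$. Then both preimages are literally $\alpha$, and their framings differ by $\theta\mapsto r_\theta|_{T_NS^2}$, so no isotopy of curves is needed to compare them. Second, the phrase ``fixed $\pi_1$-behaviour'' should read ``fixed pullback of the generator of $H^2(S^2)$'', since $\pi_1(S^2)=0$.
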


 We will now prove these propositions.

\subsection{Proof of \Cref{prop:S^1xS^2prop1}}

The proof will be split up into a number of steps.  The aim is to pick a representative diffeomorphism in $\ker\varphi$ and then step by step build an isotopy from it to either the identity map or the Gluck twist.

First we define some notation.  Let $S:=\{1\}\times S^2\subseteq S^1\times S^2$, let $N\in S^2$ be the north-pole, and let $\alpha:=S^1\times\{N\}$.  Let $C$ denote a tubular neighbourhood of $\alpha$, and let $\partial C$ be the boundary torus (see \cref{fig:s1xs2_decomposition}).

\begin{figure}
    \centering
    \includegraphics[width=0.6\linewidth]{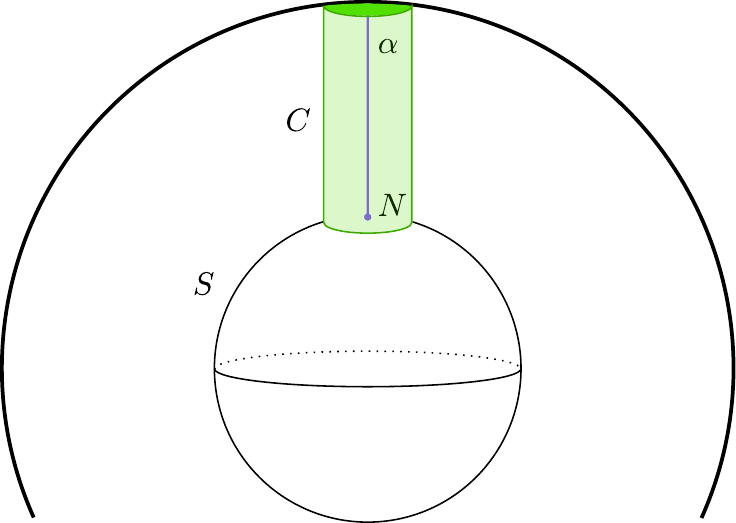}
    \caption{The manifold $S^1\times S^2$ (the outer and inner sphere both denote $S=\{1\}\times S^2$).}
    \label{fig:s1xs2_decomposition}
\end{figure}

Pick $f\colon S^1\times S^2$, a diffeomorphism representing an isotopy class in $\ker\varphi$.  Then the steps are as follows.
\begin{enumerate}[(i)]
    \item Isotope $f$ to a map which fixes $S$ pointwise.
    \item Isotope this new map to one which fixes $S$ and $\alpha$ pointwise.
    \item Isotope this new map to one which fixes $S$ pointwise and fixes $\partial C$ setwise.
    \item Isotope this new map to one which fixes $S$ pointwise and whose restriction to $\partial C$ is the $n$-fold twist map.
    \item Isotope this new map to one which fixes $S$ pointwise and whose restriction to $\partial C$ is either the identity, or the $1$-fold twist map.
    \item Isotope this new map to either the identity map or the Gluck twist $T$.
\end{enumerate}

\begin{step}[Step (i)]
    First, we can isotope $f$ so that $f(S)$ does not intersect $S$.  Now, consider the two regions of $S^1\times S^2$ bounded by $f(S)$ and $S$.  By an argument analogous to that of proving that connected sum is well-defined (Gluck works in the topological category, and so has to cite the annulus theorem for 3-manifolds, but we work in the smooth category where this is a standard exercise), it follows that both of these regions are diffeomorphic to $I\times S^2$.  It follows that $f$ can be further isotoped such that $f(S)=\{-1\}\times S^2$, and then a rotational isotopy gives that we can further isotope $f$ such that $f(S)=S$.

    Now $f$ restricts to a map $f\vert_S\colon S^2\to S^2$ which is degree one since we assumed that $f\in\ker\varphi$.  Hence, by Munkres~\cite{Munkres} or Smale~\cite{Smale_diffeos_S^2}, $f\vert_S$ is isotopic to the identity, and so $f$ is isotopic to a map that fixes $S$ pointwise.
\end{step}

\begin{step}[Step (ii)]
    This step actually has a different interpretation, which we will explain now.  Let $K\colon S^1\hookrightarrow S^1\times S^2$ be a knot in $S^1\times S^2$.  We have an invariant associated to $K$, denoted $g(K)$, called the \emph{geometric winding number}, which is defined as the minimal possible transverse intersections of $K$ with $S$.  We then have the following lemma.

    \begin{lemma}[Lightbulb trick]
        Let $K$ be a knot in $S^1\times S^2$ with $g(K)=1$.  Then $K$ is unknotted, i.e.~isotopic to $\alpha\subseteq S^1\times S^2$.
    \end{lemma}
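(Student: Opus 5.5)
The plan is to cut along $S$ and reduce to a statement about arcs in $S^2\times I$. Put $K$ transverse to $S$ with exactly one intersection point. Cutting $S^1\times S^2$ along $S$ gives $W:=S^2\times[0,1]$, and $K$ becomes a properly embedded arc $\kappa$ running from a point of $S^2\times\{0\}$ to the corresponding point of $S^2\times\{1\}$. The curve $\alpha$ becomes the straight arc $\{N\}\times[0,1]$. First isotope $K$ near $S$ so that it crosses $S$ at $N$ and agrees with $\alpha$ in a collar of $S$. It then suffices to isotope $\kappa$ to $\{N\}\times[0,1]$ rel endpoints inside $W$. Regluing such an isotopy gives an isotopy of $K$ to $\alpha$ in $S^1\times S^2$.

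For the arc isotopy I would use the classical lightbulb argument. Project $\kappa$ to the $[0,1]$ factor and arrange by general position that it is a generic arc diagram. View $W$ as $S^2\times I$ with the arc hanging from the top sphere, like a cord from a ceiling. Any crossing change between two strands of $\kappa$ can then be realised by an isotopy. Take the strand lying closer to the bottom endpoint and swing it around the sphere $S^2\times\{t\}$, sweeping through the complement of a point of $S^2$ instead of through the other strand. In the complement of a single point the sphere is a disc, so the move is legitimate.

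More concretely, I would argue by induction on the number of crossings in a diagram of $\kappa$. Order the arc from its bottom endpoint. Working from that end, change each crossing so that the arc descends monotonically, so $\kappa$ becomes a descending arc. A descending arc is isotopic rel endpoints to a graph $t\mapsto(\gamma(t),t)$. Such a graph can be straightened to $\{N\}\times I$ by contracting the path $\gamma$ in $S^2$ while keeping its endpoints at $N$. This works because $S^2$ is simply connected; when $\gamma$ is embedded it can even be done through level-preserving embeddings.

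The main obstacle is the crossing-change step. One must check carefully that passing a strand around the back of the sphere $S^2\times\{t\}$ gives a genuine ambient isotopy that avoids the rest of $\kappa$. The cleanest approach is to place the strand near the lower endpoint in a thin slab $S^2\times[t-\epsilon,t+\epsilon]$ that meets only it and the strand it crosses. Inside that slab, the first strand is a small sub-arc, and it can be moved freely because the complement of a point in $S^2$ is a disc. A secondary technical point is keeping $\kappa$ fixed near $\partial W$ so that the isotopy reglues smoothly, and the collar normalisation above handles this.
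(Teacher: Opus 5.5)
The paper gives no proof of this lemma; it quotes it as the classical lightbulb trick in Step (ii). So the question is whether your argument stands on its own.

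Your reduction is fine: cut along $S$, normalise $K$ near $S$, isotope $\kappa\subset S^2\times[0,1]$ rel a neighbourhood of its endpoints, and reglue. Your endgame is also fine, provided you project to $S^2$ with the $[0,1]$-coordinate as height. You then need, ordering $\kappa$ from its bottom endpoint, that every crossing is first met on the \emph{lower} strand. Linearly interpolating the height function to the arc parameter is then an isotopy to a graph $t\mapsto(\gamma(t),t)$. Graphs are always embedded, so any null-homotopy of $\gamma$ rel endpoints straightens it. With ``descending'' in your stated convention you would not land on a graph over $t$.

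The genuine gap is the crossing-change step, which is the whole content of the lemma. A level sphere $S^2\times\{t\}$ through the crossing region is in general met by many other strands of $\kappa$; every generic level is met an odd number of times. Swinging a strand ``around the back'' of that sphere therefore drags it through those strands: the relevant complement is a multiply punctured sphere, not a disc. Your remedy is a thin slab around the crossing meeting only the two strands involved. That is not available in general, for instance for a crossing in the middle of a knotted portion of $\kappa$. You give no mechanism for producing it, and arranging it is essentially what is to be proved.

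The missing idea is that the only levels where your disc argument works are those near an endpoint. For small $\epsilon$, the sphere $S^2\times\{\epsilon\}$ meets $\kappa$ in a single point $q$. So a small meridian of $\kappa$ at $q$ bounds the disc $S^2\times\{\epsilon\}$ minus a neighbourhood of $q$, and that disc misses $\kappa$. You must transport the crossing change down there, as follows.
\begin{enumerate}
\item Suppose the crossing involves strands $A$ and $B$, with $A=\kappa(s_A)$ earlier along $\kappa$ than $B$. The crossing change is a band sum of $B$ with a small meridian of $A$.
\item Slide this meridian back along $\kappa[0,s_A]$ until it is the meridian at $q$. Drag the band inside a thin tubular neighbourhood of $\kappa[0,s_A]$; this neighbourhood is disjoint from the rest of the arc, in particular from $B$.
\item Sweep the meridian across the punctured level sphere. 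This makes the finger trivial, and it can be retracted.
\end{enumerate}
Equivalently, drag the bulb $S^2\times\{0\}$, together with the initial segment of $\kappa$, around $B$. With this, every crossing change is realised by an isotopy rel endpoints, and your induction goes through.
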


    Given the above lemma, the step is easy to complete.  We can see that $f(\alpha)$ is a knot in $S^1\times S^2$, and clearly (since intersections between submanifolds are preserved under diffeomorphisms) $g(f(\alpha))=1$, so we can use the lemma to say that $f(\alpha)$ is isotopic to $\alpha$, meaning that $f$ can isotoped such that $f(\alpha)=\alpha$.
\end{step}

\begin{step}[Step (iii)]
    Let $C$ denote a small (open) tubular neighbourhood of $\alpha$ and let $C'$ denote a larger (closed) tubular neighbourhood such that $U:=f(C)\subset C'$ (one can always pick $C$ smaller to obtain this).  Consider the submanifold $C' \sm C$.  The boundary $\partial U$ is a torus, and so Dehn's lemma tells us that $U$ is diffeomorphic to a solid torus (an unknot exterior) if and only if $\pi_1(U)\cong \Z$.  By shrinking $C$ (and hence $f(C)$) we see that $U$ deforms onto $C'\sm\alpha$, which satisfies $\pi_1(C'\sm\alpha)=\Z$.

    Now the solid torus $U$ can be used to guide an isotopy of $h(C)$ to $C'$, and then a further shrinking isotopy to $C$ finishes the proof.
\end{step}

\begin{step}[Step (iv)]
    Fix a framing of the tubular neighbourhood $C$.  We can now consider the restriction $f\colon C\to C$ and look at the difference in the framings between $\alpha$ and $f(\alpha)$.  These framings are classified by maps $\S^1\to \SO(2)$, and hence are classified by integers.  Let $w(f)\in \Z$ denote this framing difference.  Gluck proves the following lemma.
    \begin{lemma}
        Two diffeomorphisms $g,h\colon C\to C$ are isotopic relative to $\partial C$ if and only if $w(g)=w(h)$.
    \end{lemma}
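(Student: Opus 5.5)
The plan is to identify $C$ with $S^1 \times D^2$ and reduce the classification to the known homotopy type of $\Diff(D^2)$ rel boundary. I read the lemma as follows: $g$ and $h$ are diffeomorphisms of $C$ that restrict to the identity on $\partial C$. In the application they also fix $\alpha = S^1\times\{0\}$ pointwise. The framing number $w(\cdot)$ is the winding number of the loop $\theta \mapsto D_{(\theta,0)}g|_{\nu}$ in $\SO(2)$, measured in the normal disc directions; we first deform $\GL^+(2,\R)\simeq \SO(2)$.

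For the easy direction, an isotopy rel $\partial C$ from $g$ to $h$ can be arranged to fix $\alpha$ setwise. Failing that, we take the framing number to be defined from the image of a pushed-off longitude, which is isotopy invariant. In either form the normal derivative along $\alpha$ then varies continuously, so the integer $w$ is constant.

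For the converse, we replace $g$ by $h^{-1}\circ g$. Since $w$ is additive under composition, it suffices to show that a diffeomorphism $g$ of $S^1\times D^2$ fixing $\partial C$, with $w(g)=0$, is isotopic rel $\partial C$ to the identity. We proceed in four steps.

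\textbf{Step 1.} We straighten $g$ near $\alpha$. Using $w(g)=0$, the normal derivative loop is null-homotopic in $\SO(2)$. A linearization isotopy supported in a small neighbourhood of $\alpha$ then makes $g$ equal to the identity on a thinner tube $S^1\times D^2_\epsilon$. So $g$ becomes a diffeomorphism of $S^1\times A$, with $A$ an annulus, fixing both boundary tori.

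\textbf{Step 2.} We isotope $g$ so that it preserves a meridian disc $\{1\}\times D^2$. The image $g(\{1\}\times D^2)$ is a properly embedded disc whose boundary is fixed. Cutting along the original disc and using irreducibility of the solid torus, the standard innermost-disc argument removes the intersections. This is the same flavour of argument as Step (i), with discs in place of spheres.

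\textbf{Step 3.} After this, $g$ fixes the meridian disc $D_0 = \{1\}\times D^2$ pointwise. Here we use Smale's theorem that $\Diff_\partial(D^2)$ is contractible. Cutting $C$ along $D_0$ leaves a $3$-ball $D^2\times I$, and $g$ restricts to a diffeomorphism of it fixing the whole boundary sphere. By \cref{thm:MCGS3}, in its disc form $\pi_0\Diff^+_{S^2}(D^3)=0$ as established in its proof, this restriction is isotopic rel boundary to the identity. That isotopy glues back to an isotopy of $C$ rel $\partial C$.

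\textbf{Step 4.} We account for the only possible obstruction. The data lost along the way is the choice in Step 1 of how to undo the rotation of the normal bundle. The Dehn twist along the meridian disc rotates by $2\pi$ as $\theta$ goes around, and it changes $w$ by one. Alternatively, we can phrase the whole argument as the fibration
\[
\Diff_\partial(C)\to \Emb_\partial(D^2, C)
\]
with fibre $\Diff_\partial(D^3)\simeq *$. Then $\pi_0\Diff_\partial(C)$ is identified with $\pi_0\Emb_\partial(D^2,C)$, which is $\Z$, detected exactly by $w$.

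The main obstacle is Step 2, or equivalently showing that $\Emb_\partial(D^2,C)$ has components indexed exactly by $\Z$. Mere existence of a disjoining isotopy is not the issue. What needs care is tracking that the twisting around $\alpha$ is the only invariant, and that it agrees with $w$. I would first try the innermost-disc surgery, and then verify the count of components by computing the effect of the meridional Dehn twist on $w$.
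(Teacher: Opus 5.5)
Your chosen reading, with $g,h$ the identity on $\partial C$ and fixing $\alpha$, makes the lemma degenerate, and your Step 4 is false.

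For such a $g$, linearise near $\alpha$ as in your Step 1. Then $g$ acts on a thin torus $\partial\nu\alpha$ by $(\theta,\psi)\mapsto(\theta,\psi+w(g)\theta)$, and on $\partial C$ by the identity. These two tori are the ends of a product region $T^2\times I$, and the two end inclusions induce the same isomorphism on $H_1$. Hence $w(g)=0$ for every such $g$.

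Consistently, your Steps 2--3 never use $w$. Innermost discs plus Smale make $g$ fix a meridian disc, and Cerf on the cut-open ball finishes. These steps show that $\Emb_\partial(D^2,C)$ is connected and that $\pi_0\Diff_\partial(C)$ is trivial. So your claim in Step 4 that $\pi_0\Emb_\partial(D^2,C)\cong\Z$, detected by $w$, contradicts your own Steps 2--3, and the ``main obstacle'' you single out does not exist. The meridian disc twist does change $w$ by one, but its restriction to $\partial C$ is the Dehn twist about the meridian. That is the $1$-fold twist map $(x,y)\mapsto(x,yx)$ of Step (v), so the disc twist is not in $\Diff_\partial(C)$.

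Without Step 4, your argument proves only $\pi_0\Diff_\partial(S^1\times D^2)=1$. There the ``if and only if'' holds because both sides are always true. That cannot do the job the lemma does in Step (iv): isotope $f|_C$ to the $n$-fold twist map, which is not the identity on $\partial C$. The survey omits the proof and attributes it to Gluck, so you have to supply the right setting yourself.

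In the application, $f$ fixes $\alpha$ and the meridian disc $D_0=S\cap C$ pointwise, but preserves $\partial C$ only setwise. One classifies such maps up to isotopy rel $D_0$:
\begin{itemize}
\item Cutting $C$ along $D_0$ gives $D^2\times I$, with the map equal to the identity on $D^2\times\{0,1\}$.
\item Its restriction to the side annulus is isotopic rel boundary to the $k$-th power of the Dehn twist, where $k\in\pi_0\Diff_\partial(S^1\times I)\cong\Z$.
\item One has $k=w$. After linearising along the fixed core arc, remove a neighbourhood of that arc to leave a solid torus $A\times I$. On its boundary the map is the $(k-w)$-th power of the Dehn twist about a longitude, with compatible sign conventions. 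This extends over $A\times I$ only if $k=w$.
\item Finally, $\pi_0\Diff_\partial(D^3)=0$ extends an isotopy of the annulus over the ball.
\end{itemize}
This is where $\Z$ genuinely appears, with the $n$-fold disc twists as representatives.
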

    Since the proof is somewhat technical, we omit it.  We note that Gluck remarks the proof is essentially an involved application of the 2-dimensional Schoenflies theorem.

    Let $n:=w(f)$.  Then the above lemma implies that $f$ and the $n$-fold twist map are isotopic relative to $\partial C$, which completes the step.
\end{step}

\begin{step}[Step (v)]
    Parameterise $\partial C$ as $S^1\times S^1$ where the first $S^1$ factor corresponds to the $S^1$-factor in $S^1\times S^2$ and the second corresponds to the meridian $S^1$-factor.  Then
    \begin{align*}
    f\vert_{\partial C}\colon S^1\times S^1\to& S^1\times S^1,\\
    (x,y)\mapsto& (x,yx^n)
    \end{align*}
    is the $n$-fold twist map.  We can change $n$ by an even number $2k$ by performing $k$ \emph{belt moves}.  This refers to an isotopy where we take both $\alpha$ and a push-off and pull them around $S$, introducing two new twists (it is called so since it can be easily recreated by taking a belt and pulling the middle-portion around one of the ends).  This defines an isotopy of $f$ to a map whose restriction to $\partial C$ is either the identity map (the $0$-fold twist map) or the $1$-fold twist map.
\end{step}

\begin{step}[Step (vi)]
    This is the final step, and the one where we use that $\pi_0\Diff_{\partial}(D^3)$ is trivial -- \Cref{theorem:Cerf}.  Assume that $f$ restricts to the identity $\partial C$.  Then  $S^1\times S^2\sm \partial C$ consists of two $3$-cells, and $f$ cannot interchange them.  Then since $\pi_0\Diff(D^3,\partial)$ is trivial, there exists an isotopy of $f$ restricted to each of these $3$-cells to the identity map that fixes the boundary throughout the isotopy.  Hence, $f$ is isotopic to the identity.

    If we now assume that $f$ restricts to the $1$-fold twist map on $\partial C$, then $f\circ T^{-1}$ restricts to the identity map on $\partial C$, and hence by the previous paragraph we have that $f\circ T^{-1}$ is isotopic to the identity.  Hence $f$ is isotopic to $T$.
\end{step}

This completes the proof of \Cref{prop:S^1xS^2prop1}.
\qed

\subsection{Proof of \Cref{prop:S^1xS^2prop2}}
We give a different proof to that of Gluck.  His proof showed that $T$ was not homotopic to the identity by calculating a certain Hopf invariant.  We instead consider the induced map on \emph{spin structures}.

\begin{definition}
    Let $Y$ be a compact, orientable 3-manifold and let $K$ be a choice of 2-skeleton for $Y$ with 1-skeleton $K_1$.  A \emph{spin structure} on $(Y,K)$ is a choice of trivialisation of $TY_{K_1}$ that extends over $K$, considered up to homotopy of trivialisations.
\end{definition}

\begin{remark}
    Spin structures on $Y$ are a torsor over $H^1(Y;\Z/2)$.  This is most naturally seen by using a different definition of spin structures in terms of homotopy classes of lifts of the tangent bundle.  See \cite[IV]{Kirby1989}.  In particular, this means that $S^1\times S^2$ has two spin structures, restricting to the two spin structures on the core circle $\alpha$.
\end{remark}

Note that during this whole argument there has been a natural choice for the 2-skeleton of $Y$, i.e.~take $K:=\alpha \cup S$ (and so $K_1=\alpha$).  Then the diffeomorphism $T$ already fixes $K$ pointwise and hence the derivative $dT\colon TY\to TY$ induces a map on spin structures.  Since the diffeomorphism may be assumed to be the identity on a small neighbourhood of $S$, the trivialisation of $TY\vert_{S}$ is actually fixed.  However, the trivialisation of $TY\vert_{\alpha}$ is changed by one full rotation around $\alpha$.  The circle admits two distinct spin structures, the bounding and Lie structures, that differ by one full rotation.  Hence $T$ acts non-trivially on the spin structures of $Y$.

Since the identity map clearly induces the trivial map on spin structures, and the induced map on spin structures is invariant under isotopies, it follows that $T$ cannot be isotopic to the identity.  Since $T^2$ is isotopic to the identity (by a belt move) this means that $T$ is order two in the mapping class group of $S^1\times S^2$.

This completes the proof of \cref{prop:S^1xS^2prop2}, and hence of \cref{thm:ker_S^1xS^2}.

\section{Mapping class groups of lens spaces}\label{sec-lens spaces}
In this section we will compute the mapping class group of lens spaces, as presented by Bonahon in \cite{Bonahon1983}. We begine by recalling the construction of lens spaces.

\subsection{Defining $L(p,q)$}
This subsection provides two definitions of a lens space. There are many equivalent definitions possible---the one we will work with in this survey is the second definition of this subsection. Note that we are focusing on 3-dimensional lens spaces, but these definitions can be generalised to work with $n$-dimensional lens spaces. Throughout this section let $p \in \mathbb N$, $q \in \mathbb Z$ be such that $\gcd(p,q)=1$.

The lens space $L(p,q)$ can be defined as the quotient of $S^{3}$ by a free action of $\mathbb{Z}/p$. Here the 3-sphere is modelled as \[S^{3}=\{(z_{1},z_{2}) \in \mathbb C^{2} \mid  \lvert z_{1} \rvert^{2} + \lvert z_{2} \rvert^{2} =1 \}\] and we consider the rotation
\[\rho \colon S^{3} \to S^{3}; \;\;\; \rho(z_{1},z_{2}) = \big( e^{\frac{2\pi i}{p}} \cdot z_{1}, e^{\frac{2\pi q i}{p}} \cdot z_{2}\big).\]
It can easily be verified that for $m \in \mathbb{Z}/ p$ the function
\[m \cdot (z_{1},z_{2}) := \rho^m(z_1,z_2) = \big( e^{\frac{2\pi i m }{p}} \cdot z_{1}, e^{\frac{2\pi q i m }{p}} \cdot z_{2} \big)\]
determines a group action of $\Z/p$ on $S^3$. We check that the action is free. Whenever $m \cdot (z_{1},z_{2}) = (z_{1}, z_{2})$ we have $e^{\frac{2\pi i m }{p}}z_{1} = z_{1}$ and $e^{\frac{2\pi q i m }{p}}z_{2} = z_{2}$. We consider two cases, $e^{\frac{2\pi i m }{p}} = 1$ or $z_1=0$:
\begin{itemize}
    \item If $e^{\frac{2\pi i m }{p}} = 1$, then $p\mid m \implies m = 0 \in \mathbb{Z}/p$.
    \item If $z_{1} =0$, then $\lvert z_{2} \rvert =1 \implies e^{\frac{2\pi q i m }{p}} =1 \implies p\mid mq \implies p\mid m$ (because $\gcd(p,q)=1$) and thus $m = 0 \in \mathbb{Z}/p$.
\end{itemize}
Hence only the identity element of $\Z/p$ fixes a point on $S^{3}$ as a consequence of the fact that $p$ and $q$ are coprime. Therefore the $\mathbb{Z}/p$ action is free.
Note that when $p=2$, $\rho$ is the antipodal map and $L(2,1) \cong \mathbb{R}P^{3}$.

Next we describe an alternative construction of lens spaces, which is the one we will refer to for the remainder of the section. Let $V_{1} \cong V_{2} \cong S^{1} \times D^{2}$ be abstract solid tori. Define $L(p,q) := V_{1} \cup_{\theta} V_{2}$, where $\theta \colon \partial V_{1} \to \partial V_{2}$ is an orientation-reversing diffeomorphism given by $\theta(u,v) = (u^{r}v^{p}, u^{s}v^{q})$, where $r, s \in \mathbb{Z}$ with $qr - ps = -1$.
Here we model
\[S^1 = \{z \in \C \mid |z|=1\} \text{ and } D^2 = \{z \in \C \mid |z| \leq 1\},\]
respectively.
The diffeomorphism class of the lens space does not if we modify $\theta$ by an isotopy.  Thus for this definition, $\theta$ may be considered as a class in $\pi_{0}\Diff(S^{1}\times S^{1}) \cong \operatorname{GL}_{2}(\mathbb Z)$ (for the computation of the mapping class group of $S^1 \times S^1$ see e.g.\ \cite[Theorem 2.5]{FarbMargalit2012}). Using this interpretation, $\theta$ is represented by the matrix
\[\begin{pmatrix}
    r & p \\
    s & q
\end{pmatrix}\]
that maps a meridian
$\begin{psmallmatrix}
    0 \\ 1
\end{psmallmatrix}$
of the torus $S^1 \times S^1$ to
$\begin{psmallmatrix}
    p\\
    q
\end{psmallmatrix}$.\\

In fact, the diffeomorphism type of a lens space only depends on $\theta$ modulo diffeomorphisms which extend over $V_1$ and $V_2$. A lens space defined by $\theta$ is diffeomorphic to a lens space defined by $\theta^\prime$ if and only if $\theta^\prime = h_2 \circ \theta \circ h_1$, with each $h_i$ a self-diffeomorphism of $\partial V_i$ that extends over $V_i$.
This condition is equivalent to saying that we may always substitute the parameters $r, s, p, q$ of $\theta$ with $r^{\prime}$, $s^{\prime}$, $p$, $q^{\prime}$ such that $q^{\prime} r^{\prime} - p s^{\prime} = - 1$ and $q^{\prime} \equiv q \mod{p}$ to obtain a diffeomorphic lens space.

\begin{remark}
    For the remainder of this section we will assume $p \geq 2$.
    In some of the literature, we often find that $L(1,0) \cong S^{3}$ and $L(0,1) \cong S^{1} \times S^{2}$ whenever the definition chosen allows it. The computation of the mapping class group of these two manifolds is different from the general one for lens spaces we shall present, and has already been discussed in previous sections.
\end{remark}

\subsection{Essentially unique genus one Heegaard splittings}

The main result at the core of the computation is the following theorem.

\begin{theorem}\label{thm: UniqueTorus}
    Up to isotopy, the lens space $L(p,q)$ contains a unique torus $T$ separating it into two solid tori.
\end{theorem}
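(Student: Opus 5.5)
Given a torus $T' \subseteq L(p,q)$ splitting it into two solid tori, the plan is to isotope $T'$ onto the standard Heegaard torus $T=\partial V_1=\partial V_2$. This is Bonahon--Otal / Schubert uniqueness of genus one Heegaard splittings. I would follow the classical approach via cores and their exteriors. Let $W_1\cup W_2$ be the splitting determined by $T'$, and let $c$ be a core circle of $W_1$. Then $T'$ is isotopic to the boundary of a small tubular neighbourhood of $c$. Since the complement of that neighbourhood is a solid torus, the problem reduces to showing that $c$ is isotopic to a core of $V_1$ or of $V_2$. Any two tori of the form $\partial N(\text{core})$ are then isotopic, because a core of $V_1$ has complement $V_2$ union a collar, and so $\partial N$ of it is isotopic to $T$.

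The key step is to put $c$ in good position with respect to $T$. I would first isotope $c$ so that it meets the core $k_2$ of $V_2$ minimally, and then arrange that $c$ avoids $k_2$ entirely, so that $c\subset V_1\setminus k_2\cong T^2\times I$ union $V_1$, that is, $c\subset V_1$ after radial isotopy. Alternatively, and more robustly, I would work with the torus itself. Isotope $T'$ to be transverse to a meridian disc $D_2$ of $V_2$ and to a meridian disc $D_1$ of $V_1$, minimising the number of intersection curves. Innermost-disc arguments, using irreducibility of $L(p,q)$ (every sphere bounds a ball, as its universal cover is $S^3$) and incompressibility considerations, remove inessential circles. Each remaining arc or circle of $T'\cap D_i$ is then essential in both surfaces.

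Next I would analyse $T'\cap V_2$. After the innermost-disc reductions, each component of $T'\cap V_2$ is an incompressible surface in the solid torus, so it is a meridian disc or a boundary-parallel annulus. A meridian disc is impossible, since $T'$ is a closed torus with compressing discs on both sides, and the gluing slope of a meridian would force $p=1$. So the components are boundary-parallel annuli, and these can be pushed out of $V_2$ into $V_1$. This leaves $T'\subset V_1$. A torus in a solid torus bounding a solid torus on the side away from $\partial V_1$ is either $\partial N(\text{curve})$ or compressible to a sphere. Its inner solid torus $W$ must have exterior in $L(p,q)$ a solid torus. Here the assumption $p\ge 2$ enters through $\pi_1$: the region $V_1\setminus W$ union $V_2$ must be a solid torus, which forces the core of $W$ to be isotopic to the core of $V_1$ by the Gabai/Berge classification of knots in a solid torus with solid torus exterior. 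In the simpler route one uses that $V_1\setminus \mathrm{int}\,W$ must be $T^2\times I$. Once $V_1\setminus\mathrm{int}\,W\cong T^2\times I$, $T'$ is parallel to $T$ and we are done.

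The main obstacle is this last step: showing that $V_1\setminus\mathrm{int}\,W$ is a product. I would try to prove it by Alexander duality and homology, showing $W\hookrightarrow V_1$ is degree $\pm1$ on $H_1$ given $p\ge2$. I would then combine this with the incompressibility of $\partial V_1$ in $V_1\setminus W$ and Waldhausen's characterisation of $T^2\times I$ as the irreducible manifold with incompressible torus boundary and $\pi_1\cong\Z^2$. The hardest point is excluding satellite or knotted positions of $W$ in $V_1$. There I expect to need the full strength of Schubert's argument, or Bonahon's sweepout/Morse-theoretic argument, which I would cite if the direct approach stalls.
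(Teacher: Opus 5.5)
There is a genuine gap, in two places. First, you assert that after innermost-disc reductions each component of $T'\cap V_2$ is incompressible in $V_2$, hence a meridian disc or a boundary-parallel annulus. The standard argument for this needs $T'$ to be incompressible in $L(p,q)$: if a piece compresses in $V_2$ along a disc $E$, you want $\partial E$ to bound a disc in $T'$, and then you use irreducibility. But $T'$ is a Heegaard torus, compressible on both sides. Indeed every torus in $L(p,q)$ compresses, since $\pi_1$ is finite. Innermost-disc moves only remove circles of $T'\cap D_i$ that are inessential in $T'$. An innermost circle that is essential in $T'$ bounds a compressing disc for $T'$ and cannot be isotoped away, and minimal position does not help without incompressibility. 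For example, if $T'$ is $T$ pushed slightly into $V_2$, then $T'\cap V_2=T'$ is compressible in $V_2$. (Also, no circle in a disc is essential in the disc, so ``essential in both surfaces'' cannot be arranged.)

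Second, the final step is false as stated. Take a curve $\gamma\subset T$ meeting a meridian of $V_2$ once, so $\gamma$ is isotopic in $V_2$ to the core of $V_2$. Push it into $\mathrm{int}\,V_1$ to get $c$, and set $W=N(c)\subset V_1$. Then $T'=\partial W$ lies in $V_1$, misses $V_2$, and splits $L(p,q)$ into two solid tori. Yet $c$ winds around $V_1$ a number of times congruent to $\pm q$ modulo $p$. For suitable $\gamma$ this winding number has absolute value at least $2$; when $q\not\equiv\pm1\pmod p$ this holds for every such $\gamma$. In that case $H_1(W)\to H_1(V_1)$ is not $\pm1$, $c$ is a nontrivial cable of the core of $V_1$, and $V_1\setminus\mathring{W}$ is a cable space, not $T^2\times I$.

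So only the dichotomy you stated at the start holds: the core of $W$ is isotopic to the core of $V_1$ \emph{or} of $V_2$. Proving that for arbitrary $W$ is the theorem itself. The Gabai/Berge results concern nontrivial solid-torus surgeries on knots in a solid torus and produce $0$- or $1$-bridge braids, not cores, so they do not close this. Falling back on Schubert or Bonahon at this point amounts to citing the statement.

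What is missing is a technique that works for strongly compressible surfaces. The paper, following Bonahon, uses a generalised projective plane $i(D)$ whose singular curve is the core of $V_2$, so that $T$ is the boundary of its tubular neighbourhood. It pairs this with a Morse function $f$ having one critical point of each index, whose level tori between the index-$1$ and index-$2$ critical points are isotopic to $T'$. It then isotopes $i(D)$ so that its singular curve is as simple as possible with respect to $f$, and shows that once it lies in a level torus, $T$ is isotopic to $T'$.
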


The importance of this theorem does not come from the existence of such a torus, as it is part of the definition of a lens space, but concerns its uniqueness. Moreover, this result was already known before Bonahon, as it had been proved by Schubert in \cite{Schubert1956}. This was done in the context of viewing lens spaces as the double branched cover of $S^{3}$ over a two-bridge knot, and considering an involution $\tau$ on said cover; he then showed that up to $\tau$-equivariant isotopy, $L(p,q)$ contains a unique torus preserved by $\tau$ and separating $L(p,q)$ into two solid tori.

This result was largely ignored for some time, until Bonahon presented a new proof and used it to compute $\pi_{0}\Diff( L(p,q))$. Using Theorem \ref{thm: UniqueTorus}, it is also possible to derive the classification of lens spaces which had been proved by Reidemeister in \cite{Reidemeister1935HomotopieringeUL}.
Furthermore, it is worth noting that the mapping class groups of lens space were computed independently by Hodgson--Rubinstein~\cite{HodgsonRubinstein1983}.

Let us briefly mention the methods behind the proof of Theorem \ref{thm: UniqueTorus}.

\begin{proof}[Sketch proof of \cref{thm: UniqueTorus}]
    The idea is to consider two tori $T$ and $T^{\prime}$ in $L(p,q)$, each separating the lens space into two solid tori. We want to show that $T$ and $T^{\prime}$ are isotopic.

Since $T$ separates $L(p,q)$ into two solid tori $V_{1}$ and $V_{2}$, we consider a map $i\colon D \to L(p,q)$, where $D$ is a disc, such that $i(\partial D)$ is the core of $V_{2}$, $i\vert_{\partial D} \colon \partial D \to i(\partial D)$ is a $p$-sheeted cover, and $i\vert_{\mathring{D}}$ is an embedding avoiding $i(\partial D)$. Bonahon calls $i(D)$ a generalised projective plane and notes that $T$ is isotopic to a the boundary of a tubular neighbourhood of $i(\partial D)$.

Next we work with $T^{\prime}$ and consider a Morse function $f\colon L(p,q) \to \mathbb{R}$ with one critical point of each index $0$, $1$, $2$, and $3$, such that $T^{\prime}$ is isotopic to a level surface between critical points of index 1 and 2. Such a function $f$ can be constructed starting with a suitable Morse function on the standard solid torus.

The idea is then to isotope $i(D)$ so that its singular curve $i(\partial D)$ is as simple as possible with respect to the Morse function $f$. Finally, prove that if $i(D)$ is in a level surface of $f$, then $T$ is isotopic to $T^{\prime}$.
\end{proof}

A complete discussion of the proof of Theorem \ref{thm: UniqueTorus} can be found in \cite{Bonahon1983, Hatcher2001}.

\subsection{Consequences of Theorem \ref{thm: UniqueTorus}}
For us, the most interesting consequences of Theorem \ref{thm: UniqueTorus} concern the mapping class group of lens spaces. In fact, a simple consequence is that any diffeomorphism of $L(p,q)$ is isotopic to a diffeomorphism that preserves the torus $T = \partial V_{1} = \partial V_{2}$.
Whether a diffeomorphism of $T$ extends to the whole lens space depends on the gluing map $\theta$. If such a diffeomorphism extends, then it either preserves both $V_{1}$ and $V_{2}$ or it exchanges them.

Among such diffeomorphisms we always have the involution $\tau$ preserving both $V_{1}$ and $V_{2}$, parametrised on $V_{1}\cong V_{2} \cong S^{1} \times D^{2}$ by $\tau(u,v) = (\overline{u}, \overline{v})$.
In general, there are no diffeomorphisms of $L(p,q)$ exchanging the solid tori $V_{1}$ and $V_{2}$, except when $q^{2} \equiv \pm 1 \mod{p}$.
When $q^{2} \equiv + 1 \mod{p}$ there exists an involution $\sigma_{+}$ of degree $+1$  which can be described by the map $(u, v) \in V_{1} \leftrightarrow (u,v) \in V_{2}$.
Similarly, when $q^{2} \equiv - 1 \mod{p}$, there exists a diffeomorphism $\sigma_{-}$ of degree $-1$ and order $4$, which can be expressed by $(u,v) \in V_{1} \mapsto (\overline{u}, v) \in V_{2}$ and $(u,v) \in V_{2} \mapsto (u, \overline{v}) \in V_{1}$.
Note that $\tau$ commutes with $\sigma_{+}$ and $\sigma_{-}$, and that $\sigma_{-}^{2}= \tau$.\\

It turns out that it is enough for us to work with these three diffeomorphisms.

\begin{proposition}\label{prop:groupgen}
    Any diffeomorphism of $L(p,q)$ is isotopic to an element of the subgroup of $\Diff(L(p,q))$ generated by $\tau$, and possibly $\sigma_{+}$ and $\sigma_{-}$ $($if they are defined$)$.
\end{proposition}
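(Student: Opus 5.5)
Start with an arbitrary $f \in \Diff(L(p,q))$. By \cref{thm: UniqueTorus}, $f(T)$ is isotopic to $T$, so by isotopy extension we may assume $f(T)=T$. Then $f$ either preserves each of $V_1$, $V_2$ or exchanges them. If it exchanges them, the gluing map must satisfy $q^2\equiv \pm1 \bmod p$ (the computation behind the existence statement for $\sigma_\pm$). In that case we compose with $\sigma_+$ or $\sigma_-$, chosen according to whether $f|_T$ preserves or reverses orientation on $T$, to reduce to the case that $f(V_i)=V_i$ for $i=1,2$.

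Now $f|_{V_1}$ is a diffeomorphism of a solid torus, so $f|_T$ preserves the meridian of $V_1$ up to sign, i.e.\ the class $(0,1)^T$. Likewise $f|_T$ preserves the meridian of $V_2$ pulled back via $\theta$, which has slope $\theta^{-1}(0,1)^T$, a class distinct from $\pm(0,1)^T$ since $p\ge 2$. So in $\GL_2(\Z)\cong \pi_0\Diff(T)$ the matrix of $f|_T$ has two non-proportional eigenvectors with eigenvalues $\pm1$. A short matrix computation then shows $f|_T$ is isotopic to $\pm \Id$, apart from the exceptional sign patterns where the eigenvalues are $+1$ and $-1$. These exceptional patterns force an orientation-reversing map on $T$ that preserves both sides, and one checks they can occur only when $p\le 2$, or else they are realised by existing maps; the case $p=2$ should be handled separately, where they turn out to be isotopic to $\Id$ or $\tau$. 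Since $\tau|_T$ is $-\Id$, composing with $\tau$ if necessary gives $f|_T$ isotopic to the identity. Isotoping $f$ near $T$, we may assume $f|_T=\Id$.

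Finally, $f|_{V_i}$ is a diffeomorphism of a solid torus fixing the boundary. Such maps are classified up to isotopy rel boundary by twists along the meridian disc, using Cerf's theorem (\cref{thm:MCGS3}, in the form $\pi_0\Diff_{S^2}(D^3)=0$) after cutting along a meridian disc. A meridional disc twist in $V_1$ is isotopic, through an isotopy that does not fix $T$ pointwise, to a Dehn twist on the collar of $T$. This Dehn twist can be pushed into $V_2$, where it becomes a twist along the curve $\theta(\text{meridian of } V_1)$, which is not the meridian of $V_2$. It is cleaner to argue as follows. Cut along the meridian disc $D_1$ of $V_1$: $f$ fixes $\partial D_1$, and $f(D_1)$ is isotopic rel boundary to $D_1$ inside $V_1$ by an innermost-disc argument using irreducibility of $V_1$. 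After this isotopy $f$ fixes $T\cup D_1$, and the complement of a neighbourhood of $T\cup D_1$ in $V_1$ is a ball, so Cerf's theorem makes $f|_{V_1}$ isotopic to the identity rel boundary. Doing the same in $V_2$ shows $f$ is isotopic to the identity. Altogether, $f$ is isotopic to a word in $\tau,\sigma_\pm$.

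The main obstacle is the middle step: the $\GL_2(\Z)$ bookkeeping, showing that the only mapping classes of $T$ preserving both meridian slopes are $\pm\Id$, treating the sign and small-$p$ cases correctly, and confirming that side-exchanging maps force $q^2\equiv\pm1$.
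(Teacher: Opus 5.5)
Your argument follows the paper's proof step for step. Both go: use the unique Heegaard torus, compose with $\sigma_\pm$ so that $V_1$ and $V_2$ are preserved, use that the two meridian slopes force $f|_T\simeq\pm\Id$, compose with $\tau$, then fix a meridian disc in each $V_i$ and apply Cerf on the complementary balls. You are right that the $\GL_2(\Z)$ step hides an exceptional case, but your resolution of it is wrong.

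Write $m_1=(0,1)^T$ and $m_2=\theta^{-1}(0,1)^T=(p,-r)^T$. The only matrix sending $m_i\mapsto\varepsilon_i m_i$ is
\[
A=\begin{pmatrix}\varepsilon_2 & 0\\ r(\varepsilon_1-\varepsilon_2)/p & \varepsilon_1\end{pmatrix}.
\]
Since $\gcd(p,r)=1$, for $\varepsilon_1\neq\varepsilon_2$ this is integral if and only if $p\mid 2$. So for $p=2$ such maps genuinely exist: they send each meridian to $\pm$ itself, so they extend over both solid tori. They preserve both sides while reversing the orientation of $T$, so they reverse the orientation of $L(2,1)$. Hence they cannot be isotopic to $\Id$ or $\tau$, and composing with $\tau$ does not remove the mixed signs.

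The fix is your other alternative, made precise. For $p=2$ one has $q^2\equiv 1\equiv -1 \pmod 2$, so $\sigma_+$ and $\sigma_-$ are both defined. The composite $\sigma_+\sigma_-$ preserves each $V_i$ and has degree $-1$. Replacing $f$ by $(\sigma_+\sigma_-)^{-1}f$ returns you to the case $\varepsilon_1=\varepsilon_2$. (The paper's proof passes over this case too: its step from $m_1\cdot m_2\neq\pm1$ to $\varphi_*=\pm1$ actually needs $|m_1\cdot m_2|=p\geq 3$.)

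Separately, drop the first half of your last paragraph. There are no meridional twists to keep track of, because $\pi_0\Diff_\partial(S^1\times D^2)$ is trivial. Collar twists in either direction are isotopic to the identity rel boundary via the circle actions on $S^1\times D^2$. Your disc-plus-Cerf argument proves this triviality directly, once you also use that $\Diff_\partial(D^2)$ is connected to make $f|_{D_1}=\Id$ before cutting. That cleaner argument is exactly the one the paper uses.
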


Observe that it is necessary to exclude $L(0,1) \cong S^{1} \times S^{2}$, as the Gluck twist along $\{1\} \times S^{2}$ is not isotopic to a composition of $\tau$, $\sigma_{+}$, and $\sigma_{-}$.

\begin{proof}
    Let $\varphi$ be a diffeomorphism of $L(p,q)$, which by Theorem \ref{thm: UniqueTorus}, can be isotoped so that $\varphi(T) = T = \partial V_{1}= \partial V_{2}$. After possibly composing with $\sigma_{+}$ and $\sigma_{-}$, we can furthermore suppose $\varphi(V_{1})= V_{1}$ and $\varphi(V_{2}) = V_{2}$.
    Let $m_{i}$ be the generator of the kernel of $H_{1}(T) \to H_{1}(V_{i})$, for $i=1,2$, respectively. Since $p\neq 0,1$, we have that the $m_{i}$ are distinct and that $m_{1} \cdot m_{2} \neq \pm 1$. Since $\varphi\vert_{T}$ is a diffeomorphism of~$T$, we deduce that $\varphi_{*}(m_{i}) = \pm m_{i}$ in $H_{1}(T)$, and thus $\varphi_{*}$ is multiplication by $\pm 1$ on $H_{1}(T)$.
    After possibly composing $\varphi$ with $\tau$, we can suppose $\varphi_{*}$ is the identity on $T$. In particular, this implies that $\varphi$ itself is isotopic to the identity on $T$ . We can now isotope $\varphi$ to fix meridian discs of $V_{1}$ and $V_{2}$ and then apply \Cref{theorem:Cerf} to extend the isotopy to the remainder of the solid torus, which is a 3-ball.  Thus, we have shown that after possibly composing $\varphi$ with $\sigma_{\pm}$ we can isotope it to the identity.
\end{proof}

The following lemma will be of use in the computation of $\pi_{0} \Diff( L(p,q))$.

\begin{lemma}\label{lemma: isomaps}
    If $q \equiv \pm 1\mod{p}$, there exists an isotopy of $L(p,q)$ exchanging $V_{1}$ and $V_{2}$. Furthermore, if $p=2$, there exists an isotopy of $L(2,1)\cong \mathbb{R}P^{3}$ whose output coincides with $\tau$ on $T= \partial V_{1} = \partial V_{2}$.
\end{lemma}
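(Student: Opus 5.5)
The natural approach is to use the quotient model $L(p,q)=S^3/(\Z/p)$ and produce explicit isotopies from circle actions that commute with the deck group. Since $q\equiv \pm1 \pmod p$, the two definitions can be matched with $q=\pm1$. After precomposing with complex conjugation on $z_2$ if $q\equiv -1$, we may assume $\rho(z_1,z_2)=(\zeta z_1,\zeta^{\pm1}z_2)$ with $\zeta=e^{2\pi i/p}$. The Heegaard torus $T$ is the image of the Clifford torus $\{|z_1|=|z_2|\}$. The solid tori $V_1,V_2$ are the images of $\{|z_1|\ge|z_2|\}$ and $\{|z_1|\le|z_2|\}$. The plan has two parts.

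For the first statement, I would find a path $A_t$, $t\in[0,1]$, in $\SO(4)$ (acting on $\C^2=\R^4$) such that every $A_t$ commutes with $\rho$, $A_0=\Id$, and $A_1$ swaps the two coordinate circles. Each $A_t$ then descends to a diffeomorphism of $L(p,q)$, so $A_t$ is an isotopy from the identity to a map exchanging $V_1$ and $V_2$. The centraliser of $\rho$ in $\SO(4)$ is what we need to control. If $q\equiv 1$, then $\rho$ is scalar multiplication by $\zeta$, so all of $U(2)$ commutes with it. We can take the path in $U(2)$ from $\Id$ to $\bigl(\begin{smallmatrix}0&1\\-1&0\end{smallmatrix}\bigr)$, namely rotation by angle $t\pi/2$ in the real sense, $(z_1,z_2)\mapsto(\cos(\tfrac{\pi t}{2})z_1+\sin(\tfrac{\pi t}{2})z_2,\,-\sin(\tfrac{\pi t}{2})z_1+\cos(\tfrac{\pi t}{2})z_2)$. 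This is complex linear, so it commutes with $\rho$, and at $t=1$ it exchanges $|z_1|$ and $|z_2|$. If $q\equiv-1$, then $\rho$ is multiplication by $\zeta$ for the quaternionic structure in which $z_2$ is conjugated, i.e.\ $(z_1,\bar z_2)$-coordinates. The same construction with $(z_1,z_2)\mapsto(\cos z_1+\sin \bar z_2,\dots)$, done in those coordinates, again commutes with $\rho$. At its endpoint it swaps $|z_1|,|z_2|$. The main check is that the formula is well defined and commutes with $\rho$, which is a one-line computation in each case.

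For $p=2$, the action is $\rho=-\Id$, so the whole of $\SO(4)$ descends to $\RP^3$. Here $\tau$ is represented by $(z_1,z_2)\mapsto(\bar z_1,\bar z_2)$, since in either model of $V_i\cong S^1\times D^2$ the coordinates $u,v$ are, up to normalisation, $z_1/|z_1|$ and $z_2/|z_1|$, or the analogous expressions. This map has determinant $(-1)(-1)=+1$ on $\R^4$, so it lies in $\SO(4)$. Because $\SO(4)$ is connected, there is a path from $\Id$ to it, and this path descends to an isotopy of $\RP^3$ ending at $\tau$. The isotopy coincides with $\tau$ on all of $\RP^3$, in particular on $T$. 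The only step needing care is confirming that conjugation of both coordinates really induces the $\tau$ of the solid-torus model, which amounts to matching the identification $V_i\cong S^1\times D^2$ with the coordinates. I expect this bookkeeping between the two models of lens spaces to be the main obstacle in both parts, rather than the construction of the isotopies themselves.
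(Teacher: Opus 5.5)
Your argument is correct, and it takes a genuinely different route from the paper.

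The paper works entirely in the glued model $V_1\cup_\theta V_2$. For $q\equiv\pm1$ it chooses $\theta$ with $r=\mp1$, so that the push-off of the core $C_1$ into $\partial V_1$ is a longitude of $V_2$. Hence $C_1$ is isotopic to the core $C_2$, and isotopy extension applied to tubular neighbourhoods swaps the solid tori. For $p=2$ it uses an isotopy of $\RP^2\subset\RP^3$ reversing the orientation of $\RP^1$.

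You instead pass to $S^3/(\Z/p)$ and use explicit paths in the centraliser of the deck group in $\SO(4)$, which descend to isotopies through isometries. When $q\equiv\pm1$ this centraliser contains a copy of $U(2)$; for $q\equiv-1$ it is the unitary group for the complex structure in the coordinates $(z_1,\bar z_2)$, not a quaternionic one. When $p=2$ it is all of $\SO(4)$.

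What each route buys:
\begin{itemize}
\item The paper's argument needs no change of model and uses only the homological condition $r=\mp1$.
\item Yours makes the endpoint maps explicit. The relations the paper attributes to Bonahon ($\sigma_+\sim\tau$ or $\sigma_+\sim\Id$, according to the sign of $q$) can then be read off directly.
\item For $p=2$ you show that $\tau$ itself is isotopic to $\Id$ on all of $\RP^3$. That is exactly what Proposition~\ref{prop:mcg-L21} uses, whereas agreement on $T$ alone still needs an extension argument over the solid tori.
\end{itemize}

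The price is the translation between the two models, which you flag but leave unchecked. It does go through:
\begin{itemize}
\item On the image of $\{|z_2|\le|z_1|\}$ you can take $u=\alpha^p$ and $v=w\alpha^{-q}$, with $\alpha=z_1/|z_1|$ and $w=z_2/|z_1|$. On the other side, swap the roles of $z_1,z_2$ and replace $q$ by its inverse mod $p$.
\item So the solid-torus coordinates are integer monomials in your expressions, not the expressions themselves. Hence $(z_1,z_2)\mapsto(\bar z_1,\bar z_2)$ induces exactly $(u,v)\mapsto(\bar u,\bar v)$.
\item $\tau$ is unaffected by the reparametrisations $(u,v)\mapsto(u^{\pm1},u^m v^{\pm1})$ that relate different admissible choices of $\theta$.
\item The glued model obtained this way has parameter $\pm q^{\pm1}$, depending on conventions. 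This is harmless: the hypothesis $q\equiv\pm1$ is preserved, and for the exchange statement it does not matter which image is called $V_1$.
\end{itemize}
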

\begin{proof}
    The solid torus $V_{1}$ is isotopic to $U(C_{1})$, which denotes the tubular neighbourhood of $C_{1}=S^{1} \times \{0\}$, the core of $V_{1}$. If $q \equiv \pm 1 \mod{p}$, it is possible to choose a parametrisation of $V_{1}\cong V_{2} \cong S^1 \times D^2$ such that $r = \mp 1$. Now, $C_{1}$ is isotopic to $C= S^{1} \times \{1\}$ in $\partial V_{1}$. We may imagine this as `pushing' $C_{1}$ towards the boundary of $V_{1}$. The curve $C$ can also be visualised as the curve with parametrisation $z \in S^{1} \to (z^{r}, z^{s})$ in $\partial V_{2}$.
    Since $r = \mp 1$, $C$ wraps around the longitude of $V_{2}$ once, and is thus isotopic to $C_{2}$, the core of $V_{2}$. Hence we have shown that the core curve of $V_{1}$ can be isotoped into the core curve of $V_{2}$, thus by considering a tubular neighbourhood of such a curve we obtain an isotopy exchanging $V_{1}$ and $V_{2}$.

    If $p=2$, then $L(2,1) \cong \mathbb{R}P^{3}$ and $C_{1}$ is isotopic to $\mathbb{R}P^{1} \subseteq \mathbb{R}P^{2} \subseteq \mathbb{R}P^{3}$. There exists an isotopy of $\mathbb{R}P^{2}$ which reverses the orientation of $\mathbb{R}P^{1}$ and which can be extended to $\mathbb{R}P^{3}$.
    After composing this isotopy of $\mathbb{R}P^{3}$ with an isotopy identifying $V_{1}$ and the tubular neighbourhood of $\mathbb{R}P^1$,  we obtain the desired result.
\end{proof}
Note that $q \equiv \pm 1\mod{p}$ implies $q^2 \equiv 1\mod{p}$. In particular, this means that $\sigma_{+}$ exists.
In fact, by keeping track of what happens in the $D^2$ coordinates of $V_1$ and $V_2$ during the isotopies of \cref{lemma: isomaps}, Bonahon obtained the following relationships:
\begin{itemize}
    \item If $q \equiv 1 \mod{p}$, $\sigma_{+}$ is isotopic to $\tau$;
    \item If $q \equiv -1 \mod{p}$, $\sigma_{+}$ is isotopic to the identity;
    \item If $p=2$, both $\tau$ are isotopic to the identity.
\end{itemize}

We can now already see the following.

\begin{proposition}\label{prop:mcg-L21}
  The mapping class group of $L(2,1) \cong \RP^3$ is  $\pi_{0}\Diff(L(2,1)) \cong \Z/2$, generated by $\sigma_-$. Since $\sigma_-$ is orientation-reversing, $\pi_{0}\Diff^+(L(2,1)) = \{0\}$.
\end{proposition}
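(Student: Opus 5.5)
Since $p=2$ we have $q$ odd, so $q \equiv 1 \equiv -1 \pmod 2$ and $q^2 \equiv \pm 1 \pmod 2$. Hence all three diffeomorphisms $\tau$, $\sigma_+$ and $\sigma_-$ are defined. By \cref{prop:groupgen}, $\pi_0\Diff(L(2,1))$ is therefore a quotient of the subgroup of $\Diff(L(2,1))$ generated by $\tau,\sigma_+,\sigma_-$. We then simplify this generating set using the relations recorded after \cref{lemma: isomaps}. Since $p=2$, $\tau$ is isotopic to the identity. Since $q \equiv -1 \pmod 2$, $\sigma_+$ is also isotopic to the identity; the case $q\equiv 1$ gives $\sigma_+\sim\tau\sim\Id$, which is consistent. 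So the mapping class group is generated by the class of $\sigma_-$.

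Next we determine the order of this class. We have $\sigma_-^2=\tau$, which is isotopic to the identity, so $[\sigma_-]$ has order dividing $2$. To see that it has order exactly $2$, note that $\sigma_-$ has degree $-1$, i.e.\ it is orientation-reversing. Isotopic diffeomorphisms induce the same map on $H_3(L(2,1);\Z)\cong\Z$, since $L(2,1)$ is closed and orientable. Hence $\sigma_-$ is not isotopic to the identity. It follows that $\pi_0\Diff(L(2,1))\cong\Z/2$, generated by $[\sigma_-]$.

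For the orientation-preserving statement, the degree map $\pi_0\Diff(L(2,1))\to\{\pm1\}$ is a well-defined homomorphism, and it sends $[\sigma_-]$ to $-1$. So it is an isomorphism, and its kernel $\pi_0\Diff^+(L(2,1))$ is trivial. One point needs care here: a priori $\pi_0\Diff^+$ is defined using isotopies within $\Diff^+$. But any isotopy starting at an orientation-preserving map stays in $\Diff^+$, so $\pi_0\Diff^+(L(2,1))$ is exactly the kernel of the degree map.

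The main thing to check is that the relations quoted from Bonahon genuinely apply when $p=2$. That is, we need that for $p=2$ the isotopies of \cref{lemma: isomaps} really do show $\tau\sim\Id$ and $\sigma_+\sim\Id$. We also need that $\sigma_-$ is well-defined, with $\sigma_-^2=\tau$, for the gluing map with $p=2$, $q=1$, $r=-1$, $s=-1$, which satisfies $qr-ps=-1$. Both facts are stated in the text, so the argument is essentially bookkeeping once we accept them.
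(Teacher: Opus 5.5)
Your proposal is correct and follows the paper's proof: you reduce via \cref{prop:groupgen} to the group generated by $\tau,\sigma_+,\sigma_-$, use Bonahon's relations $\tau\sim\sigma_+\sim\Id$ for $p=2$ and $\sigma_-^2=\tau$, and your degree argument on $H_3$ makes explicit the nontriviality of $[\sigma_-]$, which the paper leaves implicit in ``$\sigma_-$ is orientation-reversing''. One harmless arithmetic slip in your last paragraph is that $(p,q,r,s)=(2,1,-1,-1)$ gives $qr-ps=1$, not $-1$; with $r=-1$ you need $s=0$ (or take $r=s=1$), and nothing in your argument depends on this choice.
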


\begin{proof}
    By  \cref{prop:groupgen}, we just need to compute the group generated by $\tau$, $\sigma_+$, and $\sigma_-$. But in this case $\sigma_+ =\tau = \Id$, and $\sigma_-^2 = \tau = \Id$, so the claimed result holds.
\end{proof}

\subsection{Computing the mapping class group}

\begin{theorem}\label{thm-MCG lens spaces}
    The group $\pi_{0}\Diff(L(p,q))$ for $p\geq 2$ is isomorphic to:
    \begin{enumerate}[(a)]
        \item\label{lens-space-case-a} $\mathbb{Z}/2$, generated by $\tau$, if $q^{2} \not\equiv \pm 1 \mod{p}$;
        \item\label{lens-space-case-b} $\mathbb{Z}/2 \oplus \mathbb{Z}/2$, generated by $\tau$ and $\sigma_{+}$, if $q^{2} \equiv 1 \mod{p}$ and $q \not\equiv \pm 1 \mod{p}$;
        \item\label{lens-space-case-c} $\mathbb{Z}/2$, generated by $\tau$, if $q \equiv \pm 1 \mod{p}$ and $p\neq 2$;
        \item\label{lens-space-case-d} $\mathbb{Z}/4$, generated by $\sigma_{-}$, if $q^{2} \equiv -1 \mod{p}$ and $p \neq 2$;
        \item\label{lens-space-case-e} $\mathbb{Z}/2$, generated by $\sigma_{-}$, if $p=2$.
    \end{enumerate}
    In each case the orientation-preserving subgroup $\pi_{0}\Diff^+(L(p,q))$ is the subgroup generated by $\tau$ and $\sigma_+$.  Thus
    $\pi_{0}\Diff^+(L(p,q)) = \pi_{0}\Diff(L(p,q))$ in cases \eqref{lens-space-case-a}, \eqref{lens-space-case-b}, and \eqref{lens-space-case-c}, while $\pi_{0}\Diff^+(L(p,q)) \cong \Z/2$ in case \eqref{lens-space-case-d} and $\pi_{0}\Diff^+(L(2,1)) \cong \pi_{0}\Diff^+(\RP^3) = \{0\}$ in case \eqref{lens-space-case-e}.
    \end{theorem}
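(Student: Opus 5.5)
By \cref{prop:groupgen}, every mapping class of $L(p,q)$ is represented by an element of the subgroup generated by $\tau$, and by $\sigma_+$ and $\sigma_-$ when they are defined. The plan is therefore to show that this group is a quotient of an explicit small abelian group, and then to prove that no further relations hold in $\pi_0\Diff(L(p,q))$.

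\textbf{Upper bound.} The relations are that $\tau$ commutes with $\sigma_\pm$, that $\tau^2=\sigma_+^2=\Id$, and that $\sigma_-^2=\tau$. Since $q^2\equiv 1$ and $q^2\equiv -1 \pmod p$ can hold simultaneously only when $p=2$, for $p>2$ at most one of $\sigma_\pm$ exists. This gives the following surjections.
\begin{enumerate}[(a)]
\item $\Z/2=\langle\tau\rangle$ onto $\pi_0\Diff(L(p,q))$.
\item $\Z/2\oplus\Z/2=\langle\tau,\sigma_+\rangle$ onto $\pi_0\Diff(L(p,q))$.
\item The group $\langle\tau\rangle$, since Bonahon's relations after \cref{lemma: isomaps} give $\sigma_+\sim\tau$ or $\sigma_+\sim\Id$.
\item $\Z/4=\langle\sigma_-\rangle$ onto $\pi_0\Diff(L(p,q))$.
\item This case is \cref{prop:mcg-L21}.
\end{enumerate}

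\textbf{Lower bound.} We need invariants that detect the remaining elements.

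Orientation behaviour is one such invariant. $\tau$ and $\sigma_+$ have degree $+1$, while $\sigma_-$ has degree $-1$. Hence $\sigma_-$ and $\sigma_-^3$ are nontrivial and differ from the orientation-preserving classes. The same observation identifies the orientation-preserving subgroup as $\langle\tau,\sigma_+\rangle$.

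The action on $\pi_1=H_1(L(p,q))=\Z/p$ is a second invariant, and it is isotopy invariant. The map $\tau$ conjugates the core, so it acts by $-1$. This is nontrivial whenever $p>2$, which settles (a), (c), and $\sigma_-^2=\tau\neq\Id$ in (d).

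In case (b), $\sigma_+$ sends the core of $V_1$ to the core of $V_2$. The core of $V_2$ represents $q^{\pm1}$ times the generator, so $\sigma_+$ acts by multiplication by $q$ (up to the sign conventions). Since $q\not\equiv\pm1$, the elements $\sigma_+$ and $\sigma_+\tau$ act by $\pm q\neq 1$. Together with $\tau$ acting by $-1$, this shows that all four elements of $\langle\tau,\sigma_+\rangle$ are distinct.

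\textbf{Main obstacle.} The delicate points are pinning down the exact action of $\sigma_\pm$ on $H_1$ and verifying the relations $\sigma_-^2=\tau$ and those for $\sigma_+$. I would compute these directly from the gluing matrix $\theta=\begin{psmallmatrix} r&p\\ s&q\end{psmallmatrix}$, by tracking the longitudes $u$ of $V_1$ and $V_2$ in $H_1(T)$ modulo the meridians.

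Everything else is bookkeeping on top of \cref{prop:groupgen} and the isotopies of \cref{lemma: isomaps}.
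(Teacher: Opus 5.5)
Your proposal is correct and follows essentially the same route as the paper. You surject onto $\pi_0\Diff(L(p,q))$ from the abstract group on $\tau,\sigma_\pm$ via \cref{prop:groupgen}, and you kill $\sigma_+$ or $\tau\sigma_+$ in case (c) using the isotopies of \cref{lemma: isomaps}. You then detect the remaining elements by their action on $H_1(L(p,q))\cong\Z/p$ (where $\tau,\sigma_+,\sigma_-$ act by $-1$, $-q$, $q$) together with orientation behaviour, which is the paper's action on $H_3$. Your sign ambiguity for $\sigma_+$ is harmless: since $q\not\equiv\pm1 \pmod p$, both $\sigma_+$ and $\tau\sigma_+$ act nontrivially either way.
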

\begin{proof}
We already saw in \cref{prop:mcg-L21} that $\pi_{0}\Diff(L(2,1)) \cong \Z/2$, so we assume $p \geq 3$ throughout and prove cases \eqref{lens-space-case-a} -- \eqref{lens-space-case-d}.

    Let $G(p,q)$ be the abstract group with generator $\tau$ and, whenever $L(p,q)$ admits them, generators $\sigma_{+}$ and $\sigma_{-}$. Recall that $\sigma_+$ is defined when $q^2 \equiv 1 \mod{p}$ and $\sigma_-$ is defined when $q^2 \equiv -1 \mod{p}$. Define the relations of $G(p,q)$ to be the same as those inherited from the composition of maps previously discussed. We recall them below:
    \begin{itemize}
        \item $\tau^{2} = \sigma_{+}^{2} = \sigma_{-}^{4} = {\rm Id}$;
        \item $\sigma_{-}^{2}= \tau$;
        \item $\tau \sigma_{+} = \sigma_{+}\tau$;
    \end{itemize}
    Therefore, the group $G(p,q)$ is isomorphic to:
    \begin{enumerate}[(a)]
        \item $\mathbb{Z}/2$ when $q^{2} \not\equiv \pm 1 \mod{p}$, as $\sigma_{\pm}$ are not defined, and so $G(p,q)$ is only generated by the involution $\tau$;
        \item $\mathbb{Z}/2 \oplus \mathbb{Z}/2$ when $q^{2} \equiv 1 \mod{p}$ and
         $q \not\equiv \pm 1 \mod{p}$, as in this case $\sigma_{-}$ does not exist;
        \item $\mathbb{Z}/2 \oplus \mathbb{Z}/2$ when $q \equiv \pm 1 \mod{p}$, as in this case $\sigma_{-}$ does not exist;
        \item $\mathbb{Z}/4$ when $q^{2} \equiv -1 \mod{p}$, generated by $\sigma_{-}$, as in this case $\sigma_{+}$ does not exist and $\tau = \sigma_-^2$;
    \end{enumerate}
     We are now interested in the following composition of maps, where $f$ takes a generator of $G(p,q)$ to the corresponding diffeomorphism of $\pi_{0}\Diff(L(p,q))$ and $g_n$ is the map which records the action on the $n$-th homology group:
\begin{displaymath}
\begin{tikzcd}
    G(p,q) \ar[r, "f"] \ar[rr, bend right, "g_n\circ f"] & \pi_{0} \Diff( L(p,q)) \ar[r, "g_n"] & \Aut H_{n} (L(p,q)).
\end{tikzcd}
\end{displaymath}
Note that by Proposition \ref{prop:groupgen} the map $f$ is surjective, and thus we need to compute the kernel of $f$ to determine $\pi_{0} \Diff( L(p,q))$. For most of this proof we will only be interested in $H_{1}(L(p,q)) \cong \mathbb{Z}/p$, on which the maps $\tau$, $\sigma_{+}$, $\sigma_{-}$ act by multiplication by $-1$, $-q$, and $q$, respectively. From this it follows that when $q \not\equiv \pm 1 \mod{p}$ the homomorphism $g_1\circ f$ is injective, thus $f$ is injective and $\pi_{0}\Diff( L(p,q)) \cong G(p,q)$. When working with $H_{1}(L(p,q))$ is not enough, we will consider $H_3(L(p,q))$. The diffeomorphisms $\tau$, $\sigma_{+}$ and $\sigma_{-}$ act on $H_3(L(p,q))$ via multiplication by $1$, $1$ and $-1$, respectively.

With these observations in mind we now compute the mapping class group of $L(p,q)$ for our different cases.
\begin{enumerate}[(a)]
    \item  If $q^{2} \not\equiv \pm 1 \mod{p}$, then $q \not\equiv \pm 1 \mod{p}$ and, by the aforementioned statements, $f$ is injective and $\pi_{0}\Diff( L(p,q)) \cong G(p,q) \cong \mathbb{Z}/2$, generatred by $\tau$. Recall that in this case $\sigma_{-}$ and $\sigma_{+}$ are not defined.

    \item  Let $q^{2} \equiv 1 \mod{p}$ and $q \not\equiv \pm 1 \mod{p}$. Hence $g_1 \circ f$ is injective. Indeed note that $\tau$, $\sigma_{+}$, and $\tau\sigma_{+}$ act nontrivially on $H_{1}(L(p,q))$. Thus $\pi_{0}\Diff( L(p,q)) \cong G(p,q) \cong \mathbb{Z}/2 \oplus \mathbb{Z}/2$, generated by $\tau$ and $\sigma_+$. In this case $\sigma_-$ does not exist as $q^2 \not\equiv -1 \mod{p}$.

    \item We consider the case that $q \equiv \pm 1 \mod{p}$, divided into the two cases of $q \equiv 1 \mod{p}$ and $q \equiv -1 \mod{p}$.
    \begin{enumerate}[(i)]
        \item   Let $q \equiv 1 \mod{p}$. This implies that $G(p,q) \cong \mathbb{Z}/2 \oplus \mathbb{Z}/2$. Note that $\tau\sigma_{+}$ acts via multiplication by $q$ on $H_{1}(L(p,q))\cong \mathbb{Z}/p$ and hence the action is trivial. The action of $\tau$ on $H_{1}(L(p,q))$ implies that $\tau$ is not isotopic to the identity, therefore $\ker(g_1\circ f) \cong \mathbb{Z}/2$. By Lemma \ref{lemma: isomaps}, $\sigma_{+}$ is isotopic to $\tau$, thus $\tau\sigma_{+}$ is mapped to the identity in $\pi_{0}\Diff( L(p,q))$ and $\ker(f) \cong \mathbb{Z}/2$. Hence $\pi_{0}\Diff( L(p,q))  \cong G(p,q)/\ker(f) \cong \mathbb{Z}/2$, generated by $\tau$.
        \item
            Now let $q \equiv -1 \mod{p}$. Again, $G(p,q) \cong \mathbb{Z}/2 \oplus \mathbb{Z}/2$. In this case $\sigma_{+}$ acts trivially on $H_{1}(L(p,q))$, so $\ker(g_1\circ f) \cong \mathbb{Z}/2$. By Lemma \ref{lemma: isomaps}, we have that $\sigma_{+}$ is isotopic to the identity, so $\ker(f) \cong \mathbb{Z}/2$ and again $\pi_{0}\Diff( L(p,q)) \cong \mathbb{Z}/2$, generated by $\tau$.
            \end{enumerate}
        \item Now suppose that $q^{2} \equiv -1 \mod{p}$.
        The action of $\tau$ and $\sigma_{-}$ is not trivial on $H_{1}(L(p,q))$.  Moreover, we know that $\tau$ acts trivially on $H_{3}(L(p,q))$, while  $\sigma_{-}$ acts nontrivially.  Therefore $g_1 \circ f$ is injective, and $\pi_{0}\Diff( L(p,q)) \cong G(p,q) \cong \mathbb{Z}/4$, generated by $\sigma_-$. \qedhere
\end{enumerate}
\end{proof}

This computation concludes our discussion of the mapping class groups of lens spaces. We once again highlight the importance of Theorem \ref{thm: UniqueTorus}, which serves as the basis for subsequeent results. Finally, we remark that the generalised Smale conjecture (\cref{conj - generalised smale}) holds for lens spaces, as proved in \cite{HKMR2012}, and more recently by Ketover--Liokumovich in \cite[Theorem~2.8]{KetoverLiokumovich2023}.

\section{Mapping class groups of elliptic 3-manifolds}\label{sec-elliptic}

In this section, we discuss elliptic manifolds.  In particular, we present their classification, talk about how to compute their isometry groups as well as their mapping class groups, and discuss the generalised Smale conjecture (\cref{conj - generalised smale}). We follow the exposition and notation of \cite{McCullough2002}. Specifically, we adopt the notation $\mathcal{I}(M)$ for $\pi_0\Isom(M)$ which in our case is the same as the mapping class group of $M$ (cf.~\cref{isometry table}).

\begin{theorem}\label{thm: MCG elliptic manifolds}
For $M$ an elliptic 3-manifold with fundamental group $\pi_1(M)\cong G$, the mapping class group is $\pi_0\Diff(M)\cong \pi_0 \Isom(M)$, and  $\pi_0\Isom(M)$ is given in \cref{isometry table}.
\end{theorem}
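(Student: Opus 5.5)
The plan splits into two independent parts: the isomorphism $\pi_0\Diff(M)\cong\pi_0\Isom(M)$, and the computation of $\pi_0\Isom(M)$ recorded in \cref{isometry table}.

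\textbf{The isomorphism.} I would take this from the generalised Smale conjecture for elliptic manifolds, proved in \cite{HKMR2012}. Concretely, \cref{thm-weak Smale} says that $\Isom_0(M)\hookrightarrow\Diff_0(M)$ is a homotopy equivalence. To upgrade this to $\pi_0$, it remains to show that $\pi_0\Isom(M)\to\pi_0\Diff(M)$ is bijective.

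\emph{Surjectivity.} Every diffeomorphism should be isotopic to an isometry. For lens spaces this was done in \cref{sec-lens spaces}: the generators $\tau,\sigma_\pm$ of \cref{prop:groupgen} are visibly isometries of the round metric. For the remaining elliptic manifolds I would use the Seifert fibration induced by the Hopf fibration. By the results cited in \cref{subsec:fibre-pres-diffeos}, every diffeomorphism is isotopic to a fibre-preserving one, apart from the listed exceptions, which \cite{HKMR2012} handle separately. The fibre-preserving mapping class group is then read off from symmetries of the base orbifold $S^2(a,b,c)$ together with the fibre orientation, and each such class is realised by an isometry.

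\emph{Injectivity.} This comes from comparing actions on $\pi_1$. Compose with $\rho\colon\pi_0\Diff(M)\to\Out(\pi_1(M))$ (resp.\ $H_1$, $H_3$ as in the lens space proof). If an isometry is isotopic to the identity, it induces the trivial outer automorphism. One then checks, case by case, that the map $\pi_0\Isom(M)\to\Out(G)$ is injective except in the lens space cases already treated.

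\textbf{Computing $\pi_0\Isom(M)$.} Write $M=S^3/G$ with $G\subset\SO(4)$ acting freely, and use the classification of such $G$ via $\SO(4)=(S^3\times S^3)/\pm1$, i.e.\ pairs of subgroups of unit quaternions: cyclic, dicyclic, binary polyhedral, and their index-two diagonal variants. Then
\[
\Isom(M)\cong \Norm_{\mathrm{O}(4)}(G)/G,
\]
so $\pi_0\Isom(M)$ is the component group of this normaliser quotient. I would compute it family by family. In each family, determine the identity component of $\Norm(G)$, which is a product of circles or $S^3$ factors from the left and right quaternion actions. Then count the extra components coming from outer automorphisms of the binary polyhedral groups (e.g.\ conjugation by the binary octahedral group normalising the binary tetrahedral group) and from orientation-reversing elements. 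Orientation-reversing elements exist only for lens spaces, matching the remark in the introduction.

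I expect the main obstacle to be the surjectivity step: showing every diffeomorphism is isotopic to an isometry. For non-lens-space elliptic manifolds this needs the isotopy-to-fibre-preserving results, which fail or become delicate for prism manifolds and small Seifert fibred spaces with several fibrations. My first attempt there would be to use uniqueness of the Seifert fibration up to isotopy, analogous to \cref{thm: UniqueTorus}, via one-sided Heegaard splittings in the prism case. Failing that, I would invoke the Ricci-flow proof of the generalised Smale conjecture of Bamler--Kleiner, which gives the full homotopy equivalence $\Isom(M)\simeq\Diff(M)$ directly, and hence the $\pi_0$ statement at once.
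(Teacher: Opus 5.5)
Your computation of $\pi_0\Isom(M)$ via $\Isom(M)\cong\Norm(G)/G$ matches the paper, which defers the family-by-family work to \cite{McCullough2002}. Two small omissions: the classification also contains the index-three family $K\le\mathrm{T}^*_{24}\times C_{6n}$, and several $\Z/2$ factors in \cref{isometry table} come from the $\mathrm{O}(2)$ normalising the cyclic factor, not from outer automorphisms of the binary polyhedral factor.

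The real difference is the isomorphism $\pi_0\Diff(M)\cong\pi_0\Isom(M)$. The paper does not prove this; it cites McCullough's $\pi_0$-Smale theorem (\cref{pi0Smale}), which it notes rests on earlier explicit computations of $\pi_0\Diff(M)$, such as Asano's Klein bottle argument for prism manifolds. \cref{thm-weak Smale} concerns only identity components and plays no role in a $\pi_0$ statement. In the paper's account the $\pi_0$ input comes from \cref{pi0Smale}, not from \cite{HKMR2012}. Your primary route essentially reconstructs that history.

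Your injectivity half is easier than you suggest. The kernel of $\Norm(G)/G\to\Out(G)$ is $C(G)G/G$, where $C(G)$ is the centraliser of $G$ in $\mathrm{O}(4)$. For non-cyclic $G$, a short computation in $S^3\times S^3$ shows $C(G)$ is a copy of $S^3$ or of a circle, hence connected and contained in $\Isom_0(M)$. So $\pi_0\Isom(M)\to\Out(G)$ is injective, and therefore so is $\pi_0\Isom(M)\to\pi_0\Diff(M)$.

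Surjectivity carries all the content. \cref{thm-fibre preserving equiv to diff} excludes not a corner case but the whole prism family $M(0;(2,b_1),(2,b_2),(a_3,b_3))$, quaternionic manifolds included. These manifolds also fibre over $\RP^2$, so there you need the incompressible Klein bottle or Rubinstein's one-sided splittings, rather than uniqueness of ``the'' fibration.

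Your Bamler--Kleiner fallback does close this gap. \cite{BamlerKleiner2019, BamlerKleiner2023b} show that the space of constant-curvature metrics, which is $\Diff(M)/\Isom(M)$, is contractible. That gives $\Isom(M)\simeq\Diff(M)$ and hence the $\pi_0$ statement uniformly. The comparison is this: your route is independent of the case-by-case history but costs Ricci flow through singularities. The paper's cited route stays within classical 3-manifold topology and produces explicit generators, as for lens spaces and prism manifolds.
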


Throughout this section $M$ will denote a closed, orientable $3$-manifold. We start by defining elliptic manifolds, the following being equivalent to \cref{defn-elliptic}.

\begin{definition}
    A closed $3$-manifold $M$ is \textit{elliptic} if it is of the form $M = S^3/G$ where $G$ is a finite subgroup of $\SO(4)$ acting freely on $S^3$.
\end{definition}

The definition of an elliptic manifold $M$ is equivalent to requiring that $M$ admits a Riemannian metric of constant curvature 1. Note that if $M = S^3/G$, then $\pi_1(M) \cong G$ and therefore every elliptic manifold has finite fundamental group. The converse follows from Perelman's \emph{elliptisation theorem}: elliptic manifolds are exactly the ones with finite fundamental group \cite[Thm.~1.7.3]{AFW2015}.

Lens spaces are exactly the elliptic manifolds such that $G$ is a cyclic group. Since we discussed lens spaces in the previous section, we restrict here to a discussion of elliptic manifolds with non-cyclic fundamental group. We start by presenting the classification of such manifolds.

\subsection{Classification}
We present the classification of elliptic manifolds with non-cyclic fundamental group. From the definition of elliptic manifolds, it follows that there is a one-to-one correspondence between isomorphism classes of elliptic manifolds and conjugacy classes of finite $G \subseteq \SO(4)$ acting freely on $S^3$. Therefore, we need to study subgroups of $\SO(4).$

We equip $S^3$ with the group structure arising from viewing it as the unit sphere in the space of quaternions. Each element in $S^3$ acts on $S^3$ by left (or right) multiplication and hence defines an orientation preserving isometry of $S^3$, or in other words an element of $\SO(4)$. Consider now the following Lie group homomorphism

$$F \colon S^3 \times S^3 \to \SO(4),$$
$$(q_1, q_2) \mapsto (x \mapsto q_1xq_2^{-1}).$$\\
$F$ is the universal covering of $\SO(4)$ and $\ker(F) = \{(1,1), (-1,-1)\} \cong \mathbb{Z}/2$.

Let $P$ be the $2$-sphere of unit quaternions with real part $0$. Then $P \subseteq S^3.$ Carrying out the computation shows that conjugating by an element in $S^3$ preserves the condition of having real part $0$.  Therefore, we obtain a map
$$S^3 \to \Isom^+(P),$$
$$q \mapsto (c_q)|_P$$
where $c_q\colon S^3 \to S^3, \ x \mapsto qxq^{-1}.$ Since $\Isom^+(P) \cong \SO(3),$ we can view the above as a map
$$H \colon S^3 \to \SO(3).$$
In fact this map is the universal covering of $\SO(3)$ and $\ker(H) = \{1, -1\} \cong \mathbb{Z}/2.$

Let $\mathrm{D}_{2n}$ be the dihedral group of order $2n$. Furthermore, let $\mathrm{T}_{12}, \mathrm{O}_{24}$ and $\mathrm{I}_{60}$ be the groups of orientation preserving symmetries of the tetrahedron, octahedron and icosahedron respectively. (The subscripts denote the order of the respective group.) Then $\mathrm{D}_{2n}, \mathrm{T}_{12}, \mathrm{O}_{24}$, and $\mathrm{I}_{60}$ are subgroups of $\SO(3)$ and in fact (apart from cyclic ones) they are the only finite subgroups \cite[Thm.~4.2.2]{Ka16}. Denote by $\mathrm{D}^*_{4n}, \mathrm{T}^*_{24}, \mathrm{O}^*_{48}, \mathrm{I}^*_{120}$ the preimages of the above groups in $S^3$ under the map $H$. These groups are called the binary dihedral group, binary tetrahedral group etc.

Let $\{1, i, j, k\}$ be the standard basis for the quarternions. The set \[\mathrm{Q}_8 := \{\pm 1, \pm i, \pm j, \pm k\}\] forms a subgroup of $S^3$ called the quaternion group.

For $n \in \mathbb{N}$, let $\xi_n := \cos{\frac{2\pi}{n}} + i \sin{\frac{2\pi}{n}}$. Then $\xi_n$ generates a cyclic subgroup of $S^3$ of order $n$, which we denote by $C_n$.

We are now ready to present the classification of closed elliptic 3-manifolds. \cref{classification table} contains all the groups (apart from cyclic subgroups that correspond to lens spaces) arising as fundamental groups of elliptic manifolds. Furthermore, for each group $G$ below, there is a unique conjugacy class in $\SO(4)$ consisting of isomorphic copies of $G$, which implies that for every $G$ there is a unique elliptic manifold (up to isomorphism) with fundamental group $G$ (e.g.~ \cite[Theorem 2.10]{Sakuma1990}). Note that this is not true in the case of lens spaces where $G$ is cyclic.

\begin{table}[ht]
\begin{center}
\begin{tabular}{ l l }
 \textbf{Fundamental group} & \textbf{Name of elliptic manifold}\\[0.5ex]
 $\mathrm{D}^*_{4n}$ & prism  \\[0.5ex]
 $\mathrm{D}^*_{4n} \times C_m$ & prism  \\[0.5ex]
 $H \leq \mathrm{D}^*_{4n} \times C_{4m}$ & prism \\[0.5ex]
 $\mathrm{T}^*_{24}$ & tetrahedral \\[0.5ex]
 $\mathrm{T}^*_{24} \times C_n$ & tetrahedral \\[0.5ex]
 $K \leq \mathrm{T}^*_{24} \times C_{6n}$ & tetrahedral \\[0.5ex]
 $\mathrm{O}^*_{48}$ & octahedral \\[0.5ex]
 $\mathrm{O}^*_{48} \times C_n$ & octahedral \\[0.5ex]
 $\mathrm{I}^*_{120}$ & icosahedral \\[0.5ex]
 $\mathrm{I}^*_{120} \times C_n$ & icosahedral \\[0.5ex]
 $\mathrm{Q}_8$ & quaternionic \\[0.5ex]
 $\mathrm{Q}_8 \times C_n$ & quaternionic
\end{tabular}
\end{center}
\caption{Classification of elliptic manifolds}
\label{classification table}
\end{table}
The groups $H$ are certain index $2$ subgroups of $\mathrm{D}^*_{4n} \times C_{4m}$ with $(n,m) = 1$. The groups $K$ are certain index $3$ subgroups of $\mathrm{T}^*_{24} \times C_{6n}$ with $n$ odd and divisible by $3$. The names `prism' etc.\ stand for what the corresponding manifolds are called; for example octahedral manifolds are exactly the elliptic manifolds with a subgroup of their fundamental group isomorphic to $\mathrm{O}^*_{48}$. Note that all the groups in the above table are naturally subgroups of $S^3 \times S^3$, that are mapped to an isomorphic image under $F$ and therefore can also be viewed as subgroups of $\SO(4)$. More detail on the classification can be found in \cite{HKMR2012}.

\subsection{Isometries}
In this section, we determine the isometry groups of all the elliptic manifolds occurring in the classification table from the previous section. This means that we again omit discussing lens spaces. Among all elliptic 3-manifolds, only certain lens spaces admit orientation-reversing isometries \cite[Proposition 1.1]{McCullough2002}. Hence, for our discussion, we have $\Isom(M) = \Isom^+(M)$, where $M$ is any elliptic 3-manifold with non-cyclic fundamental group.

Let $G \subseteq \SO(4)$ be such that $M = S^3/G$ is elliptic. Given $f \in \SO(4)$, we can define a unique $\Bar{f} \in \Isom(M)$ such that the following diagram commutes
\begin{center}
\begin{tikzcd}
S^3 \arrow{r}{f} \arrow{d}{} & S^3 \arrow{d}{}\\
M \arrow{r}{\Bar{f}} & M,
\end{tikzcd}
\end{center}
if and only if the following holds.
\[\text{For all } x \in S^3 \text{ and for all } g\in G, \text{ there exists } \ g' \in G \text{ such that }  f(g(x)) = g'(f(x)).\]
The final condition is equivalent to $f \in \Norm(G) = \{h \in \SO(4) \mid hGh^{-1} = G\}$. Hence, we obtain a map $\Norm(G) \to \Isom(M)$ which sends $f$ to $\Bar{f}$. Since $S^3$ is simply connected, every isometry of $M$ can be lifted (non-uniquely) to an isometry of $S^3$, which implies that the map $\Norm(G) \to \Isom(M)$ is surjective. Any two lifts of an isometry of $M$ differ by a deck transformation, i.e.~an element of $G$, so we have the short exact sequence
$$1 \to G \to \Norm(G) \to \Isom(M) \to 1.$$
We summarise the above discussion as a proposition.

\begin{proposition}
    For an elliptic manifold $M = S^3/G$ with $G$ not cyclic, we have that $\Isom(M) \cong \Norm(G)/G$.
\end{proposition}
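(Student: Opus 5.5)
The plan is to make rigorous the discussion immediately preceding the statement. We do this by constructing a surjective homomorphism $\Phi\colon \Norm(G)\to \Isom(M)$ and identifying its kernel with $G$. Here $\Norm(G)$ is the normaliser of $G$ in $\SO(4)$.

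\emph{Construction of $\Phi$.} For $f\in\Norm(G)$ and $g\in G$ we have $f g = (fgf^{-1}) f$ with $fgf^{-1}\in G$. Hence $f$ maps $G$-orbits to $G$-orbits and descends to a map $\bar f\colon M\to M$. This map is a local isometry, since the covering $S^3\to M$ is a local isometry and $f\in\SO(4)$. It is bijective with inverse $\overline{f^{-1}}$, so it is an isometry. The assignment $f\mapsto\bar f$ is clearly a homomorphism.

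\emph{Surjectivity.} Let $h\in\Isom(M)$. Because $S^3$ is simply connected, the composite $S^3\to M\xrightarrow{h}M$ lifts through the covering to a map $\tilde h\colon S^3\to S^3$. This lift is a local isometry of the round $S^3$ that covers a bijection, so it is a global isometry of $S^3$, that is, an element of $\mathrm{O}(4)$. Since $\tilde h$ covers $h$, it conjugates deck transformations to deck transformations, so $\tilde h G\tilde h^{-1}=G$.

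The one subtle point is that $\tilde h$ must lie in $\SO(4)$ rather than merely in $\mathrm{O}(4)$. Here we use the fact already quoted from \cite[Proposition 1.1]{McCullough2002}: elliptic manifolds with non-cyclic fundamental group admit no orientation-reversing isometries. Thus $h$ is orientation preserving. The covering map is orientation preserving, so $\tilde h$ is orientation preserving too, and $\tilde h\in\Norm(G)$.

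\emph{Kernel.} Suppose $\bar f=\id_M$. Then $f$ is a lift of the identity, hence a deck transformation, hence an element of $G$. Conversely, every element of $G$ induces the identity on $M$. Therefore $\ker\Phi=G$, and the first isomorphism theorem gives $\Isom(M)\cong\Norm(G)/G$.

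The main obstacle is the orientation issue in the surjectivity step, and we handle it by citing McCullough rather than proving it. Everything else is standard covering space theory together with the rigidity fact that a local isometry of the round $3$-sphere onto itself lies in $\mathrm{O}(4)$.
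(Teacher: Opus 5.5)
Your proposal is correct and is essentially the paper's argument. The paper likewise descends elements of $\Norm(G)\subseteq\SO(4)$ to isometries of $M$, gets surjectivity by lifting isometries through the simply connected cover $S^3\to M$, and identifies the kernel with the deck group $G$. Your appeal to \cite[Proposition 1.1]{McCullough2002} to place the lift in $\SO(4)$ uses the same fact the paper states immediately before the proposition, namely that $\Isom(M)=\Isom^+(M)$ for non-cyclic $G$; you have made that step explicit.
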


The above proposition also holds for lens spaces that do not admit orientation-reversing isometries (those for which $\sigma_-$ is not defined). If a lens space does admit orientation-reversing isometries, its isometry group is an extension of $\mathbb{Z}/2$ by $\Norm(G)/G$.

In \cref{isometry table} we present all elliptic manifolds with non-cyclic fundamental group and their isometry groups. We also compute the group of connected components $\pi_0\Isom(M)$, which we denote by $\mathcal{I}(M)$. The explicit computations can be found in \cite{McCullough2002}.

\begin{table}[ht]
\begin{center}
\begin{tabular}{ l l l}
 $\mathbf{G}$ & \textbf{Isom(M)} & $\mathbf{\mathcal{I}(M)}$ \\[2ex]
 $\mathrm{D}^*_{4n}$ & $\SO(3) \times \mathbb{Z}/2$ & $\mathbb{Z}/2$ \\[1ex]
 $\mathrm{D}^*_{4n} \times C_m$ & $O(2) \times \mathbb{Z}/2$ & $\mathbb{Z}/2 \times \mathbb{Z}/2$\\[1ex]
 $H \leq \mathrm{D}^*_{4n} \times C_{4m}$ & $\mathrm{O}(2) \times \mathbb{Z}/2$ & $\mathbb{Z}/2 \times \mathbb{Z}/2$\\[1ex]
 $\mathrm{T}^*_{24}$ & $\SO(3) \times \mathbb{Z}/2$ & $\mathbb{Z}/2$\\[1ex]
 $\mathrm{T}^*_{24} \times C_n$ & $\mathrm{O}(2) \times \mathbb{Z}/2$ & $\mathbb{Z}/2 \times \mathbb{Z}/2$\\[1ex]
 $K \leq \mathrm{T}^*_{24} \times C_{6n}$ & $\mathrm{O}(2)$ & $\mathbb{Z}/2$\\[1ex]
 $\mathrm{O}^*_{48}$ & $\SO(3)$ & 1\\[1ex]
 $\mathrm{O}^*_{48} \times C_n$ & $\mathrm{O}(2)$ & $\mathbb{Z}/2$\\[1ex]
 $\mathrm{I}^*_{120}$ & $\SO(3)$ & 1\\[1ex]
 $\mathrm{I}^*_{120} \times C_n$ & $\mathrm{O}(2)$ & $\mathbb{Z}/2$\\[1ex]
 $\mathrm{Q}_8$ & $\SO(3) \times S_3$ & $S_3$\\[1ex]
 $\mathrm{Q}_8 \times C_n$ & $\mathrm{O}(2) \times S_3$ & $\mathbb{Z}/2 \times S_3$
\end{tabular}
\end{center}
\caption{Isometry groups of elliptic manifolds}
\label{isometry table}
\end{table}
\vspace{0.5cm}

\subsection{Mapping class groups}
Let $M = S^3/G$ be an elliptic manifold. (We should note that everything in this section holds in particular for lens spaces.) The mapping class group of $M$ is $\pi_0\Diff(M)$. In the previous section, we already computed $\mathcal{I}(M) = \pi_0\Isom(M)$. The following theorem (also known as the $\pi_0$-Smale conjecture) shows that these two groups are the same.

\begin{theorem}[{\cite[Thm.~3.1]{McCullough2002}}]\label{pi0Smale}
    The inclusion $\iota \colon \Isom(M) \to \Diff(M)$ induces a bijection on path components.
\end{theorem}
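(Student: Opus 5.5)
We split the statement into injectivity and surjectivity of $\iota_*\colon \pi_0\Isom(M)\to\pi_0\Diff(M)$, treating lens spaces separately.

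\textbf{Lens spaces.} Here the statement is essentially already proved in \cref{sec-lens spaces}. By \cref{prop:groupgen}, every mapping class is represented by a word in $\tau$, $\sigma_+$, $\sigma_-$. Each of these can be realised by an isometry of the standard metric (the torus $T$ is the Clifford torus, and $\tau,\sigma_\pm$ are induced by elements of $\mathrm{O}(4)$ normalising $\Z/p$), so $\iota_*$ is surjective. For injectivity, compare the tables: $\pi_0\Isom(L(p,q))$ computed in \cite{HKMR2012} has the same order as the group in \cref{thm-MCG lens spaces}, and a surjection between finite groups of equal order is a bijection.

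\textbf{Non-cyclic $G$: injectivity.} Compose $\iota_*$ with $\rho\colon \pi_0\Diff(M)\to\Out(G)$. Any isometry $\bar f$ is induced by some $f\in\Norm(G)$, and $\rho(\bar f)$ is conjugation by $f$ modulo inner automorphisms. So it suffices to show that if $c_f$ is inner on $G$, then $\bar f$ lies in $\Isom_0(M)$. Write $f=F(q_1,q_2)$ and use that $G\subseteq S^3\times S^3$ is given explicitly by \cref{classification table}. One checks case by case that such $f$ lies in $G\cdot \Norm(G)_0$, with the identity component read off from the $\SO(3)$ or $\mathrm{O}(2)$ factors in \cref{isometry table}. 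This amounts to computing the map $\mathcal I(M)\to\Out(G)$ and verifying that it is injective, which is a finite check against the table.

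\textbf{Non-cyclic $G$: surjectivity.} This is the main obstacle. I would use the Seifert fibred structure. Every such $M$ Seifert fibres over $S^2$ with three exceptional fibres, or over $\RP^2$ with at most one exceptional fibre (the prism manifolds). By results stated in \cref{subsec:fibre-pres-diffeos}, for $M$ not a lens space and outside a small exceptional list, every diffeomorphism is isotopic to a fibre-preserving one. Work in the fibre-preserving setting:
\begin{itemize}
\item[] a fibre-preserving diffeomorphism induces an orbifold diffeomorphism of the base;
\item[] mapping class groups of these small orbifolds are finite and explicit, namely permutations of cone points of equal order together with reflections;
\item[] the vertical part is controlled by $H^1(B;\Z)$, which vanishes for these bases up to the finitely many choices of fibre orientation.
\end{itemize}
This shows that $\pi_0\Diff(M)$ is generated by finitely many explicit fibre-preserving maps, each of which can be realised by an element of $\Norm(G)$, for example the Hopf-fibration-preserving isometries. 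The exceptional case, where $M$ has more than one Seifert fibration (the quaternionic manifolds), needs a separate argument: I would use the uniqueness up to isotopy of a one-sided Klein bottle or Heegaard surface, analogous to \cref{thm: UniqueTorus}. This follows \cite[Thm.~3.1]{McCullough2002}, and \cite{HKMR2012} for the careful treatment of the remaining cases.
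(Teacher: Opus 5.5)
Your surjectivity argument in the non-cyclic case has a real gap, because you misidentify the exceptional set of \cref{thm-fibre preserving equiv to diff}. The excluded family $M(0;(2,b_1),(2,b_2),(a_3,b_3))$ is not just the quaternionic manifolds. It is exactly the class of all prism manifolds: every $M$ with $\pi_1(M)$ equal to $\mathrm{D}^*_{4n}$, $\mathrm{D}^*_{4n}\times C_m$, one of the index-two groups $H$, $\mathrm{Q}_8$, or $\mathrm{Q}_8\times C_n$ (recall $\mathrm{Q}_8=\mathrm{D}^*_8$). All of these fibre over $S^2$ with cone points of orders $(2,2,n)$, and also over $\RP^2$ with at most one cone point. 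Having two non-isotopic Seifert fibrations is precisely why they are excluded. The cited theorem therefore says nothing about isotoping a self-diffeomorphism to be fibre-preserving for either fibration. As written, your reduction only handles the tetrahedral, octahedral and icosahedral manifolds (base $S^2(2,3,n)$, $n=3,4,5$). The infinite prism families in \cref{classification table} are left unproved.

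The fix is to run the argument you reserve for the quaternionic case on all prism manifolds, as in the Klein bottle sketch given after \cref{pi0Smale} (following Asano). Write $M$ as an $S^1$-bundle over a M\"obius band union a solid torus. The preimage $K$ of the core circle is an incompressible Klein bottle which, for most prism manifolds, is unique up to isotopy. Hence every diffeomorphism can be isotoped to preserve $K$, and $\pi_0\Diff(K)\cong\Z/2\times\Z/2$, generated by a Dehn twist and the $y$-homeomorphism, surjects onto $\pi_0\Diff(M)$. You must then exhibit elements of $\Norm(G)$ inducing the images of these two classes; they should account for the $\mathrm{O}(2)$ and extra $\Z/2$ factors in \cref{isometry table}. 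You also need to deal separately with the small cases where $K$ is not unique, which include the quaternionic manifolds with their $S_3$ symmetry.

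With this repair your plan is sound, and it differs usefully from the route the paper describes, namely McCullough's comparison of $\mathcal{I}(M)$ with previously computed groups $\pi_0\Diff(M)$. Your injectivity step, checking that $\mathcal{I}(M)\to\Out(G)$ is injective, is pure normaliser and centraliser algebra and needs no homotopy-implies-isotopy input. All of the hard topology is then concentrated in surjectivity: in the reduction to fibre-preserving maps over $S^2(2,3,n)$, and in the uniqueness of $K$ for prism manifolds.
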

\begin{remark}
The proof of this theorem relies on already knowing most of the groups $\pi_0\Diff(M)$, since historically these were determined earlier.
\end{remark}

We now present a sketch of how $\pi_0\Diff(M)$ can be computed without knowing $\mathcal{I}(M)$ for prism manifolds (i.e.~the ones having a dihedral group as a subgroup of their fundamental group). See \cite{Asano1978} for a detailed proof.

Consider the following construction. Take an orientable $S^1$-bundle $\pi \colon N \to B$ over a M\"obius band $B$. Then the boundary $\partial N$ is a torus. Glue a solid torus $V$ to $N$ along their boundaries. For certain choices of gluing, the resulting manifold $M$ turns out to be a prism manifold, and in fact every prism manifold can be constructed this way.

Let $\alpha$ be the core curve of $B$. Then $\pi^{-1}(\alpha) =: K$ is a Klein bottle.
One can show that $K$ is incompressible and that every diffeomorphism of $K$ extends to one of~$M$.  Furthermore, any isotopy of $K$ extends to an isotopy of~$M$, which implies that we have a well-defined map $\rho \colon \pi_0 \Diff(K) \to \pi_0 \Diff(M).$ For most prism manifolds~$M$, the surface~$K$ is the unique incompressible Klein bottle in $M.$ Therefore, every diffeomorphism of $M$ can be isotoped such that it fixes $K$. Hence, for a given diffeomorphism of $M,$ we obtain (after a possible isotopy) a diffeomorphism of $K$ and applying $\rho$ to the class of this diffeomorphism yields back the original one. In other words, the homomorphism $\rho$ is surjective.

It is known that $\pi_0\Diff(K) \cong \mathbb{Z}/2 \times \mathbb{Z}/2$, generated by a Dehn twist about a two-sided curve and the $y$-homeomorphism of Lickorish~\cite{Lickorish1963}. By the above discussion, computing the kernel of $\rho$ determines $\pi_0\Diff(M)$.

The kernel will depend on the gluing used in the construction of $M$. For example, assume that the meridian of the solid torus is glued to a curve in $\partial N$ that bounds a M\"obius band. Then there is an embedded projective plane inside $M$, and in fact $M$ is an $S^1$-bundle over $\mathbb{R}\mathrm{P}^2.$ There is an isotopy of the projective plane that switches the orientation of one-sided curves. This can be lifted to an isotopy of $M$ switching the orientation of one-sided curves on $K.$ The latter implies that the $y$-homeomorphism is in the kernel of $\rho$. The Dehn twist is not in the kernel, since it acts nontrivially on $\pi_1(M)$. We conclude that $\ker(\rho) \cong \mathbb{Z}/2$ and consequently $\pi_0\Diff(M) \cong \mathbb{Z}/2$ in this particular case.

We end this section by presenting the so-called realisation theorem \cite[Thm.~1.3]{McCullough2002}, which states that the mapping class group can be modelled as a group of isometries. Let $\iota \colon \Isom(M) \to \Diff(M)$ and $p \colon \Diff(M) \to \pi_0\Diff(M)$ be the natural inclusion and projection maps, respectively. Let $\phi = p \circ \iota$ be the composition.

\begin{theorem}
    There is a subgroup $\Theta \subseteq \Isom(M)$ such that the map $$\phi \colon \Isom(M) \to \pi_0\Diff(M)$$ restricted to $\Theta$ induces an isomorphism $\Theta \xrightarrow{\cong} \pi_0\Diff(M)$.
\end{theorem}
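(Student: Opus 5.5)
By \cref{pi0Smale}, $\phi$ is surjective with kernel $\Isom_0(M)$. So it is enough to find a subgroup $\Theta \subseteq \Isom(M)$ that maps isomorphically onto $\pi_0\Isom(M) = \mathcal{I}(M)$; in other words, we must split the extension
\[1 \to \Isom_0(M) \to \Isom(M) \to \mathcal{I}(M) \to 1.\]
I would not look for a uniform abstract argument. Extensions of a finite group by a compact Lie group need not split in general, so I plan to work case by case through \cref{isometry table}, together with the lens space tables of \cite{HKMR2012}. In each case I would use the explicit description $\Isom(M) \cong \Norm(G)/G$, where the normaliser is computed inside $\SO(4)$ via the covering $F\colon S^3\times S^3 \to \SO(4)$.

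\textbf{Product cases.} Most rows of the table have $\Isom(M)$ isomorphic to a direct product $\Isom_0(M)\times \mathcal{I}(M)$. Examples are $\SO(3)\times\Z/2$, $\SO(3)\times S_3$, $\mathrm{O}(2)\times\Z/2$ and $\mathrm{O}(2)\times S_3$. Where that direct product decomposition really is realised inside $\Norm(G)/G$, the splitting is immediate. We take $\Theta$ to be the finite factor, or a finite subgroup of $\mathrm{O}(2)$ mapping onto its component group when the $\mathrm{O}(2)$ factor itself contributes to $\mathcal{I}(M)$. Concretely, the reflection $\begin{psmallmatrix}1&0\\0&-1\end{psmallmatrix}$ generates a $\Z/2 \subseteq \mathrm{O}(2)$ that meets $\SO(2)$ trivially. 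So for $\Isom(M)\cong \mathrm{O}(2)\times A$ I would take $\Theta = \langle r\rangle \times A$. If $\Isom(M)\cong \SO(3)\times A$, I would take $\Theta = A$. Rows such as $\mathrm{O}^*_{48}$ and $\mathrm{I}^*_{120}$ have $\mathcal{I}(M)$ trivial, so $\Theta = 1$ works.

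\textbf{Main obstacle.} The real work is to check that the table's isomorphisms are isomorphisms of extensions. That is, the copy of $\SO(3)$ or $\SO(2)$ appearing in the product must be exactly the identity component, and the finite factor must actually be realised by isometries of finite order, not merely abstractly. For that I would write down explicit elements of $\Norm(G)$. Typical candidates are left multiplication by suitable unit quaternions such as $j$ or $\xi_{2m}$, and right multiplication by elements normalising $\mathrm{D}^*_{4n}$, $\mathrm{T}^*_{24}$ or $\mathrm{Q}_8$. I would then verify that their images in $\Norm(G)/G$ satisfy the group relations of $\mathcal{I}(M)$ exactly, not only up to $\Isom_0(M)$. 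The delicate cases are the index $2$ and index $3$ subgroups $H$ and $K$, and the lens spaces.

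\textbf{Lens spaces.} Here I would use the explicit diffeomorphisms $\tau$, $\sigma_+$, $\sigma_-$ from \cref{sec-lens spaces}, realised as isometries of the quotient metric. On $S^3 \subseteq \C^2$ they are given by $(z_1,z_2)\mapsto(\bar z_1,\bar z_2)$ and by coordinate swaps, possibly composed with conjugation. These visibly normalise the cyclic group $\langle\rho\rangle$. By \cref{thm-MCG lens spaces}, the relations $\tau^2=\sigma_+^2=1$, $\sigma_-^2=\tau$ and $\sigma_-^4=1$ hold on the nose. So $\Theta = \langle \tau,\sigma_\pm\rangle$ maps onto $\pi_0\Diff(M)$.

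It may not be injective, however. For example, when $q\equiv 1 \bmod p$ the element $\sigma_+\tau$ is isotopic to the identity, so it lies in the kernel. I would repair this by discarding the redundant generators and keeping only a subgroup realising the computed group. Typically that means $\langle\tau\rangle$, or $\langle\sigma_-\rangle \cong \Z/4$ when $q^2\equiv -1 \bmod p$. Injectivity on that subgroup then follows from the action on $H_1$ and $H_3$, exactly as in the proof of \cref{thm-MCG lens spaces}.
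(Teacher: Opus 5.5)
Your reduction is the same as the paper's. By \cref{pi0Smale} it suffices to split $1\to\Isom_0(M)\to\Isom(M)\to\mathcal{I}(M)\to 1$, and for non-cyclic $G$ the splitting is read off from \cref{isometry table} using a reflection in $\mathrm{O}(2)$. The ``main obstacle'' you flag is automatic once the table's entries are understood as isomorphisms of topological groups, which is what the computation of $\Norm(G)/G$ produces. Such an isomorphism carries the identity component to the identity component, so the finite factors split the sequence directly; the paper uses nothing more.

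For lens spaces you depart from the paper, which simply cites McCullough's Table 3, and instead realise $\tau,\sigma_\pm$ as explicit isometries. For $p\geq 3$ this works and is pleasantly self-contained. Surjectivity comes from \cref{prop:groupgen} and injectivity from the homology computation, so you do not even need \cref{pi0Smale} there.

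The gap is $L(2,1)=\RP^3$, which the statement covers. There $q=1$ satisfies $q^2\equiv -1 \pmod 2$, so your recipe selects $\langle\sigma_-\rangle\cong\Z/4$.

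As an isometry, $\sigma_-^2=\tau$ is induced by $(z_1,z_2)\mapsto(\bar z_1,\bar z_2)$. This map is not $\pm\mathrm{id}$ on $S^3$, so $\tau$ is a nontrivial element of $\Isom(\RP^3)$. It lies in $\Isom_0(\RP^3)=\SO(4)/\{\pm 1\}$ and is isotopic to the identity (\cref{prop:mcg-L21}). Hence $\langle\sigma_-\rangle\to\pi_0\Diff(\RP^3)\cong\Z/2$ has kernel $\langle\tau\rangle$, and the remaining subgroups $1$ and $\langle\tau\rangle$ map trivially.

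Your injectivity test cannot detect this. The map $\tau$ acts trivially on $H_1(\RP^3)\cong\Z/2$ and on $H_3$. Correspondingly, the proof of \cref{thm-MCG lens spaces} handles $p=2$ separately via \cref{prop:mcg-L21} rather than by homology.

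The repair is to let $\Theta$ be generated by an orientation-reversing isometry of order two. For instance, take the isometry induced by $(z_1,z_2)\mapsto(z_1,\bar z_2)$, which commutes with $-\mathrm{id}$. Since $\pi_0\Diff^+(\RP^3)$ is trivial, this $\Z/2$ maps isomorphically onto $\pi_0\Diff(\RP^3)$.
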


\begin{proof}
    Consider the short exact sequence
    $$1 \to \Isom_0(M) \to \Isom(M) \to \mathcal{I}(M) \to 1,$$
    where $\Isom_0(M)$ denotes the path-component of the identity of $\Isom(M)$ and $\mathcal{I}(M) = \pi_0\Isom(M)$. Using that
    $$1 \to \SO(2) \to \mathrm{O}(2) \to \mathbb{Z}/2 \to 1$$
    is a split short exact sequence, and checking every possible case for every $M$ in \cref{isometry table}, one sees that the above sequence always splits. (The same holds for all lens spaces by checking \cite[Table 3]{McCullough2002}.) 

    Let $s \colon \mathcal{I}(M) \to \Isom(M)$ be a section of the natural map $\Isom(M) \to \mathcal{I}(M)$ and let $\Theta := \im{s}$. Restrict $\phi \colon \Isom(M) \to \pi_0\Diff(M)$ to the subgroup $\Theta$, i.e.~consider $\phi|_\Theta \colon \Theta \to \pi_0\Diff(M)$. By the definition of $\Theta$ and \cref{pi0Smale}, we obtain that $\phi|_\Theta$ is an isomorphism.
\end{proof}

\subsection{The Smale conjecture}
Let $M$ be an elliptic 3-manifold. The generalised Smale conjecture (\cref{conj - generalised smale}) asks whether $\Isom(M) \to \Diff(M)$ is a homotopy equivalence. From \cref{pi0Smale} we know that $\Isom(M)$ and $\Diff(M)$ have the same $\pi_0$, i.e.~the same path components. This reduces the generalised Smale conjecture in the case of elliptic 3-manifolds to the weak Smale conjecture (\cref{thm-weak Smale}), which asks whether the inclusion of the components of the identity $\Isom_0(M) \to \Diff_0(M)$ is a homotopy equivalence. This has recently been proven affirmatively, combining earlier work of Ivanov \cite{Ivanov79, Ivanov82} with \cite{HKMR2012} and the final cases appearing in the work of Bamler--Kleiner \cite{BamlerKleiner2019, BamlerKleiner2023b}. This leads to the following theorem.

\begin{theorem}
    Let $M$ be an elliptic manifold. Then $\Isom(M) \to \Diff(M)$ is a homotopy equivalence.
\end{theorem}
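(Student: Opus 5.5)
The plan is to reduce the statement to two facts already recorded in the paper: \cref{pi0Smale} (the $\pi_0$-Smale conjecture for elliptic manifolds) and \cref{thm-weak Smale} (the weak generalised Smale conjecture for geometric manifolds). Both $\Isom(M)$ and $\Diff(M)$ are topological groups, and the inclusion $\iota\colon\Isom(M)\to\Diff(M)$ is a homomorphism. Write $\Isom(M)\simeq \coprod_{\pi_0\Isom(M)} \Isom_0(M)$ and similarly for $\Diff(M)$, using left translation by a representative of each component.

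The first step is to compare the extensions
\[
\Isom_0(M)\to \Isom(M)\to \pi_0\Isom(M), \qquad \Diff_0(M)\to\Diff(M)\to\pi_0\Diff(M)
\]
via $\iota$. For each $g\in\Isom(M)$, left translation by $g$ gives a homeomorphism from $\Isom_0(M)$ onto the component of $g$. Similarly, left translation by $\iota(g)$ gives a homeomorphism from $\Diff_0(M)$ onto the component of $\iota(g)$. These commute with $\iota$. Consequently, the restriction of $\iota$ to each path component of $\Isom(M)$ is, up to these homeomorphisms, the map $\Isom_0(M)\to\Diff_0(M)$. That map is a homotopy equivalence by \cref{thm-weak Smale}, since $M$ is elliptic and hence geometric.

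The second step uses \cref{pi0Smale}: $\iota$ induces a bijection $\pi_0\Isom(M)\to\pi_0\Diff(M)$. For lens spaces this also follows from the Hong--McCullough--Rubinstein result cited in \cref{sec-lens spaces}. Hence $\iota$ is a bijection on components and a homotopy equivalence on each component, so it is a weak homotopy equivalence.

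Finally, to upgrade to a genuine homotopy equivalence, note that $\Isom(M)$ is a compact Lie group and $\Diff(M)$ is a Fréchet manifold. Both therefore have the homotopy type of CW complexes, by Palais' results on infinite-dimensional manifolds, and Whitehead's theorem applies.

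The main content, and hence the main obstacle, lies entirely inside the cited input \cref{thm-weak Smale}. For elliptic manifolds this combines Ivanov, the Hong--Kalliongis--McCullough--Rubinstein case analysis via Seifert fibrations and Heegaard tori, and the Bamler--Kleiner Ricci flow arguments for the remaining cases (notably $\RP^3$ and the quaternionic manifolds). For the present statement I would simply invoke it. The only points I would check carefully are the component-by-component translation argument and the CW-type hypothesis needed for Whitehead's theorem.
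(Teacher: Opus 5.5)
Your proposal is correct and takes essentially the same route as the paper, which also deduces the statement by combining the $\pi_0$-Smale theorem (\cref{pi0Smale}) with the weak generalised Smale conjecture for identity components (\cref{thm-weak Smale}). Your left-translation and Whitehead steps simply spell out the reduction the paper states in one line. The Whitehead step could even be dropped: path components are open in both groups, so a map that is bijective on $\pi_0$ and a homotopy equivalence on each component is already a homotopy equivalence.
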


\section{Mapping class groups of Haken 3-manifolds}\label{sec-Haken}

Throughout this section, our 3-manifolds $M$ will be compact and orientable, with possibly nonempty boundary.

\subsection{Definition of a Haken 3-manifold}
Part (4) of the following definition agrees with~\cref{defn-Haken}.

\begin{definition} Let $M$ be a compact, orientable 3-manifold.
\begin{enumerate}
    \item We say that $M$ is \emph{irreducible} if and only if every 2-sphere in $M$ bounds a copy of $D^3$ in $M$.
    \item We say that an orientable embedded surface $F$ in $M$ with $\partial F \subseteq \partial M$ is \emph{compressible} if either
    \begin{enumerate}[(a)]
        \item there exists a curve $k \subseteq \mathring{F}$, homotopically essential in $F$, and a disc $D \subseteq M$ with $\mathring{D} \subseteq \mathring{M}$ and $D \cap F = \partial D =k$; or
        \item there exists a 3-ball $E \subseteq M$ with $E \cap F = \partial E$.
    \end{enumerate}
    If an orientable, embedded surface $F$ is not compressible, we say that $F$ is \emph{incompressible}.
    \item We say that $M$ is \emph{boundary irreducible} if and only if $\partial M$ is incompressible.
    \item We say that $M$ is \emph{Haken} (or sufficiently large) if it contains an incompressible surface.
\end{enumerate}
\end{definition}

\begin{remark}\leavevmode \label{remark:non-boundary-Haken}
We discuss some consequences of the definition, in particular the relationship between Haken and Seifert fibred 3-manifolds.
\begin{enumerate}[(i)]
    \item  \label{item:nbH} If $\partial M \neq \emptyset$, then $M$ is Haken. This is because a small properly embedded boundary parallel disc is incompressible.  So $D^3$ is Haken, although it is not boundary irreducible.
    \item A closed Seifert fibred 3-manifold is Haken if it does not admit any Seifert fibration with base orbifold $S^2$ and at most three singular fibres~\cite[Theorem~VI.15]{Jaco1980}.
    \item Seifert fibred 3-manifolds with base orbifold $S^2$ and at most two singular fibres are lens spaces, $S^1 \times S^2$, or $S^3$. Lens spaces and $S^3$ are not Haken, but $S^1 \times S^2$ is Haken (it is, however, reducible).
    \item A Seifert fibred 3-manifold $M$ with base orbifold $S^2$ and three singular fibres is Haken if and only if $H_1(M;\Z)$ is infinite, again by \cite[Theorem~VI.15]{Jaco1980}.
   \item On the other hand, a closed, orientable Haken 3-manifold $M$ admits a Seifert fibred structure if and only if $\pi_1(M)$ has an infinite cyclic normal subgroup~\cite[Theorem~VI.24]{Jaco1980}.
\end{enumerate}
\end{remark}

\begin{lemma}\label{lemma:kpi1}
    Let $M$ be a compact, orientable 3-manifold that is irreducible, boundary irreducible, and Haken. Then $M \simeq K(\pi_1(M),1)$, with $\pi_1(M)$ infinite.
\end{lemma}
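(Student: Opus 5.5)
The plan is to prove the two assertions in the order \emph{$\pi_1(M)$ is infinite}, then \emph{$\pi_2(M)=0$}, and to combine them through the universal cover $\widetilde M$. Asphericity means $\widetilde M$ is contractible. Since $\widetilde M$ is simply connected, the Hurewicz and Whitehead theorems reduce this to showing $H_i(\widetilde M)=0$ for all $i\geq 2$.

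First, $\pi_2(M)=0$. By the sphere theorem, if $\pi_2(M)\neq 0$ there is an embedded $2$-sphere in $M$ that is essential in $\pi_2$. Irreducibility says every embedded $2$-sphere bounds a ball, hence is null-homotopic. Therefore $\pi_2(M)=\pi_2(\widetilde M)=0$, and by Hurewicz $H_2(\widetilde M)=0$.

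Second, $\pi_1(M)$ is infinite. We split according to whether $\partial M$ is empty, and in each case find an incompressible surface of positive genus and apply the loop theorem.
\begin{enumerate}
\item Suppose $\partial M\neq\emptyset$. No boundary component is a $2$-sphere. Otherwise, by irreducibility that sphere bounds a ball $E$, which forces $M=E\cong D^3$; then $\partial M$ is compressible by clause (b) of the definition, contradicting boundary irreducibility. So some boundary component $F$ has genus $\geq1$ and is incompressible.
\item Suppose $M$ is closed. Take an incompressible surface $F$ given by the Haken hypothesis. It cannot be a sphere, since irreducibility would make it bound a ball, violating clause (b). So again $F$ has genus $\geq1$.
\end{enumerate}
In both cases the loop theorem (Dehn's lemma form) upgrades the geometric statement \emph{no essential curve of $F$ bounds an embedded disc} to the algebraic statement \emph{$\pi_1(F)\to\pi_1(M)$ is injective}. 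Since $\pi_1(F)$ is infinite for a closed orientable surface of genus $\geq1$, so is $\pi_1(M)$.

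Third, vanishing of the higher homology. Because $\pi_1(M)$ is infinite, $\widetilde M$ is a noncompact, connected $3$-manifold, possibly with boundary. Hence $H_3(\widetilde M)=0$, since a noncompact connected $3$-manifold has no fundamental class, and $H_i(\widetilde M)=0$ for $i>3$ by dimension. Together with $H_2(\widetilde M)=0$ and $\pi_1(\widetilde M)=0$, Hurewicz gives $\pi_i(\widetilde M)=0$ for all $i$. By Whitehead, $\widetilde M$ is contractible, so $M\simeq K(\pi_1(M),1)$.

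The main obstacle is the use of the two deep inputs, the sphere theorem and the loop theorem, which I would cite rather than reprove. The second is subtler because the paper's definition of incompressibility is phrased via embedded discs, while the argument needs $\pi_1$-injectivity, so the loop theorem is essential there. One also has to handle carefully the degenerate case $M=D^3$, which the boundary-irreducibility hypothesis is designed to exclude.
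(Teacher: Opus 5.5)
Your proposal is correct and follows essentially the same route as the paper. Both arguments get $\pi_2(M)=0$ from the sphere theorem, and get infinite $\pi_1(M)$ from a positive-genus incompressible surface plus the loop theorem. That surface is a boundary component, after ruling out $M\cong D^3$, when $\partial M\neq\emptyset$, and the Haken surface when $M$ is closed. Both then conclude via Hurewicz on the noncompact universal cover.
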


\begin{proof}
   Since $M$ is irreducible, the sphere theorem (see e.g.~ \cite{Hempel1976}) implies that $\pi_2(M)=0$.
    We argue that $\pi_1(M)$ is infinite.
    Suppose that $\partial M = \emptyset$. Then $M$ contains a closed incompressible surface $F$, which must be orientable and have positive genus because $M$ is irreducible.  By the loop theorem, $\pi_1(F)$ injects into $\pi_1(M)$, and hence $\pi_1(M)$ is infinite.
    Now suppose that $\partial M \neq \emptyset$. If $\p M$ contains $S^2$ as a connected component, then $M \cong D^3$ by irreducibility. But $\p D^3$ is compressible, and by hypothesis $M$ is boundary incompressible, hence $M$ is not $D^3$. Since $M$ is orientable, so is $\p M$, and it follows that every component of $\partial M$ is a closed, orientable surface of positive genus.  Since $\p M$ is incompressible, $\pi_1(\p M) \to \pi_1(M)$ is injective, and hence $\pi_1(M)$ is infinite.

    Now we consider the universal cover $\wt{M}$ of $M$. We have $\pi_1(\wt{M})=0$, and $\pi_2(\wt{M})= \pi_2(M)=0$, as noted above. Hence $H_1(\wt{M}) = 0 = H_2(\wt{M})$ by the Hurewicz theorem.  Since $\pi_1(M)$ is infinite, $\wt{M}$ is a noncompact 3-manifold, so $H_3(\wt{M})=0$. Hence $H_i(\wt{M}) =0$ for all $i>0$. By the Hurewicz theorem, $\pi_i(\wt{M})=0$ for all $i >0$. So $M \simeq K(\pi_1(M),1)$ as desired.
\end{proof}

As a consequence of the lemma, there is a bijection between homotopy classes of homotopy equivalences of $M$ and outer automorphisms of $\pi_1(M)$.

\begin{lemma}\label{lemma:haut-equals-out-for-Kpi1}
Let $X$ be a $K(\pi,1)$-space where $\pi$ is a group. Then, for each choice of basepoint $x_0 \in X$ and identification $\pi = \pi_1(X,x_0)$, there is an isomorphism
\[ \phi \colon \pi_0 \hAut(X) \to \Out(\pi)\]
given by sending $f \mapsto [(f_{\gamma})_*]$ where $(f_{\gamma})_* \colon \pi \to \pi$, $[\alpha] \mapsto [\gamma \cdot (f \circ \alpha) \cdot \gamma^{-1}]$ where $\gamma \colon [0,1] \to X$ is any choice of path from $x_0$ to $f(x_0)$.
\end{lemma}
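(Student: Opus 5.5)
Here is the plan. The argument has three parts: show $\phi$ is well defined, show it is a homomorphism, and show it is bijective using the standard classification of maps into a $K(\pi,1)$.

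\emph{Well definedness.} Two choices of path $\gamma,\gamma'$ from $x_0$ to $f(x_0)$ differ by the loop $\gamma'\cdot\gamma^{-1}$ based at $x_0$. Hence $(f_{\gamma'})_*$ is $(f_\gamma)_*$ composed with conjugation by $[\gamma'\gamma^{-1}]$, and the class in $\Out(\pi)$ is unchanged. If $H$ is a homotopy from $f$ to $g$, let $\delta$ be the track $t\mapsto H(x_0,t)$. The usual change-of-basepoint argument, applied to the square $\alpha\times I$, gives $(g_{\gamma\cdot\delta})_*=(f_\gamma)_*$. So $\phi$ depends only on the homotopy class.

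\emph{Homomorphism.} For $f,g\in\hAut(X)$, choose $\gamma$ from $x_0$ to $g(x_0)$ and $\gamma'$ from $x_0$ to $f(x_0)$. Then $\gamma'\cdot(f\circ\gamma)$ is a path from $x_0$ to $fg(x_0)$, and a direct check gives $((fg)_{\gamma'\cdot f\gamma})_*=(f_{\gamma'})_*\circ(g_\gamma)_*$. It follows that $\phi(fg)=\phi(f)\phi(g)$. Because $\phi$ is multiplicative and sends $\id$ to $1$, every $(f_\gamma)_*$ is an automorphism rather than merely an endomorphism.

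\emph{Bijectivity.} This rests on the classification of maps into a $K(\pi,1)$. We may assume $X$ has the homotopy type of a CW complex, as is implicit for $K(\pi,1)$-spaces and holds for manifolds.

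For surjectivity, let $\theta\in\Aut(\pi)$. Build a based map $f\colon X\to X$ with $f_*=\theta$ skeleton by skeleton. On the 1-skeleton, send each 1-cell to a loop representing $\theta$ of its class. The 2-cells extend because $\theta$ is a homomorphism, so the attaching words map to null-homotopic loops. All higher cells extend because $\pi_n(X)=0$ for $n\ge2$. Doing the same with $\theta^{-1}$ gives $g$. Then $fg$ and $gf$ induce the identity on $\pi_1$, so by the injectivity argument below they are homotopic to $\id$. Thus $f\in\hAut(X)$ and $\phi([f])=[\theta]$.

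For injectivity, suppose $\phi(f)$ is trivial. After reparametrising along $\gamma$, homotope $f$ to a based map with $f_*=c_h$ for some $h\in\pi$. Pushing the basepoint back around the loop $h^{-1}$, which is again a free homotopy, produces a based map inducing the identity on $\pi_1$. Now build a homotopy to $\id_X$ on $X\times I$, cell by cell relative to $X\times\partial I\cup x_0\times I$. On the 1-cells the obstruction vanishes because the two maps agree on $\pi_1$. On higher cells the obstructions lie in $\pi_n(X)=0$ for $n\ge2$.

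The main obstacle I expect is the bookkeeping in this last step: the free homotopy classes correspond to $\Out(\pi)$ rather than $\Aut(\pi)$, and inner automorphisms have to be absorbed by moving the basepoint. I would handle this with the standard fact that, for $\pi_n(X)=0$ when $n\ge2$, free homotopy classes $[X,X]$ biject with $\operatorname{Hom}(\pi,\pi)/\text{conjugation}$ (e.g.\ Hatcher, Prop.~1B.9 and its unbased variant).
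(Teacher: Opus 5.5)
Your proposal is correct and follows essentially the same route as the paper, which cites Hatcher's Proposition~1B.9 (based maps from a CW complex into a $K(\pi,1)$ correspond to homomorphisms on $\pi_1$) together with the fact that moving the basepoint along a path acts on $\pi_1$ by conjugation. You have simply unpacked these two citations, including the CW hypothesis on $X$, which the paper's proof also relies on implicitly.
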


\begin{proof}
This is a straightforward consequence of combining \cite[Proposition 1B.9]{Ha02}, which gives a correspondence between homotopy classes of based homotopy automorphisms of $X$ with $\Aut(\pi)$, and the fact that change of basepoint corresponds to conjugation on $\pi_1$ (see, for example, \cite[p59]{Ha02}).
\end{proof}

\subsection{Statement of the main theorems on Haken 3-manifolds}
In the case that $\p M \neq \emptyset$, to be able to compare with diffeomorphisms, we restrict to  homotopy equivalences that map boundary to boundary. On the level of fundamental groups, this leads to the following definition.

Let $\Aut_{\p}(\pi_1(M))$ denote the automorphisms of $\pi_1(M)$ that preserve the peripheral structure. Here an automorphism $\psi$ of $\pi_1(M)$ is said to \emph{preserve the peripheral structure} if for every connected component $F \subseteq \partial M$, there exists a  connected component $G \subseteq \p M$ and subgroup $A \leq \pi_1(M)$ that is conjugate to $\pi_1(G) \subseteq \pi_1(M)$, such that $\psi(\pi_1(F)) \subseteq A$.

We can now state Waldhausen's main result~\cite{Waldhausen1968} on the mapping class groups of Haken 3-manifolds.

\begin{theorem}[Waldhausen]\label{thm:Waldhausen}
        Let $M$ be a compact, orientable 3-manifold that is irreducible, boundary irreducible, and Haken. Then taking the induced action on $\pi_1$ determines an isomorphism
    \[\rho \colon \pi_0 \Diff(M) \xrightarrow{\cong} \Aut_\p (\pi_1(M))/\Inn(\pi_1(M)).\]
\end{theorem}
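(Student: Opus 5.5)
The plan is to prove Waldhausen's theorem in two halves, surjectivity and injectivity, and to reduce both to a single geometric statement about homotopy equivalences of Haken manifolds. By \cref{lemma:kpi1}, $M$ is aspherical, so by \cref{lemma:haut-equals-out-for-Kpi1} an element of $\Out(\pi_1(M))$ is the same thing as a homotopy class of self-homotopy equivalences of $M$. If an outer automorphism preserves the peripheral structure, we first realise it by a map $f\colon (M,\partial M)\to (M,\partial M)$. On each boundary component $F$, which is aspherical and $\pi_1$-injective, the induced map on $\pi_1(F)$ lands in a conjugate of some $\pi_1(G)$. Hence we may homotope $f|_F$ into $G$, and use asphericity and the homotopy extension property to extend this to a homotopy of $f$. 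Applying the same construction to the inverse outer automorphism shows that $f$ is a homotopy equivalence of pairs. The theorem then becomes:
\begin{enumerate}[(1)]
\item every such homotopy equivalence of pairs is homotopic, as a map of pairs, to a diffeomorphism;
\item two diffeomorphisms that are homotopic are isotopic.
\end{enumerate}
Together, (1) and (2) give that $\rho$ is surjective and injective.

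For (1), the approach is induction along a Haken hierarchy $M=M_0\supset M_1\supset\cdots\supset M_n$. Each $M_{i+1}$ is obtained from $M_i$ by cutting along a properly embedded, incompressible, boundary-incompressible surface $F_i$, and $M_n$ is a disjoint union of balls. The inductive step goes as follows:
\begin{itemize}
\item Make $f$ transverse to $F=F_0$.
\item Use incompressibility of $F$ (together with irreducibility and $\partial$-irreducibility of $M$) to homotope $f$ rel boundary until $f^{-1}(F)$ is an incompressible surface. Compressing discs are removed by surgery, and sphere or disc components are removed using $\pi_2(M)=0$ and asphericity.
\item Argue, using $\pi_1$-injectivity and the fact that $f$ is a homotopy equivalence, that $f^{-1}(F)$ can be reduced to a single surface $F'$ with $f|_{F'}\colon F'\to F$ a homotopy equivalence. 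Here a degree argument and the surface case (Nielsen/Baer: homotopic surface homeomorphisms are isotopic, and homotopy equivalences of surfaces are homotopic to homeomorphisms) are used.
\item Homotope $f|_{F'}$ to a diffeomorphism onto $F$.
\item Cut along $F'$ and $F$. This gives a homotopy equivalence of pairs between the cut manifolds, which are again irreducible and boundary irreducible, and whose boundary is now mapped diffeomorphically.
\item Recurse down the hierarchy. At the bottom, the pieces are balls with maps that are already diffeomorphisms on the boundary, and these extend across the balls by Alexander's trick.
\item Reglue the diffeomorphisms back up the hierarchy.
\end{itemize}
To make the recursion close up, the induction has to be carried out with the boundary pattern (the traces of earlier cuts) kept fixed. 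So the statement proved inductively should be relative: a homotopy equivalence that is already a diffeomorphism on $\partial M$ is homotopic rel $\partial M$ to a diffeomorphism.

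For (2), the same hierarchy induction is applied to a homotopy $H\colon M\times I\to M$ between diffeomorphisms $f_0$ and $f_1$, viewed as a map of $4$-dimensional pairs; equivalently, we show that $f_1^{-1}f_0$ is isotopic to the identity. First isotope $f_1^{-1}f_0(F)$ off $F$ using incompressibility, and then use the product-region lemma to isotope it onto $F$. The product-region lemma says that homotopic disjoint incompressible surfaces cobound an $F\times I$, and it is proved via covering spaces and the $h$-cobordism-type argument available in irreducible $3$-manifolds. Once the map preserves $F$, the surface result makes it isotopic to the identity on $F$, and we can cut and induct. At the base, Alexander's trick, or \cref{thm:MCGS3}/Cerf's theorem on $\pi_0\Diff_\partial(D^3)$, finishes the argument.

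The main obstacle is the step in (1) that simplifies $f^{-1}(F)$ to a single incompressible surface mapping by degree one onto $F$. This requires careful surgery arguments, the loop theorem and Dehn's lemma, and control of boundary-parallel annuli and discs. For this step I would follow Waldhausen's original argument in \cite{Waldhausen1968}, with Hempel's exposition \cite{Hempel1976} as a guide, and try first to reduce it to the statement that a $\pi_1$-injective map of surfaces of nonzero degree is homotopic to a covering.
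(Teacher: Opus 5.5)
Your surjectivity half follows essentially the paper's route; the paper sketches only surjectivity. That route is Waldhausen's hierarchy induction in Scott's form: make $f$ restrict to a homeomorphism onto the first cutting surface, cut, induct with the boundary already a homeomorphism, and finish on balls.

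The difference is the interval-bundle alternative. The paper isolates it as case (b) of \cref{prop:scott-surfaces} and \cref{thm:that-implies-wladhausens-thm} and treats it separately; you never confront it. Your step ``reduce $f^{-1}(F)$ to a single surface mapped by a homotopy equivalence'' covers only alternative (a). You exclude (b) by normalising $f$ to be a homotopy equivalence of pairs. That is cleaner than the paper's weaker normalisation $f^{-1}(\partial M)=\partial M$, but you assert it rather than arrange it. For $M=F\times I$ with $F$ closed, both peripheral subgroups equal $\pi_1(M)$. So your construction may send both boundary components into $F\times\{0\}$, which is exactly the folding situation of case (b). You must choose the targets so that $f|_{\partial M}$ is a homotopy equivalence $\partial M\to\partial M$, and check that this is possible. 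Once you have that, $f$ cannot be properly homotopic into $\partial M$, and (b) is excluded.

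For injectivity the omission is fatal. Equality in $\Out(\pi_1(M))$ only gives a free homotopy, via \cref{lemma:haut-equals-out-for-Kpi1}. Your step (2) is false for free homotopies:
\begin{itemize}
\item On $F\times I$, the flip $(x,t)\mapsto(x,1-t)$ fixes $F\times\{\tfrac12\}$ pointwise, so it acts trivially on $\pi_1$. But it exchanges the two boundary components, so it is not isotopic to the identity.
\item The fibre reflection of a twisted $I$-bundle is another example; it restricts to the orientation-reversing deck involution on $\partial M$.
\end{itemize}
Your sketch breaks at ``the surface result makes it isotopic to the identity on $F$''. Take the first hierarchy surface to be a vertical annulus $\gamma\times I$. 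The flip preserves it and restricts to the reflection exchanging its boundary circles, which is homotopic but not isotopic to the identity.

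What the hierarchy argument can prove is (2) for homotopies \emph{of pairs}. You then need a separate argument that equal images in $\Aut_\partial(\pi_1(M))/\Inn(\pi_1(M))$ give a homotopy of pairs, and that is where $I$-bundles over closed surfaces must be excluded. So the statement as written, for the full $\pi_0\Diff(M)$, needs that exception. These reflections reverse orientation, so $\pi_0\Diff^+(M)$ is unaffected; compare the fibre reflections of $I$-bundles listed in $\ker\rho$ in \cref{sec-reducible}.

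Two smaller points:
\begin{itemize}
\item The cut pieces are not ``again irreducible and boundary irreducible''. Cutting $F\times I$ along a vertical annulus gives a handlebody, which is why Waldhausen's induction carries boundary patterns.
\item In the smooth category the Alexander trick does not produce diffeomorphisms. Either use Smale's theorem on $\Diff(S^2)$, or work with homeomorphisms and smooth at the end via Cerf--Hatcher, as the paper does.
\end{itemize}
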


More generally, after Waldhausen, Hatcher and Ivanov~\cite{Hatcher1976,Ivanov1976} proved the following theorem. Here, on both sides, the boundary must be fixed pointwise. That is, $\Diff_\p (M)$ denotes the topological group of diffeomorphisms of $M$ that fix $\partial M$ pointwise, and $\hAut_\p(M)$ denotes the topological monoid of homotopy equivalences of $M$ that fix $\partial M$ pointwise.

\begin{theorem}[Hatcher, Ivanov]
    Let $M$ be a compact, orientable 3-manifold that is irreducible, boundary irreducible, and Haken. Then the forgetful map is a homotopy equivalence
        \[\Diff_\p (M) \xrightarrow{\simeq} \hAut_\p(M).\]
\end{theorem}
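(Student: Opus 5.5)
The plan is to prove the Hatcher--Ivanov theorem by inducting along a Haken hierarchy and comparing, at each step, a fibration sequence of diffeomorphism groups with a fibration sequence of homotopy-equivalence monoids.

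\textbf{Step 1: the two fibrations.} Choose an incompressible, $\partial$-incompressible, properly embedded surface $F \subseteq M$ that is not boundary parallel; this is possible because $M$ is Haken and boundary irreducible. Let $M' = M \setminus \nu F$ be the manifold obtained by cutting along $F$. Restricting to $F$ gives a fibration
\[
\Diff_{\partial M \cup F}(M) \to \Diff_\partial(M) \to \Emb_\partial(F,M).
\]
The fibre is $\Diff_\partial(M')$, up to the usual collar identifications. On the homotopy side there is an analogous fibration
\[
\hAut_{\partial M\cup F}(M) \to \hAut_\partial(M) \to \mathrm{Map}_\partial(F,M)_{[F]},
\]
where the base is the union of components of maps homotopic rel $\partial$ to the inclusion or its images. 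The forgetful map carries the first fibration to the second, so by the five lemma on long exact sequences it suffices to show two things:
\begin{enumerate}[(a)]
\item the base map $\Emb_\partial(F,M) \to \mathrm{Map}_\partial(F,M)$ is a weak equivalence onto the relevant components;
\item the fibre map $\Diff_\partial(M') \to \hAut_\partial(M')$ is a weak equivalence.
\end{enumerate}
For (b) I also need that $\hAut_{\partial M\cup F}(M) \simeq \hAut_\partial(M')$. This holds because $M$ is aspherical by \cref{lemma:kpi1} and $F$ is $\pi_1$-injective, so homotopy equivalences fixing $F$ split along $F$.

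\textbf{Step 2: induction on the hierarchy.} The cut manifold $M'$ is again irreducible and Haken, and each component is boundary irreducible after discarding balls. Haken's hierarchy terminates in a union of 3-balls. The base case is then $\Diff_\partial(D^3) \simeq \ast$ by Hatcher's theorem (\cref{thm-hatcher smale}), and $\hAut_\partial(D^3) \simeq \ast$. So (b) follows by induction on the length of the hierarchy. I will need a well-founded complexity that strictly decreases under cutting, and this is where Haken's finiteness theorem enters.

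\textbf{Step 3: the main obstacle, statement (a).} Both sides have contractible components or components that are $K(\pi,1)$'s, since the target $M$ is aspherical. For $\pi_0$, homotopic incompressible surfaces are isotopic rel boundary; this is Waldhausen's argument and underlies \cref{thm:Waldhausen}, with an extra rel-$\partial$ twist. For $\pi_1$, Hatcher's relative result says that a loop of embeddings that is null-homotopic through maps is null-homotopic through embeddings, using the same $\pi_1$-injectivity. For the higher homotopy groups, I would show that each component of $\Emb_\partial(F,M)$ is aspherical; this is Hatcher's theorem on spaces of incompressible surfaces. On the mapping side, components of $\mathrm{Map}_\partial(F,M)$ are aspherical because $M$ is a $K(\pi,1)$ and $F$ has nonempty boundary or is closed incompressible. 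In the closed-$F$ case, the center of the image of $\pi_1(F)$ must be compared on both sides, since the torus case produces circle factors.

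I expect this step, contractibility of the space of incompressible surfaces, which needs Hatcher's parametrised version of the disk-pushing argument, to be the hardest part. I would simply invoke it rather than reprove it.
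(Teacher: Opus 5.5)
The paper does not prove this theorem; it only cites Hatcher and Ivanov. Your diffeomorphism-side skeleton is the right shape: fibre $\Diff_\p(M)$ over $\Emb_\p(F,M)$, induct along a hierarchy, invoke Hatcher's theorem on spaces of incompressible surfaces, and end with $\Diff_\p(D^3)\simeq\ast$. The comparison with the homotopy side, however, has genuine gaps.

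The main gap is surjectivity on $\pi_0$. For the five lemma you need every component $[h\circ\iota_F]$ of $\mathrm{Map}_\p(F,M)$, with $h\in\hAut_\p(M)$, to contain $g\circ\iota_F$ for some diffeomorphism $g$. Your Step 3 only gives injectivity (homotopic incompressible surfaces are isotopic). That $h\circ\iota_F$ is homotopic to a diffeomorphic translate of $F$ is not formal. It is the realisation half of Waldhausen's theorem, i.e.\ exactly the $\pi_0$-surjectivity you are trying to prove. Waldhausen obtains it by making $h$ transverse to $F$ and simplifying the \emph{preimage} $h^{-1}(F)$ (\cref{prop:scott-surfaces}). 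He then inducts over homotopy equivalences $M\sm h^{-1}(F)\to M\sm F$ between a priori different manifolds (\cref{thm:that-implies-wladhausens-thm}). A fibration of self-maps over the \emph{image} of $F$ cannot express this step, so ``onto the relevant components'' silently assumes the conclusion.

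The second gap is the identification $\hAut_{\p M\cup F}(M)\simeq\hAut_\p(M')$, which does not follow from asphericity and $\pi_1$-injectivity. A map fixing $F$ pointwise may carry points of $M'$ across $F$, so it only restricts to a map $M'\to M$ rel $\p M'$. Both spaces have contractible components, so your claim is really a statement about homotopy classes rel $\p M\cup F$, i.e.\ a relative Waldhausen theorem. As stated it fails in the closed case. On $F\times S^1$ the reflection $(x,\theta)\mapsto(x,\bar\theta)$ fixes $F\times\{1\}$ pointwise but exchanges its two sides. It therefore lies in $\Diff_F(M)$ and $\hAut_F(M)$ but not in the image of $\Diff_\p(M')$ or $\hAut_\p(M')$.

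Relatedly, $\pi_1\mathrm{Map}(F,M)_\iota$ is the centraliser of $\pi_1F$ in $\pi_1M$, not a centre. It is nontrivial for a genus-two fibre of $F\times S^1$, not only for tori.

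Your induction also leaves its class. Cutting a fibred knot exterior along a fibre gives a handlebody, which is neither a union of balls nor boundary irreducible. So you must induct over all irreducible $M$ with $\p M\neq\emptyset$.

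The clean route avoids fibring the homotopy side altogether:
\begin{itemize}
\item Compute $\hAut_\p(M)$ directly from asphericity: its components are contractible if $\p M\neq\emptyset$, and $\hAut_0(M)\simeq K(Z(\pi_1M),1)$ if $M$ is closed.
\item Prove the matching statements for $\Diff_\p(M)$ by your hierarchy induction.
\item Take the $\pi_0$-bijection from Waldhausen's theorem in its rel-boundary form, using both halves.
\end{itemize}
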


So for Haken 3-manifolds there is an extremely strong correspondence between homotopy equivalences and diffeomorphisms. This contrasts with the results on geometric 3-manifolds, as described for example in \cref{sec-elliptic} and \cref{sec-hyperbolic}, where diffeomorphism groups are compared with isometry groups.

Next we mention an interesting theorem on mapping class groups of Haken 3-manifolds due to Johannson~\cite{Johannson1979}.

\begin{theorem}[Johannson]\label{thm:Johannson-finiteness-Haken}
    Let $M$ be an irreducible, boundary irreducible, and Haken 3-manifold.
Suppose also that every incompressible annulus or torus in $M$ is boundary parallel. Then $\pi_0 \Diff(M)$ is finite.
\end{theorem}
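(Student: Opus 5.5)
By Waldhausen's \cref{thm:Waldhausen}, $\pi_0\Diff(M)\cong \Aut_\p(\pi_1(M))/\Inn(\pi_1(M))$, so it suffices to show that this group of outer automorphisms preserving the peripheral structure is finite. The hypothesis that every incompressible annulus and torus is boundary parallel says that $M$ is \emph{simple}: its JSJ (characteristic) submanifold is trivial, consisting only of collars of boundary components. Two cases require separate treatment: the degenerate small cases where $M$ is itself Seifert fibred or an $I$-bundle (e.g.\ $T^2\times I$, the solid torus is excluded by boundary irreducibility). I would check these first and, where necessary, observe that the hypothesis fails for them or the group is finite by direct computation. For instance, $T^2\times I$ contains the essential non-boundary-parallel annuli $\gamma\times I$, so it is excluded.

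\textbf{Main route (geometrisation).} For the remaining cases I would apply Thurston's hyperbolisation for Haken manifolds, as in \cref{thm: geometrisation}. The manifold obtained by removing torus boundary components from $M$ admits a finite-volume hyperbolic structure. If higher-genus boundary is present, I would work instead with a geometrically finite structure with totally geodesic boundary, obtained by doubling $M$ along the non-torus boundary; the double is again simple and closed-or-cusped hyperbolic. Mostow--Prasad rigidity (\cref{thm-Mostow rigidity}) then identifies $\Out(\pi_1)$ of the finite-volume hyperbolic manifold with its isometry group. That group is finite, since it is a compact Lie group with trivial identity component: a Killing field would give an $S^1$-action, contradicting centrelessness of $\pi_1$. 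Restriction from the double to $M$, using that peripheral-structure-preserving automorphisms of $\pi_1(M)$ extend to the double after composing with the involution, yields finiteness of $\Aut_\p(\pi_1(M))/\Inn(\pi_1(M))$.

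\textbf{Alternative route (Johannson's original argument).} This avoids geometrisation and is purely combinatorial. Every diffeomorphism is isotopic to one preserving a fixed Haken hierarchy up to finitely many choices. In a simple manifold there are only finitely many isotopy classes of essential surfaces of bounded complexity meeting a fixed hierarchy nicely. A diffeomorphism is then determined up to isotopy, via Waldhausen's rigidity, by the induced permutation of these finitely many classes together with finite data on the pieces, which are balls.

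\textbf{Main obstacle.} The main obstacle is making the doubling/boundary step rigorous, namely passing from finiteness for the double (or cusped interior) back to $\Aut_\p(\pi_1(M))/\Inn$. The key input there is that the kernel of restriction is trivial, which follows from \cref{thm:Waldhausen} applied to the double. I would try to handle this by showing that the inclusion of $\pi_1(M)$ into $\pi_1$ of the double induces an injection on peripheral outer automorphism groups, using malnormality of the boundary subgroups in the simple case.
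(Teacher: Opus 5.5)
The paper gives no proof of this statement. It only quotes Johannson, whose argument lives inside his characteristic-submanifold and boundary-pattern theory and predates geometrisation. Your main route is therefore genuinely different. After \cref{thm:Waldhausen}, you embed $\Aut_\partial(\pi_1 M)/\Inn(\pi_1 M)$ into $\Out$ of a finite-volume hyperbolic manifold: the interior of $M$, or its double $DM$ along the higher-genus boundary $\partial_1 M$. Mostow--Prasad rigidity and finiteness of the isometry group then finish. This gives a short proof and a sharper conclusion (an embedding into a finite isometry group), at the cost of Thurston's hyperbolisation for Haken manifolds; Johannson's approach is purely topological.

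The step you flag as the main obstacle is actually easy. Send $[f]$ to $[Df]\in\pi_0\Diff(DM)$, with $f$ fixing a basepoint in the first copy of $M$, and suppose $(Df)_*=c_g$.
\begin{enumerate}
\item Then $g$ normalises the vertex group $\pi_1(M)$ in the splitting of $\pi_1(DM)$ along the components of $\partial_1 M$.
\item If $g\notin\pi_1(M)$, Bass--Serre theory forces $\pi_1(M)$ to fix an edge, hence to equal a conjugate of $\pi_1$ of a boundary component. Then $M$ would be a product $I$-bundle, which your annulus hypothesis excludes.
\item So $g\in\pi_1(M)$, $f_*$ is inner, and $f$ is isotopic to the identity by injectivity in \cref{thm:Waldhausen}.
\end{enumerate}
Neither malnormality nor composing with the involution is needed.

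The genuine gap is in your case split. You treat Seifert fibred $M$ as degenerate cases where the hypothesis fails, but one family survives. Take a closed Seifert fibred $M$ over $S^2(p,q,r)$ with $1/p+1/q+1/r<1$ and Euler number $0$; for example, the mapping torus of an order-$5$ symmetry of a genus-$2$ surface, with base $S^2(5,5,5)$.
\begin{enumerate}
\item It is irreducible, and Haken because it fibres over $S^1$.
\item It has no properly embedded annuli.
\item It has no incompressible tori. Vertical tori over curves in a sphere with three cone points compress, and horizontal tori would need $\chi^{\mathrm{orb}}=0$.
\end{enumerate}
So it satisfies every hypothesis, but it has $\mathbb{H}^2\times\R$ geometry, and your hyperbolisation step does not apply.

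This case needs its own argument. The fibre generates the centre of $\pi_1(M)$, so there is a map $\Out(\pi_1 M)\to\Out(\Delta)\times\Z/2$, where $\Delta$ is the triangle group. This map is injective: an automorphism acting trivially on $\Delta$ and on the fibre $t$ has the form $x\mapsto x\,t^{\lambda(x)}$ with $\lambda\in\mathrm{Hom}(\Delta,\Z)=0$. Since $\Out(\Delta)$ is finite, so is $\Out(\pi_1 M)$. Alternatively, cite the finiteness of $\Out(\pi_1(M))$ for base $S^2$ with three exceptional fibres, recorded in \cref{subsec:SFS-non-Haken-case}. When $\partial M\neq\emptyset$, all Seifert fibred spaces and $I$-bundles are indeed excluded, as you say.

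Finally, your ``alternative route'' is not an argument. Isotoping into a fixed hierarchy ``up to finitely many choices'' presupposes the finiteness you are trying to prove. The finiteness of isotopy classes of essential surfaces of bounded complexity is asserted without justification. That paragraph should be dropped.
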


This includes cases where the analogous algebraic fact is hard to see, so Johannson also proved new algebraic results on finiteness of $\Out(\pi_1(M))$ for $M$ a closed, Haken 3-manifold.  See \cref{section:finiteness-properties} for more information on finiteness properties of mapping class groups of 3-manifolds.

\subsection{Sketch proof of surjectivity in \cref{thm:Waldhausen}}
Now we begin to outline the methods used in Waldhausen's proof. As well as the original article, good references include \cite[Chapter~13]{Hempel1976} and \cite{Jaco1980}.
The key is Haken's notion of hierarchies~\cite{Haken}.

\begin{definition}[Hierarchies]
Let $M_1$ be an irreducible 3-manifold.  A \emph{hierarchy} for $M_1$ of length $n$ is a sequence of triples
\[\big\{(M_j,F_j \subseteq M_j,N(F_j)) \big\}_{j=1}^n,\]
 where $F_j \subseteq M_j$ is an incompressible surface, and $N(F_j)$ is a tubular neighbourhood of $F_j$, such that $M_{j+1} = M_j \sm \mathring{N(F_j)}$ and $M_{n+1}$ is a union of 3-balls.
\end{definition}

\begin{theorem}[Existence of hierarchies]\label{thm:existence-hierarchies}
    Let $M_1$ be an irreducible 3-manifold with $\p M_1 \neq \emptyset$. Then there exists finite $n$ and a hierarchy for $M_1$ of length $n$,  $\big\{(M_j,F_j \subseteq M_j,N(F_j))\big\}_{j=1}^n$ such that $0 \neq [\partial F_j] \in H_1(\p M_j)$ for all $j$.
\end{theorem}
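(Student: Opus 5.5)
The plan is to build the hierarchy inductively, choosing at each stage an incompressible surface whose boundary is homologically nontrivial in the boundary of the current piece. Termination will be controlled by a complexity that drops with each cut. The basic input is the existence of a suitable surface. Let $M$ be a compact, orientable, irreducible 3-manifold with $\p M \neq \emptyset$ that is not a union of 3-balls. I will proceed as follows.

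\begin{enumerate}
\item If some component of $\p M$ is a sphere, irreducibility forces $M \cong D^3$, and we are done. So assume every boundary component has positive genus.
\item By half-lives-half-dies (Poincar\'e--Lefschetz duality), the image of $H_1(\p M;\Q) \to H_1(M;\Q)$ has rank at least $\tfrac12 \operatorname{rank} H_1(\p M;\Q) > 0$, so $H_1(M;\Z)$ is infinite.
\item Hence there is a nonzero class in $H^1(M;\Z) \cong [M,S^1]$ that restricts nontrivially to $\p M$. Choose a dual map $f \colon M \to S^1$. Pull back a regular value to get a properly embedded, two-sided surface $F$ representing a class in $H_2(M,\p M)$ whose boundary class is nonzero in $H_1(\p M)$.
\item Compress $F$ along compressing discs and discard sphere components, which bound balls by irreducibility. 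Also discard components with null-homologous boundary, choosing at least one remaining component carefully so the boundary class stays nonzero. This yields an incompressible $F_1$ with $0 \neq [\p F_1] \in H_1(\p M)$.
\end{enumerate}

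To ensure later stages also satisfy the boundary condition, I will use compressions and $\p$-compressions that reduce complexity while preserving the relative homology class. This keeps $[\p F_1]$ unchanged in $H_1(\p M)$.

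Cutting $M_1$ along $F_1$ gives $M_2$. Each component of $M_2$ is again irreducible: a sphere in $M_2$ bounds a ball in $M_1$, and since $F_1$ is incompressible and contains no sphere components, that ball misses $F_1$. Moreover, $M_2$ has nonempty boundary, so we can iterate the construction on each component that is not a ball.

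The main obstacle is termination, that is, showing the process stops after finitely many steps. I would follow Haken's approach and assign a complexity to $M_j$. One natural choice is the pair ($-\chi$ of boundary genus data, Kneser--Haken normal-surface count) with respect to a fixed triangulation of $M_1$.

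First I would try arranging each $F_j$ to be normal with respect to the triangulation restricted to $M_j$. Kneser finiteness then bounds the number of pairwise disjoint non-parallel incompressible surfaces. Once the pieces are handlebody-like, I would switch to cutting along discs that reduce the boundary genus. Here $\p$-irreducibility fails and compressing discs themselves, which have essential boundary, serve as the $F_j$. Cutting a positive-genus boundary along an essential disc strictly lowers total genus, and genus-zero pieces are balls.

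The delicate point is combining the two regimes. I would use an ordered complexity: first the sum over components of $\operatorname{rank} H_1$ (via $\p$-compressions), then the boundary genus. I would then check that each cut decreases it lexicographically, citing Hempel, Chapter~13, for the detailed verification.
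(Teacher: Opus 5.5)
Your steps 1--4 are the standard construction of an incompressible surface with $[\partial F]\neq 0$ in a non-ball component, and your check that cutting preserves irreducibility is correct. The gap is termination, which is the real content of the theorem: the complexities you propose do not decrease.

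Cutting along a properly embedded surface $F$ changes $\chi(\partial M)$ by $2\chi(F)$, so every cut along a surface with $\chi(F)<0$ \emph{increases} boundary genus. Concretely, let $M_1$ be the exterior of the figure-eight knot, or of any fibred knot of genus $g\geq 1$, and let $F_1$ be a fibre. It is incompressible, and $[\partial F_1]$ is the longitude, which is nonzero in $H_1(\partial M_1)$, so it is an admissible first cut. But $M_2\cong F_1\times I$ is a handlebody of genus $2g$. So the total rank of $H_1$ goes from $1$ to $2g$, the boundary genus goes from $1$ to $2g$, and $-\chi(\partial)$ goes from $0$ to $4g-2$. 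Both of your lexicographic complexities go up.

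Kneser--Haken finiteness does not rescue the argument either. It bounds the number of pairwise \emph{disjoint}, non-parallel normal surfaces in one fixed triangulated manifold. Successive hierarchy surfaces are not disjoint in $M_1$: each $\partial F_{j+1}$ lies on the copies of earlier surfaces in $\partial M_{j+1}$. Moreover, a triangulation of $M_1$ does not restrict to one of $M_j$, and retriangulating $M_j$ changes the Kneser bound, so no uniform bound results. Likewise, nothing in your proposal shows that your ``first regime'' ever ends.

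What is needed is the Haken--Waldhausen device the paper alludes to, which has three parts:
\begin{enumerate}
\item a handle (or cell) structure on $M_1$ that is inherited by $M_{j+1}$ whenever one cuts along a surface in normal position with respect to the structure on $M_j$;
\item a complexity of that structure given by a lexicographically ordered tuple of nonnegative integers;
\item a proof that this complexity strictly drops at every cut.
\end{enumerate}
This requires isotoping each $F_j$ into normal position relative to the \emph{inherited} structure, using irreducibility of $M_j$ and incompressibility of $F_j$. The hypothesis $[\partial F_j]\neq 0$ is what rules out trivial cuts. A boundary-parallel surface has null-homologous boundary, and cutting along it just returns a copy of $M_j$ together with a product $F_j\times I$.

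Citing Hempel, Chapter~13, for ``the verification'' does not close the gap. The complexity you would be verifying is not the one used there, and it does not decrease.
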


\begin{proof}
    See \cite[Section~2]{Waldhausen1968} or \cite[Thm~13.3]{Hempel1976}.
\end{proof}

To apply the theorem to a closed Haken manifold, we first cut along the given incompressible surface.
The condition that $0 \neq [\partial F_j] \in H_1(\partial M)$ is important in the use of \cref{thm:existence-hierarchies}, but this will not be explained in this section.

To prove \cref{thm:existence-hierarchies}, Waldhausen defined a complexity of a handle decomposition, in terms of three nonnegative integers. He took an incompressible surface, and modified it to be a `normal surface'. Cutting along a normal surface reduces the complexity, and hence the procedure terminates in finite time.

Next, to sketch the proof of surjectivity in Waldhausen's Theorem~\ref{thm:Waldhausen}, we will follow Scott's exposition~\cite{Scott1972}, and use the following proposition, which arranges for a homotopy equivalence to be a homeomorphism on surfaces in $M$.

\begin{proposition}\label{prop:scott-surfaces}
    Let $M$ be irreducible and boundary irreducible. Let $f \colon M \to M$, with $f^{-1}(\partial M) = \partial M$, be a homotopy equivalence. Let $F \subseteq M$ be incompressible and suppose that  either $F$ is non-separating, or, if  $M \sm \nu F$ has two components, suppose that neither of these components  has fundamental group $\pi_1(F)$. Then $f \simeq g$ such that either
    \begin{enumerate}[(a)]
        \item\label{item:prop-scott-surfaces-a} $g| \colon g^{-1}(F) \to F$ is a homeomorphism; or
        \item\label{item:prop-scott-surfaces-b} $g(M) \subseteq \p M$, and $M$ is an interval bundle over $F$, i.e.~$M \cong F \times I$ or $M \cong F \wt{\times} I$.
    \end{enumerate}
\end{proposition}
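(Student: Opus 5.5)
The plan is the standard transversality-and-compression argument (Waldhausen, Scott). First I homotope $f$, rel the condition $f^{-1}(\partial M)=\partial M$, so that it is transverse to $F$. Then $f^{-1}(F)$ is a properly embedded, two-sided surface $G\subseteq M$, possibly disconnected. The goal is to simplify $G$ by homotopies of $f$ until it consists of a single component mapped homeomorphically onto $F$. The exception is when $G$ becomes empty, which will lead to alternative \eqref{item:prop-scott-surfaces-b}.

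The first stage makes $G$ incompressible. Suppose a component of $G$ has a compressing disc $D$. Then $f(\partial D)$ is a loop in $F$ that is null-homotopic in $M$, hence null-homotopic in $F$ because $F$ is incompressible. So $f|_D$ can be homotoped rel $\partial D$ into $F$, using asphericity (\cref{lemma:kpi1}) to fill in the resulting sphere. Performing this homotopy in a neighbourhood of $D$ replaces $G$ by the surface surgered along $D$. Spheres appearing in $G$ bound balls by irreducibility, and we remove them by homotoping $f$ on the ball off $F$; here $\pi_2(M)=0$ is used again. Boundary compressions are handled in the same way, using boundary irreducibility and the condition $f^{-1}(\partial M)=\partial M$. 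A complexity such as $\sum(2-\chi)$ over the components, together with the number of components, strictly decreases, so the process terminates with $G$ incompressible and with no sphere or disc components that are inessential.

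The second stage takes each component $G_i$ of $G$. Since $G_i$ is incompressible and $f_*$ is $\pi_1$-injective, the map $f|\colon G_i\to F$ is $\pi_1$-injective. Using that $f$ is a homotopy equivalence, I compare the covers of $M$ corresponding to $\pi_1(G_i)$ and to $\pi_1(F)$. I want to show that $f|_{G_i}$ is either a finite-degree covering or has degree zero. If it has degree zero, I push $G_i$ off $F$ by a homotopy. Next I have to rule out two parallel components of $G$ that map to $F$ with opposite orientations. If $G_i$ and $G_j$ cobound a product region $G_i\times I$, then $f$ maps that region into one side of $F$, and the map can be homotoped off $F$. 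Finding this product region needs Waldhausen's theorem that homotopic incompressible surfaces are isotopic, or cobound products. After these reductions I expect $G$ to be a single component mapping with degree $\pm1$. A $\pi_1$-injective degree $\pm1$ map between surfaces is homotopic to a homeomorphism by the surface case of the Dehn--Nielsen--Baer theorem, and extending that homotopy to a collar gives alternative \eqref{item:prop-scott-surfaces-a}.

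The main obstacle is excluding $G=\emptyset$ and controlling the degree. Here the hypothesis on the complementary components is used. If $f$ misses $F$, then $f(M)$ lies in one side of $F$, so $f_*\pi_1(M)$ is contained in the fundamental group of one complementary piece. Surjectivity of $f_*$ then forces that piece to carry all of $\pi_1(M)$. If $F$ is non-separating, this contradicts the HNN structure of $\pi_1(M)$, since the stable letter is not in the image. If $F$ separates, amalgamation is only possible when the other piece has fundamental group $\pi_1(F)$, which is excluded. The only exception is the degenerate case where the image can be pushed into $\partial M$. There one argues, via Waldhausen's characterisation of products, that $M$ is an $I$-bundle over $F$, which is \eqref{item:prop-scott-surfaces-b}. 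A similar degree count in homology relative to the boundary, $H_3(M,\partial M)$, shows that the total degree over $F$ is $\pm1$. I expect the hardest part to be carrying out these group-theoretic steps carefully, particularly the $I$-bundle recognition.
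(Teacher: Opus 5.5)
The paper does not prove this proposition. It is quoted, following Scott's exposition of Waldhausen \cite{Scott1972}, and used only as input for the hierarchy argument, so I am comparing your outline with the standard Waldhausen--Scott argument. Several parts are fine: the transversality and compression stage, the observation that each component $G_i$ of $G=g^{-1}(F)$ maps $\pi_1$-injectively to $F$ (hence up to homotopy as a covering of degree $d_i$), your HNN/amalgam argument that $G\neq\emptyset$, and the final appeal to the surface case.

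The genuine gap is the step you only say you ``expect'': reducing $G$ to a single component. Your count in $H_3(M,\partial M)$ gives $\sum_i\varepsilon_i d_i=\pm1$. That finishes the proof when $G$ is connected, but it is equally compatible with several components, for instance degrees $+2,-1$ or $+1,+1,-1$. Your only reducing move cancels two oppositely oriented components that cobound a product region. Your justification (homotopic incompressible surfaces are parallel) needs $G_i$ and $G_j$ to be homotopic in $M$, and nothing provides that; they are merely both mapped onto $F$, possibly with different degrees. Even granting a product $R\cong G_i\times I$ mapped into a side $N_j$ of $F$, pushing $g|_R$ across $F$ requires $g|_{G_i}$ and $g|_{G_j}$ to be homotopic in $F$, not just in $N_j$. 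That is not automatic: if $N_j$ is a twisted $I$-bundle with horizontal boundary $F$, the identity of $F$ and the deck involution are homotopic in $N_j$ but not in $F$.

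What is missing is a real use of the fact that $g_*$ is an \emph{isomorphism} between two splittings. One is the splitting of $\pi_1(M)$ along the $\pi_1(G_i)$; the other is the splitting of $\pi_1(M)$ along $\pi_1(F)$. The map $g$ induces an equivariant simplicial map $\phi\colon T_G\to T_F$ between the corresponding Bass--Serre trees. Your hypothesis on the pieces of $M\setminus\nu F$ is exactly what makes $T_F$ minimal, since a piece with fundamental group $\pi_1(F)$ would give valence-one vertices. So if $\phi$ were locally injective, it would be an isomorphism of trees and $G$ would be connected. Hence, for $g$ minimising the number of components, a disconnected $G$ gives a fold at some complementary region $M_k$. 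Via injectivity of $g_*$, that fold makes the group of a boundary surface of $M_k$ commensurable with another boundary surface group, or of finite index in a strictly larger subgroup of $\pi_1(M_k)$. Waldhausen's product and $I$-bundle recognition theorems \cite{Hempel1976} then identify the region, and one still has to show it can be pushed across $F$ to lower the complexity. None of this appears in your outline; you use the hypothesis only to exclude $G=\emptyset$.

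Your route to (b) is also incoherent. Your own HNN/amalgam argument shows that no $g\simeq f$ misses $F$, yet you then posit an ``exception'' with $g(M)\subseteq\partial M$. For closed $F$ such a $g$ misses $F$, and in any case it violates $g^{-1}(\partial M)=\partial M$, which your homotopies were meant to preserve. You also give no argument producing the $I$-bundle structure.
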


We will to use this to sketch the proof of surjectivity in \cref{thm:Waldhausen}.  To do this, we prove the following theorem, using \cref{prop:scott-surfaces}.

\begin{theorem}\label{thm:that-implies-wladhausens-thm}
    Let $M$ be irreducible and boundary irreducible. Let $f \colon M \to M$, with $f^{-1}(\partial M) = \partial M$, be a homotopy equivalence.  Then either
    \begin{enumerate}[(a)]
        \item\label{item-waldhausen-thm-a} $f$ is properly homotopic to a homeomorphism $g \colon M \to M$; or
        \item\label{item-waldhausen-thm-b} $f$ is properly homotopic to $g$ with $g(M) \subseteq \p M$, and $M$ is an interval bundle over $F$ for some surface $F$, i.e.~$M \cong F \times I$ or $M \cong F \wt{\times} I$.
    \end{enumerate}
\end{theorem}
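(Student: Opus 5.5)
We argue by induction along a hierarchy, using \cref{prop:scott-surfaces} repeatedly to make $f$ a homeomorphism on more and more of $M$. Implicitly $M$ is Haken; this is automatic when $\partial M \neq \emptyset$ by \cref{remark:non-boundary-Haken}\eqref{item:nbH}. In the closed case we first cut along a given incompressible surface, and by \cref{thm:existence-hierarchies} we obtain a hierarchy $\{(M_j,F_j,N(F_j))\}_{j=1}^n$ for $M=M_1$ with $[\partial F_j]\neq 0$ in $H_1(\partial M_j)$. We may assume each $F_j$ is connected and that every $M_j$ is again irreducible and boundary irreducible. Irreducibility passes to the pieces because the $F_j$ are incompressible. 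Boundary irreducibility of the new boundary, which contains copies of $F_j$, follows from incompressibility of $F_j$ together with that of the old boundary.

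\textbf{Step 1: first surface.} Apply \cref{prop:scott-surfaces} to $F_1\subseteq M$. Its hypotheses can be arranged: if $F_1$ separates $M$ and a complementary piece has fundamental group $\pi_1(F_1)$, then by the standard Stallings/Brown-type result that piece is a product $F_1\times I$. In that case we replace $F_1$ by a different surface, or else recognise $M$ as an interval bundle. If alternative \eqref{item:prop-scott-surfaces-b} occurs we are in case \eqref{item-waldhausen-thm-b} and stop. Otherwise $f\simeq g$ with $g|\colon g^{-1}(F_1)\to F_1$ a homeomorphism. Since $g^{-1}(F_1)$ is then a surface mapped homeomorphically onto $F_1$, we can isotope it to $F_1$ itself; at that point $g$ maps $N(F_1)$ to $N(F_1)$ by a bundle homeomorphism and maps the complement $M_2$ to $M_2$.

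\textbf{Step 2: induction.} The restriction $g|\colon M_2\to M_2$ satisfies $g^{-1}(\partial M_2)=\partial M_2$. On each component it induces a $\pi_1$-isomorphism: $\pi_1(M)$ is a graph-of-groups amalgam over $\pi_1(F_1)$, and $g$ preserves the splitting. Hence the restriction is a homotopy equivalence, since the pieces are aspherical by \cref{lemma:kpi1}, or are balls at the final stage. We then repeat the argument with $F_2\subseteq M_2$, and so on. Every homotopy is taken rel the part of the boundary on which $g$ is already a homeomorphism, so that the homotopies on pieces glue to a proper homotopy of $M$. At the last stage $M_{n+1}$ is a union of balls. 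There $g$ is a homeomorphism on the boundary spheres, and by the Alexander trick it is homotopic rel boundary to a homeomorphism. This yields case \eqref{item-waldhausen-thm-a}.

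\textbf{Main obstacle.} The delicate point is doing all of this relative to the boundary. \cref{prop:scott-surfaces} as stated is not relative, so I would first prove a relative version in which $g$ is already a homeomorphism on the part of $\partial M_j$ coming from earlier cuts. Its proof is the same transversality-plus-compression argument: make $f$ transverse to $F$, then use incompressibility and irreducibility to remove inessential components of $f^{-1}(F)$ by homotopy.

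Two further issues must be handled inside the induction. First, the interval-bundle alternative \eqref{item:prop-scott-surfaces-b} could in principle appear for some intermediate $M_j$; there the condition $[\partial F_j]\neq 0$ should be used to rule it out or to handle it directly. Second, a homeomorphism $\partial M_j\to\partial M_j$ has to extend, which needs control of how $g$ permutes the two sides of $N(F_j)$.
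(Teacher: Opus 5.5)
\paragraph{Comparison with the paper's proof.} Your overall strategy matches the paper's: induct along a hierarchy, apply \cref{prop:scott-surfaces} to the first surface, cut, and finish on the balls with the Alexander trick. However, the step that sets up your induction fails.

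\paragraph{The self-map induction does not work.} You claim that once $g|\colon g^{-1}(F_1)\to F_1$ is a homeomorphism, $g^{-1}(F_1)$ can be isotoped onto $F_1$, so that $g$ restricts to a self-map $M_2\to M_2$. There is no reason for this, and it is false in general. Take $M=T^3$ and $F_1=T^2\times\{\mathrm{pt}\}$. Let $f$ be the linear homeomorphism given by some $A\in\GL(3,\Z)$ whose third row is not $\pm(0,0,1)$. For any $g\simeq f$ transverse to $F_1$, the surface $g^{-1}(F_1)$ is Poincar\'e dual to the pullback under $g$ of the class dual to $F_1$. That class is the third row of $A$ in $H^1(T^3;\Z)\cong\Z^3$, so $g^{-1}(F_1)$ is not even homologous to $\pm F_1$.

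This is exactly why the paper labels the target copy as $N$. It cuts the \emph{domain} along $f^{-1}(F)$ and the \emph{target} along $F$, and applies the inductive hypothesis to the resulting map $f'\colon M'\to N'$. These are two a priori different manifolds, and the induction is on the hierarchy length of the target. So the statement you induct on must concern homotopy equivalences $M\to N$, not self-maps. Your graph-of-groups argument that the pieces map by homotopy equivalences then has to compare the splitting of $\pi_1(M)$ along $g^{-1}(F_1)$ with the splitting of $\pi_1(N)$ along $F_1$.

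\paragraph{The boundary issues.} The paper resolves the points you leave open differently from what you suggest. Before invoking the inductive hypothesis, it makes $f'$ a homeomorphism on all of $\p M'$:
\begin{itemize}
\item on the copies of the cut surface, this comes from \cref{prop:scott-surfaces};
\item on the remaining boundary, including the original $\p M$, it uses that every homotopy equivalence of a surface is homotopic to a homeomorphism.
\end{itemize}
Your argument never arranges the second item, but you need it at the ball stage.

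Alternative (b) is then excluded at every intermediate stage, but not via $[\p F_j]\neq 0$. Instead, a map that is already a homeomorphism on $\p M'$ cannot be homotoped rel $\p M'$ into $\p M'$, because $\p M'$ is not a retract of $M'$.

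Relatedly, your claim that every $M_j$ stays boundary irreducible is false. Cutting $T^2\times I$ along a vertical essential annulus gives a solid torus. So the statement you actually induct on has to be the relative one you allude to, for maps already homeomorphic on the boundary, rather than the theorem as stated. The paper's sketch is terse here too, but it does run the induction with $f'$ a homeomorphism on $\p M'$.
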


The theorem almost implies surjectivity in Waldhausen's theorem. First, because $M$ is an Eilenberg-MacLane space, every automorphism of $\pi_1(M)$ is realised by a homotopy self-equivalence of $M$. If \cref{thm:that-implies-wladhausens-thm}~\eqref{item-waldhausen-thm-a} occurs, then we are done. If \cref{thm:that-implies-wladhausens-thm}~\eqref{item-waldhausen-thm-b} occurs, then more argument is required. The fact that every homotopy self-equivalence of a surface is homotopic to a homeomorphism is a key input. This, together with further arguments of Waldhausen \cite{Waldhausen1968} for the case of interval bundles over surfaces, into the details of which we will not go, allow us to conclude.

Also note that it suffices to find a homeomorphism, by the theorem of Cerf~\cite{Cerf1959} and Hatcher~\cite{Hatcher1983} that every homeomorphism between 3-manifolds is isotopic to a diffeomorphism.
This concludes our sketch of surjectivity in Waldhausen's \cref{thm:Waldhausen}, modulo
a sketch of the proof of \cref{thm:that-implies-wladhausens-thm}.

The idea of the proof of \cref{thm:that-implies-wladhausens-thm} is to repeatedly cut along surfaces, using hierarchies, until we obtain pieces that are sufficiently simple 3-manifolds, namely either 3-balls or interval bundles over surfaces.

\begin{proof}[Sketch proof of \cref{thm:that-implies-wladhausens-thm}]
    Choose a hierarchy for $M$ as in \cref{thm:existence-hierarchies}.  Induct on the length of the hierarchy.
    Let $F \subseteq M$ be irreducible. Apply \cref{prop:scott-surfaces}. If \cref{prop:scott-surfaces}~\eqref{item:prop-scott-surfaces-b} holds, then \cref{thm:that-implies-wladhausens-thm}~\eqref{item-waldhausen-thm-b} holds.

    If \cref{prop:scott-surfaces}~\eqref{item:prop-scott-surfaces-a} holds, then we give more information on the induction. Label one copy of $M$ as $N$ and think of $f$ as a map $f \colon M \to N$.   Cut $N$ along $F$ and cut $M$ along $f^{-1}(F)$. We obtain a map $f' \colon M' \to N'$
between the cut manifolds. Either $f'$ satisfies the hypotheses of the theorem or  $M' = N'$ is a union of copies of $D^3$. The latter is the bases case, dealt with below. Also  $f|_{\p M'}$ is a homeomorphism (apply the fact that every homotopy equivalence of a surface can be approximated by a homeomorphism, to any boundary component where this is not yet the case).
By the inductive hypothesis, and the fact that the length of the hierarchy for $M'$ is less than the length of the hierarchy for $M$, either \eqref{item-waldhausen-thm-a} or \eqref{item-waldhausen-thm-b} hold in the statement of \cref{thm:that-implies-wladhausens-thm}.  But $f|_{\p M'}$ is a homeomorphism, so any $g$ arising from a proper homotopy must also satisfy $g^{-1}(\partial M) \subseteq \partial M$. Thus \eqref{item-waldhausen-thm-b} cannot hold, and so  \eqref{item-waldhausen-thm-a} does hold. Therefore $f'$ is properly homotopic to a homeomorphism. Glue back together to obtain a homeomorphism.

  For the base case, if the hierarchy has length one, then after cutting we  have a union of 3-balls.  But every homotopy equivalence of $D^3$ that is a homeomorphism on the boundary is properly homotopic to a homeomorphism by the Alexander trick -- coning the given homeomorphism of $S^2$ produces an extension to a homeomorphism of~$D^3$, then a straight line homotopy yields a proper homotopy between this homeomorphism and the original homotopy equivalence.
  \end{proof}

\section{Mapping class groups of hyperbolic 3-manifolds}\label{sec-hyperbolic}

In this section, we will consider the case of compact, orientable hyperbolic manifolds.
We will begin by discussing the basic properties of hyperbolic 3-manifolds. We will then discuss Mostow rigidity \cite{Mostow1968} as well as its consequences for mapping class groups. Then we will discuss the work of Gabai \cite{Gabai1997} and Gabai--Meyerhoff--Thurston \cite{GabaiMeyerhoffThurston2003} on completing the proof that \[\pi_0\Diff(M) \cong \pi_0\Isom(M) \cong \Out(\pi_1(M))\] for closed hyperbolic 3-manifolds.
Finally, we will discuss the resolution of the Smale conjecture for closed hyperbolic 3-manifolds due to Gabai \cite{Gabai2001}.

From now on, we will write $\cong_{\isom}$ and $\cong$ when two manifolds are isometric or diffeomorphic respectively. We will write $\sim$ when two maps between manifolds are isotopic.

\subsection{Definition and properties} \label{ss:Hyp-defs}

The following definition is equivalent to \cref{defn-hyperbolic} when $M$ is a 3-manifold.

\begin{definition}
For $n \ge 2$, an $n$-manifold $M$ is \textit{hyperbolic} if its interior admits a complete Riemannian metric of constant curvature $-1$.
\end{definition}

Our prototypical example of a hyperbolic $n$-manifold is real hyperbolic space $\H^n$.
This has multiple definitions (which are equivalent up to isometry) but, for our purposes, it will be convenient to use the \textit{Poincar\'{e} disc model} where $\H^n =
\Int D^n$ is taken to be the open unit ball in $\R^n$ with metric
\[ \frac{dx_1^2 + \cdots + dx_n^2}{(1-(x_1^2 + \cdots + x_n^2))^2}.\]
The following is well known (see, for example, \cite[Theorem 3.32]{BH99}).

\begin{proposition}
    Every simply connected hyperbolic $n$-manifold is isometric to real hyperbolic space $\H^n$.
\end{proposition}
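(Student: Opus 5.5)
The plan is to use the Cartan--Hadamard theorem together with the local rigidity of constant curvature metrics. Let $M$ be a simply connected hyperbolic $n$-manifold, so $M$ carries a complete Riemannian metric of constant sectional curvature $-1$. (When $M$ has boundary we work with its interior, which is simply connected as well.) I will produce an isometry $M \to \H^n$ in four steps.

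First, I check that the metric on $\H^n$ written above in the Poincar\'e disc model really has constant curvature $-1$. This is a routine computation for a conformally flat metric. It is cleanest with the conformal factor written as $4/(1-|x|^2)^2$; with the displayed normalisation one gets curvature $-4$, which differs only by a constant rescaling. I also check that this metric is complete, by noting that radial geodesics have infinite length. This gives a model space of curvature $-1$ that is complete and simply connected.

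Second, I fix points $p \in M$ and $o \in \H^n$, and a linear isometry $L \colon T_pM \to T_o\H^n$. By Cartan--Hadamard, since $M$ is complete with nonpositive curvature, $\exp_p \colon T_pM \to M$ is a covering map. Because $M$ is simply connected, it is a diffeomorphism, and the same holds for $\exp_o$ on $\H^n$. I then define
\[
F := \exp_o \circ L \circ \exp_p^{-1} \colon M \to \H^n .
\]
This is a diffeomorphism.

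Third, I show that $F$ is an isometry, using the Cartan--Ambrose--Hicks argument. In geodesic polar coordinates, the pullback of the metric under $\exp$ is determined by Jacobi fields along radial geodesics. When the curvature is constant $-1$, the Jacobi equation becomes $J'' = J$ for normal Jacobi fields, so their norms are $\sinh(t)|J'(0)|$. Consequently both metrics take the form $dr^2 + \sinh^2(r)\, g_{S^{n-1}}$ in these coordinates, and $F$ matches them. This yields the isometry.

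The main obstacle is the Cartan--Hadamard step, in particular the claim that completeness together with the absence of conjugate points makes $\exp_p$ a covering. To handle it, I pull the metric back along $\exp_p$, which is a local diffeomorphism because Jacobi fields never vanish when $K\le 0$. The pulled-back metric is complete, since radial lines through the origin are geodesics, so Hopf--Rinow applies. A local isometry from a complete manifold is a covering, which finishes the step.
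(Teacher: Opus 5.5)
Your argument is correct. The paper gives no proof of its own: it quotes the statement as well known and cites \cite[Theorem 3.32]{BH99}. Your argument is the standard self-contained proof of that cited fact, namely Cartan--Hadamard together with the Jacobi-field computation in normal polar coordinates, where $|J(t)|=\sinh(t)\,|J'(0)|$ and the Gauss lemma and polarisation handle the mixed terms. What it adds over the bare citation is that it shows exactly where completeness and simple connectivity are used.

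You can trim it slightly. Step 3 already shows that $\exp_p^*g$ is the model metric on all of $T_pM$, and this needs only completeness of $M$. Then $\exp_p\circ L^{-1}\circ\exp_o^{-1}\colon \H^n\to M$ is a local isometry from a complete manifold, hence a covering, hence an isometry because $M$ is simply connected. So the general $K\le 0$ conjugate-point argument is unnecessary.

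Your remark on normalisation is right and necessary. With the conformal factor $1/(1-|x|^2)^2$ displayed in the paper, the curvature is $-4$. The statement holds as written only after replacing that factor by $4/(1-|x|^2)^2$.
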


An important isometry invariant of a hyperbolic manifold $M$ is its volume:
\[ \vol(M) := \int_M \omega_g \]
where $\omega_g$ is the volume form associated to a choice of (hyperbolic) Riemannian metric $g$.

If $M$ is a hyperbolic $n$-manifold, then its universal cover is a simply connected hyperbolic $n$-manifold and so is isometric to $\H^n$. This implies that $M \cong \H^n/ G$ for some $G \le \Isom(\H^n)$.

We will now consider the case of compact, orientable hyperbolic 3-manifolds $M$.
Since 3-manifolds with nonempty boundary are Haken, and so the results of \cref{sec-Haken} apply, nothing will be lost from our development if we restrict further to closed, orientable hyperbolic 3-manifolds.

Since $\Isom^{+}(\H^3) \cong \PSL_2(\C)$ where $\PSL_2(\C):= \SL_2(\C)/\pm 1$, every closed orientable hyperbolic 3-manifold is of the form $\H^3/G$ where $G \le \PSL_2(\C)$ is a discrete subgroup which is of finite covolume in the sense that $\vol(\H^3/G)<\infty$ (since $M$ is closed).
Such groups $G$ are known as \textit{Kleinian groups}. Not all Kleinian groups correspond to manifolds since, in general, $\H^3/G$ will be an orbifold rather than a manifold.

\begin{example}
Here are some  examples of hyperbolic 3-manifolds.
\begin{enumerate}
    \item
Exteriors of hyperbolic knots are compact hyperbolic 3-manifolds. Here, a knot $K \subseteq S^3$ is hyperbolic if $S^3 \sm \nu K$ is a hyperbolic 3-manifold, or equivalently if the interior of the exterior, which is homeomorphic to $S^3 \sm K$, admits a hyperbolic structure. Since $S^3 \sm \nu K$ has nonempty boundary, it is  Haken. For instance the figure eight knot $4_1$ is hyperbolic.

\item
Hyperbolic Dehn surgery. This is an operation which can be used to construct further hyperbolic 3-manifolds from existing ones. It can be used to produce infinitely many (non-diffeomorphic) hyperbolic 3-manifolds with $\vol(M) \le V$ for some constant $V > 0$.
Given a hyperbolic knot $K \subseteq S^3$ such that $S^3 \sm K$ is hyperbolic, we can fill $S^3 \sm \nu K$ with a solid torus $S^1 \times D^2$.  A slope in $\Q \cup \{\infty\}$ corresponds to a curve in $\partial(S^3 \sm\ \nu K)$.  The slope of the filling is the curve that is glued to $\{1\} \times \partial D^2$. This operation is called \emph{Dehn filling}.  Thurston showed that there exist at most finitely many so-called \emph{exceptional slopes} for which this construction does not yield a closed hyperbolic 3-manifold (see \cite[Theorem 5.8.2]{Thurston-Levy-book} and the example which follows). For $K=4_1$, the exceptional slopes are $\{\infty, 0, \pm 1, \pm 2, \pm 3, \pm 4\}$.  So one hyperbolic knot yields infinitely many closed, hyperbolic 3-manifolds.

\item More generally, Thurston showed the following more general statement \cite[Theorem 5.8.2]{Thurston-Levy-book}. Let $M$ be a compact, orientable, hyperbolic 3-manifold with $\partial M \cong T^2$. Then there are at most finitely many exceptional slopes in $\Q \cup \{\infty\}$ for which the corresponding Dehn filled 3-manifold is not hyperbolic.

    \item
Arithmetic hyperbolic 3-manifolds. If $\mathcal{O}$ is an order (a subring that is also a lattice) in a quaternion algebra $A$ over a number field $K$ with exactly one complex place, then, subject to certain conditions, one can construct a Kleinian group
$G_{\mathcal O}\le \PSL_2(\C) \cong \Isom^+(\H^3)$
and a corresponding closed, orientable, hyperbolic 3-manifold $M_{\mathcal{O}} = \H^3/G_{\mathcal{O}}$. For more details, see \cite[Chapter 8]{MR03}.

\item \label{item:jorg}
It was shown by J{\o}rgensen \cite[Theorem 1]{Jo77} that there exist compact hyperbolic $3$-manifolds $M$ that fibre over $S^1$.  These are the mapping tori of \emph{pseudo-Anosov} mapping classes of surfaces.
\end{enumerate}
\end{example}

\begin{remark}\leavevmode
\begin{enumerate}[(a)]
    \item  There is no analogue of hyperbolic Dehn surgery in higher dimensions. In fact, for all $n \ge 4$ and $V >0$, there are finitely many hyperbolic $n$-manifolds $M$ with $\vol(M) < V$ (up to diffeomorphism). So, in some sense, there are not that many hyperbolic $n$-manifolds for $n \ge 4$. This is another reason why hyperbolic $3$-manifolds are especially interesting.
\item Following Example (\ref{item:jorg}) above, one can also ask in what other dimensions $n \ge 4$ there exists a hyperbolic $n$-manifold that fibres over $S^1$. Such examples do not exist for $n$ even, since manifolds $M$ which fibre over $S^1$ necessarily have $\chi(M) = 0$, whilst it is an elementary consequence of the Gauss-Bonnet theorem \cite[Theorem 9.3.1]{Ra94} that $\chi(M) \ne 0$ for $M$ an even-dimensional hyperbolic manifold. An example in the case $n=5$ was found by Italiano-Martelli-Migliorini \cite{IMM23} but the case of $n \ge 7$ odd remains open.
\end{enumerate}
\end{remark}

Here are some properties of closed hyperbolic 3-manifolds.
\begin{enumerate}
\item
They are aspherical since $\wt M \cong \H^3$ is contractible. This implies, for example, that $\pi_1(M) = \G$ is torsion-free.
    \item
They are irreducible and atoroidal. Conversely, by Thurston's hyperbolisation theorem, every closed irreducible atoroidal 3-manifold which is not Seifert fibred is hyperbolic~\cite[Theorem 1.7.5]{AFW2015}.
\item
If $M$ is a closed hyperbolic 3-manifold, then $\pi_1(M)$ is a hyperbolic group (in the sense of Gromov). This follows from the fact that $\pi_1(M)$ acts freely by isometries on $\H^3$ and so, by the Milnor-Svarc lemma \cite[Proposition I.8.19]{BH99}, the Cayley graph of $\pi_1(M)$ is quasi-isometric to $\H^3$, from which the result follows. In fact, this is true for fundamental groups of closed hyperbolic $n$-manifolds for all $n \ge 2$.
\item
Every closed hyperbolic $3$-manifold $M$ has a finite cover $M' \to M$ where $M'$ fibres over the circle. This is (a special case of) the virtual fibring conjecture, and was shown by Agol \cite{Ag13}.
\end{enumerate}

Let $M$ be a closed hyperbolic $3$-manifold. Since $M$ is aspherical, we have that $M \simeq K(\pi_1(M),1)$. 
Therefore, by \cref{lemma:haut-equals-out-for-Kpi1}, for a closed hyperbolic 3-manifold $M$ we have that
\[ \pi_0 \hAut(M) \cong \Out(\pi_1(M)). \]

\subsection{Mostow rigidity}

We will now discuss the following famous result, due to Mostow \cite{Mostow1968}.

\begin{theorem}[Mostow rigidity]\label{thm-Mostow rigidity}
Let $M$ and $N$ be closed, orientable, hyperbolic $3$-manifolds. If $f \colon M \to N$ is a homotopy equivalence, then $f$ is homotopic to an isometry.
\end{theorem}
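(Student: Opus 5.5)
We follow the classical Mostow--Gromov argument via the boundary at infinity. Identify $\wt M \cong \H^3 \cong \wt N$ and write $\pi_1(M) = \G_M$, $\pi_1(N) = \G_N$, acting cocompactly by isometries. First replace $f$ by a smooth homotopy inverse pair $f,g$, and lift to $\tilde f \colon \H^3 \to \H^3$. This lift is equivariant with respect to the isomorphism $\theta = f_* \colon \G_M \to \G_N$. By compactness of $M$ and $N$ and the Milnor--Svarc lemma, $\tilde f$ is a quasi-isometry of $\H^3$: it is Lipschitz with a quasi-inverse $\tilde g$ at bounded distance from inverting it.

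The next step extends $\tilde f$ to the sphere at infinity. By the Morse lemma, quasi-geodesics in $\H^3$ stay within bounded distance of genuine geodesics. Hence $\tilde f$ induces a homeomorphism $\partial \tilde f \colon S^2_\infty \to S^2_\infty$, and this map is $\theta$-equivariant for the actions on $S^2_\infty = \hat{\C}$. The Morse lemma also gives a geometric control: $\tilde f$ sends (ideal) regular simplices to sets that are uniformly close to straight simplices. From this one deduces that $\partial\tilde f$ is quasiconformal.

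The key step, which we expect to be the main obstacle, is to show that $\partial \tilde f$ is in fact conformal, i.e.\ a M\"obius transformation. We would use Gromov's volume approach, since it stays within what a survey can sketch. Straighten $f$ via barycentric straightening of simplices, and compare simplicial volumes: $\|M\| = \vol(M)/v_3$, where $v_3$ is the volume of the regular ideal tetrahedron. Since $f$ has degree $\pm1$, we get $\vol(M)=\vol(N)$. Then examine a sequence of fundamental cycles whose simplices have volume tending to $v_3$. Equality forces almost every regular ideal tetrahedron to be mapped by $\partial\tilde f$ to a regular ideal tetrahedron. Reflecting along faces generates a dense set of vertices, so by continuity $\partial \tilde f$ preserves all regular ideal tetrahedra, and hence is M\"obius.

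Alternatively, one could use the analytic route: by ergodicity of the $\G_M$-action on $S^2_\infty$ and a.e.\ differentiability of quasiconformal maps, an equivariant measurable field of ellipses must be invariant and therefore round. The delicate point in either route is the measure-theoretic or limiting argument, and we would lean on the references rather than reproduce it.

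Finally, let $\phi \in \PSL_2(\C)$, extended to $\Isom(\H^3)$ and allowing orientation reversal, be the isometry with boundary values $\partial \tilde f$. Equivariance of $\partial\tilde f$ gives $\phi \gamma \phi^{-1} = \theta(\gamma)$ for all $\gamma \in \G_M$. Therefore $\phi$ descends to an isometry $\bar\phi \colon M \to N$ inducing $\theta$ on $\pi_1$. Since $N$ is aspherical, by \cref{lemma:haut-equals-out-for-Kpi1}, maps $M \to N$ inducing the same homomorphism up to conjugation are homotopic. Hence $f \simeq \bar\phi$. Concretely, one can also take the straight-line homotopy in $\H^3$ between $\tilde f$ and $\phi$, which is equivariant and has bounded length.
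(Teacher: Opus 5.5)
Your proposal is correct and follows the same three-step outline as the paper's sketch: extend the lift $\tilde f$ continuously to $\partial\H^3$, show the boundary map is conformal (M\"obius, possibly orientation-reversing), and use the correspondence between conformal maps of $\partial\H^3$ and isometries of $\H^3$ to get an equivariant isometry that descends to $M\to N$ and is homotopic to $f$. The only difference is that the paper treats the conformality step as a black box with references, whereas you sketch it, either through Gromov's simplicial-volume argument or through Mostow's quasiconformal/ergodic argument.
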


\begin{remark} \label{remark:Mostow}
\leavevmode
\begin{enumerate}[(a)]
\item This was generalised to not necessarily-closed finite volume hyperbolic 3-manifolds by Prasad~\cite{Pr73}.

\item This holds for closed, hyperbolic $n$-manifolds for all $n \ge 3$ but fails for $n=2$ where the surfaces $\Sigma_g$ admit infinitely many inequivalent hyperbolic metrics for any $g \ge 2$. Hyperbolic 3-manifolds therefore occupy a middle ground where the dimension $n=3$ is large enough for Mostow rigidity but small enough that infinitely many hyperbolic 3-manifolds exist with bounded volume.

\item  The Borel conjecture can be viewed as a broad generalisation of Mostow rigidity. Let $M$ and $N$ be closed, aspherical, topological $n$-manifolds. Then the Borel conjecture is that, if $f \colon M \to N$ is a homotopy equivalence, then $f$ is homotopic to a homeomorphism. This is true for $n=3$ by Perelman's resolution to Thurston's geometrisation conjecture~\cite{Perelman:2002-1,Perelman:2003-1,Perelman:2003-2}.
The Borel conjecture is true for $n \geq 5$ for closed, aspherical $n$-manifolds whose fundamental groups satisfy the Farrell-Jones conjecture. If the fundamental group is moreover a good group, then the Borel conjecture holds in dimension four as well (see \cite[Theorem 2.28]{Lu10}).

\end{enumerate}
\end{remark}

In particular, if two closed hyperbolic 3-manifolds are diffeomorphic, then they are isometric. That is, the choice of hyperbolic metric $g$ on a closed hyperbolic 3-manifold $M$ is essentially unique. Moreover, we have the following corollary.

\begin{corollary}
    Let $M$, $N$ be closed, orientable, hyperbolic $3$-manifolds. Then
    \[ M \cong_{\isom} N \,\Leftrightarrow\, M \cong N \,\Leftrightarrow\,  M \simeq N \,\Leftrightarrow\,  \pi_1(M) \cong \pi_1(N).\]
\end{corollary}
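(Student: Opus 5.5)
The plan is to prove the chain of equivalences by showing the cycle of implications
\[ M \cong_{\isom} N \Rightarrow M \cong N \Rightarrow M \simeq N \Rightarrow \pi_1(M)\cong\pi_1(N) \Rightarrow M \cong_{\isom} N. \]
The first three implications are formal. An isometry is in particular a diffeomorphism, since isometries of Riemannian manifolds are smooth. A diffeomorphism is in particular a homotopy equivalence. A homotopy equivalence induces an isomorphism on fundamental groups after choosing basepoints and a path between $f(x_0)$ and the basepoint of $N$.

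The only substantive step is $\pi_1(M)\cong\pi_1(N) \Rightarrow M \cong_{\isom} N$, and I would split it into two parts.

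First, upgrade the group isomorphism to a homotopy equivalence. Both $M$ and $N$ are closed hyperbolic, so their universal covers are $\H^3$, which is contractible. Hence both are aspherical, $M\simeq K(\pi_1(M),1)$ and $N\simeq K(\pi_1(N),1)$, as noted before \cref{thm-Mostow rigidity}. Given an isomorphism $\psi\colon \pi_1(M)\to\pi_1(N)$, I invoke the standard fact \cite[Proposition 1B.9]{Ha02}, already used in \cref{lemma:haut-equals-out-for-Kpi1}. It says that for a connected CW complex $X$ and a $K(G,1)$ space $Y$, every homomorphism $\pi_1(X)\to G$ is induced by a based map $X\to Y$, unique up to based homotopy; closed smooth manifolds admit CW structures, so this applies. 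This gives maps $f\colon M\to N$ realising $\psi$ and $g\colon N\to M$ realising $\psi^{-1}$. The composites $g\circ f$ and $f\circ g$ induce the identity on $\pi_1$, so by the uniqueness part they are homotopic to the identities. Thus $f$ is a homotopy equivalence. (Alternatively, Whitehead's theorem applies, since $f$ induces isomorphisms on all homotopy groups, all higher ones being zero.)

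Second, apply \cref{thm-Mostow rigidity} to $f$. It gives that $f$ is homotopic to an isometry $M\to N$, so $M\cong_{\isom} N$, closing the cycle.

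No step is a genuine obstacle once Mostow rigidity is assumed. The only point needing care is the $K(\pi,1)$ realisation of $\psi$ by a homotopy equivalence, and this rests on asphericity of closed hyperbolic manifolds, which the paper has already recorded.
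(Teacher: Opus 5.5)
Your proof is correct and takes essentially the same approach as the paper. The paper records the corollary as an immediate consequence of two ingredients: asphericity of closed hyperbolic $3$-manifolds, which turns a $\pi_1$-isomorphism into a homotopy equivalence via \cite[Proposition 1B.9]{Ha02} as in \cref{lemma:haut-equals-out-for-Kpi1}, and \cref{thm-Mostow rigidity}, which upgrades that homotopy equivalence to an isometry, with the remaining implications formal.
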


We will now discuss the general idea of the proof of Mostow rigidity. More details can be found in \cite{Mostow1968}, as well as in \cite{Ro15,Zh14}.
Let $\overline{\H^3} \cong D^3$ denote the closed ball compactification in the Poincar\'{e} ball model (see, for example, \cite[Section 2.1]{Thurston-Levy-book}), and let $\partial \H^3 := \overline{\H^3} \backslash \H^3$ $(\cong S^2)$.

\begin{proof}[Sketch of proof]
Let $f \colon M \to N$ be a homotopy equivalence between closed hyperbolic $3$-manifolds $M$ and $N$.
Then $f$ induces a homotopy equivalence $F \colon \wt M \to \wt N$.
Since $\wt M \cong \H^3$ and $\wt N \cong \H^3$, we will write this as $F \colon \H^3 \to \H^3$.

The proof now consists of the following three steps:
\begin{enumerate}
    \item Show that there exists a (continuous) extension $\overline{F} \colon \overline{\H^3} \to \overline{\H^3}$ such that $\overline{F} \mid_{\partial \H^3} \colon \partial \H^3 \to \partial \H^3$ is a conformal diffeomorphism (i.e.~an angle-preserving diffeomorphism).
    \item
There is a one-to-one correspondence between isometries of $\H^3$ and conformal diffeomorphisms of $\partial \H^3$ given by
\[ \Isom(\H^3) \xrightarrow[]{\cong} \text{\normalfont ConfDiffeo}(\partial \H^3), \quad G \mapsto \overline{G} \mid_{\partial \H^3}.  \]
    \item
Pick $G \in \Isom(\H^3)$ such that $\overline{F} \mid_{\partial \H^3} = \overline{G} \mid_{\partial \H^3}$. Then show that $G$ induces an isometry $g \colon M \to N$ such that $f \simeq g$.   \qedhere
\end{enumerate}
\end{proof}

We conclude this section by noting that this does not suffice to compute the mapping class group $\pi_0\Diff(M) = \Diff(M)/\sim$.

It is well-known and can be proven using an elementary argument that, for $M$ closed hyperbolic 3-manifold, the set of isometries $\Isom(M)$ is finite and discrete. In particular, we have that $\Isom(M)$ and $\pi_0\Isom(M) = \Isom(M)/\sim$ coincide, as do $\Isom(M)$ and $\Isom(M)/\simeq$.
By combining \cref{thm-Mostow rigidity} with the result established at the end of \cref{ss:Hyp-defs}, we obtain the following diagram:
\[
\begin{tikzcd}
  \overbrace{\Isom(M)/\sim}^{\pi_0\Isom(M)} \ar[r] \ar[d,"\cong"] & \overbrace{\Diff(M)/\sim}^{\pi_0\Diff(M)} \ar[d,twoheadrightarrow] & & \\
  \Isom(M)/\simeq \ar[r,"\cong"] & \Diff(M)/\simeq \ar[r,"\cong"] &  \underbrace{\hAut(M) /\simeq}_{\pi_0 \hAut(M)} \ar[r,"\cong"] & \Out(\pi_1(M))
\end{tikzcd}
\]
where the bottom arrows and the left vertical arrow are all bijections. The missing ingredient is whether homotopic diffeomorphisms $f, g \colon M \to M$ are actually isotopic, i.e.\ injectivity of the right-hand vertical map.

\subsection{The Gabai--Meyerhoff--Thurston theorem}

The aim of this section will be to discuss the following major theorem due to Gabai, Meyerhoff and N.Thurston \cite{GabaiMeyerhoffThurston2003}, building upon a previous result of Gabai \cite{Gabai1997}. In the case where $M$ is Haken, this was proved in the previous chapter.

\begin{theorem}[Gabai--Meyerhoff--Thurston] \label{thm:GMT}
Let $M$ be a closed hyperbolic $3$-manifold. If $f, g \in \Diff(M)$ and $f \simeq g$, then $f \sim g$. It then follows that
\[ \pi_0\Isom(M) \cong \pi_0\Diff(M) \cong \pi_0\hAut(M) \cong \Out(\pi_1(M)). \]
\end{theorem}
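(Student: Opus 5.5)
The plan follows the original strategy of Gabai and Gabai--Meyerhoff--Thurston. The main input is Gabai's rigidity theorem for \emph{non-coalescable insulator families}. Gabai proved that if a closed hyperbolic $3$-manifold $M$ contains a simple closed geodesic $\delta$ with a sufficiently large embedded tubular neighbourhood (a ``noncoalescable insulator'', e.g.\ a tube of radius at least $\log(3)/2$), then homotopic diffeomorphisms of $M$ are isotopic. So the statement reduces to a purely geometric one: every closed hyperbolic $3$-manifold contains such a geodesic. Once that is known, the chain of isomorphisms follows formally. The diagram at the end of the previous subsection shows that $\pi_0\Isom(M)\to\Isom(M)/\simeq$, $\Diff(M)/\simeq\to\pi_0\hAut(M)$, and $\pi_0\hAut(M)\to\Out(\pi_1(M))$ are bijections. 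It also shows that the vertical map $\pi_0\Diff(M)\to\Diff(M)/\simeq$ is surjective, and injectivity of that map is exactly the statement $f\simeq g\Rightarrow f\sim g$.

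Concretely, I would proceed in three steps. \textbf{Step 1} (Gabai's reduction). Given homotopic diffeomorphisms $f\simeq g$, consider $h=g^{-1}f\simeq \id$. Lift a homotopy to the universal cover $\H^3$, where it moves points a bounded distance, and choose the geodesic $\delta$ and its tube $N$. By Mostow-type arguments, $h(\delta)$ is homotopic to $\delta$. Gabai's insulator argument, a combinatorial analysis in $\H^3$ of the lifts of $N$ and how they link, shows that $h(\delta)$ is in fact \emph{isotopic} to $\delta$. Hence $h$ can be isotoped to preserve $N$. \textbf{Step 2}. Isotope $h$ so that it is the identity on $N$. This uses that $h\simeq\id$ forces the correct action on $\partial N$ and on the core, together with the standard isotopy classification of self-maps of a solid torus. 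Then $M\setminus \mathring N$ is an irreducible, boundary irreducible Haken manifold, since it has torus boundary that is incompressible because $\delta$ is a nontrivial geodesic in a hyperbolic manifold. On it, $h$ restricts to a map homotopic to the identity rel boundary, so Waldhausen's \cref{thm:Waldhausen}, or more precisely its rel boundary version, the Hatcher--Ivanov theorem, gives an isotopy rel $\partial$ to the identity. \textbf{Step 3} is the existence of the geodesic, which I discuss next.

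Step 3 is the main obstacle. Gabai's earlier work established the insulator condition with tube radius $\log 3/2$ only for ``most'' hyperbolic manifolds. Gabai--Meyerhoff--Thurston handle the rest as follows. They parametrise the possible configurations of a shortest geodesic and its nearest translate, that is, the relevant elements of $\PSL_2(\C)$, by a compact region in a parameter space. They then cover this region by finitely many boxes and use a rigorous computer-assisted elimination: each box is shown either to violate discreteness, or to yield a tube of radius greater than $\log 3/2$, or to lie in a small exceptional region. The exceptional regions correspond to a single explicit manifold family, Vol3 and its relatives. For these they verify the conclusion directly, either because the manifold is Haken, so \cref{thm:Waldhausen} applies, or by an explicit check. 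If I were executing this, I would follow their parameter-space approach and rely on interval arithmetic for the box eliminations. I expect the bookkeeping of the exceptional boxes to be the delicate part.
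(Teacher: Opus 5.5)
\textbf{Main gap: Step 3.} The box-elimination you describe does match how Gabai--Meyerhoff--Thurston obtain the seven exceptional regions. The error is in your reduction and in how you treat those regions.

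You equate ``non-coalescable insulator family'' with ``tube of radius $\log(3)/2$''. The tube is only a sufficient condition: Gabai shows that such a tube makes the \emph{Dirichlet} insulator family non-coalescable. Gabai--Meyerhoff--Thurston do not prove that every closed hyperbolic $3$-manifold has such a tube. Their computer analysis shows only that a manifold without one has a relevant $2$-generator subgroup with parameters in one of seven regions $\mathcal{R}_0,\dots,\mathcal{R}_6\subseteq\C^2$.

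Your plan is missing the idea that closes the argument: a \emph{new} insulator family, the Corona insulator family of the shortest geodesic. Its non-coalescability is expressed as inequalities on the parameters, so it can be verified uniformly over the whole regions $\mathcal{R}_1,\dots,\mathcal{R}_6$. Separately, a non-coalescable insulator family is exhibited for Vol3, the single manifold in $\mathcal{R}_0$.

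Your substitute, ``the manifold is Haken, or an explicit check'', cannot work. The regions $\mathcal{R}_1,\dots,\mathcal{R}_6$ are sets of parameters, not an a priori known finite list of manifolds, so there is nothing explicit to check. You also give no reason why the manifolds in them would be Haken.

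\textbf{Second gap: Step 2.} You assert that $h|_{M\setminus\mathring N}$ is homotopic to the identity rel boundary. This does not follow from $h\simeq\id$: a homotopy on $M$ need not preserve the complement, and $\pi_1(M\setminus\mathring N)\to\pi_1(M)$ is not injective.

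The claim is in fact false as stated. Take a collar twist along $\partial N$, supported in a collar $\partial N\times[0,1]$ and given by a loop in $\Diff^+(T^2)$. It is the identity on $N$ and is isotopic to the identity in $M$ (spin $N$). But it is nontrivial in $\pi_0\Diff_\partial(M\setminus\mathring N)$, because the complement has centreless $\pi_1$, so $\pi_1\Diff(M\setminus\mathring N)=0$ and boundary twists inject.

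Similarly, $h\simeq\id$ by itself cannot rule out a meridional shear $\lambda\mapsto\lambda+k\mu$ on $\partial N$, since $\mu$ is trivial in $\pi_1(M)$. Excluding it needs rigidity of the complement, for instance finiteness of its mapping class group.

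What Step 2 actually requires is the Siebenmann theorem the paper cites (proved as Gabai's Proposition~2.11): a diffeomorphism homotopic to the identity and fixing $\delta$ pointwise is isotopic to the identity. Its proof must control the induced outer automorphism of $\pi_1(M\setminus\mathring N)$ before Waldhausen or Hatcher--Ivanov can be applied. One route: note that the track of a basepoint on $\partial N$ lies in the cyclic centraliser of $[\delta]$, absorb the discrepancy into collar twists, and use rigidity of the complement.

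Step 1 and the final formal deduction of the chain of isomorphisms from the diagram agree with the paper. One small point there: $h(\delta)\simeq\delta$ is immediate from $h\simeq\id$, so no Mostow argument is needed.
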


\begin{remark} \label{remark:GMT}
\leavevmode
\begin{enumerate}[(a)]
    \item
    The main result in \cite{GabaiMeyerhoffThurston2003} actually proved more. Let $M$ be a closed 3-manifold and let $N$ be a closed hyperbolic 3-manifold. If $f \colon M \to N$ is a homotopy equivalence, then $f$ is homotopic to an isometry. (In particular, $M$ is a hyperbolic 3-manifold.) This is stronger than Mostow rigidity because here we do not need to know a priori that $M$ is hyperbolic.
\item As mentioned in the previous section, $\pi_0\Isom(M)$ is finite and so the above theorem shows that mapping class groups of closed hyperbolic 3-manifolds are finite.
\end{enumerate}
\end{remark}

\begin{theorem}[\cite{Ko88}]
    Every finite group arises as the mapping class group of a closed hyperbolic 3-manifold.
\end{theorem}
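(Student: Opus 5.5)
The plan is to reduce the statement to a statement about isometry groups. By \cref{thm:GMT}, for a closed hyperbolic 3-manifold $M$ we have $\pi_0\Diff(M)\cong\pi_0\Isom(M)=\Isom(M)$, since $\Isom(M)$ is finite and discrete. So it suffices to construct, for a given finite group $G$, a closed hyperbolic 3-manifold $M$ with $\Isom(M)\cong G$. I would build $M$ in two stages: first a hyperbolic manifold on which $G$ acts freely by isometries, and then a $G$-equivariant rigidification that kills every isometry not in $G$.

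\emph{Stage 1: a free isometric $G$-action.} Choose generators $g_1,\dots,g_n$ of $G$, giving a surjection $F_n\to G$. Take a closed hyperbolic 3-manifold $N$ whose fundamental group surjects onto $F_n$. For example, Dehn fill a hyperbolic manifold that contains a genus $n$ handlebody-type quotient, or use a hyperbolic manifold fibring over a surface with a map to a wedge of circles. Composing gives a surjection $\pi_1(N)\to G$, and hence a regular cover $M_0\to N$ with deck group $G$. The group $G$ then acts freely on $M_0$ by isometries of the lifted metric. However, $\Isom(M_0)$ may well be strictly larger than $G$.

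\emph{Stage 2: rigidification by drilling and filling.} Choose a finite collection of disjoint simple closed geodesics $\gamma_1,\dots,\gamma_k$ in $N$ such that:
\begin{itemize}
\item $N\sm\bigcup\gamma_i$ is hyperbolic;
\item the cusped manifold $N\sm\bigcup\gamma_i$ has no nontrivial isometries;
\item its cusps are pairwise distinguishable, for instance by their cusp shapes or by a further decoration.
\end{itemize}
Take the preimages of these geodesics in $M_0$ to obtain a $G$-invariant link $L$. I would then perform hyperbolic Dehn filling on $N\sm\bigcup\gamma_i$ with distinct, very long slopes $s_i$ and lift the filling $G$-equivariantly, obtaining a closed hyperbolic manifold $M$ that still carries the free $G$-action. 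By Thurston's hyperbolic Dehn filling theorem, for sufficiently long slopes the cores of the filling solid tori are the shortest geodesics of $M$, and their lengths are determined by the slopes. Hence every isometry of $M$ permutes these cores, and preserves the labels $i$ because the lengths differ. It therefore restricts to an isometry of the drilled manifold $M_0\sm L$ that respects the labelled cusps. Such an isometry normalises the deck group $G$, so it descends to an isometry of $N\sm\bigcup\gamma_i$ fixing every cusp. By the choice of the $\gamma_i$ that descended isometry is the identity, so the original isometry lies in $G$. This gives $\Isom(M)\cong G$.

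The main obstacle is the step asserting that an isometry of $M_0\sm L$ normalises $G$. A priori the drilled cover may have extra symmetries that do not descend to $N$. I would handle this by choosing the geodesics $\gamma_i$ generically enough that the cusp data of $M_0\sm L$ is rigid, for example by making the number of cusps in each labelled $G$-orbit and the cusp shapes distinct. A secondary difficulty is producing $N$ together with asymmetric drilling geodesics; for this I would first find one generic geodesic $\gamma$ making $N\sm\gamma$ asymmetric, and use the fact that there are finitely many exceptional choices.
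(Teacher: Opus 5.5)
Your reduction to producing $M$ with $\Isom(M)\cong G$ is correct. The filling mechanism is also correct: for long slopes an isometry of $M$ permutes the short cores, and so induces a label- and slope-preserving isometry of the complete structure on $M_0\sm L$. This map is injective, because an isometry of a closed hyperbolic manifold that is homotopic to the identity is trivial.

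The gap is exactly the step you flag, and your remedy does not close it. Let $K\supseteq G$ be the group of label-preserving isometries of $M_0\sm L$. Only elements of $K$ that normalise $G$ descend to $N\sm\bigcup\gamma_i$. So asymmetry of $N\sm\bigcup\gamma_i$ shows only that $G$ is self-normalising in $K$, not that $K=G$. Distinct cusp counts and cusp shapes only force $K$ to preserve each $G$-orbit of cusps, which is much weaker than $hGh^{-1}=G$.

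The phenomenon you have not excluded is hidden symmetry. An asymmetric $N\sm\bigcup\gamma_i$ can still cover an orbifold $\H^3/\Lambda$ non-regularly, with $\Gamma_N:=\pi_1(N\sm\bigcup\gamma_i)$ self-normalising in $\Lambda$. If the kernel $\Gamma_0$ of $\Gamma_N\to G$ happens to be normal in $\Lambda$, then $\Lambda/\Gamma_0\supsetneq G$ acts by isometries on $M_0\sm L$. It may even preserve every label, since that covering can be one-to-one on cusps. Nor can the filling slopes be relied on to remove such elements: they may act on the cusp tori they stabilise as $\pm1$ on homology, and then they fix every slope.

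What is missing is control over the full isometry group of the cover, not just of the quotient. One way to get it is to require $\Gamma_N$ to be a \emph{maximal} discrete subgroup of $\Isom(\H^3)$, i.e.\ the drilled manifold does not nontrivially cover any orbifold. For instance, it suffices that $\Gamma_N$ is non-arithmetic and equal to its commensurator, so that by Margulis' theorem no lattice properly contains it. Then $\Norm(\Gamma_0)$ is a discrete group containing $\Gamma_N$, hence equal to it. So $\Isom(M_0\sm L)=\Gamma_N/\Gamma_0=G$, and every $G$-equivariant long filling has $\Isom(M)=G$. If the closed $N$ itself had this property, Stage~2 would be superfluous, since then already $\Isom(M_0)=G$.

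Establishing the existence of such an $N$ is a substantive input that your sketch does not supply. The same applies to your asymmetric drilling: the appeal to ``finitely many exceptional choices'' of geodesic is not backed by any result.

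Two smaller points:
\begin{itemize}
\item A closed $3$-manifold fibring over a surface is Seifert fibred, so your Stage~1 justification is wrong. An $N$ with $\pi_1(N)\twoheadrightarrow F_n$ does nevertheless exist: closed hyperbolic $3$-manifold groups are large by Agol's work, and $F_2$ contains $F_n$ with finite index, so a finite cover works.
\item The lifted filling is an unbranched cover, so that $G$ still acts freely on $M$, only if each $s_i=p_i\mu_i+q_i\lambda_i$ maps trivially to $G$, i.e.\ the order of the image of $\gamma_i$ divides $q_i$. Otherwise $G$ fixes points on the cores. That is harmless for counting isometries, but it contradicts what you wrote.
\end{itemize}
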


The remainder of this section will be dedicated to a discussion of the proof of this theorem. We will begin with the work of Gabai \cite{Gabai1997} which proved the theorem subject to a certain technical condition which we will now discuss.

\begin{definition}
Let $M$ be a closed hyperbolic $3$-manifold and let $\delta$ be a simple closed geodesic in $M$. Then $\delta$ lifts to a set of hyperbolic lines $\{\delta_i\}$ in $\H^3$. For each $i \ne j$, define the \textit{orthocurve} $s_{ij}$ to be the shortest hyperbolic line segment in $\H^3$ between $\delta_i$ and $\delta_j$. Define the \textit{midplane} $D_{ij}$ to be the hyperbolic plane in $\H^3$ which meets $s_{ij}$ orthogonally at its midpoint.

By considering the completion $\overline{\H^3}$ and corresponding boundary $\partial \H^3$ ($\cong S^2$), we have a curve $\lambda_{ij} = \partial D_{ij}$ which separates the pair of points $\partial \delta_i$ from the pair of points $\partial \delta_j$.
The set of simple closed curves $\{ \lambda_{ij} \mid i \ne j\}$ in $\partial \H^3$ is known as the \textit{Dirichlet insulator family} associated to the geodesic $\delta$.

We say that a Dirichlet insulator family $\{\lambda_{ij} \mid i \ne j\}$ is \textit{non-coalescable} if there does not exist $i, j_1,j_2,j_3$ such that $\lambda_{ij_1} \cup \lambda_{ij_2} \cup \lambda_{ij_3}$ separates the two points $\partial \delta_i$. See \cref{fig:DIT} for an example where this condition fails.
\end{definition}

\begin{figure}[h]
    \centering
\begin{tikzpicture}[scale=0.8, line cap=round, line join=round]
  \usetikzlibrary{calc}

  \def\R{2.25}
  \def\d{4.3} 

  \pgfmathsetmacro{\h}{0.866025403784*\d} 

  \coordinate (A) at (0,0);
  \coordinate (B) at (\d,0);
  \coordinate (C) at (\d/2,\h);

  \coordinate (shift) at ($(-\d/2,-\h/3)$);
  \coordinate (L) at ($(A)+(shift)$);
  \coordinate (R) at ($(B)+(shift)$);
  \coordinate (T) at ($(C)+(shift)$);

  \draw[thick] (L) circle (\R);
  \draw[thick] (R) circle (\R);
  \draw[thick] (T) circle (\R);

  \draw (0.13,-0.07) -- (0.5,-0.3);

  \node[left]  at ($(L)+(-\R-0.1,0.6)$) {$\lambda_{ij_1}$};
  \node[right] at ($(R)+(\R+0.1,0.6)$)  {$\lambda_{ij_3}$};
  \node[left]  at ($(T)+(-2.1,\R-0.7)$) {$\lambda_{ij_2}$};

  \tikzset{dot/.style={circle, fill, inner sep=1.2pt}}

  \node[dot, label=above left:{$\partial\delta_{j_1}$}] at ($(L)+(-0.95,-0.35)$) {};
  \node[dot, label=above:{$\partial\delta_{j_1}$}]      at ($(L)+( 0.55,-0.05)$) {};

  \node[dot, label=above left:{$\partial\delta_{j_2}$}] at ($(T)+(-0.80,-0.05)$) {};
  \node[dot, label=above:{$\partial\delta_{j_2}$}]      at ($(T)+( 0.75, 0.15)$) {};

  \node[dot, label=above:{$\partial\delta_{j_3}$}]       at ($(R)+(-0.60, -0.9)$) {};
  \node[dot, label=above right:{$\partial\delta_{j_3}$}] at ($(R)+( 0.95,-0.55)$) {};

  \node[dot] at (0,0) {};
  \node[label=right:{$\partial\delta_i$}] at (0.4,-0.4) {};

  \node[dot, label=right:{$\partial\delta_i$}] at (4.0,2.9) {};
\end{tikzpicture}
    \caption{Example of a Dirichlet insulator family which is not non-coalescable: the four pairs of points are $\partial \delta_i$, $\partial \delta_{j_1}$, $\partial \delta_{j_2}$, $\partial \delta_{j_3}$, the circles are $\lambda_{ij_1}$, $\lambda_{ij_2}$, $\lambda_{ij_3}$ and the diagram is drawn on a patch of the boundary sphere $\partial \H^3 \cong S^2$.}
    \label{fig:DIT}
\end{figure}

Gabai defined a more general notion of \emph{insulator family}, which is a collection of simple closed curves $\{\lambda_{ij} \mid i \ne j\}$ that must satisfy certain criteria but need not arise from the construction involving orthocurves and midplanes described above \cite[Definition 0.4]{Gabai1997}. The following is equivalent to \cite[Theorem 0.10]{Gabai1997}.

\begin{theorem}[Gabai] \label{thm:Gabai 97 main thm 0.9}
Let $M$ be a closed hyperbolic $3$-manifold. Suppose there exists a simple closed geodesic $\delta$ in $M$ which has an associated non-coalescable insulator family. Then, if $f, g \in \Diff(M)$ and $f \simeq g$, then $f \sim g$.
\end{theorem}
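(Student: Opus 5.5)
By replacing $g$ with $g^{-1}f$, it suffices to show that a diffeomorphism $f$ of $M$ homotopic to the identity is isotopic to the identity. The plan follows Gabai's strategy. Use the insulator family to rigidify the geodesic $\delta$ and a tubular neighbourhood $N(\delta)$ under $f$. Then reduce to the complement $M \setminus \mathring{N}(\delta)$, which is Haken, and apply Waldhausen (\cref{thm:Waldhausen}).

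\textbf{Step 1: reduce to preserving $N(\delta)$.} Lift a homotopy $F_t$ from the identity to $f$ to a bounded-distance homotopy $\widetilde F_t$ on $\H^3$ that extends continuously to $\partial\H^3$ as the identity. The image $f(\delta)$ is a simple closed curve homotopic to $\delta$. The key claim is that $f(\delta)$ is \emph{isotopic} to $\delta$, and more generally that any embedded curve homotopic to $\delta$ is isotopic to it. I would prove this by a ``tube-straightening'' argument. Take a smooth solid torus $V$ around $f(\delta)$. Using the midplanes $D_{ij}$, cut $\widetilde V$ into pieces separating the lifts $\widetilde V_i$. The non-coalescable condition on $\{\lambda_{ij}\}$ should ensure that these insulating surfaces can be isotoped off one another. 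This lets one isotope $f(\delta)$ equivariantly into a small neighbourhood of $\delta$. There it becomes a core curve of $N(\delta)$, hence isotopic to $\delta$ by the light-bulb-type argument for curves in a solid torus that are homotopic to the core. I expect this step to be the main obstacle. The difficulty is converting the combinatorial non-coalescability of the curves $\lambda_{ij}$ on $\partial\H^3$ into a genuine equivariant isotopy in $\H^3$. This needs control of how the lifted tubes meet the midplanes, plus a careful surgery and innermost-disc argument to remove intersections while staying equivariant. I would first attempt the case where $V$ is already contained in the Dirichlet-type region around $\widetilde\delta_i$ cut out by the midplanes, then reduce the general case to it by induction on intersection complexity.

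\textbf{Step 2: isotope the restriction.} Once $f(N(\delta))=N(\delta)$, further isotope $f$ so that it is a standard map on $N(\delta)$. The restriction to the boundary torus must be standard on the meridian, and the longitude must match because $f\simeq \Id$ acts trivially on $\pi_1$. Twisting along $\partial N(\delta)$ is then removed by an isotopy supported near the torus. This is possible because the resulting twist is homotopically trivial in $M$, which forces it to be trivial: $\pi_1(M)$ is torsion-free and centreless.

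\textbf{Step 3: apply Waldhausen on the complement.} Let $X = M\setminus \mathring N(\delta)$. This is irreducible and boundary irreducible, since $\delta$ is a nontrivial geodesic in a hyperbolic manifold, and it is Haken because $\partial X\neq\emptyset$. The restriction $f|_X$ fixes $\partial X$ and is homotopic to the identity rel boundary. To see the rel-boundary part, adjust the homotopy using asphericity (\cref{lemma:kpi1}); there is no obstruction because the centraliser of $\pi_1(\partial X)$ is itself. By \cref{thm:Waldhausen}, or more precisely its rel-boundary version due to Hatcher--Ivanov, $f|_X$ is isotopic to the identity rel $\partial X$. Gluing this isotopy to the one on $N(\delta)$ gives $f\sim\Id$.
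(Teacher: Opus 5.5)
Your architecture is the one the paper uses, following Gabai: first use the insulator family to isotope $f$ to a map fixing $\delta$, then conclude with Siebenmann's result (Gabai's Proposition~2.11). Your Steps~2--3 try to reprove that result via the Haken exterior $X=M\setminus\mathring{N}(\delta)$. The problem is Step~1, which carries the whole content of the theorem (the paper only cites it).

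\textbf{Step 1.} Beyond being left open, its endgame rests on false statements.
\begin{itemize}
\item Homotopy does not imply isotopy for curves in 3-manifolds. Tie a local knot into $\delta$: the new curve is homotopic to $\delta$, but its exterior contains an essential swallow--follow torus. By contrast, $M\setminus\delta$ is atoroidal, indeed hyperbolic by Kojima's theorem.
\item A curve in a solid torus that is homotopic to the core need not be isotopic to it, e.g.\ a locally knotted core or the Mazur pattern. The light bulb trick uses the $S^2$ factor of $S^1\times S^2$ and has no solid-torus analogue.
\end{itemize}
So pushing $f(\delta)$ into $N(\delta)$ would not finish the argument. You must use that $f(\delta)$ is the image of $\delta$ under a diffeomorphism: $M\setminus f(\delta)\cong M\setminus\delta$ is atoroidal, which forces $\partial N(\delta)$ to be parallel to the boundary of a thin tube around $f(\delta)$. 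More seriously, the mechanism that should produce the isotopy is missing. Your sketch uses the midplanes $D_{ij}$, which exist only for Dirichlet families. The theorem concerns arbitrary insulator families, e.g.\ the Corona families needed by Gabai--Meyerhoff--Thurston, which are only curves $\lambda_{ij}$ at infinity. Nothing in your sketch shows where the three-insulator non-coalescability condition enters.

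\textbf{Steps 2--3.} These contain incorrect justifications.
\begin{itemize}
\item Once $f(N(\delta))=N(\delta)$, the restriction $f|_{\partial N(\delta)}$ is isotopic to $T_\mu^k$, a power of the meridian Dehn twist. Since $\mu$ is null-homotopic in $M$, $\pi_1(M)$ sees the longitude only modulo $\mu$. So no property of $\pi_1(M)$ (torsion-freeness, trivial centre) can detect $k$, and no isotopy supported near the torus changes $k$. You need the exterior: $X$ is hyperbolic, so by Mostow--Prasad $f|_X$ is homotopic to an isometry. That isometry acts on the cusp group $\pi_1(\partial X)\cong\Z^2$ with finite order, forcing $k=0$.
\item $f\simeq\Id$ in $M$ does not give $f|_X\simeq\Id$ in $X$, because $\pi_1(X)\to\pi_1(M)$ kills $\mu$. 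One route: compose with a longitudinal collar twist so that $f$ acts trivially on $\pi_1(M,p)$ for $p\in\partial X$. Realise $f|_X$ by an isometry of $X$; this is a cusp translation, so it extends over $N(\delta)$ to a periodic map of $M$ homotopic to $\Id$. Borel's theorem then makes this map trivial.
\item The obstruction to making the homotopy rel $\partial X$ lies in the centraliser of $\pi_1(\partial X)$, which is $\Z^2\neq 0$, not zero. It is realised by the collar twists of \cref{def:iota-adj}, which are nontrivial rel $\partial X$. To absorb them you must note they are isotopic to $\Id$ in $M$, by sliding $N(\delta)$ along its core or rotating it about its core.
\end{itemize}
This bookkeeping is exactly what Siebenmann's proposition supplies.
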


Since this conclusion is known already in the Haken case, it suffices to treat the case where $M$ is a closed non-Haken hyperbolic 3-manifold.
By replacing $f$ with $f \circ g^{-1}$ we can assume that $f \simeq \id$ and we aim to show that $f \sim \id$. The strategy of the proof is to use a 1981 theorem attributed to Siebenmann (not published by Siebenmann, but proven in the article as \cite[Proposition 2.11]{Gabai1997}), which implies that $f \sim \id$ provided $f$ is isotopic to a map which fixes the geodesic $\delta$ pointwise.
Such an isotopy is built using the insulator family. This construction is beyond the scope of this survey, but further details can be found in the outline given in \cite[pp.~38--40]{Gabai1997}.

Next, Gabai observes that, if $M$ contains a hyperbolic tube of radius $\log(3)/2$ ($\approx 0.5$) about a simple closed geodesic $\delta$, then its associated Dirichlet insulator family is non-coalescable \cite[Lemma 5.9]{Gabai1997}.
Hence, in order to prove Theorem \ref{thm:GMT}, it remains to prove the result in the case where $M$ is a closed hyperbolic $3$-manifold such that no simple closed geodesic is contained in a hyperbolic tube of radius $\log(3)/2$.
This was achieved by Gabai--Meyerhoff--Thurston by showing that \cref{thm:Gabai 97 main thm 0.9} actually applies to all closed hyperbolic $3$-manifolds. Their proof involved the introduction of a new insulator family associated to certain geodesics, known as the \emph{Corona insulator family} (for a definition, see \cite[Section 2]{GabaiMeyerhoffThurston2003}), and they gave a criteria for this family to be non-coalescable.

Their main result is as follows. This implies \cref{thm:GMT} by the discussion above.

\begin{theorem}[Gabai--Meyerhoff--Thurston] \label{thm:CMT-lemma}
Let $M$ be a closed hyperbolic $3$-manifold such that no simple closed geodesic is contained in a hyperbolic tube of radius $\log(3)/2$. Then:
\begin{enumerate}[\normalfont (a)]
    \item
$M$ is contained in one of seven families $\mathcal{R}_0, \cdots, \mathcal{R}_6$ {\normalfont(}which can be viewed as subsets of $\C^2${\normalfont)}.
    \item
For each $M \in \mathcal{R}_i$ for $i=1, \cdots, 6$, the shortest geodesic $\delta$ has an associated \textit{Corona insulator family} $\{\kappa_{ij}\}$ which is non-coalescable.
    \item
The set $\mathcal{R}_0$ consists of a single manifold $\text{\normalfont Vol} 3$, and $\delta$ in $\text{\normalfont Vol} 3$ has a non-coalescable insulator family.
\end{enumerate}
\end{theorem}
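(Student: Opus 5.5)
The proof of \cref{thm:CMT-lemma} will be a rigorous computer-assisted parameter-space argument, in the spirit of Gabai--Meyerhoff--Thurston. Fix $M$ closed hyperbolic, let $\delta$ be a shortest geodesic, and suppose no simple closed geodesic has an embedded tube of radius $\log(3)/2$. Conjugate $\pi_1(M)\le \PSL_2(\C)$ so that a lift $\delta_0$ of $\delta$ is the axis through $0$ and $\infty$. Let $f$ be a primitive loxodromic generator along $\delta_0$, and let $w$ be an element realising the shortest orthocurve from $\delta_0$ to a translate $w\delta_0$. The pair $(f,w)$ is then determined up to conjugacy by three complex parameters: the complex length $L$ of $f$, the complex distance $D$ between $\delta_0$ and $w\delta_0$, and the complex translation $R$ along $\delta_0$ between the feet of the orthocurve. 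These parameters lie in a compact region $W\subset\C^3$. Compactness follows from three constraints: $\delta$ is shortest, so $\operatorname{Re}L$ is bounded via the Margulis constant and standard packing; $\operatorname{Re}D$ is less than $\log 3$ by the tube hypothesis; and $R$ and $\operatorname{Im} L$ are normalised modulo $L$ and $2\pi$.

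The first step is to exclude most of $W$. Subdivide $W$ into finitely many boxes, at least $10^9$ of them in practice. On each box, exhibit a word $u(f,w)$ in the group that either

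\begin{itemize}
  \item has translation length shorter than $\operatorname{Re}L$, contradicting the choice of $\delta$, or
  \item moves $\delta_0$ a complex distance less than $\operatorname{Re}D$, contradicting the choice of orthocurve, or
  \item is a nontrivial element that is forced to be elliptic or parabolic, contradicting that the group is a torsion-free cocompact Kleinian group.
\end{itemize}

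Each such verification must hold uniformly over the whole box. I would certify this with affine (first-order Taylor) arithmetic carrying explicit error terms, so that floating-point round-off is controlled. The boxes that survive form seven small "exceptional" regions. Part (a) then defines $\mathcal R_0,\dots,\mathcal R_6$ as these regions, viewed as subsets of $\C^2$ after using a relator to eliminate one parameter.

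For (b) and (c), each exceptional region contains a distinguished candidate group. For $\mathcal R_1,\dots,\mathcal R_6$, I would identify a quasi-relator near the candidate group, and either show the group is discrete or rule it out using a killer word in a neighbourhood of it. For the groups that genuinely arise, I would construct the Corona insulator family for $\delta$ and verify non-coalescability. The plan is to bound the relevant complex distances so that no three of the insulating circles $\kappa_{ij}$ can jointly separate $\partial\delta_i$. This reduces to an inequality on orthodistances that can be checked on the surviving boxes. For $\mathcal R_0$, the candidate group is identified with $\pi_1(\mathrm{Vol}3)$, and the same separation condition is checked directly for its shortest geodesic using its explicit geometry.

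The main obstacle is the first step. Producing the box subdivision together with a certified killer word for each box, and keeping the Taylor-arithmetic error bounds rigorous, is an enormous computation. Choosing the words is essentially trial and error over short words in $f$ and $w$. I would start with words of length at most about $10$ and refine any box where the current candidate words fail, until only the seven exceptional regions remain.
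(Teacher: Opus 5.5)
Your overall route is the one the paper sketches, following Gabai--Meyerhoff--Thurston. You pass to the relevant two-generator subgroup $G=\langle f,w\rangle$ and parametrise it by $(L,D,R)$. You then discard everything outside seven small regions by a certified box-by-box computation with killer words, and verify non-coalescability of the Corona family on $\mathcal{R}_1,\dots,\mathcal{R}_6$ by inequalities checked on the surviving boxes. Two small corrections:
\begin{itemize}
\item A word with translation length less than $\operatorname{Re}L$ only kills a box if it is also certified to be nontrivial there, i.e.\ its matrix is bounded away from $\pm I$. That is what your third bullet should say, since being elliptic or parabolic cannot hold uniformly on a box. The seven regions survive precisely because each contains a quasi-relator, a word $r(f,w)$ that cannot be separated from $\pm I$.
\item The passage to $\C^2$ by ``using a relator to eliminate one parameter'' is unjustified and unnecessary. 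The regions are simply unions of boxes in your three-complex-dimensional $(L,D,R)$-space, which is how they arise in Gabai--Meyerhoff--Thurston.
\end{itemize}

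The genuine gap is the first half of (c): the claim that $\mathcal{R}_0$ consists of the single manifold $\mathrm{Vol}3$. Box elimination cannot deliver this. The boxes containing the parameters of $\mathrm{Vol}3$ itself can never be killed, and nothing in the elimination rules out other groups with parameters in those same boxes. So ``the candidate group is identified with $\pi_1(\mathrm{Vol}3)$'' is an assertion, not a step. What the computation does give is a certificate that the quasi-relator has translation length less than $\operatorname{Re}L$ on all of $\mathcal{R}_0$. Since $\pi_1(M)$ is torsion-free and $\delta$ is shortest, this forces $r(f,w)=1$ in $G$. From there you need an exact, non-numerical argument in three parts:
\begin{itemize}
\item the locus $\{r=\pm I\}$ meets $\mathcal{R}_0$ only at the parameters of $\mathrm{Vol}3$, so $G$ is pinned down to the explicit group coming from $\mathrm{Vol}3$;
\item this group is all of $\pi_1(\mathrm{Vol}3)$, hence of finite index in $\pi_1(M)$;
\item $\mathrm{Vol}3$ cannot properly cover $M$.
\end{itemize}
In the literature this identification is a separate argument of Jones--Reid, not an output of the computer program; the paper's sketch also passes over this point. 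Once $M=\mathrm{Vol}3$ is known, your direct check of an insulator family for its shortest geodesic is a sensible way to finish (c).
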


The proof of all three parts of the proof of \cref{thm:CMT-lemma} is carried out with the assistance of a rigorous computer program. We will now briefly explain the basic approach taken. Let $\delta$ be a shortest geodesic in the closed hyperbolic $3$-manifold $M$ and consider the
$2$-generator subgroup $G = \langle f, w \rangle$ of $\pi_1(M) \le \Isom^{+}(\H^3)$ where $f$ is a primitive hyperbolic isometry whose fixed axis $\delta_0\subseteq \H^3$
projects to $\delta$, and $w$ is a hyperbolic isometry which
takes $\delta_0$ to a nearest translate.
By hypothesis, $\delta$ does not have a $\ln(3)/2$ tube, and it follows that the shortest geodesic in $Z=\H^3/G$ (which corresponds to $\delta$) also does not have a $\ln(3)/2$ tube.
Such a $2$-generator subgroup is a \emph{relevant} subgroup of $\Isom^{+}(\H^3)$ (this has a precise definition given in \cite[Definition 1.12]{GabaiMeyerhoffThurston2003}) and the space of all relevant subgroups is naturally parametrised as a subset $P$ of $\C^2$.
Such a parameter space is amenable to a computer analysis; for example, the criteria for the Corona insulator family associated to $\delta$ being non-coalescable can be expressed in terms of estimates on the points in the corresponding regions of $\C^2$ (see, for example, \cite[Proposition 2.8]{GabaiMeyerhoffThurston2003}). For further details on the outline of the proof, see \cite[p336-338]{GabaiMeyerhoffThurston2003}.

\subsection{The generalised Smale conjecture}

The following was shown by Gabai \cite{Gabai2001}, building upon the Gabai--Meyerhoff--Thurston theorem. This resolves the generalised Smale conjecture (\cref{conj - generalised smale}) for closed hyperbolic 3-manifolds.

\begin{theorem}[Gabai]\label{thm-strong Smale for hyperbolic}
Let $M$ be a closed hyperbolic $3$-manifold. Then the inclusion map
\[ \Isom(M) \hookrightarrow \Diff(M) \]
is a homotopy equivalence.
\end{theorem}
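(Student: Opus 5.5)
The plan is to split the statement into a $\pi_0$ part and an identity-component part, using the diagram from the introduction that compares $\Isom_0(M)\to\Isom(M)\to\pi_0\Isom(M)$ with $\Diff_0(M)\to\Diff(M)\to\pi_0\Diff(M)$. For $M$ closed hyperbolic, $\Isom(M)$ is finite and discrete. So $\Isom_0(M)$ is trivial and $\Isom(M)=\pi_0\Isom(M)$. By \cref{thm:GMT}, the map $\pi_0\Isom(M)\to\pi_0\Diff(M)$ is a bijection. Since both rows are (up to homotopy) fibre sequences with discrete quotient, the five lemma on homotopy groups, applied component by component, reduces the theorem to the single statement
\[\Diff_0(M)\simeq\ast,\]
which is the weak Smale statement (\cref{thm-weak Smale}) in the hyperbolic case. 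I will treat this as the real content rather than quote \cref{thm-weak Smale}, since for hyperbolic manifolds this is exactly the input it rests on.

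To prove $\Diff_0(M)\simeq\ast$, I would follow the structure of Gabai's argument for $\pi_0$. Choose a shortest simple closed geodesic $\delta$ with a non-coalescable insulator family, which exists for every closed hyperbolic $M$ by \cref{thm:CMT-lemma}. Restricting to $\delta$ gives a fibration
\[\Diff_\delta(M)\to\Diff(M)\to\Emb(S^1,M).\]
The first main step is to show that the component of $\Emb(S^1,M)$ containing $\delta$, modulo reparametrisation, is contractible, so that $\Diff_0(M)$ is equivalent to the identity part of the diffeomorphisms fixing $\delta$ pointwise. This is the parametrised version of the insulator argument. Given a family $S^k\to\Emb(S^1,M)$ of curves homotopic to $\delta$, use the insulator family in $\H^3$ to build a parametrised family of isotopies pushing each curve onto $\delta$. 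Non-coalescability is what prevents the lifts from getting tangled while doing this.

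The second step deals with the fibre. Diffeomorphisms fixing $\delta$ correspond to diffeomorphisms of the exterior $X=M\sm\nu\delta$, after accounting for the framing and rotation data, which contribute a torus/$\SO(2)$ factor that cancels against reparametrisations. The exterior $X$ has torus boundary, so it is Haken. It is irreducible and boundary irreducible, since $\delta$ is a geodesic in a hyperbolic manifold. Hatcher--Ivanov then gives $\Diff_\p(X)\simeq\hAut_\p(X)$. Since $X$ is aspherical with centreless $\pi_1$, the identity components of $\hAut_\p(X)$ are contractible. Assembling the fibre sequences then gives $\Diff_0(M)\simeq\ast$.

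The main obstacle is the first step. Gabai's insulator method works for a single curve, and for $\pi_0$ only a single isotopy is needed. Here it must be done continuously in families over $S^k$, with all the unique shortest-path and midplane choices varying continuously. This requires a parametrised normal form and possibly a generic position argument in the style of Hatcher. I would first try to define a canonical "insulated" neighbourhood depending continuously on the curve, and contract the family via the straight-line homotopy in $\H^3$ relative to the insulators.
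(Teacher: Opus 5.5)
Your outline follows the same route as the paper (which is Gabai's): finiteness of $\Isom(M)$ together with \cref{thm:GMT} reduces everything to $\Diff_0(M)\simeq\ast$, and this in turn comes from a parametrised insulator argument around a suitable geodesic $\delta$, which reduces to the Haken exterior where Hatcher--Ivanov applies. The paper only rephrases $\Diff_0(M)$, via Mostow rigidity, as the space of hyperbolic metrics, and like your sketch leaves the parametrised insulator step to Gabai. One bookkeeping fix: contractibility of the \emph{unparametrised} curve space identifies $\Diff_0(M)$ with the identity component of the \emph{setwise} stabiliser of $\delta$, hence with $\Diff_0(M\sm\nu\delta)$ with free boundary, which is contractible because $\pi_1$ is centreless; the pointwise stabiliser is not directly equivalent, since the parametrised component of $\delta$ is $\simeq \SO(2)$, and comparing with it needs the collar-twist cancellation you allude to.
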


The idea of the proof is as follows. Since $\Isom(M)$ is finite, we have that $\Isom_0(M) = \{\ast\}$. Since $\pi_0\Isom(M) \cong \pi_0\Diff(M)$ by Theorem \ref{thm:GMT}, it therefore suffices to prove that $\Diff_0(M) \simeq \{\ast\}$ (which, in this case, is equivalent to proving the weak Smale conjecture, \cref{thm-weak Smale}).
By Mostow rigidity, $\Diff_0(M)$ can be equivalently viewed as the space of hyperbolic metrics $\text{\normalfont Hyp}(M)$ on $M$. Gabai shows that this space is contractible using a generalisation of the methods used in \cite{Gabai1997} (in particular, using the theory of insulators to reduce to the case of Haken 3-manifolds).

\section{Mapping class groups of Seifert fibred 3-manifolds}\label{sec-Seifert fibred}
Here we collect the results on mapping class groups of Seifert fibred manifolds. We restrict to compact, orientable \(3\)-manifolds.

\subsection{Definition of Seifert fibred 3-manifolds}\label{subsec:defn-of-SFS}
First, we discuss the definition, following~\cite{NeumannLectureNotes}.
Denote the half-disc by \[D^2_+ := \{x = (x_1,x_2) \in \R^2 \mid \|x\| \leq 1, x_2 \geq 0\}.\]

\begin{definition}\label{defn:SFS-defn}
A \emph{Seifert fibration} on a compact, orientable $3$-manifold $M$ is a map $\pi \colon M \to B$, where  $B$ is a compact surface, possibly nonorientable, such that the following holds.
\begin{enumerate}
    \item For every $x$ in the interior of $B$
there is a $D^2$ neighbourhood of $x$ and a diffeomorphism $\pi^{-1}(D^2) \cong D^2 \times S^1$, with respect to which the restriction of $\pi$ to $\pi^{-1}(D^2)$ is given by
\begin{align*}
\pi \colon D^2 \times S^1 &\to D^2 \\
(rt_1,t_2) & \mapsto rt_1^at_2^b,
\end{align*}
where $t_i \in S^1$, $r \in [0,1]$, and $a,b \in \Z$ with $a \neq 0$ and $\gcd(a,b)=1$.
\item For every $x \in \partial B$, there is a half-disc neighbourhood $D^2_+$ of $x$ and a diffeomorphism $\pi^{-1}(D^2_+) \cong D^2_+ \times S^1$, with respect to which the restriction of $\pi$ to $\pi^{-1}(D^2)$ is given by
\begin{align*}
\pi \colon D^2_+ \times S^1 &\to D^2_+ \\
(rt_1,t_2) & \mapsto rt_1,
\end{align*}
where $t_i \in S^1$ and $r \in [0,1]$ such that $rt_1 \in D^2_+$.
\end{enumerate}
\end{definition}

In the case that $x \in \mathring{B}$, the centre of the disc $0 \in D^2$ has inverse image the core circle of $D^2 \times S^1$, and the other points in $D^2 \sm \{0\}$ have inverse images circles that wrap $a$ times around the core in the longitudinal direction and and $-b$ times in the meridional direction.

\begin{definition}
We set up some more terminology.
\begin{enumerate}
    \item     A compact, orientable \(3\)-manifold \(M\) is \emph{Seifert fibred} if it admits a Seifert fibration.
    \item The inverse image of a point $x \in B$ is called a \emph{fibre}.
    \item  The \emph{multiplicity} of an interior fibre is the number $a$ associated to that fibre in the local model from \cref{defn:SFS-defn}. The multiplicity of a boundary fibre is $1$.
    \item  A fibre is \emph{exceptional} or \emph{singular} if $p \neq \pm 1$, and is \emph{regular} otherwise.
\item       Let $g \in \Z$ be the genus of $B$ for $B$ an orientable surface, and let $ g:= -\beta_1(B)$ for  $B$ nonorientable.
\item  Let $c \in \Z_{\geq 0}$ be the number of boundary components of $B$.
\end{enumerate}
\end{definition}

There are finitely many singular fibres, each of which is the fibre of a point in the interior of $B$.  Each circle in $\partial B$ gives rise to a torus boundary component of $M$.
Next we describe an alternative way to think about Seifert fibred 3-manifolds using standard fibred tori.

\begin{definition}\label{def-standard fibred torus}
Let \(a\) and \(b\) coprime integers with $0 \leq b < a$. Define an automorphism \[f_{a,b} \colon D^2 \rightarrow D^2 ; \ z \mapsto e^{2\pi i b/a}z\] which is a rotation of the disc by an angle of \(2\pi b/a\).
Consider the mapping torus $f_{a,b}$.  For each $p\in D^2$ we take $\{p\}\times I$ and identify $\{p\}\times \{0\}$ with $\{f_{a,b}(p)\}\times \{1\}$.
Under this identification the union of intervals
\[
\cup_{k=0}^{a-1}\{(f_{a,b})^k(p)\}\times I
\]
is a simple closed curve in the mapping torus.
This collection of simple closed curves endows the mapping torus of $f_{a,b}$, which is diffeomorphic to $S^1\times D^2$, with a decomposition into disjoint copies of $S^1$.
A \emph{standard fibred torus} is the mapping torus of $f_{a,b}$, together with the induced decomposition into copies of $S^1$, for some \(a\) and \(b\).
\end{definition}

\cref{def-standard fibred torus} provides another way to visualise the local model for a Seifert fibred 3-manifold from \cref{defn:SFS-defn}.
The following is a direct consequence of \cref{defn:SFS-defn}.

\begin{proposition}
    Every closed Seifert fibred 3-manifold admits a decomposition into disjoint copies of \(S^1\), such that each \(S^1\) admits a neighbourhood that is a union of copies of $S^1$, homeomorphic as a decomposed space to a standard fibred torus $S^1\times D^2$, with decomposition as above for some coprime integers $a>0$ and $b$.
\end{proposition}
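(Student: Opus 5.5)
The plan is to read off the decomposition from the local models in \cref{defn:SFS-defn}, after first checking that the fibres of $\pi$ are circles and partition $M$. Since $M$ is closed, every point of $B$ is an interior point, and only part (1) of the definition is needed. The decomposition is the set of fibres $\{\pi^{-1}(x)\}_{x \in B}$. These are pairwise disjoint and cover $M$ because they are preimages of points of a map. Each is a circle: in the local model
\[
\pi(rt_1,t_2) = r\,t_1^a t_2^b,
\]
the preimage of the centre $0$ is the core $\{0\}\times S^1$. For $re^{i\phi}$ with $r>0$, the preimage is $\{(rt_1,t_2) : t_1^at_2^b = e^{i\phi}\}$, which is a single circle because $\gcd(a,b)=1$. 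Indeed, choosing $c,d$ with $ad-bc=1$, the map $(t_1,t_2)\mapsto(t_1^at_2^b,\,t_1^ct_2^d)$ is an automorphism of the torus. In these coordinates the level set is $\{e^{i\phi}\}\times S^1$.

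Next, fix a fibre $\pi^{-1}(x)$. The definition gives a disc $D^2\ni x$ with $\pi^{-1}(D^2)\cong D^2\times S^1$, and we may assume $x$ corresponds to the centre. The neighbourhood $\pi^{-1}(D^2)$ is saturated, i.e.\ a union of fibres. It remains to identify $D^2\times S^1$, with its level-set decomposition, with the standard fibred torus, the mapping torus of $f_{a',b'}$. Reparametrise by $(rt_1,t_2)\mapsto(rt_1^at_2^b,\,t_1^ct_2^d)$ on $D^2\setminus 0 \times S^1$. This map extends over the core only when it is compatible with the solid torus structure, so the cleaner route is direct. Cut $D^2\times S^1$ along the meridian disc $D^2\times\{1\}$ to get $D^2\times I$. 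A fibre through $(rt_1,1)$ is traced by moving $t_2$ once around while $t_1$ rotates by $t_2^{-b/a}$. So the first-return map on the meridian disc is the rotation $z\mapsto e^{-2\pi i b/a}z$ (the sign depends on convention). Each non-central fibre meets the meridian disc in $|a|$ points. After replacing $a$ by $|a|$ and $-b$ by its residue $b'\in[0,a)$ modulo $a$, possibly applying a reflection or a Dehn twist along the meridian disc, the flow-box along the fibres identifies the decomposed space with the mapping torus of $f_{a,b'}$.

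The main obstacle is this identification: showing rigorously that $D^2\times S^1$ with the level-set decomposition is homeomorphic as a decomposed space to the mapping torus of a rotation. I would handle it by an explicit formula, mapping $(z,s)\in D^2\times[0,1]$ to $(z e^{2\pi i b s/a}, e^{2\pi i s})$. Then I would verify three things. The map is a homeomorphism onto $D^2\times S^1$ after the gluing $(p,0)\sim(f_{a,b}(p),1)$. It sends the curves $\bigcup_k\{f^k(p)\}\times I$ onto level sets of $\pi$. The case $a=1$ gives a regular product neighbourhood. Normalising $b$ into $[0,a)$ and $a>0$ is routine bookkeeping with orientation changes and twisting.
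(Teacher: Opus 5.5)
Your proposal is correct and follows the paper's route: the paper states this proposition without proof as a direct consequence of the local models in the definition of a Seifert fibration, and you make that reading-off explicit. Two small repairs are needed. First, as written your map $(z,s)\mapsto(ze^{2\pi i bs/a},e^{2\pi i s})$ descends through the gluing $(p,1)\sim(f_{a,b}(p),0)$, not the paper's $(p,0)\sim(f_{a,b}(p),1)$; using $ze^{-2\pi i bs/a}$ instead sends the curves $\bigcup_k\{f_{a,b}^k(p)\}\times I$ exactly onto level sets of $rt_1^at_2^b$, so only the sign of $a$ and the residue of $b$ modulo $a$ need normalising. Second, if $x$ is not the centre of its model disc, shrink to a disc around $x$ that misses $0$; over it your torus-automorphism coordinates show the fibration is a product, which is the case $a=1$, $b=0$.
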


The space of fibres in a Seifert fibred 3-manifold form a \(2\)-dimensional orbifold. Thus, by the map which collapses each fibre to a point, we can think of a Seifert fibred \(3\)-manifold as a circle bundle over this base orbifold.

We only consider orientable, compact Seifert fibred spaces, so there will be a finite number of exceptional fibres and the only singularities of the base orbifold will be cone points. Indeed, the exceptional fibres correspond to the cone points on the orbifold.

\begin{example}
The 3-sphere $S^3$ is Seifert fibred via the Hopf map. More generally, every lens space is Seifert fibred via its decomposition into two tori (\cref{thm: UniqueTorus}); see \cite[Sec.~4]{NeumannLectureNotes} for details.

In fact six of eight of Thurston's geometries (excluding hyperbolic and Sol) correspond to Seifert fibred geometric 3-manifolds. See~\cref{sec: JSJ}, in particular \cref{prop-SFgeometry}, for more details.
\end{example}

\subsection{Encoding Seifert fibrations by Seifert symbols}\label{subsec:Seifert-symbols}
We describe how to encode Seifert fibrations into the data of a \emph{Seifert symbol}:
\[(g,c; (a_1,b_1), \dots, (a_n,b_n))\] with \(g\in\Z\), $a_i \in \Z_{>0}$,
and \(b_i\in \Z\), with \(\gcd(a_i,b_i)=1\) for all \(i\), we can build a Seifert fibred \(3\)-manifold.

The construction of a 3-manifold from the Seifert symbol proceeds as follows. Take a surface $\Sigma$, either $\Sigma_{g,c}$ for $g \geq 0$, or the nonorientable surface $F_{g,c}$ for $g<0$.  Remove $n$ open discs from the interior, to obtain $\Sigma'$. Form the unique circle bundle over $\Sigma'$ with orientable total space, and consider this as a Seifert fibred 3-manifold.
In the $i$th new toroidal boundary component just created, $S^1_i \times S^1$, for $i = 1,\dots,n$, glue a solid torus $T_i \cong D^2 \times S^1$, with the identification determined by $(a_i,b_i)$. We describe this identification in the case of $\Sigma$ orientable, and refer to \cite[pp.13--4]{NeumannLectureNotes} in the nonorientable case.

In the orientable case, we have a trivial circle bundle $\Sigma' \times S^1$. Let
\begin{align*}
R &:= \Sigma' \times \{1\};    \\
Q_i &:= R \cap(S^1_i \times S^1) = S^1_i \times \{1\}; \text{ and} \\
H_i &:= \{1\} \times S^1 \subseteq S^1_i \times S^1.
\end{align*}
Glue $T_i$ to $S^1_i \times S^1$ in such a way that the meridian $\mu_i = S^1 \times \{1\} \subseteq T_i$ is glued to a simple closed curve in $S^1_i \times S^1$ representing the homology class of $a_iQ_i + b_i H_i$.
This completes the construction of a Seifert fibred 3-manifold corresponding to the data $(g,c; (a_1,b_1), \dots, (a_n,b_n))$.

\begin{theorem}[{\cite[Theorems~1.5~and~1.8]{NeumannLectureNotes}}]
 For any  Seifert fibred space \(M\), there is a fibre-preserving diffeomorphism between \(M\) and the space determined by \((g,c; (a_1,b_1), \dots, (a_n,b_n))\) for some \(g,c,a_i,b_i\).
Thus we can write \[M= M(g,c; (a_1,b_1), \dots, (a_n,b_n)).\]
\end{theorem}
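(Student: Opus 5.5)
The strategy is to build the model Seifert fibred space directly from the fibration of $M$ and then match the two piece by piece. Let $\pi\colon M\to B$ be a Seifert fibration. There are finitely many exceptional fibres: they lie over interior cone points, and these are isolated because in the local model of \cref{defn:SFS-defn} only the centre of the disc has multiplicity $\neq \pm 1$. Around each exceptional fibre choose a fibred solid torus neighbourhood $T_i$, taken to be pairwise disjoint and given by the local model. Let $M' := M\sm \bigcup_i \mathring{T}_i$ and $\Sigma' := \pi(M')$. Then $\pi|_{M'}\colon M'\to\Sigma'$ is a genuine circle bundle over a compact surface with boundary, and its total space is orientable. The surface $\Sigma'$ is classified by its orientability, its Euler characteristic and its number of boundary components. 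This gives $g$ (the genus, or $-\beta_1$ in the nonorientable case) and $c$ (the number of boundary circles of $B$ itself).

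Next I would trivialise $M'$. When $\Sigma'$ is orientable, an orientable circle bundle over a surface with nonempty boundary is trivial: the obstruction, the Euler class, lies in $H^2(\Sigma';\Z)=0$. Note that $\Sigma'$ has boundary as soon as $n\geq 1$; if $n=0$ and $B$ is closed, I would drill out a neighbourhood of one regular fibre and treat it as a fibre with invariant $(1,b)$. This gives a fibre-preserving diffeomorphism $M'\cong \Sigma'\times S^1$. In the nonorientable case the orientable total space forces the monodromy around orientation-reversing loops to reverse the fibre, and the bundle is the unique such bundle; here I would cite Neumann's description. Fixing this product structure gives a section $R=\Sigma'\times\{1\}$, and hence the curves $Q_i$ and $H_i$ on each boundary torus $\partial T_i$.

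Then I identify the gluing. The meridian $\mu_i$ of $T_i$ is a simple closed curve on $\partial T_i$, so its homology class is primitive: $[\mu_i]=a_iQ_i+b_iH_i$ with $\gcd(a_i,b_i)=1$. Replacing $\mu_i$ by $-\mu_i$ if needed, we may take $a_i\geq 0$. Moreover $a_i\neq 0$, since $a_i$ is the intersection number of $\mu_i$ with the fibre $H_i$, which equals the multiplicity of the core fibre. A solid torus glued along the boundary is determined up to fibre-preserving diffeomorphism by the isotopy class of the meridian. So the fibre-preserving diffeomorphism $M'\cong\Sigma'\times S^1$ extends over each $T_i$ to the model space $M(g,c;(a_1,b_1),\dots,(a_n,b_n))$. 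To do this I would extend across each meridian disc and then across the remaining ball, using \cref{theorem:Cerf}, or simply radially.

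The main obstacle is making this extension fibre-preserving on $T_i$. One has to check that the fibration of $T_i$ induced from the local model agrees, up to fibred isotopy, with the standard fibred torus of \cref{def-standard fibred torus} determined by the slope of $H_i$ relative to $\mu_i$. I would handle this by observing that the fibration of a solid torus by parallel circles of a fixed slope on the boundary is unique up to fibre-preserving isotopy rel boundary. The idea is to identify both fibrations with the mapping torus of $f_{a,b}$, where $b$ is determined by the slope of the fibre on $\partial T_i$. Everything else reduces to classification facts about surfaces and bundles.
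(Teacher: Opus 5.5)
Your proposal is correct, and it is the standard argument from Neumann's notes. The paper cites that argument without giving a proof, and your decomposition is its own construction of $M(g,c;(a_1,b_1),\dots,(a_n,b_n))$ run in reverse: drill out fibred solid tori around the exceptional fibres (plus one regular fibre if needed), trivialise the circle bundle over $\Sigma'$ to get a section, read off $[\mu_i]=a_iQ_i+b_iH_i$, and extend over the $T_i$ using the lemma you correctly single out as the crux. That lemma is that a fibre-preserving boundary map between fibred solid tori sending meridian to meridian extends fibre-preservingly. One small correction: extending a diffeomorphism over a $3$-ball needs Smale's theorem that $\Diff^+(S^2)$ is connected, not \cref{theorem:Cerf} (which concerns $S^3$ and $D^4$), and a radial extension only gives a homeomorphism; in any case that step is superseded by the fibred-solid-torus lemma.
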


The Seifert symbol is uniquely determined by $(M,B,\pi)$, up to a collection of operations one can perform on the Seifert symbol data. 
See \cite[Theorems~\(1.5\)~and~\(1.8\)]{NeumannLectureNotes} for more details on these operations.
In many cases, the class of the Seifert symbol, modulo to the same operations, is determined by the Seifert fibred 3-manifold $M$, without first fixing $(\pi,B)$. The exceptions to this are listed in \cref{thm-fibre preserving equiv to diff} below.

\begin{example}
    The orientable circle bundle over the closed, orientable surface $\Sigma_g$ of genus $g$, with Euler number $e$, can be constructed using the Seifert symbol $(g,0;(1,e))$.
\end{example}

\subsection{Which Seifert fibred 3-manifolds are Haken?}\label{subsec:SFS-vs-Haken}

By Jaco \cite[Lemma VI.7.]{Jaco1980} the only reducible oriented Seifert fibred manifolds are $\S^1\times S^2$ and $\mathbb{RP}^3\# \mathbb{RP}^3$. Jaco also clarifies that the overlap between Haken and Seifert fibred manifolds is large, in the following theorem.

\begin{theorem}[{\cite[p.~\(96\)]{Jaco1980}}]\label{thm-SF Haken Classification}
        With the exception of lens spaces, $S^3$, \(S^1\times S^2\), and \(\mathbb{RP}^3\# \mathbb{RP}^3\),
    an oriented Seifert fibred manifold $M$ is either Haken or has base \(S^2\) and exactly \(3\) exceptional fibres;
    in this last case \(M\) is Haken if and only if \(H_1(M)\) is infinite.
\end{theorem}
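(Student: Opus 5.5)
We plan to split according to whether $M$ has boundary and, in the closed case, according to the Seifert invariants of $M$. If $\partial M \neq \emptyset$, then $M$ is Haken by \cref{remark:non-boundary-Haken}\eqref{item:nbH}; it is irreducible because, by Jaco's lemma quoted above, the only reducible oriented Seifert fibred manifolds are $S^1\times S^2$ and $\RP^3\#\RP^3$, which are closed. So we may assume $M$ is closed, irreducible, and not among the excluded manifolds. Write $M = M(g,0;(a_1,b_1),\dots,(a_n,b_n))$ with $n$ the number of exceptional fibres.

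\textbf{Producing incompressible surfaces.} First suppose the base orbifold $B$ is not $S^2$. We will exhibit a vertical incompressible torus or Klein-bottle-bundle surface. For $B$ orientable of genus $g\ge 1$, take an essential simple closed curve $\gamma$ in $B$ avoiding the cone points, with $\gamma$ non-separating. Then $\pi^{-1}(\gamma)$ is a torus. To check incompressibility we argue on $\pi_1$: the regular fibre generates an infinite cyclic normal subgroup, because $M$ is irreducible with infinite $\pi_1$ when $B$ has nonpositive orbifold Euler characteristic or $g\ge1$. Also $\gamma$ has infinite order in $\pi_1^{orb}(B)$, so $\pi_1(T^2)\to\pi_1(M)$ is injective, and the loop theorem gives incompressibility. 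For $B$ nonorientable, we take $\gamma$ an orientation-reversing curve. Its preimage is a Klein bottle, and the boundary of a regular neighbourhood of that Klein bottle is a torus, which is incompressible by the same $\pi_1$ argument. The exception is $B=\RP^2$ with at most one cone point, whose Seifert manifolds are prism manifolds or $\RP^3\#\RP^3$. Those with finite $\pi_1$ contain no incompressible surface, and the case analysis here needs care; we would cite Jaco for this finite list.

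\textbf{The case $B=S^2$.} Here we count exceptional fibres. If $n\le 2$, \cref{remark:non-boundary-Haken} says $M$ is a lens space, $S^3$, or $S^1\times S^2$, all excluded. If $n\ge 4$, we take a curve $\gamma$ in $B$ separating the cone points into two groups of at least two each. Each side is then an orbifold disc with at least two cone points, which is not a disc with at most one cone point. Hence $\gamma$ is nontrivial and non-peripheral in $\pi_1^{orb}$, and $\pi^{-1}(\gamma)$ is an incompressible vertical torus. If $n=3$, every vertical torus bounds a fibred solid torus, so we instead look for a horizontal surface. A horizontal surface exists if and only if the rational Euler number $e=-\sum b_i/a_i$ vanishes, which in turn happens if and only if $H_1(M;\Q)\neq 0$, that is, $H_1(M)$ is infinite. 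When $H_1$ is infinite, any embedded surface dual to a nonzero class in $H^1$ can be compressed to a non-separating incompressible surface, so $M$ is Haken. When $H_1$ is finite, $M$ is not Haken: any incompressible surface in an irreducible $M$ can be isotoped to be vertical or horizontal. Vertical surfaces fail for $n=3$, and horizontal surfaces would force $e=0$.

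\textbf{Main obstacle.} The main obstacle is the last step, the vertical/horizontal dichotomy for incompressible surfaces in Seifert fibred manifolds. That is the genuinely nontrivial input, namely Waldhausen's theorem as presented in Jaco, Chapter VI. Everything else reduces to $\pi_1$ computations in orbifold groups.
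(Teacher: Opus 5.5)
The paper gives no proof of this statement beyond citing Jaco. Your outline (the vertical/horizontal dichotomy for incompressible surfaces, plus the observation that for base $S^2$ the rational Euler number vanishes exactly when $H_1(M)$ is infinite) is the standard argument behind that citation. The following cases go through essentially as you describe: orientable base of positive genus; nonorientable base other than $\RP^2$ with at most one cone point; and base $S^2$ with $n\le 2$, $n\ge 4$ or $n=3$.

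The genuine gap is the case you set aside, base $\RP^2$ with at most one cone point.

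\begin{itemize}
\item With respect to that fibration, $M$ is neither Haken nor fibred over $S^2$. It is not Haken because its $\pi_1$ is finite unless $M\cong\RP^3\#\RP^3$, which is excluded.
\item So the dichotomy in the statement can only hold because $M$ carries a \emph{different} Seifert fibration. The theorem has to be read as ``admits a fibration with base $S^2$ and three exceptional fibres'', as in \cref{remark:non-boundary-Haken}.
\item Showing that these manifolds are not Haken, which is all you indicate, is therefore not the relevant point.
\item Deferring to Jaco for ``this finite list'' does not close the gap. The family is infinite, and it contains lens spaces as well as prism manifolds; for instance, the orientable circle bundles over $\RP^2$ with Euler number $\pm1$ have $\pi_1\cong\Z/4$.
\end{itemize}

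The missing ingredient is the standard non-uniqueness of Seifert fibrations. Apart from $\RP^3\#\RP^3$, a manifold fibred over $\RP^2$ with at most one cone point is also fibred over $S^2$ with exceptional fibres of multiplicities $2,2,m$. For $m=1$ it is a lens space. For $m\ge 2$ it is a prism manifold, which lands in the second alternative of the theorem.

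A smaller point: infinite order of the regular fibre does not follow merely from ``$M$ irreducible with infinite $\pi_1$''. Argue instead as follows. When $\chi^{\mathrm{orb}}(B)\le 0$, the base is finitely covered by a surface $\Sigma$ of genus at least one. The induced finite cover of $M$ is a circle bundle over $\Sigma$, and there the fibre injects on $\pi_1$ because $\pi_2(\Sigma)=0$.
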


We will not discuss the cases of lens spaces, $S^3$, nor \(S^1\times S^2\) as their mapping class groups have been covered in Sections \ref{section-S^3}, \ref{sec-S^1xS^2}, and \ref{sec-lens spaces} respectively.

We will survey the developments in the calculation of these mapping class groups, considering two separate classes of Seifert fibred \(3\)-manifolds: Haken 3-manifolds, and 3-manifolds with base \(S^2\) and exactly \(3\) exceptional fibres.
We give an account of the calculation of the mapping class group and the status of the generalised Smale conjecture in both cases. First we introduce the subgroup of fibre preserving diffeomorphisms, which can be useful for computations.

\subsection{The subgroup of fibre preserving diffeomorphisms}\label{subsec:fibre-pres-diffeos}

To discuss the mapping class group of Seifert fibred \(3\)-manifolds, we first consider a useful subgroup of the diffeomorphism group, comprising those diffeomorphisms that respect the fibre structure.

Fix a Seifert fibring \(\varphi\) of \(M\), and denote by \(\Diff^f(M, \varphi)\) the subgroup of \(\Diff(M)\) consisting of those diffeomorphisms which send fibres of \(\varphi\) diffeomorphically to fibres. We call such diffeomorphisms \textit{fibre-preserving}. If \(\varphi\) and \(\varphi'\) are two Seifert fibrings of \(M\) such that there exists a diffeomorphism \(M \to M\) sending the fibres of \(\varphi\) to the fibres of \(\varphi'\), then  \(\Diff^f(M, \varphi)\) and  \(\Diff^f(M, \varphi')\) are conjugate subgroups in \(\Diff(M)\).

By work of Connor-Raymond \cite{ConnerRaymond77} (building on results by Waldhausen in the Haken case) every homotopy equivalence \(M\to M\) is homotopic to a fibre-preserving diffeomorphism of \(M\).

For the rest of the section, we will abuse notation and drop the \(\varphi\) and write \(\Diff^f(M)\) for fibre-preserving diffeomorphisms of \(M\), implicitly choosing a preferred Seifert fibring of \(M\).
This notion of fibre-preserving diffeomorphism is helpful for the  classification of Seifert fibred \(3\)-manifolds, and for the computation of their mapping class groups.

The following result tells us that, in most cases, the classification of Seifert fibred 3-manifolds up to fibre-preserving diffeomorphism is in fact a classification up to arbitrary diffeomorphism.

\begin{theorem}[Waldhausen \cite{Waldhausen1967a, Waldhausen1967b}, Jaco {\cite[Theorem VI.17]{Jaco1980}}]\label{thm-fibre preserving equiv to diff}
    Let \(M_0\) and \(M_1\) be closed, orientable Seifert fibred spaces which are not
    \begin{itemize}
            \item lens spaces,
            \item the 3-torus \(T^3 = S^1 \times S^1 \times S^1\),
            \item \(M(0; (2,1), (2,1), (2,-1), (2,-1))\), the double of the twisted \(I\)-bundle over the Klein bottle, or
            \item \(M(0; (2,b_1), (2,b_2), (a_3,b_3))\).
    \end{itemize}
    Then every diffeomorphism \(M_0\to M_1\) is isotopic to a fibre-preserving diffeomorphism.
\end{theorem}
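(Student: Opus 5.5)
The plan is to split into the Haken case and the small Seifert fibred case, and in both to reduce to the existence of a fibre-preserving map in the homotopy class of a given diffeomorphism, followed by upgrading a homotopy to an isotopy. Let $f\colon M_0\to M_1$ be a diffeomorphism. Since $M_0,M_1$ are closed, orientable Seifert fibred and not $S^1\times S^2$ or $\RP^3\#\RP^3$ (the latter is excluded by the last bullet, as it is $M(0;(2,1),(2,-1))$ up to conventions, and lens spaces are excluded), they are irreducible. By \cref{thm-SF Haken Classification}, each is either Haken or has base $S^2$ with exactly three exceptional fibres.

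In the Haken case I would argue on $\pi_1$. The key algebraic input is that the fibre subgroup $\langle h\rangle\le\pi_1(M_i)$ is the unique maximal infinite cyclic normal subgroup (\cref{remark:non-boundary-Haken}), except when $\pi_1$ has several such subgroups. Establishing this uniqueness is where the exceptions come from: $T^3$, where every primitive element generates a normal $\Z$, and the double of the twisted $I$-bundle over the Klein bottle, which has two inequivalent Seifert fibrations. Granting uniqueness, $f_*$ carries the fibre class of $M_0$ to that of $M_1$. I would then realise $f$ up to homotopy by a fibre-preserving diffeomorphism. I would do this either via the result of Conner--Raymond quoted above, or by cutting along vertical incompressible tori (preimages of essential arcs and curves in the base orbifold) and using \cref{prop:scott-surfaces} inductively, as in \cref{thm:that-implies-wladhausens-thm}, keeping each piece fibred. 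Finally, \cref{thm:Waldhausen}, via the injectivity of $\rho$ given by \cref{thm:kernel-rho}, upgrades homotopy to isotopy, since both $M_i$ are aspherical by \cref{lemma:kpi1}.

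In the non-Haken case the base is $S^2$ with three exceptional fibres, and $M$ is either elliptic or modelled on $\wt{\SL(2,\R)}$. In the elliptic case, \cref{pi0Smale} shows that $f$ is isotopic to an isometry. Isometries of $S^3/G$ normalise $G$ and so preserve the Hopf-type fibration, except where the isometry group exchanges fibrations. I expect those exceptions to be exactly the prism-type manifolds $M(0;(2,b_1),(2,b_2),(a_3,b_3))$, which carry two Seifert fibrations. For the $\wt{\SL(2,\R)}$ (and $\H^2\times\R$) cases I would again use the fact that the centre of $\pi_1$ is the unique maximal normal $\Z$. I would then realise $f_*$ by an isometry, which is fibre-preserving because the geometric fibration is canonical, and deduce isotopy from injectivity of $\rho$ on irreducible manifolds (\cref{thm:kernel-rho}).

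I expect the main obstacle to be the Haken step of producing a fibre-preserving representative. Concretely, this means showing that a homotopy equivalence preserving the fibre subgroup is homotopic to a fibre-preserving homeomorphism, which requires building the diffeomorphism of the base orbifold from the induced automorphism of the orbifold group. I would first try to deduce this from the Dehn--Nielsen--Baer theorem for 2-orbifolds, lifting a realising orbifold homeomorphism and correcting it by vertical twists.
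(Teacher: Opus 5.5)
The paper does not prove this theorem; it only cites Waldhausen and Jaco. Your outline therefore has to stand on its own, and its central step fails. \cref{remark:non-boundary-Haken} gives only the \emph{existence} of an infinite cyclic normal subgroup. Uniqueness of the fibre subgroup has to be checked on $\pi_1^{\mathrm{orb}}(B)$. That check is standard for hyperbolic $B$, but for Euclidean $B$ it produces more exceptions than $T^3$ and the double of the twisted $I$-bundle over the Klein bottle.

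Take the Hantzsche--Wendt manifold $N=\R^3/\Gamma$, with $\Gamma$ generated by
$\alpha(x,y,z)=(x+\tfrac12,-y+\tfrac12,-z)$, $\beta(x,y,z)=(-x,y+\tfrac12,-z+\tfrac12)$ and $\gamma(x,y,z)=(-x+\tfrac12,-y,z+\tfrac12)$.
\begin{itemize}
\item $N$ is closed, orientable and flat.
\item For each coordinate axis, the foliation by parallel lines descends to a Seifert fibration over $\RP^2$ with two cone points of order $2$. By \cref{thm-SF Haken Classification}, $N$ is Haken, and it lies in none of the excluded families.
\item The three fibre subgroups are $\langle t_k\rangle$, where $t_1=\alpha^2$, $t_2=\beta^2$, $t_3=\gamma^2$ are the unit translations. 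Each is normal, since the holonomy is diagonal.
\item The rotation $P(x,y,z)=(y,z,x)$ conjugates $\alpha,\beta,\gamma$ to $\gamma,\alpha,\beta$, and $t_1$ to $t_3$. Hence $P$ induces an isometry $\bar P$ of $N$.
\end{itemize}
Conjugation in $\Gamma$ sends $t_1$ only to $t_1^{\pm1}$. So $\bar P$ carries the free homotopy class of a regular fibre of the $x$-fibration to the class of $t_3$, which no regular fibre of that fibration represents. Thus $\bar P$ is not even homotopic to a fibre-preserving diffeomorphism. Your uniqueness claim is therefore false, and so is the statement as transcribed: this manifold, $M(-1;(2,1),(2,-1))$, must also be excluded. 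The same example shows that the Conner--Raymond sentence quoted in the survey cannot hold verbatim for a fixed fibration, so citing it does not rescue this step.

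The case analysis has further errors.
\begin{itemize}
\item $M(0;(2,1),(2,-1))$ is not $\RP^3\#\RP^3$. Over $S^2$ with two exceptional fibres one glues two fibred solid tori, and here the two meridians coincide, giving $S^1\times S^2$. $\RP^3\#\RP^3$ fibres over $\RP^2$ with no exceptional fibres and is covered by none of the bullets. Your reduction to irreducible manifolds is therefore unjustified, and this case needs its own argument.
\item In the non-Haken case you omit Nil geometry. Base $S^2(3,3,3)$, $S^2(2,4,4)$ or $S^2(2,3,6)$ with nonzero Euler number gives finite $H_1$, hence a non-Haken manifold that is neither elliptic nor $\wt{\SL(2,\R)}$. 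Conversely, the $\H^2\times\R$ case is empty here, since Euler number zero forces $H_1$ to be infinite.
\item In the elliptic case it is false that isometries of $S^3/G$ preserve the fibration. For $G=\mathrm T^*_{24},\mathrm O^*_{48},\mathrm I^*_{120}$ acting by right multiplication, the fibration comes from the left Hopf circle. The $\SO(3)$ factor of $\Isom(M)$, which acts by left multiplication, preserves it only for elements normalising that circle. What is true, and what you actually need, is that every path component of $\Isom(M)$ contains a fibre-preserving isometry.
\end{itemize}
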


We know the mapping class groups of lens spaces by \cref{sec-lens spaces}. The 3-torus is Haken, and so $\pi_0 \Diff(T^3) \cong \Out(\Z^3) \cong \GL(3,\Z)$.  The double of the twisted $I$-bundle over the Klein bottle is also Haken.

On the level of mapping class groups, the inclusion map \(\Diff^f(M) \hookrightarrow \Diff(M)\) induces a homomorphism \(\pi_0\Diff^f(M) \to \pi_0\Diff(M)\), which a priori does not need to be injective nor surjective. The above result shows, however, that in most cases it is in fact surjective.
For injectivity, we have the following result of Hong--Kalliongis--McCullough--Rubinstein.

\begin{theorem}[{\cite[Theorem~\(3.9.1\)]{HKMR2012}}]
 Let $M$ be a closed, orientable, Haken Seifert fibred 3-manifold. The inclusion map   \(\Diff^f_0(M) \to \Diff_0(M)\) is a homotopy equivalence, and hence the map \(\pi_0 \Diff^f(M) \to \pi_0 \Diff(M)\) is injective.
\end{theorem}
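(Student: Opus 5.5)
The plan has two parts: first show that $\Diff^f_0(M)\to\Diff_0(M)$ is a homotopy equivalence, then deduce injectivity on $\pi_0$ by a fibration argument.

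For the homotopy equivalence, I would compute both sides and check that the inclusion matches them. On the $\Diff_0(M)$ side, $M$ is closed, irreducible and Haken, so the Hatcher--Ivanov theorem gives $\Diff(M)\simeq\hAut(M)$. By \cref{lemma:kpi1}, $M\simeq K(\pi_1(M),1)$. For an aspherical space the identity component of $\hAut(M)$ is $K(Z(\pi_1(M)),1)$, because its fundamental group is the centre of $\pi_1(M)$ and its higher homotopy groups vanish. Hence $\Diff_0(M)\simeq K(Z(\pi_1 M),1)$. For a Haken Seifert fibred $M$ the centre is either trivial or infinite cyclic, generated by the regular fibre. The exception is $T^3$, where the centre is $\Z^3$, and that case would be handled separately using the torus action.

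On the fibre-preserving side, I would use the projection to the base orbifold. This gives a fibration
\[ \operatorname{Vert}_0(M)\to\Diff^f_0(M)\to\Diff^{orb}_0(B), \]
where $\operatorname{Vert}(M)$ denotes the diffeomorphisms preserving each fibre. There are two inputs:
\begin{enumerate}
\item The base orbifold is hyperbolic or Euclidean, since $M$ is Haken and not one of the small exceptions. Earle--Eells type results then give $\Diff^{orb}_0(B)\simeq *$ in the hyperbolic case and a torus in the flat case.
\item The vertical group is the space of sections of a bundle with fibre $S^1$. Its identity component deformation retracts onto the circle action by rotation in the fibres when $B$ is orientable, and is contractible when $B$ is nonorientable, where the fibre class is not central.
\end{enumerate}
Comparing the two sides, the circle action along fibres generates exactly the centre $\langle t\rangle$ of $\pi_1(M)$. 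So the inclusion induces an isomorphism on $\pi_1$, and all higher homotopy groups vanish on both sides, which gives the homotopy equivalence.

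For injectivity on $\pi_0$, I would use the orbit maps $\Diff(M)\to\Diff(M)/\Diff^f(M)$, whose target is the space of Seifert fibrations conjugate to $\varphi$. The homotopy equivalence on identity components, together with the fact that any fibre-preserving diffeomorphism isotopic to the identity has trivial image in $\Out(\pi_1M)$ and therefore covers an orbifold mapping class acting trivially, implies that the component of $\varphi$ in the space of fibrations is simply connected. Concretely, this means that an isotopy from a fibre-preserving $f$ to the identity can be deformed to a fibre-preserving isotopy.

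I expect this last deformation step to be the main obstacle. My first attempt would be to reduce it to the algebra: a fibre-preserving $f$ that acts trivially on $\pi_1(M)$ up to conjugation induces an orbifold map of $B$ that acts trivially on $\pi_1^{orb}(B)$. By the Dehn--Nielsen--Baer theorem for orbifolds, that map is isotopic to the identity. It remains to straighten the resulting vertical diffeomorphism, which is classified by $H^1(B;\Z)$, and these classes are detected faithfully in $\Out(\pi_1 M)$, so a vertical map acting trivially there is isotopic to the identity.
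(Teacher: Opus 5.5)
The survey gives no proof of this statement; it only quotes Hong--Kalliongis--McCullough--Rubinstein, so I am assessing your argument on its own. Your argument works when $\Diff^{\mathrm{orb}}_0(B)\simeq *$ and $\pi_1^{\mathrm{orb}}(B)$ is centreless. That covers every hyperbolic base and the Euclidean bases with cone points. In these cases:
the fibre of $\Diff^f_0(M)\to\Diff^{\mathrm{orb}}_0(B)$ really is $\operatorname{Vert}_0(M)$;
its circle maps isomorphically onto $Z(\pi_1M)=\langle t\rangle$ when $B$ is orientable, and both sides are contractible when $B$ is nonorientable;
your last paragraph is a valid proof of injectivity. It uses orbifold Dehn--Nielsen--Baer, lifts the base isotopy, and observes that a vertical automorphism can be inner only via conjugation by a power of $t$, which gives the zero class (a coboundary in the twisted case).
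Your paragraph about the space of fibrations being simply connected only restates injectivity; it does not prove it.

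The gap is the Euclidean bases with $\pi_1\Diff^{\mathrm{orb}}_0(B)\neq 0$: $B=T^2$ (apart from $T^3$) and $B$ the Klein bottle. Your key claim, that vertical classes are detected faithfully in $\Out(\pi_1M)$, fails there. For the Heisenberg manifold, the paper's own example shows that conjugation by $\alpha$ and $\beta$ realises the vertical classes $(1,0)$ and $(0,-1)$. So $H^1(T^2;\Z)\to\Out(\pi_1M)$ is zero, and for Euler number $e$ its image is $\Z^2/e\Z^2$. A vertical diffeomorphism in a nontrivial component of $\operatorname{Vert}(M)$ can therefore act trivially on $\Out(\pi_1M)$, and nothing in your argument shows it is fibre-preservingly isotopic to the identity.

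What is missing is the connecting map $\partial\colon\pi_1\Diff^{\mathrm{orb}}_0(B)\to\pi_0\operatorname{Vert}(M)$ of the fibration $\operatorname{Vert}(M)\to\Diff^f(M)\to\Diff^{\mathrm{orb}}(B)$, obtained by lifting loops of isotopies of the base. You need to show $\ker\big(\pi_0\operatorname{Vert}(M)\to\Out(\pi_1M)\big)=\operatorname{im}\partial$. Over $T^2$ with Euler number $e$, $\partial$ is $e$ times an isomorphism $\Z^2\to\Z^2$.

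The same omission affects your first part. The fibre of $\Diff^f_0(M)\to\Diff^{\mathrm{orb}}_0(B)$ is $\operatorname{Vert}(M)\cap\Diff^f_0(M)$, whose $\pi_0$ is $\operatorname{im}\partial$, not $\operatorname{Vert}_0(M)$. For Nil manifolds it is the injectivity of $\partial$ that kills $\pi_1\Diff_0(T^2)\cong\Z^2$. With your fibre you would get $\pi_1\Diff^f_0(M)$ of rank $3$, against a centre $\Z$.

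Over the Klein bottle the fibre is not central, yet $Z(\pi_1M)$ can still be $\Z$. An example is the mapping torus of $-I$ on $T^2$, fibred by circles in the $T^2$ direction; there the centre is generated by a lift of the circle action on the base. So your assertions that the centre is generated by the regular fibre, and that the fibre rotation generates it, are false for that fibration. These cases need the long exact sequence with $\partial$ computed explicitly.
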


We will see next that being able to restrict to fibre-preserving diffeomorphisms can be very useful for computations.

\subsection{Haken Seifert fibred 3-manifolds}\label{subsec:Haken-SFS}

In this subsection we let \(M\) be closed, orientable, Seifert fibred, irreducible, and Haken.
The Seifert fibred 3-manifolds $M$ with nonempty boundary are also Haken, but we do not consider these here. We also restrict this discussion to the case that $B$ is orientable.

Then \cref{lemma:kpi1} applies and \(M\simeq K(\pi_1(M),1) \) with \(\pi_1(M)\) infinite. Recall that by \cref{thm:Waldhausen}, for Haken \(3\)-manifolds \(\pi_0\Diff(M) \cong\Out(\pi_1(M))\). 
We can compute the mapping class group using the following strategy, provided $M$ is not one of the Haken 3-manifolds listed as exceptional in \cref{thm-fibre preserving equiv to diff}. For, under this assumption, the mapping class group of fibre-preserving diffeomorphisms is isomorphic to the mapping class group, so we can calculate the mapping class groups/outer automorphism groups by considering the base orbifold, following \cite{ConnerRaymond77}.

Since \(\pi_1(M)\) is infinite, we get the short exact sequence
\[\begin{tikzcd}
	1 & {\pi_1(S^1)} & {\pi_1(M)} & {\pi_1^{\mathrm{orb}}(B)} & 1
	\arrow[from=1-1, to=1-2]
	\arrow[from=1-2, to=1-3]
	\arrow[from=1-3, to=1-4]
	\arrow[from=1-4, to=1-5]
\end{tikzcd}\] where \(B\) is the base orbifold and \(\pi_1^{\mathrm{orb}}(B)\) is the orbifold fundamental group (see e.g.\ \cite{Davis-orbifold-lectures}).
Since \(M\) is Haken, \(\pi_1(M)\) is a central extension of \(\pi_1^{\mathrm{orb}}(B)\) by \(\Z\).
This means that there is a corresponding extension class \(a\in H^2(\pi_1^{\mathrm{orb}}(B);\Z)\) which determines the above short exact sequence.

The idea is to use this extension to construct a corresponding short exact sequence containing \(\Out(\pi_1(M))\). Indeed, let \(\Out(a): = \Aut(a) / \Inn(\pi_1^{\mathrm{orb}}(B))\), where \(\Aut(a)\) is the subgroup of \(\Aut(\pi_1^{\mathrm{orb}}(B))\) given by \[\Aut(a) := \{\varphi \in \Aut(\pi_1^{\mathrm{orb}}(B)) \mid \exists \, \varepsilon \in  \{\pm 1\}  \text{ such that } \varphi^*(a) = \varepsilon a \}.\]
We get the following result from \cite[Corollary~\(2\), Subsection~\(6.13\)]{ConnerRaymond77}, which means we can calculate \(\Out(\pi_1(M))\) using the base orbifold \(B\).

\begin{theorem}\label{thm:computing-Out-pi1-Haen-SFS}
    Suppose that the class \(a\in H^2(\pi_1^{\mathrm{orb}}(B);\Z)\) is not \(2\)-torsion, then we have an exact sequence
    \[\begin{tikzcd}
	K & {H^1(\pi_1^{\mathrm{orb}}(B);\Z)} & {\Out(\pi_1(M))} & {\Out(a)} & 1.
	\arrow[from=1-1, to=1-2]
	\arrow[from=1-2, to=1-3]
	\arrow[from=1-3, to=1-4]
	\arrow[from=1-4, to=1-5]
\end{tikzcd}\]
Moreover, if $\pi_1^{\mathrm{orb}}(B)$ is centreless, or $a=0$, then $K=1$, and we have a short exact sequence.
\end{theorem}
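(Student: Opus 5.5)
We will derive the exact sequence in \cref{thm:computing-Out-pi1-Haen-SFS} from the group extension itself. Write $\pi=\pi_1(M)$, $Q=\pi_1^{\mathrm{orb}}(B)$, and let $Z\cong\Z$ be the image of the fibre, so that $1\to Z\to\pi\to Q\to 1$ is central with class $a$. The first step is to show that $Z$ is characteristic in $\pi$. When $Q$ is centreless, $Z$ is the centre of $\pi$. In general, $Z$ is the unique maximal infinite cyclic normal subgroup; this uniqueness is where the non-exceptional hypotheses of \cref{thm-fibre preserving equiv to diff} are implicitly used. Hence every $\psi\in\Aut(\pi)$ restricts to $\pm1$ on $Z$ and descends to some $\bar\psi\in\Aut(Q)$. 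Naturality of extension classes gives $\bar\psi^*(a)=\varepsilon a$, where $\varepsilon=\psi|_Z$. Therefore we obtain a homomorphism $\Aut(\pi)\to\Aut(a)$. Inner automorphisms of $\pi$ map onto $\Inn(Q)$, so this descends to $\Out(\pi)\to\Out(a)$.

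The second step is surjectivity. Given $\varphi\in\Aut(Q)$ with $\varphi^*a=\varepsilon a$, consider the pulled-back extension twisted by $\varepsilon$ on $Z$. It has the same class $a$, so by the classification of central extensions via $H^2(Q;\Z)$ there is an isomorphism of extensions. That isomorphism is an automorphism $\psi$ of $\pi$ lifting $\varphi$ and acting by $\varepsilon$ on $Z$.

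The third step identifies the kernel. Let $\psi$ induce the identity on $Q$, after composing with an inner automorphism of $\pi$. Then $\psi(g)=g\,\varepsilon$-twisted by a function $c\colon\pi\to Z$ factoring through $Q$.

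First rule out $\varepsilon=-1$. If $\psi|_Z=-1$ and $\bar\psi=\mathrm{id}$, then $a=-a$, so $2a=0$. This contradicts the hypothesis that $a$ is not $2$-torsion, so $\varepsilon=1$.

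With $\varepsilon=1$, the map $c$ is a homomorphism $Q\to Z$, that is, an element of $H^1(Q;\Z)=\operatorname{Hom}(Q,\Z)$. Conversely, every such homomorphism $c$ gives an automorphism $g\mapsto g\,c(\bar g)$. This is bijective because its inverse is $g\mapsto g\,c(\bar g)^{-1}$, using centrality of $Z$. This yields a map $H^1(Q;\Z)\to\Out(\pi)$ whose image is the kernel of $\Out(\pi)\to\Out(a)$.

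The map $H^1(Q;\Z)\to\Out(\pi)$ is a homomorphism, and its kernel $K$ consists of those $c$ realised by inner automorphisms of $\pi$ inducing inner automorphisms of $Q$ that are trivial modulo the identification above. Concretely, $K$ consists of $c$ of the form $g\mapsto hgh^{-1}g^{-1}$, where $h$ maps to $Z(Q)$. This gives the exact sequence $K\to H^1\to\Out(\pi)\to\Out(a)\to1$.

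For the final clause, first suppose $Q$ is centreless. Then $h\in Z$, so $hgh^{-1}g^{-1}=1$ and $K=1$. Next suppose $a=0$. Then $\pi\cong Z\times Q$, and $hgh^{-1}g^{-1}$ is the commutator $[\bar h,\bar g]$, which lies in $Q$, not in $Z$, unless it is trivial. So again $K=1$.

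I expect the main obstacle to be the first step, showing that $Z$ is characteristic. This is the place where one must use the Seifert fibred structure, via Jaco's characterisation of infinite cyclic normal subgroups, rather than pure group theory, and where exceptional base orbifolds such as Euclidean ones have to be handled.
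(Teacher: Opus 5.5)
Your argument is correct in its main line, and it takes a genuinely different route from the paper. The paper gives no proof and simply quotes Conner--Raymond \cite[Corollary~2, Subsection~6.13]{ConnerRaymond77}. You instead carry out the standard analysis of automorphisms of the central extension $1\to Z\to\pi\to Q\to 1$:
\begin{itemize}
\item a pair $(\varepsilon,\varphi)\in\{\pm1\}\times\Aut(Q)$ lifts to $\Aut(\pi)$ exactly when $\varphi^*a=\varepsilon a$;
\item the lifts of $(1,\mathrm{id})$ are the maps $g\mapsto g\,c(\bar g)$ with $c\in\operatorname{Hom}(Q,\Z)=H^1(Q;\Z)$;
\item the hypothesis $2a\neq 0$ enters only to make $\varepsilon$ a function of $\varphi$, so that the group of liftable pairs is $\Aut(a)$.
\end{itemize}
What this buys over the citation is a self-contained proof and an explicit $K$: it is the image of $Z(Q)$ under $\bar h\mapsto[h,\,\cdot\,]$, which makes both vanishing criteria transparent. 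Two points need repair.

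First, Step~1 is only asserted. The claim that $Z$ is the unique maximal infinite cyclic normal subgroup is false in general (e.g.\ in $\pi_1(T^3)=\Z^3$), and your appeal to the non-exceptional hypotheses is left unverified. In the setting where the paper states the result (closed, irreducible, Haken, orientable base, not exceptional) there is a short route.
\begin{itemize}
\item $Z$ is central. So if $Q$ is centreless, $Z(\pi)$ lies in the preimage of $Z(Q)=1$, which is $Z$, and hence $Z=Z(\pi)$.
\item The base orbifold is Euclidean or hyperbolic, since spherical and bad bases give only finite $\pi_1$ or reducible manifolds. The only such orientable $B$ with $Z(Q)\neq 1$ is the torus without cone points.
\item In that case $M\neq T^3$ forces $e\neq 0$. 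The presentation $\langle\alpha,\beta,t\mid[\alpha,t],[\beta,t],[\alpha,\beta]=t^{e}\rangle$ then gives $Z(\pi)=\langle t\rangle$ by a direct check.
\end{itemize}
So $Z=Z(\pi_1(M))$ is characteristic in every case you need, with no appeal to Jaco's theorem on cyclic normal subgroups.

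Second, your treatment of $a=0$ is inconsistent with your own Step~3. You excluded $\varepsilon=-1$ using $2a\neq 0$, which fails for $a=0$. In fact, for $a=0$ we have $\pi=Z\times Q$, and the automorphism $(z,q)\mapsto(z^{-1},q)$ lies in the kernel of $\Out(\pi)\to\Out(a)=\Out(Q)$. It is not in the image of $H^1(Q;\Z)$, because inner automorphisms and the maps $g\mapsto g\,c(\bar g)$ all fix the central subgroup $Z$ pointwise. The manifold $M=S^1\times\Sigma_g$ is a concrete example. Your computation $K=1$ for $a=0$ is right, but it does not yield a short exact sequence: for $a=0$ one must replace $\Out(a)$ by $\{\pm1\}\times\Out(Q)$. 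Under the literal hypothesis $2a\neq 0$, this case is vacuous anyway.
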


This gives us a way to calculate mapping class groups of Haken Seifert fibred \(3\)-manifolds.
Piwek~\cite[Proposition~2.15]{Pivek} showed that if $B$ is closed, orientable, and has genus at least two, or genus one and at least one cone point, or genus zero and at least four cone points, then the centre of $\pi_1^{\mathrm{orb}}(B)$  is trivial, and so $K=1$. 

We give two examples of \cref{thm:computing-Out-pi1-Haen-SFS} in action.

\begin{example}
    Let \(M\) be the orientable circle bundle over \(T^2\) with Euler number one, which is the Heisenberg 3-manifold. In this case, the base orbifold is again a manifold and \(\pi_1(T^2)\cong \Z^2\). Since $T^2=K(\Z^2,1)$, we have that \(H^n(\pi_1(T^2);\Z) \cong H^n(T^2;\Z)\) for all \(n\). Since the Euler number of the bundle is one, we get that \(a=1\in H^2(T^2;\Z)\cong \Z\). The automorphism group is \(\Aut(\pi_1(T^2)) \cong \GL{(2,\Z)}\), and the inner automorphism group is trivial since \(\Z^2\) is abelian. Hence \[\Out(a) =\Aut(a) =\{\varphi \in \GL{(2,\Z)} \mid \varphi^*(1) = \pm 1 \in H^2(\Z^2;\Z) \} \cong \GL(2,\Z). \] 
    Thus, applying the exact sequence in \cref{thm:computing-Out-pi1-Haen-SFS}, and \cref{thm:Waldhausen} to equate $\pi_0 \Diff(M) \cong \Out(\pi_1(M))$, we get the following exact sequence. 
    \[\begin{tikzcd}
    	K & {\Z^2} & {\pi_0\Diff(M)} & {\GL(2,\Z)} & 1.
    	\arrow[from=1-1, to=1-2]
    	\arrow[from=1-2, to=1-3]
    	\arrow[from=1-3, to=1-4]
    	\arrow[from=1-4, to=1-5]
    \end{tikzcd}\]
We need to determine the map $K \to \Z^2$. 
The fundamental group of  $M$ is \[\pi_1(M) \cong \langle \alpha,\beta,t \mid [\alpha,t],[\beta,t],[\alpha,\beta]=t\rangle.\]
The inner automorphism that conjugates by $\alpha$ fixes $\alpha$ and $t$, and sends $\beta$ to $t\beta$. Similarly the inner automorphism that conjugates by $\beta$ fixes $\beta$ and $t$ and sends $\alpha$ to $t^{-1}\alpha$.
These automorphisms are exactly the image of $(1,0)$ and $(0,-1)$ in $\Z^2$ in $\Aut(\pi_1(M)$, according to \cite[Proof of Cor.~3.1]{Chen-Tshishiku-MCG-circle-bundles}.  Since these are both inner automorphisms, this shows that the map $\Z^2 \to \pi_0\Diff(M)$ is trivial, and hence $K \to \Z^2$ is surjective.
We deduce that $\pi_0 \Diff(M) \cong \GL(2,\Z)$. 
\end{example}

\begin{example}
  Let \(M\) be the orientable circle bundle over the closed, orientable surface $\Sigma_g$ of genus $g \geq 2$. As mentioned above, in this case $K=1$. Also the base orbifold is the manifold $\Sigma_g$, and similarly to the previous example, $\Out(a) = \Out(\pi_1(\Sigma_g))$. So we have a short exact sequence
   \[\begin{tikzcd}
	1 & {H^1(\Sigma_g;\Z)} & {\Out(\pi_1(M))} & {\Out(\pi_1(\Sigma_g))} & 1.
	\arrow[from=1-1, to=1-2]
	\arrow[from=1-2, to=1-3]
	\arrow[from=1-3, to=1-4]
	\arrow[from=1-4, to=1-5]
\end{tikzcd}\] 
We have that $\Out(\pi_1(\Sigma_g)) \cong \pi_0 \Diff(\Sigma_g)$ by the Dehn--Nielsen--Baer theorem, so we obtain a short exact sequence
  \[\begin{tikzcd}
	1 & \Z^{2g} & {\pi_0\Diff(M)} & \pi_0 \Diff(\Sigma_g) & 1.
	\arrow[from=1-1, to=1-2]
	\arrow[from=1-2, to=1-3]
	\arrow[from=1-3, to=1-4]
	\arrow[from=1-4, to=1-5]
\end{tikzcd}\] 
Chen--Tshishiku~\cite{Chen-Tshishiku-MCG-circle-bundles} investigated when this sequence splits.
\end{example}

Another useful short exact sequence in the case of Haken Seifert fibred \(3\)-manifolds which do not have base \(S^2\) and \(3\) exceptional fibres, is due to Johannson \cite[Proposition~\(25.3\)]{Johannson1979b}.

\subsection{The weak generalised Smale conjecture}
For Haken Seifert fibred \(3\)-manifolds, the weak generalised Smale conjecture was proven independently by Hatcher \cite{Hatcher1976} and Ivanov \cite{Ivanov1976}. Furthermore, the homotopy type of \(\Diff_0\) is known.

\begin{proposition}[\cite{Hatcher1976, Ivanov1976}]
    Let \(M\) be a closed, orientable, geometric, Haken, Seifert fibred 3-manifold. Then \[\Diff_0(M) \simeq \prod_b \text{SO}(2)\] such that \(b\) is the rank of the centre \(Z(\pi_1(M))\) of $\pi_1(M)$.
\end{proposition}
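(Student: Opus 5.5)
Since $M$ is Haken, the theorem of Hatcher and Ivanov stated in \cref{sec-Haken} gives a homotopy equivalence $\Diff(M)\simeq \hAut(M)$. Restricting to identity components, $\Diff_0(M)\simeq \hAut_0(M)$, the identity component of the monoid of self homotopy equivalences. So I would reduce the proposition to a computation of $\hAut_0(M)$ for an aspherical space, and then compute the rank of the centre using the Seifert structure.

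For the first step, $M\simeq K(\pi,1)$ with $\pi=\pi_1(M)$ by \cref{lemma:kpi1}. I would use the classical result of Gottlieb: for $X$ an aspherical finite complex, the identity component of the space of self-maps $\mathrm{Map}(X,X)_{\id}$ is itself a $K(Z(\pi),1)$. The argument goes as follows.
\begin{itemize}
\item Consider the evaluation fibration $\mathrm{Map}_*(X,X)_{\id}\to \mathrm{Map}(X,X)_{\id}\to X$.
\item The based mapping space component is contractible, because based maps of $K(\pi,1)$'s are classified up to based homotopy by homomorphisms, and all higher homotopy vanishes.
\item It follows that $\pi_i\mathrm{Map}(X,X)_{\id}=0$ for $i\ge 2$.
\item $\pi_1\mathrm{Map}(X,X)_{\id}$ injects into $\pi$, with image the centraliser of $\id_*(\pi)$, which is $Z(\pi)$. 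This is Gottlieb's identification of the image of evaluation on $\pi_1$.
\end{itemize}
Since homotopy equivalences form a union of components of the mapping space, we get $\hAut_0(M)\simeq K(Z(\pi),1)$.

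Next I would show that $Z(\pi)\cong \Z^b$ is free abelian. It is abelian and finitely generated, since in each case below it is a subgroup of a finitely generated abelian group. It is torsion-free because $\pi$ is the fundamental group of a finite-dimensional aspherical manifold. Then $K(\Z^b,1)\simeq T^b=\prod_b \SO(2)$, which gives the claim.

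The step I expect to be the main obstacle is pinning down $Z(\pi)$, and in particular checking finite generation rather than merely quoting it. Here I would use the central extension $1\to\Z\to\pi\to\pi_1^{\mathrm{orb}}(B)\to 1$ from \cref{subsec:Haken-SFS}.
\begin{itemize}
\item When $\pi_1^{\mathrm{orb}}(B)$ is centreless (Piwek's cases), any central element maps into the trivial centre. So $Z(\pi)$ is exactly the fibre subgroup $\Z$, and $b=1$.
\item In the remaining Haken cases, the base is a torus or a Euclidean orbifold. Then $\pi$ is virtually $\Z^3$ or a Nil lattice, and $Z(\pi)$ can be computed directly. For $T^3$ it is $\Z^3$ and $b=3$. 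For the Heisenberg manifold it is $\langle t\rangle$ and $b=1$.
\end{itemize}
Nonorientable bases would need care, because there the fibre class need not be central. Since the statement only asserts $\prod_b\SO(2)$ with $b$ the rank, it suffices to know that $Z(\pi)$ is finitely generated free abelian. That follows in all cases because $Z(\pi)$ maps with finitely generated kernel into the centre of a finitely generated Fuchsian or Euclidean orbifold group, which is finite or free abelian of small rank.
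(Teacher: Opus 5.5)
Your proposal is correct and is the standard argument behind the cited results of Hatcher and Ivanov (the paper gives no proof beyond the citation): combine $\Diff(M)\simeq\hAut(M)$ for Haken $M$ with Gottlieb's theorem $\hAut_0(M)\simeq K(Z(\pi_1(M)),1)$ for aspherical $M$, and check via the Seifert structure that the centre is finitely generated free abelian. One correction to your Gottlieb sketch: the fibre of the evaluation map $\mathrm{Map}(M,M)_{\id}\to M$ is not just $\mathrm{Map}_*(M,M)_{\id}$, but the union of the contractible components of based maps inducing inner automorphisms, indexed by $\pi/Z(\pi)$, and it is exactly this $\pi_0$ in the long exact sequence that cuts $\pi$ down to $Z(\pi)$.
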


The full generalised Smale conjecture (\cref{conj - generalised smale}) does not hold for all Haken Seifert fibred \(3\)-manifolds; for instance, as mentioned in the introduction, it is false for $S^1 \times S^2$ and for $T^3$.

\subsection{Base orbifold \(S^2\) with exactly three exceptional fibres}\label{subsec:SFS-non-Haken-case}

We have the following proposition.

\begin{proposition}
Let \(M\) be Seifert fibred with base \(S^2\) and exactly three exceptional fibres.
    Exactly one of the following holds:
    \begin{enumerate}
        \item $M$ is Haken, so $\pi_0 \Diff(M) \cong \Out(\pi_1(M))$;
        \item $M$ is geometric with $\mathbb{H}^2 \times \R$ geometry, and the standard maps induce isomorphisms
       \[ \pi_0\Isom(M)\xrightarrow{\cong} \pi_0\Diff^f(M) \xrightarrow{\cong} \pi_0\Diff(M) \xrightarrow{\rho,\cong} \Out(\pi_1(M));\]
        \item $M$ is geometric with $\SL(2,\mathbb{R})$ geometry, and the standard maps induce isomorphisms
       \[ \pi_0\Isom(M)\xrightarrow{\cong} \pi_0\Diff^f(M) \xrightarrow{\cong} \pi_0\Diff(M) \xrightarrow{\rho,\cong} \Out(\pi_1(M));\]
        \item $M$ is geometric with elliptic geometry, and the standard maps induce isomorphisms
       \[ \pi_0\Isom(M)\xrightarrow{\cong} \pi_0\Diff(M).\]
     \end{enumerate}
    Moreover, in each case the mapping class group $\pi_0 \Diff(M)$ is finite.
\end{proposition}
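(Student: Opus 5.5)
The plan is to split by geometry, using the standard fact that a closed Seifert fibred manifold with base $S^2$ and three exceptional fibres carries exactly one of the geometries $S^3$, $\mathbb{H}^2\times\R$, $\widetilde{\SL}(2,\R)$, Nil, $\mathbb{E}^3$, or $S^2\times\R$. Which one occurs is determined by the sign of the orbifold Euler characteristic $\chi = -1+\sum_{i=1}^3 1/a_i$ of the base and by whether the Euler number $e$ vanishes.

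First I would dispose of the cases $\chi \geq 0$. If $\chi>0$, then $\pi_1^{\mathrm{orb}}(B)$ is finite, so $\pi_1(M)$ is finite and $M$ is elliptic; this gives case (4), with finiteness and $\pi_0\Isom(M)\cong\pi_0\Diff(M)$ coming from \cref{pi0Smale} and \cref{isometry table}. If $\chi=0$ (cone angles $(2,3,6)$, $(2,4,4)$ or $(3,3,3)$), then $M$ is Euclidean or Nil. Since $e\ne0$ or $e=0$ determines infinitude of $H_1$, I would check that Haken-ness is exactly governed by $H_1(M)$ being infinite via \cref{thm-SF Haken Classification}. For the Euclidean and Nil cases, finiteness of $\Out(\pi_1(M))$ would be verified directly: for example, with $e=0$ the manifold is a torus bundle over $S^1$ with finite order monodromy, and the extension sequence of \cref{thm:computing-Out-pi1-Haen-SFS} has finite outer terms because $H^1(\pi_1^{\mathrm{orb}}(B);\Z)=0$ for a sphere with three cone points. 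In every case, ``exactly one'' holds because the geometry is unique, and Haken-ness is determined by $H_1$ via \cref{thm-SF Haken Classification}. If Haken, case (1) follows from \cref{thm:Waldhausen}.

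For $\chi<0$, the geometry is $\mathbb{H}^2\times\R$ when $e=0$ and $\widetilde{\SL}(2,\R)$ when $e\neq0$; $e=0$ is precisely when $H_1$ is infinite. So the $\mathbb{H}^2\times\R$ manifolds here are Haken, and I would need to interpret the trichotomy as ``Haken, else geometric''. Alternatively, I would allow the overlap and note that case (2) is really the Haken case. In the non-Haken $\widetilde{\SL}$ case, I would argue as follows. \cref{thm-fibre preserving equiv to diff} excludes only $(2,b_1),(2,b_2),(a_3,b_3)$, which are elliptic or prism when $\chi>0$, so $\pi_0\Diff^f\to\pi_0\Diff$ is surjective. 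By Conner--Raymond, every outer automorphism is realised by a fibre-preserving diffeomorphism, which handles surjectivity of $\rho$. For injectivity of $\rho$ I would cite \cref{thm:kernel-rho}, since $M$ is irreducible. The fibre-preserving group modulo isotopy maps to the orbifold mapping class group of $S^2(a_1,a_2,a_3)$, which is finite (permutations of cone points with equal orders, together with reflection). The kernel is generated by vertical twists controlled by $H^1(\pi_1^{\mathrm{orb}}(B);\Z)=0$, so $\Out(\pi_1(M))$ is finite. Finally, I would show $\pi_0\Isom(M)\to\pi_0\Diff^f(M)$ is an isomorphism by realising each orbifold symmetry by a hyperbolic isometry of the base, which is possible because a triangle orbifold has a unique hyperbolic structure, and then lifting it to an isometry of the model geometry.

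I expect the main obstacle to be this last realisation step, together with injectivity of $\pi_0\Isom\to\pi_0\Diff^f$. My first approach would be to use the weak Smale theorem \cref{thm-weak Smale}, which gives $\Isom_0\simeq\Diff_0$. With it, injectivity reduces to showing that an isometry isotopic to the identity lies in $\Isom_0$, and this follows because $\rho$ is injective and isometries acting trivially on $\pi_1$ lie in the identity component (the $S^1$-action along fibres).
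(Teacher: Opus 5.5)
The genuine gap is in your $\chi=0$ paragraph, and it exposes a flaw in the statement itself.

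If $\chi=0$ and $e\neq 0$, then $M$ has Nil geometry and $H_1(M)$ is finite, so by \cref{thm-SF Haken Classification} it is not Haken. For instance, $M(0;(3,1),(3,1),(3,1))$ has $|H_1(M)|=27$, and there are analogous examples over $S^2(2,3,6)$ and $S^2(2,4,4)$. Such an $M$ satisfies none of (1)--(4). So your claim that ``in every case `exactly one' holds because the geometry is unique'' is false, and your proposal never says where these manifolds go; you only dispose of the Haken ones.

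You also observe, correctly, that every $\mathbb{H}^2\times\R$ manifold here has $e=0$, hence infinite $H_1$, hence is Haken. Taken together, these two facts mean the proposition cannot be proved as stated. The actual partition is: Haken (precisely $e=0$, i.e.\ Euclidean or $\mathbb{H}^2\times\R$), or non-Haken with Nil, $\widetilde{\SL}(2,\R)$, or elliptic geometry. You should say so explicitly and replace $\mathbb{H}^2\times\R$ by Nil in (2). You then still have to supply the isomorphism chain for non-Haken Nil manifolds. McCullough--Soma, which the paper quotes for this chain, only treats hyperbolic base orbifolds, so it does not help there. Your centraliser-and-lifting argument does seem to adapt: a Euclidean triangle orbifold is equally rigid, and a Euclidean triangle group has trivial centraliser in $\Isom(\R^2)$. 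Your finiteness argument goes through unchanged.

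Otherwise your route differs from the paper's. The paper sorts by whether $H_1(M)$ or $\pi_1(M)$ is infinite and takes finiteness of $\Out(\pi_1(M))$ from McCullough. For the non-Haken infinite-$\pi_1$ case it simply cites McCullough--Soma for the whole chain $\pi_0\Isom\to\pi_0\Diff^f\to\pi_0\Diff\to\Out$. In doing so it asserts that these manifolds have $\mathbb{H}^2\times\R$ or $\widetilde{\SL}(2,\R)$ geometry, which has the same blind spot and is wrong about $\mathbb{H}^2\times\R$.

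Your hands-on $\widetilde{\SL}$ argument is sound in outline and more informative. It gets surjectivity of $\rho$ via Conner--Raymond, injectivity via \cref{thm:kernel-rho}, then injectivity of $\pi_0\Isom(M)\to\Out(\pi_1(M))$ and surjectivity of each map in the chain. For the injectivity step you only need the trivial-centraliser argument, not the weak Smale theorem or injectivity of $\rho$.

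The step that needs more care is the lift. Lifting a base isometry to $\Isom(\widetilde{\SL}(2,\R))$ conjugates $\pi_1(M)$ to a lattice with the same image in $\Isom(\mathbb{H}^2)$ and the same fibre subgroup. But such lattices form a torsor over the finite, generally nontrivial group $\mathrm{Hom}(\pi_1^{\mathrm{orb}}(B),S^1)$; they record the $b_i \bmod a_i$. So you must use that the base isometry is isotopic to the base map of a fibre-preserving diffeomorphism, which respects the Seifert invariants. Only then can you conclude that the lift normalises $\pi_1(M)$.
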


\begin{proof}
McCullough~\cite[p.~21]{McCullough1991} showed that for such 3-manifolds, \(\Out(\pi_1(M))\) is finite.

If \(H_1(M)\) is infinite, then by \cref{thm-SF Haken Classification}, \(M\) is Haken and the results of \cref{subsec:Haken-SFS} apply. In particular, since \(\pi_0\Diff(M) \cong\Out(\pi_1(M))\), the mapping class group of \(M\) is finite \cite[Prop.~3.4.5]{McCullough1991}. In this case, \(\pi_0\Diff^f(M)\) is a subgroup of \(\pi_0\Diff(M)\) by \cite[Thm.~\(3.9.1\)]{HKMR2012} as discussed above, so the mapping class group of fibre-preserving diffeomorphisms is also finite for \(M\).

If $\pi_1(M)$ is infinite, then we are either in the Haken case of \cref{subsec:Haken-SFS} above, or $H_1(M)$ is finite and $M$ admits $\mathbb{H}^2\times \mathbb{R}$ or $\SL(2,\mathbb{R})$ geometry. In these cases McCullough--Soma \cite[Prop.~8.3]{McCulloghSoma2013} show that each of the following maps is a bijection on path components
\[
\Isom(M)\to \Diff^f(M) \to \Diff(M) \to \Out(\pi_1(M)),
\]
hence once again the mapping class group is finite (and moreover the generalised Smale conjecture holds for these manifolds).

If \(\pi_1(M)\) is finite then by the elliptisation theorem \cite[1.7.3]{AFW2015}, \(M\) is spherical.
Thus, in this case the results of \cref{sec-elliptic} apply, and again the mapping class group is finite.
\end{proof}

In the elliptic case, excluding the exceptional cases of \cref{thm-fibre preserving equiv to diff}, we also have \(\Diff^f(M) \cong \Diff(M)\).
For the following non-Haken Seifert fibred \(3\)-manifolds, Birman--Rubinstein demonstrate some explicit calculations of the mapping class group.

\begin{theorem}[{\cite[Main Theorem II]{BirmanRubinstein1984}}]
    Let \(M= M(0; (2,1),(4k,2k-1),(m,n))\) such that \(1\leq k\), \(1\leq n \leq m\) and \((m,n) \neq (4k,1)\). Then the mapping class group is as follows.
    \[\pi_0\Diff(M) \cong \begin{cases}
        \Z/2\times\Z/2 & \text{if } (m,n)=(4k,2k-1)  \text{ or }  (m,n)= (2,1)  \text{ and } k\neq 1 ; \\
        \Z/2 &  \text{if \((m,n,k)=(2,2,1)\)} \\  & \text{or \((m,n)\neq (2,1),(4k,2k-1)\) and \((m,n,k)\neq (3,1,1)\); } \\
        \{1\} & \text{if \((m,n,k)=(3,1,1)\). } \\
    \end{cases}\]
In each case, the map $\rho \colon \pi_0 \Diff(M) \to \Out(\pi_1(M))$ is injective.
\end{theorem}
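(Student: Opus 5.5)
The plan is to reduce the computation to fibre-preserving mapping classes and then to automorphisms of the base orbifold, and to use $\rho$ to separate the resulting candidates.

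First, the constraints force $M$ to be non-Haken. We have $a_1=2$, $a_2=4k\ge 4$, and $(m,n)\neq(4k,1)$, so there are three genuine exceptional fibres over $S^2$. Next we compute $H_1(M)$ from the standard presentation
\[\langle q_1,q_2,q_3,h \mid q_i^{a_i}h^{b_i},\ q_1q_2q_3\rangle.\]
Its order is governed by the Euler number $\frac12+\frac{2k-1}{4k}+\frac nm\neq 0$, so $H_1(M)$ is finite. By \cref{thm-SF Haken Classification}, $M$ is therefore not Haken. We then sort by the orbifold Euler characteristic $\frac12+\frac1{4k}+\frac1m-1$, which places $M$ in the elliptic case for small $m$ or $k$ and in the $\widetilde{\SL}(2,\R)$ case otherwise. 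The proposition just proved then gives that $\pi_0\Diff(M)$ is finite and isomorphic to $\pi_0\Isom(M)$. In the non-elliptic case we also get $\pi_0\Diff^f(M)\cong\pi_0\Diff(M)\cong\Out(\pi_1(M))$.

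Second, outside the exceptional families of \cref{thm-fibre preserving equiv to diff}, every mapping class is fibre-preserving. Note that the family $M(0;(2,b_1),(2,b_2),(a_3,b_3))$ occurs exactly when $(m,n)=(2,1)$. A fibre-preserving diffeomorphism induces an orbifold diffeomorphism of $S^2(2,4k,m)$ that permutes cone points of equal order and preserves the invariants $b_i/a_i \bmod 1$ up to a global sign, the sign being set by orientation behaviour on base and fibre. The candidate group is therefore generated by:
\begin{itemize}
\item the fibrewise involution, which reverses both base and fibre orientation and is always present;
\item a swap of the two cone points of equal order when $(m,n)=(4k,2k-1)$, or of the two order-$2$ points when $(m,n)=(2,1)$;
\item possibly the identification $(2,2)$ with $(2,1)$ when $k=1$, which arises from normalising the $b_i$.
\end{itemize}
These generate a quotient of $\Z/2\times\Z/2$, and collapse to $\Z/2$ in the generic case.

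Third, to decide triviality I would compute the images under $\rho$ in $\Out(\pi_1(M))$. This requires checking which candidate automorphisms are inner: the involution inverts $h$, and the swaps permute the generators $q_i$. Injectivity of $\rho$ would follow in the $\widetilde{\SL}(2,\R)$ case from the proposition above. In the elliptic cases it would follow by matching against \cref{isometry table}, where $\pi_1$ is $\mathrm{D}^*_{4n}\times C_m$-type (prism) for $m=2$, and $\mathrm{T}^*$- or $\mathrm{O}^*$-type for $m=3$ with small $k$. The degenerate value $(m,n,k)=(3,1,1)$ should be the octahedral manifold $\mathrm{O}^*_{48}$, whose group is trivial by the table.

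The main obstacle is the exceptional family $(m,n)=(2,1)$, where fibre-preserving reduction may fail, together with the small elliptic cases. Here the fibration is not unique, so I would abandon the fibration argument and identify the prism manifold explicitly, reading off $\pi_0\Isom$ from \cref{isometry table}. For example, $\mathrm{D}^*_{4n}\times C_m$ gives $\Z/2\times\Z/2$, which matches the $k\neq1$ answer, while $k=1$ gives a case where the group drops to $\Z/2$.
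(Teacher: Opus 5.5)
Your proposal has genuine gaps, and they sit exactly where the exceptional answers come from. The route itself (fibre-preserving reduction, then \cref{isometry table} or McCullough--Soma) differs from the paper's. The paper follows Birman--Rubinstein: it takes the unique one-sided Heegaard surface $K$ (closed, nonorientable, genus $3$), computes the kernel of $\pi_0\Diff(M)\to\Aut(H_1(M;\Z/2))$ as $G_1/G_2\le\pi_0\Diff(K)\cong\GL(2,\Z)$, and then computes the image.

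\textbf{The identification of $\pi_1(M)$.} You use $e=\tfrac12+\tfrac{2k-1}{4k}+\tfrac nm$, the literal reading of the convention in \cref{subsec:Seifert-symbols}. In the spherical case $|\pi_1(M)|=4|e|/(\chi^{\mathrm{orb}})^2$.
\begin{itemize}
\item For $(m,n,k)=(3,1,1)$ this gives $e=\tfrac{13}{12}$ and $\chi^{\mathrm{orb}}=\tfrac1{12}$. So $\pi_1(M)\cong\mathrm{O}^*_{48}\times C_{13}$, and the table gives $\Z/2$, not $\{1\}$.
\item For $(m,n,k)=(2,1,1)$ it gives $\mathrm{D}^*_{16}\times C_5$, hence $\Z/2\times\Z/2$, not $\Z/2$.
\end{itemize}
So your claims ``$(3,1,1)$ is $\mathrm{O}^*_{48}$'' and ``$k=1$ drops to $\Z/2$'' contradict your own formula. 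Carried out literally, your plan refutes the statement.

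The statement matches the tables only in Birman--Rubinstein's normalisation. In this paper's convention that amounts to an extra fibre $(1,-1)$ (equivalently $(2,-1)$ in place of $(2,1)$), so $e=\tfrac nm-\tfrac1{4k}$. This is also the real role of the hypothesis $(m,n)\neq(4k,1)$: it says exactly $e\neq0$, i.e.\ $H_1(M)$ is finite and $M$ is non-Haken. It has nothing to do with having three genuine exceptional fibres: $(4k,1)$ is exceptional, whereas $m=1$ gives a lens space.

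With this $e$ you get:
\begin{itemize}
\item $\mathrm{O}^*_{48}$ for $(3,1,1)$, hence $\{1\}$;
\item $\mathrm{O}^*_{48}\times C_5$ for $(3,2,1)$, hence $\Z/2$;
\item $\mathrm{D}^*_{16k}\times C_{2k-1}$ for $(m,n)=(2,1)$, hence $\Z/2$ if $k=1$ and $\Z/2\times\Z/2$ if $k\ge2$.
\end{itemize}
The printed case $(2,2,1)$ is presumably $(2,1,1)$, since $(2,2)$ is not a Seifert pair. Your ``normalising the $b_i$'' explanation therefore has nothing to refer to. Also, $\mathrm{T}^*$ never occurs: the spherical cases are $m\le2$ and $(k,m)=(1,3)$.

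\textbf{The missing Nil case.} Your trichotomy by $\chi^{\mathrm{orb}}$ omits $\chi^{\mathrm{orb}}=0$. For $(k,m,n)=(1,4,3)$ the base is $S^2(2,4,4)$ and $e\neq0$, so $M$ is a non-Haken Nil manifold in either normalisation. It is covered neither by the elliptic tables nor by McCullough--Soma; the proposition in \cref{subsec:SFS-non-Haken-case} has the same omission. You need an extra input here, e.g.\ Bamler--Kleiner's theorem that $\Isom(M)\hookrightarrow\Diff(M)$ is a homotopy equivalence for non-Haken Nil manifolds. After that, your orbifold computation gives $\Z/2$: only the fibre involution survives, and it inverts the central fibre of infinite order, so it is not inner.

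\textbf{Injectivity of $\rho$.} In the elliptic cases this comes from \cref{prop:generators of kernel of action of MCG on Out(pi1)}(v), not from the table, which records $\pi_0\Isom(M)$ rather than $\Out(\pi_1(M))$.

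With these repairs your approach works. It makes the case distinction transparent (spherical, Nil, $\widetilde{\SL}(2,\R)$, swaps of cone points with equal invariants), but it leans on deep results from 2012--2023. The one-sided Heegaard surface argument, by contrast, is self-contained cut-and-paste topology.
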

Furthermore, notice that unless \(m=2\), each diffeomorphism can be taken to be fibre-preserving by \cref{thm-fibre preserving equiv to diff}.

The key point of restricting to these particular Seifert fibred \(3\)-manifolds is that they each contain a closed, nonorientable embedded surface \(K\) of genus \(3\), unique up to isotopy. This surface represents a fixed element in \(H_2(M;\Z/2)\). The existence and uniqueness of such a surface means that a structure can be put on these Seifert fibred \(3\)-manifolds which is restrictive enough to compute the mapping class group. This structure is called a ``one-sided Heegaard Diagram'' and is studied extensively in \cite{Rubinstein1978, BirmanRubinstein1984}.

\begin{proof}[Sketch of proof]
The method employed in \cite{BirmanRubinstein1984} to obtain the above theorem is the following. Let \[G:= \left \{ [h]\in \pi_0\Diff(M) \mid h_* = \Id \colon H_1(M;\Z/2) \to H_1(M;\Z/2) \right\}.\]
That is, \(G\) is the kernel of the map \(\pi_0 \Diff(M) \to \Aut(H_1(M;\Z/2))\).
Hence, to calculate the mapping class group, it suffices to calculate \(G\) and the image of this map.
Notice that this is similar to how we calculated the mapping class group in the case of lens spaces, in the proof of \cref{thm-MCG lens spaces}.
In the case of Seifert fibred \(3\)-manifolds which satisfy the hypotheses of the theorem, it turns out that \(H_1(M;\Z/2)\) is either \(\Z/2\) or \(\Z/2\times \Z/2\). In the former case, the automorphism group is trivial, in the latter it is nontrivial but finite. Birman--Rubinstein use this to calculate the image. 

To determine \(G\), we have that any isotopy class in \(G\) contains a diffeomorphism~\(f\) such that \(f(K) =K\), and conversely any diffeomorphism \(f\colon M\to M\) such that \(f(K)=K\) is the identity on \(H_1(M;\Z/2)\). This means that \(G \cong G_1 / G_2\), where \(G_1\) is the subgroup of \(\pi_0\Diff(K)\) containing diffeomorphisms of \(K\) which extend to diffeomorphisms of \(M\), and \(G_2\) is the subgroup of \(G_1\) such that the extended diffeomorphism on \(M\) is isotopic to the identity.

In \cite{BirmanRubinstein1984}, they use the fact that \(\pi_0\Diff(K)\cong \GL{(2,\Z)}\) to compute both \(G_1\) and \(G_2\), in order to obtain \(G\) and calculate the mapping class group.
\end{proof}

We also mention the following result of Boileau-Otal. 

\begin{theorem}[{\cite[Th\'eor\`eme~3]{BoileauOtalHeegardSplittings91}}]
 Let $M$ be a closed, Seifert fibred 3-manifold with base $S^2$ and three exceptional fibres  of orders $(2,3,p)$, $p \geq 5$, or $(3,3,q)$, $q \geq 2$. Then every diffeomorphism that is homotopic to the identity is isotopic to the identity.
\end{theorem}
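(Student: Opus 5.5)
The plan is to reduce the statement to known rigidity results, splitting according to whether $M$ is Haken.

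\emph{Haken case.} If $M$ is Haken, then by \cref{lemma:kpi1} it is aspherical. Homotopic diffeomorphisms are then isotopic by Waldhausen's \cref{thm:Waldhausen}: $\rho$ is injective, and $\pi_0\hAut(M)\cong\Out(\pi_1(M))$ by \cref{lemma:haut-equals-out-for-Kpi1}. Since $M$ has base $S^2$ with three exceptional fibres, by \cref{thm-SF Haken Classification} this case occurs exactly when $H_1(M)$ is infinite. For the orders listed, $H_1(M)$ is the finite group of order $|e|\,\alpha_1\alpha_2\alpha_3$ unless the Euler number vanishes, so only $e=0$ remains here.

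\emph{Non-Haken case, geometry.} Otherwise $H_1(M)$ is finite. I would sort $M$ by geometry using the orbifold Euler characteristic $\chi=1/2+1/3+1/p-1$ of the base, respectively $1/3+1/3+1/q-1$.
\begin{itemize}
\item For $(2,3,5)$ and for $(3,3,2)$ the base is spherical, so $\pi_1(M)$ is finite and $M$ is elliptic (icosahedral, respectively tetrahedral or octahedral type).
\item For $(2,3,6)$ and $(3,3,3)$ the base is Euclidean, giving Nil geometry, which is infinite $\pi_1$ with finite $H_1$.
\item Otherwise the base is hyperbolic and $M$ has $\widetilde{\SL}(2,\R)$ geometry, since $e\neq0$.
\end{itemize}

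\emph{Elliptic and $\widetilde{\SL}(2,\R)$ subcases.} In the elliptic case, \cref{pi0Smale} identifies $\pi_0\Diff(M)$ with $\pi_0\Isom(M)$. I then check from \cref{isometry table} and the structure of $\Norm(G)/G$ that no nontrivial component of $\Isom(M)$ acts by an inner automorphism on $G$. This is the content of injectivity of $\rho$ on $\pi_0\Diff(M)$, since homotopy classes of self-maps of $S^3/G$ inducing the identity on $\pi_1$ are detected by degree. In the $\widetilde{\SL}(2,\R)$ and Nil-type cases, I would invoke McCullough--Soma: the chain $\Isom(M)\to\Diff^f(M)\to\Diff(M)\to\Out(\pi_1(M))$ is a bijection on components. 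A diffeomorphism homotopic to the identity therefore lies in the identity component.

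\emph{Main obstacle.} The elliptic step is where I expect difficulty. There $\rho$ need not be injective in general, as the lens space reflections show. So I must check the specific groups $\mathrm{I}^*_{120}\times C_n$, $\mathrm{O}^*_{48}\times C_n$ and the tetrahedral family: the $\Z/2$ in $\mathcal I(M)$ comes from complex-conjugation-type isometries, which invert the cyclic factor and so act nontrivially in $\Out(G)$. The degree check also needs care. If McCullough--Soma does not cover the Nil case with three exceptional fibres, I would instead argue via fibre-preserving diffeomorphisms (\cref{thm-fibre preserving equiv to diff} applies once $p\ge5$ or $q\ge3$). That reduces to the orbifold mapping class group of the base, where homotopic implies isotopic.
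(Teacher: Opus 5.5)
The paper gives no proof of this statement; it only quotes Boileau--Otal. Your Haken step is fine. Those cases are exactly the flat manifolds, with $e=0$, which occurs only for $p=6$ or $q=3$, and Waldhausen's \cref{thm:Waldhausen} is genuinely independent input. Your sorting by geometry is also correct, with one slip: $(3,3,2)$ gives only the tetrahedral manifolds. The octahedral ones have base $S^2(2,3,4)$ and lie outside the statement, so $\mathrm{O}^*_{48}\times C_n$ is irrelevant.

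The genuine gap is that in every non-Haken case the result you invoke is itself proved using the theorem you are trying to prove, so the argument is circular.
\begin{itemize}
\item \emph{Elliptic cases.} As the remark after \cref{pi0Smale} says, McCullough's $\pi_0$-Smale theorem takes the previously computed groups $\pi_0\Diff(M)$ as input. For the tetrahedral and icosahedral manifolds (bases $S^2(2,3,3)$ and $S^2(2,3,5)$) that input is exactly Boileau--Otal.
\item \emph{$\widetilde{\SL}(2,\R)$ cases.} The $\pi_0$-statements in McCullough--Soma are not proved there from scratch. They rest on the classical ``homotopy implies isotopy'' results for non-Haken Seifert manifolds, due to Scott and, for bases $S^2(2,3,p)$ and $S^2(3,3,q)$, to Boileau--Otal.
\item \emph{Nil cases.} As you suspected, McCullough--Soma only treat hyperbolic base orbifolds. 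The non-Haken Nil cases ($p=6$ or $q=3$ with $e\neq 0$) are not covered at all.
\item \emph{Your fallback.} The results of Waldhausen and Jaco behind \cref{thm-fibre preserving equiv to diff} are Haken-manifold results, since they work by cutting along incompressible surfaces. For these small Seifert manifolds, the claim that every diffeomorphism is isotopic to a fibre-preserving one is exactly the hard content of Scott and Boileau--Otal.
\end{itemize}
Within the survey the statement even follows in one line from \cref{thm:kernel-rho}. But the introduction lists Boileau--Otal among that theorem's inputs, so that route is circular too. The elliptic degree check you flag as the main obstacle is harmless, since you only need $f\simeq\id\Rightarrow\rho(f)=1$. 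The real obstacle is the circularity above.

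To get an actual proof you need an independent source of rigidity.
\begin{itemize}
\item For the elliptic cases, cite Bamler--Kleiner's Ricci-flow proof of the generalised Smale conjecture for spherical space forms, which does not rely on the earlier mapping class group computations. Your isometry-group check then finishes the argument.
\item For the infinite-$\pi_1$ cases, you must show by a genuinely 3-dimensional argument that a diffeomorphism can be isotoped to be fibre-preserving. This is what Boileau--Otal achieve through their analysis of genus-two Heegaard splittings of these manifolds up to isotopy, and nothing stated in the survey substitutes for it.
\end{itemize}
Once that step is available, your final reduction is sound. For a Euclidean or hyperbolic triangle orbifold $B$, the induced orbifold diffeomorphism acts by an inner automorphism of $\pi_1^{\mathrm{orb}}(B)$ and hence is isotopic to the identity. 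The remaining vertical diffeomorphisms are trivial because $H^1(\pi_1^{\mathrm{orb}}(B);\Z)=0$.
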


\section{JSJ decompositions and mapping class groups}\label{sec: JSJ}

In this section we recall the main results concerning JSJ decompositions, and then relate the mapping class group of a 3-manifold to those of its JSJ components. Throughout, $M$ denotes a compact, oriented, irreducible 3-manifold with toroidal boundary.

\subsection{JSJ decompositions}
All results stated in this subsection can be found in \cite[Section 1.7]{AFW2015}. The elliptisation, hyperbolisation, and geometrisation theorems are due to Perelman~\cite{Perelman:2002-1,Perelman:2003-1,Perelman:2003-2}.

As mentioned in the introduction, the JSJ decomposition of Jaco--Shalen and Johannson \cite{JacoShalen1976, Johannson1979b} decomposes a compact, orientable, irreducible 3-manifold $M$ with toroidal boundary into Seifert fibred (see \cref{defn:SFS-defn}) and atoroidal pieces.

\begin{definition}
A 3-manifold $M$ is \emph{atoroidal} if every incompressible torus is boundary parallel.
\end{definition}

\begin{theorem}[JSJ]
Suppose that $M$ is a compact, oriented, irreducible  3-manifold with $\partial M \cong \bigsqcup^k T^2$ for some $k \geq 0$.
\begin{enumerate}[(a)]
\item There is an $m \geq 0$ and a submanifold $\bigsqcup_{i=1}^m T_i \subseteq M$ such that for every $i$, $T_i \subseteq M$ is an incompressible torus, and each component of $M$ cut along $\bigsqcup_{i=1}^m T_i$ is Seifert fibred or atoroidal.
\item Any two minimal submanifolds with the above property are isotopic.
\end{enumerate}
\end{theorem}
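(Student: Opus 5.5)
The plan has two parts. Existence comes from a finiteness bound for families of disjoint, non-parallel incompressible tori together with a minimality argument. Uniqueness comes from an innermost-curve analysis showing that a minimal JSJ family can be isotoped off any essential torus.

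\emph{Existence.} First, I would invoke Kneser--Haken finiteness. Since $M$ is compact and irreducible, there is a bound $N(M)$, depending on a fixed triangulation, such that any collection of pairwise disjoint, pairwise non-parallel incompressible surfaces in $M$ has at most $N(M)$ members. The proof puts each surface in normal form and counts the non-product regions of the complement. Given this bound, I would choose a maximal collection $\mathcal{T}$ of disjoint, non-parallel, non-boundary-parallel incompressible tori. Each component of $M$ cut along $\mathcal{T}$ is then either atoroidal, or contains an essential torus that is parallel into the cut boundary. So $\mathcal{T}$ is a family with the required property, and the set of such families is nonempty. Among all such families, take one with the fewest tori. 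It is automatically minimal, and any redundant torus (one bounding $T^2\times I$ on both sides, or separating two Seifert pieces whose fibrations match along it) can be deleted.

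\emph{Uniqueness.} Let $\mathcal{T}$ and $\mathcal{T}'$ be two minimal families. I would first make $\mathcal{T}'$ transverse to $\mathcal{T}$. Then I would remove trivial curves of intersection: each inessential circle bounds a disc on one torus, and because the tori are incompressible it bounds a disc on the other torus as well. By irreducibility, the resulting sphere bounds a ball, and an isotopy through that ball removes the circle.

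The remaining intersection curves are essential, so $\mathcal{T}'$ meets each piece $P$ of $M$ cut along $\mathcal{T}$ in essential annuli. At this point I would invoke the classification of essential annuli in irreducible pieces. In an atoroidal, non-Seifert piece, every essential annulus is boundary parallel, so it can be isotoped away. In a Seifert piece, an essential annulus is isotopic to a vertical annulus, a union of fibres. Consequently, every torus $T'\in\mathcal{T}'$ can be isotoped into a position where it is vertical in each Seifert piece and disjoint from the atoroidal pieces. If $T'$ is still not disjoint from $\mathcal{T}$, the fibrations of adjacent Seifert pieces must agree along the tori of $\mathcal{T}$ that $T'$ meets, since the vertical annuli from the two sides glue to form $T'$. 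Those tori of $\mathcal{T}$ could then be removed, contradicting minimality. Hence $\mathcal{T}'$ can be isotoped to be disjoint from $\mathcal{T}$.

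Once the families are disjoint, each torus of $\mathcal{T}'$ lies in a piece of $M$ cut along $\mathcal{T}$. If that piece is atoroidal, the torus is boundary parallel. If the piece is Seifert fibred, the torus is vertical and separates the piece into two Seifert pieces whose fibrations agree along it, which contradicts the minimality of $\mathcal{T}'$ unless the torus is boundary parallel. So every torus of $\mathcal{T}'$ is parallel to a torus of $\mathcal{T}$. Running the same argument with the roles reversed gives a bijection up to isotopy.

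The main obstacle is the annulus analysis in Seifert pieces: showing that essential annuli are vertical, together with the handling of small exceptional pieces such as $T^2\times I$, the twisted $I$-bundle over the Klein bottle, and $S^1\times$ pair of pants. For that step I would first try the Waldhausen-style argument that the fibre generates an infinite cyclic normal subgroup, which forces an essential annulus to be homotopic, and then isotopic, to a vertical annulus. The exceptional pieces I would treat by hand.
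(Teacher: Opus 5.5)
The paper gives no proof of this theorem. It quotes it from \S1.7 of Aschenbrenner--Friedl--Wilton, after Jaco--Shalen and Johannson, so I am comparing your outline with the standard argument.

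Your existence part is fine. In fact a maximal family of disjoint, non-parallel, non-boundary-parallel incompressible tori already cuts $M$ into atoroidal pieces, so your second alternative there is vacuous.

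The first half of your uniqueness argument is also the standard step. It rests on two lemmas that you cite rather than prove:
\begin{enumerate}[(i)]
\item an irreducible atoroidal piece with incompressible torus boundary that contains an essential annulus is Seifert fibred;
\item essential annuli in a Seifert piece are vertical, except for horizontal ones in $T^2\times I$ and in the twisted $I$-bundle over the Klein bottle.
\end{enumerate}

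The genuine gap is not the annulus analysis you flag, but the last step. Suppose $\mathcal{T}'$ is disjoint from $\mathcal{T}$, and $T'\in\mathcal{T}'$ is a vertical, non-boundary-parallel torus in a Seifert piece $P$ of $M$ cut along $\mathcal{T}$. You say this contradicts minimality of $\mathcal{T}'$. But minimality of $\mathcal{T}'$ is a statement about the components of $M$ cut along $\mathcal{T}'$, not about $P$ cut along $T'$.

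The two components $Q_1,Q_2$ of $M$ cut along $\mathcal{T}'$ adjacent to $T'$ are not the pieces of $P$ cut along $T'$. Since the families are disjoint, they generally contain tori of $\mathcal{T}$ and run into neighbouring pieces. Deleting $T'$ is allowed only if $Q_1\cup_{T'}Q_2$ is Seifert fibred or atoroidal. The Seifert structure of $P$ tells you nothing about this. The union $Q_1\cup_{T'}Q_2$ may contain essential tori of $\mathcal{T}$, and whatever fibrations $Q_1,Q_2$ carry need not restrict to that of $P$. So no contradiction follows. Note also that $T'$ need not separate $P$.

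The missing idea is to use what your first half actually proves. That argument uses only the minimality of $\mathcal{T}$, never that $\mathcal{T}'$ is a JSJ family. So it shows that \emph{every} incompressible torus in $M$ can be isotoped off a minimal family. Applied to $\mathcal{T}'$, every incompressible torus can be isotoped off $T'$. The argument then runs as follows.
\begin{enumerate}[(a)]
\item If $T'$ is vertical and not boundary parallel in $P$, it lies over an essential, non-peripheral curve $c$ in the base orbifold of $P$.
\item Choose a curve $c^*$ meeting $c$ essentially. The vertical torus $T^*$ over $c^*$ is incompressible in $M$.
\item You must check that $T^*$ cannot be isotoped off $T'$. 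One way: the element represented by a curve in $T^*$ lying over $c^*$ acts without fixed points on the Bass--Serre tree of the splitting of $\pi_1(M)$ along $\pi_1(T')$.
\end{enumerate}
This contradiction forces $T'$ to be boundary parallel in $P$. Hence $T'$ is parallel to a torus of $\mathcal{T}$; it cannot be parallel to $\partial M$, by minimality of $\mathcal{T}'$. This ``canonical torus'' argument is what has to replace your appeal to the minimality of $\mathcal{T}'$.
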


Recall that by the elliptisation theorem if $\pi_1(M)$ is finite, then $M$ is elliptic (hence Seifert fibred). Recall that if $M$ is hyperbolic, then it is atoroidal.

\begin{theorem}[Hyperbolisation theorem]
Suppose that $M$ is a compact, oriented, irreducible  3-manifold with $\partial M \cong \bigsqcup^k T^2$ for some $k \geq 0$. If $M$ is atoroidal and $\pi_1(M)$ is infinite, then it is either hyperbolic with finite volume or diffeomorphic to one of $S^1 \times D^2$, $T^2 \times I$ or the nontrivial $I$-bundle over the Klein bottle.
\end{theorem}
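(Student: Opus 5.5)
Throughout I read ``atoroidal'' in its $\pi_1$-sense, which is the one under which the statement is true as worded: every subgroup $\Z^2 \le \pi_1(M)$ is conjugate into $\pi_1$ of a boundary torus. This already rules out every Seifert fibred space with infinite $\pi_1$ other than the three listed ones, so no separate ``not Seifert fibred'' hypothesis is needed. The plan is to invoke the geometrisation \cref{thm: geometrisation}, which reduces $M$ to two cases, and then classify which Seifert fibred spaces survive the $\pi_1$-atoroidal condition.

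\textbf{Step 1: reduction to two cases.} An incompressible torus $T \subseteq M$ injects on $\pi_1$, giving a subgroup $\Z^2$. By hypothesis this subgroup is peripheral, and in an irreducible manifold with toroidal boundary this forces $T$ to be boundary parallel. So every JSJ torus is boundary parallel, the JSJ decomposition is trivial, and by \cref{thm: geometrisation} the manifold $M$ itself is either Seifert fibred or admits a finite-volume hyperbolic structure on its interior. In the hyperbolic case we are done. The deep input here is Thurston's theorem in the Haken case, together with Perelman's work for closed non-Haken $M$; I would cite it, as the paper does, rather than reprove it.

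\textbf{Step 2: the Seifert fibred case.} Let $M \to B$ be a Seifert fibration, and let $h \in \pi_1(M)$ be the class of a regular fibre. Since $\pi_1(M)$ is infinite and $M$ is irreducible, $\langle h \rangle$ is an infinite cyclic normal subgroup with quotient $\pi_1^{\mathrm{orb}}(B)$.

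\begin{itemize}
\item \emph{Finite base group.} Suppose $\pi_1^{\mathrm{orb}}(B)$ is finite. Since $B$ has no boundary that would yield $\Z$ otherwise, $B$ is a closed spherical orbifold, so $M$ has elliptic geometry (impossible, as $\pi_1$ is infinite) or is $S^1\times S^2$ (excluded by irreducibility).
\item \emph{Infinite base group.} Otherwise pick $g \in \pi_1^{\mathrm{orb}}(B)$ of infinite order, with lift $\tilde g$. After passing to $\tilde g^2$ if $\tilde g$ reverses the fibre orientation, $\tilde g$ commutes with $h$, and $\langle h, \tilde g\rangle \cong \Z^2$. The hypothesis makes this subgroup peripheral, so $g$ is conjugate into a boundary subgroup of $\pi_1^{\mathrm{orb}}(B)$.
\end{itemize}

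\textbf{Step 3: classifying the bases.} It remains to find the compact $2$-orbifolds $B$ with only cone points, infinite orbifold fundamental group, and every infinite-order element peripheral.
\begin{itemize}
\item If $B$ is closed, there are no peripheral elements at all. A closed $B$ with infinite group, namely Euclidean or hyperbolic, always has infinite-order elements, so it is excluded.
\item If $B$ has boundary, then $\pi_1^{\mathrm{orb}}(B)$ is a free product of cyclic groups. Requiring every infinite-order element to be conjugate into a boundary cyclic subgroup leaves exactly three bases:
\begin{itemize}
\item a disc with at most one cone point, giving $S^1\times D^2$;
\item an annulus, giving $T^2\times I$;
\item a disc with two cone points of order $2$, giving $\Z/2 * \Z/2$, whose infinite-order elements are powers of the boundary word. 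This yields the twisted $I$-bundle over the Klein bottle.
\end{itemize}
Every other case contains a free subgroup, or a product $ab$ that is not a boundary word, and so has a non-peripheral element. Non-orientable bases are handled in the same way via the orientation double cover, giving nothing new beyond the Klein bottle bundle.
\end{itemize}

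\textbf{Main obstacle.} Mathematically, the hard step is the geometrisation input in Step 1 in the closed non-Haken case, which I treat as a black box. Within this proof, the step needing the most care is the exhaustive free-product argument in Step 3: checking that, for instance, $\Z/p*\Z/q$ with $(p,q)\neq(2,2)$ and the once-punctured-M\"obius-band base each contain non-peripheral infinite-order elements.
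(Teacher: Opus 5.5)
\textbf{Comparison with the paper.} The paper does not prove this statement. It quotes it from \cite{AFW2015} (Thurston for Haken $M$, Perelman otherwise), and its logic runs the other way from yours. Both \cref{thm: geometrisation} in the introduction and the one-line proof of the geometrisation theorem stated just after this result are obtained from JSJ, elliptisation and this very hyperbolisation theorem. So you cannot cite \cref{thm: geometrisation} here without circularity.

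You must instead import the deep input directly, in the form ``an irreducible, atoroidal $M$ with toroidal boundary and infinite $\pi_1$ is hyperbolic or Seifert fibred''. For Haken $M$ (in particular whenever $\partial M\neq\emptyset$) this is Thurston's hyperbolisation theorem. So for such $M$ your Step~1 is citing essentially the statement itself, with ``Seifert fibred'' in place of the three exceptions.

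With that understood, your route is a legitimate alternative. What it buys is the Seifert fibred classification, which is the only elementary content of the statement and explains where the three exceptions come from. The paper instead takes hyperbolisation as primitive, so that geometrisation becomes a one-liner.

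Your insistence on the $\pi_1$-sense of ``atoroidal'' is correct and worth keeping. With the paper's definition (every incompressible torus is boundary parallel) the statement is false. For example, the trefoil exterior is irreducible, has infinite $\pi_1$ and no essential tori, yet is neither hyperbolic nor one of the three listed manifolds.

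\textbf{Fixable slips.} In Step~2, a finite orbifold group does not force $B$ to be closed. A disc with at most one cone point has boundary and trivial or $\Z/p$ orbifold group, and this is exactly where $S^1\times D^2$ arises. So it belongs in your finite case, not in Step~3's list of bases with infinite group. The closed $S^2\times\R$ case also contains $\RP^3\#\RP^3$, likewise excluded by irreducibility.

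In Step~1, going from ``$\pi_1(T)$ is conjugate into a boundary subgroup'' to ``$T$ is boundary parallel'' needs two inputs you should state. These are asphericity of $M$, and Waldhausen's theorem that an incompressible surface homotopic into $\partial M$ is boundary parallel.

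Finally, the criterion ``every infinite-order element of $\pi_1^{\mathrm{orb}}(B)$ is peripheral'' is only correct for orientable $B$. For the M\"obius band the core is not peripheral (only its square is), yet this base gives the twisted $I$-bundle over the Klein bottle. Your orientation double cover reduction handles this, but it uses that $\pi_1$-atoroidality passes to finite covers, which you should state.
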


Note that all $3$ exceptional manifolds in the hyperbolisation theorem are Seifert fibred.

\begin{theorem}[Geometrisation theorem]
Suppose that $M$ is a compact, oriented, irreducible  3-manifold with $\partial M \cong \bigsqcup^k T^2$ for some $k \geq 0$.
\begin{enumerate}[(a)]
\item  There is an $m \geq 0$ and a submanifold $\bigsqcup_{i=1}^m T_i \subseteq M$ such that for every $i$, $T_i \subseteq M$ is an incompressible torus, and each component of $M$ cut along $\bigsqcup_{i=1}^m T_i$ is Seifert fibred or hyperbolic.
\item  Any two minimal submanifolds with the above property are isotopic.
\end{enumerate}
\end{theorem}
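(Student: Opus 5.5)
We deduce the statement from the JSJ theorem combined with the hyperbolisation theorem, both stated just above. For part (a), begin with a minimal JSJ system of tori $\bigsqcup_{i=1}^m T_i$. Each complementary piece $N$ is then either Seifert fibred or atoroidal. If $N$ is Seifert fibred there is nothing to do. If $N$ is atoroidal, split according to $\pi_1(N)$.

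If $\pi_1(N)$ is finite, then $N$ is closed, since a compact orientable 3-manifold with nonempty non-spherical boundary has infinite $\pi_1$ (half lives, half dies). In that case $N = M$ with $m=0$, and the elliptisation theorem makes $M$ elliptic, hence Seifert fibred. If $\pi_1(N)$ is infinite, the hyperbolisation theorem applies: $N$ is either hyperbolic of finite volume or one of $S^1\times D^2$, $T^2\times I$, or the twisted $I$-bundle over the Klein bottle. All three exceptions are Seifert fibred, as noted after the hyperbolisation theorem. Before applying the theorem we must check its hypotheses for $N$:
\begin{itemize}
\item irreducibility, which holds because each $T_i$ is incompressible (an innermost-disc argument passes irreducibility of $M$ to the cut pieces);
\item orientability;
\item toroidal boundary, since $\partial N$ consists of copies of the $T_i$ together with components of $\partial M$.
\end{itemize}
Hence the same system of tori gives a decomposition into Seifert fibred or hyperbolic pieces, which proves (a).

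For (b), the aim is to show that a minimal system for the geometric property coincides, up to isotopy, with a minimal JSJ system. Every hyperbolic piece is atoroidal, because a hyperbolic manifold contains no essential tori: $\Z^2$ subgroups are peripheral. So any system with Seifert-fibred-or-hyperbolic pieces is also a system with Seifert-fibred-or-atoroidal pieces. Conversely, the argument in (a) shows every JSJ-admissible system is geometric-admissible. The two classes of admissible systems therefore coincide, and so do their minimal elements. Uniqueness up to isotopy then follows from part (b) of the JSJ theorem.

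The main obstacle is hypothesis-checking on the cut pieces. Irreducibility of $M$ cut along incompressible tori needs a short argument, as does the claim that finite $\pi_1$ forces the piece to be closed. A further subtlety is whether \emph{minimal} means the same thing in both statements. I would treat minimality as minimal cardinality among admissible systems; with that reading it agrees, because the two admissible classes are equal. If needed, I would cite \cite[Section 1.7]{AFW2015} for these details.
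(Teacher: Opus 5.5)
Your proposal is correct and takes essentially the same route as the paper. The paper's one-line proof is the observation that, by elliptisation, hyperbolisation and the fact that hyperbolic pieces are atoroidal, the conditions ``Seifert fibred or hyperbolic'' and ``Seifert fibred or atoroidal'' on the pieces are equivalent, so the statement reduces to the JSJ theorem; you have additionally supplied the checks the paper leaves implicit (irreducibility and toroidal boundary of the cut pieces, closedness in the finite-$\pi_1$ case, and why minimality means the same thing for both classes of systems).
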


\begin{proof}
By the previous observations ``each component is Seifert fibred or hyperbolic" $\Leftrightarrow$ ``each component is Seifert fibred or atoroidal".
\end{proof}

\begin{proposition}\label{prop-SFgeometry}
Suppose that $M$ is a compact, oriented, irreducible 3-manifold with $\partial M \cong \bigsqcup^k T^2$ for some $k \geq 0$. Suppose that  $M$ is not diffeomorphic to $S^1 \times D^2$, $T^2 \times I$, or the nontrivial $I$-bundle over the Klein bottle. Then $M$ is Seifert fibred if and only if it has a geometry other than hyperbolic or Sol.
\end{proposition}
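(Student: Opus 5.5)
Both directions go through the classification of the eight Thurston geometries together with the structure of the geometric manifolds in each class; the paper already treats this as background (see the geometrisation discussion in the introduction and \cite[Section 1.7]{AFW2015}). The plan is to prove each implication separately.

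For ``Seifert fibred $\Rightarrow$ geometric, not hyperbolic or Sol'', I would take the Seifert fibration $\pi\colon M\to B$ and pass to the base orbifold $B$. Its orbifold Euler characteristic $\chi^{\mathrm{orb}}(B)$ is positive, zero or negative, and so $B$ carries a spherical, Euclidean or hyperbolic orbifold structure. In the interior of $M$ the fibration is an orbifold circle bundle, with Euler number $e$ (zero when $\partial M\neq\emptyset$, after choosing the structure appropriately). The six pairs $(\operatorname{sign}\chi^{\mathrm{orb}},\, e=0 \text{ or } \neq 0)$ then give $S^2\times\R$, $S^3$, $\R^3$, Nil, $\H^2\times\R$ and $\widetilde{\SL}(2,\R)$ respectively. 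The explicit construction is Scott's: lift the orbifold geometry of $B$ to a bundle geometry on $M$.

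For the converse, I would use that every compact quotient, or finite-volume quotient with toroidal cusps, of the six geometries $S^3$, $S^2\times\R$, $\R^3$, Nil, $\H^2\times\R$ and $\widetilde{\SL}(2,\R)$ is Seifert fibred. The reason is that each of these model spaces carries a foliation by lines or circles that is invariant under the full isometry group. For $S^3$ this is the Hopf fibration, and a little care is needed because $\Isom(S^3)$ does not preserve a single Hopf fibration; here one instead uses that the elliptic manifolds of \cref{sec-elliptic} are Seifert fibred, via the quaternionic description of $G\leq \SO(4)$ preserving one of the fibrations. For $\R^3$, the flat manifolds are finitely covered by $T^3$, and I would check the Bieberbach list directly. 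This invariant foliation descends to a Seifert fibration of $M$.

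The role of the excluded manifolds $S^1\times D^2$, $T^2\times I$ and the twisted $I$-bundle over the Klein bottle is to make the statement an equivalence. These three are Seifert fibred, but their interiors admit several geometries. For example, the interior of $T^2\times I$ is both Euclidean and hyperbolic in a suitable sense, and the interior of $S^1\times D^2$ can be hyperbolic with infinite volume. So ``has a geometry other than hyperbolic or Sol'' is not exclusive for them, and they are removed.

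It remains to show that a Seifert fibred $M$ does not also carry a hyperbolic or Sol structure. If $M$ is closed, I would compare fundamental groups. When $\pi_1(M)$ is infinite, the regular fibre generates an infinite cyclic normal subgroup. Neither a closed hyperbolic group nor a lattice in Sol has such a subgroup, since those groups would then be virtually a product or virtually nilpotent. When $\pi_1(M)$ is finite, the manifold is elliptic and neither alternative applies. If $\partial M\neq\emptyset$, finite-volume hyperbolic manifolds have $\Z^2$ cusp subgroups but still no infinite cyclic normal subgroup, unless $M$ is one of the excluded manifolds.

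I expect this last step to be the main obstacle: making the uniqueness of geometry rigorous, particularly the boundary case and the verification that the exceptions are exactly those three manifolds. I would handle it by citing \cite[Section 1.7]{AFW2015} and Scott's classification rather than reproving it.
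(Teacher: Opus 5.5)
The paper gives no proof of its own: the proposition is quoted from \cite[Section~1.7]{AFW2015}. Your outline (base orbifold and Euler number in one direction, invariant fibrations of the six model spaces in the other, then a citation) is the standard argument behind that reference. The genuine problem is your account of the three excluded manifolds, which leaves the case $\partial M\neq\emptyset$ of the forward direction unjustified.

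``Has a geometry'' must be read as ``the interior admits a complete locally homogeneous metric of \emph{finite volume}''. This is the standard convention for manifolds with toroidal boundary. It is also the one implicit in the hyperbolisation theorem stated just before the proposition, whose conclusion is ``hyperbolic with finite volume'' and whose exceptions are the same three manifolds. You yourself use finite volume in the converse direction. Under this convention $S^1\times D^2$, $T^2\times I$ and the twisted $I$-bundle over the Klein bottle are Seifert fibred but admit \emph{no} geometric structure. Their fundamental groups are virtually abelian, which rules out finite-volume quotients of $\H^3$, $\H^2\times\R$ and $\widetilde{\mathrm{SL}}(2,\R)$, and finite-volume quotients of $S^3$, $S^2\times\R$, $\R^3$, Nil and Sol are compact. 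So they are excluded because ``Seifert fibred $\Rightarrow$ geometric'' fails for them, not because their geometry is non-unique. Non-uniqueness could never break the equivalence, which only asserts the \emph{existence} of a geometry other than $\H^3$ and Sol. Moreover, if infinite-volume structures were allowed, as in your examples, all three interiors are flat (e.g.\ $T^2\times\R$), so they would satisfy the equivalence and there would be nothing to exclude.

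Correspondingly, your table gives wrong answers when applied verbatim with $\partial M\neq\emptyset$. For the interior of $S^1\times D^2$ (base a disc with at most one cone point, so $\chi^{\mathrm{orb}}>0$ and $e=0$) it outputs $S^2\times\R$, which is impossible since the universal cover is $\R^3$. For $T^2\times I$ it outputs an infinite-volume Euclidean structure. The missing step is this: if $\partial M\neq\emptyset$ then $B$ has nonempty boundary, and $\chi^{\mathrm{orb}}(B)\geq 0$ forces $B$ to be a disc with at most one cone point, a disc with two cone points of order $2$, an annulus or a M\"obius band. That is, $M$ is one of the three excluded manifolds. For every other $M$ with boundary one therefore has $\chi^{\mathrm{orb}}(B)<0$, so the interior of $B$ carries a complete finite-area hyperbolic orbifold metric. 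Lifting it gives a finite-volume $\H^2\times\R$ structure on the interior of $M$, and this is exactly where the hypothesis enters.

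Two smaller points. First, $\chi^{\mathrm{orb}}(B)>0$ does not give a spherical structure when $B$ is a bad orbifold ($S^2(p)$, or $S^2(p,q)$ with $p\neq q$). The Seifert fibred spaces over these, namely $S^3$ and lens spaces, must be handled directly. Second, your final uniqueness step is not required by the statement as written, so it is not the main obstacle. The clause ``unless $M$ is one of the excluded manifolds'' there is also misplaced, since none of them is finite-volume hyperbolic.
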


\begin{remark}
There is also a unique minimal decomposition into geometric components, which is different from the JSJ decomposition in general (it may contain fewer tori). This difference is due to the Sol components, which may be decomposed further to get a JSJ decomposition. For instance, Sol geometry arises for torus bundles over $S^1$ with Anosov monodromy, meaning that the trace of the action of the monodromy on the fibre is greater than two. 
\end{remark}

\subsection{Mapping class groups}

The goal of this section is to relate the mapping class group of $M$ to those of its JSJ components. We will do this in multiple steps, replacing the mapping class group with other groups that are gradually closer to what we can obtain from assembling diffeomorphisms of the individual JSJ components, and describe how the group changes in each step. We will also consider the full diffeomorphism group alongside the mapping class group.

It will be convenient to only consider orientation-preserving diffeomorphisms. Recall that there are always exact sequences
\[
0 \to \Diff^+(M) \to \Diff(M) \to \Z/2
\]
\[
0 \to \pi_0 \Diff^+(M) \to \pi_0 \Diff(M) \to \Z/2
\]
so either $\Diff^+(M) \cong \Diff(M)$ and $\pi_0 \Diff^+(M) \cong \pi_0 \Diff(M)$ or, if there exists an orientation-reversing diffeomorphism, $\Diff^+(M)$ and $\pi_0 \Diff^+(M)$ are index two subgroups of $\Diff(M)$ and $\pi_0 \Diff(M)$, respectively.

\begin{remark}
If $M$ has a nontrivial JSJ decomposition, then it is Haken.
Thus Theorem \ref{thm:Waldhausen} can be used to determine the mapping class group of $M$ (but not to compare it with the mapping class groups of the JSJ components).  However, in some cases, the description in terms of $\Out(\pi_1(M))$ might not be the most useful one.
\end{remark}

Fix a compact, oriented, irreducible 3-manifold $M$ with $\partial M = \bigsqcup_{h=1}^k B_h$ for some $k \geq 0$, where each boundary component $B_h \cong T^2$ is a torus. Also fix a JSJ decomposition $T = \bigsqcup_{i=1}^m T_i \subseteq M$, and let $M_1, \ldots, M_n$ denote the JSJ components.

\begin{definition}
Define $\Diff^+(M,T) := \{ f \in \Diff^+(M) \mid f(T)=T \}$.
\end{definition}

\begin{proposition} \label{prop:diff-mt-fibr}
There is a fibration
\[
\Diff^+(M,T) \rightarrow \Diff^+(M) \rightarrow (\Emb(T,M)/\Diff(T))_T
\]
where $\Emb(T,M)/\Diff(T)$ is the moduli space of submanifolds of $M$ diffeomorphic to $T$ and $(\Emb(T,M)/\Diff(T))_T$ is the path component of $T$.

Further, $(\Emb(T,M)/\Diff(T))_T$ is either contractible $($hence $\Diff^+(M,T) \simeq \Diff^+(M))$, or $(\Emb(T,M)/\Diff(T))_T$ is homotopy equivalent to $S^1$.
\end{proposition}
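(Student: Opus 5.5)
The plan has two parts: first construct the fibration from the action of $\Diff^+(M)$ on submanifolds, then identify the homotopy type of the base using Hatcher's theorem on spaces of incompressible tori recalled in the introduction.

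For the fibration, consider the action of $\Diff^+(M)$ on the moduli space $\Emb(T,M)/\Diff(T)$ given by $f\cdot[T'] = [f(T')]$. By the Palais--Cerf local triviality theorem \cite{PalaisLocalTriviality60} (in the form used in \cite{BoydBregmanSteinebrunnerModuliSpacesFinite24}), the orbit map $\Diff^+(M)\to \Emb(T,M)/\Diff(T)$, $f\mapsto f(T)$, is a locally trivial fibration onto its image. Its image is a union of path components. Since $\Diff^+(M)$ need not be connected, I would restrict to the path component $(\Emb(T,M)/\Diff(T))_T$ by replacing $\Diff^+(M)$ with the subgroup of diffeomorphisms sending $T$ into that component. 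I expect this subgroup to be all of $\Diff^+(M)$ up to the indexing conventions. This uses uniqueness of the JSJ tori up to isotopy (part (b) of the JSJ theorem): any $f(T)$ is again a minimal JSJ system, hence isotopic to $T$, so it lies in the path component of $T$. Transitivity on this component follows from isotopy extension. The fibre over $[T]$ is the stabiliser $\{f \mid f(T)=T\}=\Diff^+(M,T)$, which gives the claimed fibration.

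For the base, I would invoke Hatcher's theorem \cite[Thm~1(a)]{Hatcher1999}, quoted in the introduction: if $M$ is not a $T^2$-bundle over $S^1$ with Anosov monodromy, then the path component of the space of submanifolds containing the JSJ tori is contractible. In that case the long exact sequence shows that $\Diff^+(M,T)\to\Diff^+(M)$ is a weak equivalence, and hence a homotopy equivalence since both are Fréchet-type groups having the homotopy type of CW complexes.

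The remaining case, a Sol torus bundle with Anosov monodromy $\phi$, is the main obstacle. Here $T$ is a single fibre torus and $M\setminus T\cong T^2\times I$. I would identify the path component of $T$ in the space of incompressible tori with the space of fibres of the bundle structure. Concretely, I would show that every torus isotopic to $T$ is isotopic, through a contractible space of choices, to a fibre $T^2\times\{\theta\}$. This would use Hatcher's contractibility result for incompressible tori in $T^2\times I$ rel boundary, or the cited \cite[Cor.~5.21]{BoydBregmanSteinebrunnerModuliSpacesFinite24}. With this identification the component deformation retracts onto the circle of fibres $\{T^2\times\{\theta\}\}_{\theta\in S^1}\cong S^1$. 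The essential loop is pushing $T$ once around the base circle, and it is nontrivial because the monodromy $\phi$ is not isotopic to the identity. Failing a direct argument, I would cite Hatcher's paper, whose statement includes this exceptional case.
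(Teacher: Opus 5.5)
Your proposal is correct and follows essentially the same route as the paper. The orbit map $f\mapsto f(T)$ lands in the path component of $T$ by JSJ uniqueness, surjects onto that component by isotopy extension, and is a fibration by parametrised isotopy extension (your Palais--Cerf citation), with fibre $\Diff^+(M,T)$. The base is then identified using \cite[Cor.~5.21]{BoydBregmanSteinebrunnerModuliSpacesFinite24} and Hatcher's Theorem~1(a).

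There are two cosmetic differences. The paper handles torus bundles separately: the JSJ system is empty if $M$ is Seifert fibred, and otherwise it is a single fibre. You instead quote the ``not Anosov'' criterion in one step. Also, the paper simply cites Hatcher for the $S^1$ case, whereas you sketch a retraction onto the circle of fibres before falling back on that citation.
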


\begin{proof}
There is a map $t \colon \Diff^+(M) \rightarrow \Emb(T,M)/\Diff(T)$ given by $t(f)=f(T)$. By the uniqueness part of the JSJ theorem, $\Image(t) \subseteq (\Emb(T,M)/\Diff(T))_T$. By isotopy extension $\Image(t) \supseteq (\Emb(T,M)/\Diff(T))_T$. One can use parametrised isotopy extension to check that $t$ is a fibration.

If $M$ is not a torus bundle over $S^1$, then by the proof of \cite[Corollary 5.21]{BoydBregmanSteinebrunnerModuliSpacesFinite24}, $(\Emb(T,M)/\Diff(T))_T$ is contractible. If $M$ is a torus bundle, then by cutting it along a fibre torus, we get $T^2 \times I$, which is Seifert fibred. Therefore the JSJ decomposition $T$ of $M$ is either empty, or it is $T \cong T^2$ which can be taken to be a fibre, depending on whether $M$ itself is Seifert fibred or not. In the former case $(\Emb(T,M)/\Diff(T))_T$ is contractible, and in the latter case it is homotopy equivalent to $S^1$ by \cite[Thm 1(a)]{Hatcher1999}.
\end{proof}

\begin{proposition} \label{prop:diff-mt-pi0}
There is a short exact sequence
\[
1 \rightarrow \pi_1 (\Emb(T,M)/\Diff(T))_T \rightarrow \pi_0 \Diff^+(M,T) \rightarrow \pi_0 \Diff^+(M) \rightarrow 1
\]
and $\pi_1 (\Emb(T,M)/\Diff(T))_T$ is either trivial $($hence $\pi_0 \Diff^+(M,T) \cong \pi_0 \Diff^+(M))$ or isomorphic to $\Z$.
\end{proposition}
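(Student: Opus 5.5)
The plan is to take the long exact sequence in homotopy groups of the fibration from \cref{prop:diff-mt-fibr},
\[
\Diff^+(M,T) \rightarrow \Diff^+(M) \rightarrow (\Emb(T,M)/\Diff(T))_T,
\]
and read off its low-degree terms. Write $E := (\Emb(T,M)/\Diff(T))_T$, which is path connected by definition. The tail of the sequence is
\[
\pi_1 \Diff^+(M) \rightarrow \pi_1 E \xrightarrow{\partial} \pi_0 \Diff^+(M,T) \rightarrow \pi_0 \Diff^+(M) \rightarrow \pi_0 E = \ast .
\]
Here $\partial$ and the $\pi_0$ terms are understood in the usual group-theoretic sense: since the map $\Diff^+(M) \to E$ is the orbit map of a group action, the boundary map is a homomorphism. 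Surjectivity of $\pi_0 \Diff^+(M,T) \to \pi_0 \Diff^+(M)$ follows from connectedness of $E$, and can also be checked directly. Any $f$ satisfies $f(T) \in E$, so a path from $f(T)$ to $T$ in $E$ lifts by isotopy extension to an isotopy from $f$ to a diffeomorphism preserving $T$.

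The substantive step is to show that $\partial$ is injective, i.e.\ that $\pi_1\Diff^+(M) \to \pi_1 E$ is zero. There are two cases, following \cref{prop:diff-mt-fibr}.

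\begin{itemize}
\item If $E$ is contractible, then $\pi_1 E = 1$ and there is nothing to prove.
\item If $E \simeq S^1$, then $M$ is a torus bundle over $S^1$ with Anosov monodromy and $T$ is a fibre torus, so $\pi_1 E \cong \Z$.
\end{itemize}

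In the second case, $M$ is a closed Haken manifold, so it is aspherical by \cref{lemma:kpi1}. I would then argue that $\pi_1 \Diff_0(M) \cong Z(\pi_1 M)$, using the Hatcher--Ivanov theorem $\Diff(M)\simeq \hAut(M)$ together with the standard fact that for an aspherical $M$ the identity component of $\hAut(M)$ has $\pi_1$ equal to the centre of $\pi_1(M)$ and vanishing higher homotopy groups.

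For Anosov monodromy $A$, the group $\pi_1(M) = \Z^2 \rtimes_A \Z$ has trivial centre: a central element would have to be fixed by $A$ and commute with the stable letter, and neither is possible because $A$ has no eigenvalue $\pm 1$. Hence $\pi_1 \Diff^+(M) = 1$, the map $\pi_1\Diff^+(M)\to \pi_1 E$ is zero, and $\partial$ is injective with image $\Z$. This gives the short exact sequence and the dichotomy trivial versus $\Z$.

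The main obstacle is precisely this vanishing of $\pi_1\Diff^+(M)$ in the Sol case. If invoking Hatcher--Ivanov feels too indirect, a second route is to check geometrically that no loop of diffeomorphisms can drag the fibre torus once around the circle direction. Such a loop would produce a nontrivial central element in $\pi_1 M$, since the trace of a basepoint under the loop is central. That again contradicts the triviality of the centre.
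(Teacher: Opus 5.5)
Your proof is correct, but it gets injectivity of $\partial$ by a different route from the paper.

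You use exactness one step further to the left. Since $\ker\partial$ is the image of $\pi_1\Diff^+(M)$, you kill that group outright. For this you combine three inputs:
\begin{itemize}
\item Hatcher--Ivanov, giving $\Diff(M)\simeq\hAut(M)$ for closed Haken $M$;
\item Gottlieb's identification $\pi_1\hAut_0(M)\cong Z(\pi_1 M)$ for aspherical $M$;
\item the fact that $\Z^2\rtimes_A\Z$ is centreless.
\end{itemize}

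The paper never touches $\pi_1\Diff^+(M)$. It takes Hatcher's explicit generator $\ell$ of $\pi_1 E\cong\Z$, namely the fibre torus sweeping once around the base. In the mapping torus model it lifts $\ell$ to the path $(x,r)\mapsto(x,r+t)$, so $\partial\ell$ is represented by $(x,r)\mapsto(x,r+1)$. Restricting this to $T$ gives $[g^{-1}]\in\pi_0\Diff^+(T^2)\cong\SL_2(\Z)$, which has infinite order because $g$ is Anosov. Since the source is $\Z$, this forces injectivity.

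What each route buys:
\begin{itemize}
\item The paper's argument is more elementary: it needs no Hatcher--Ivanov or Gottlieb, only Hatcher's description of $E$. It also identifies the kernel of $\pi_0\Diff^+(M,T)\to\pi_0\Diff^+(M)$ concretely, together with its image in $\pi_0\Diff^+(T)$. That is exactly the information used when assembling $\pi_0\Diff^+(M)$ from the JSJ pieces.
\item Your argument uses heavier input but proves the stronger statement $\pi_1\Diff^+(M)=1$. Injectivity of $\partial$ then holds for any closed Haken $M$ with centreless $\pi_1$, without knowing a generator of $\pi_1 E$.
\end{itemize}

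Two small points.
\begin{itemize}
\item In the centre computation, first show that a central element $(v,n)$ has $n=0$. Commuting with $\Z^2$ forces $A^n=I$, which fails for $n\neq 0$. Only then use that $A$ has no eigenvalue $1$ to get $v=0$.
\item Your alternative ``trace'' route needs an extra step. You would have to relate the class of the loop of tori in $\pi_1 E$ to the image of the trace in $\pi_1(S^1)$. Centrality of the trace alone does not show that the loop is null in $E$.
\end{itemize}
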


\begin{remark}
$\pi_1(\Diff^+(M),\Diff^+(M,T)) \cong \pi_1 (\Emb(T,M)/\Diff(T))_T$ is called the motion group of the pair $(M,T)$.
\end{remark}

\begin{proof}
Aside from the $1$ on the left, this is the long exact sequence of the fibration in Proposition \ref{prop:diff-mt-fibr}. It remains to prove that the map $\pi_1 (\Emb(T,M)/\Diff(T))_T \rightarrow \pi_0 \Diff^+(M,T)$ is injective.

As we saw in the proof of Proposition \ref{prop:diff-mt-fibr}, if $M$ is not a torus bundle over $S^1$, or if it is Seifert fibred, then $(\Emb(T,M)/\Diff(T))_T$ is contractible, and hence $\pi_1 (\Emb(T,M)/\Diff(T))_T \cong 1$. So we will assume that $M$ is a torus bundle over $S^1$ and $T \cong T^2$ is a fibre, and that $M$ is not Seifert fibred.
This means that $M$ is an Anosov bundle~\cite[p. 24]{BoydBregmanSteinebrunnerModuliSpacesFinite24}.
The isomorphism class of the torus bundle $M$ is determined by an element $[g] \in \pi_0 \Diff^+(T^2) \cong SL_2(\Z)$, and since it is Anosov, $[g]$ has infinite order.

By \cite[Thm 1(a)]{Hatcher1999} we have $\pi_1 (\Emb(T,M)/\Diff(T))_T \cong \Z$ and the generator~$\ell$ is given by the map $S^1 \rightarrow (\Emb(T,M)/\Diff(T))_T$ sending each point of $S^1$ to the fibre of $M \rightarrow S^1$ over that point. Describe $M$ as $T^2 \times \R / (x,r) \sim (g(x),r+1)$. Then the path in $\Diff^+(T^2 \times \R)$ given by $(x,r) \mapsto (x,r+t)$ for $t \in [0,1]$ determines a path in $\Diff^+(M)$, which is a lift of $\ell$. In particular, the diffeomorphism determined by $(x,r) \mapsto (x,r+1)$ is the image of $\ell$ in $\pi_0 \Diff^+(M,T)$. Hence the composition $\pi_1 (\Emb(T,M)/\Diff(T))_T \rightarrow \pi_0 \Diff^+(M,T) \rightarrow \pi_0 \Diff^+(T)$ sends $\ell$ to $[g^{-1}]$, which has infinite order, and this implies that $\pi_1 (\Emb(T,M)/\Diff(T))_T \rightarrow \pi_0 \Diff^+(M,T)$ is injective.
\end{proof}

From now on we consider $\pi_0 \Diff^+(M,T)$.

\begin{definition}
We define
\begin{multline*}
\Diff^+(M,(B_h),(T_i),(M_j)) \coloneqq \\
\{ f \in \Diff^+(M,T) \mid f(B_h)=B_h, f(T_i)=T_i, f(M_j)=M_j \text{ for every $h,i,j$} \}
\end{multline*}
\end{definition}

\begin{definition}
Let $G = (V,E,\alpha)$ be the not necessarily simple, undirected, vertex-decorated graph with vertex and edge set
\[
\begin{aligned}
V &= \{ M_j \mid 1 \leq j \leq n \} \cup \{ * \} \\
E &= \{ B_h \mid 1 \leq h \leq k \} \cup \{ T_i \mid 1 \leq i \leq m \}
\end{aligned}
\]
and vertex decoration determined by
\[
\alpha(M_j) = [\text{the diffeomorphism class of $M_j$}], \quad \alpha(*) = \emptyset
\]
where the edge $B_h$ connects the distinguished vertex $*$ to the vertex $M_j$ if $B_h$ is a boundary component of $M_j$, and the edge $T_i$ connects the vertices corresponding to the neighbouring components $M_j$ (which may be equal).
\end{definition}

Note that all automorphisms of $G$ fix $*$, since it is the only vertex with decoration~$\emptyset$.

\begin{proposition} \label{prop:ses-aut-g}
There are exact sequences
\[
1 \rightarrow \Diff^+(M,(B_h),(T_i),(M_j)) \rightarrow \Diff^+(M,T) \rightarrow \Aut(G)
\]
\[
1 \rightarrow \pi_0 \Diff^+(M,(B_h),(T_i),(M_j)) \rightarrow \pi_0 \Diff^+(M,T) \rightarrow \Aut(G)
\]
\end{proposition}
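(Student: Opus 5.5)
We first construct a homomorphism $\Phi \colon \Diff^+(M,T) \to \Aut(G)$. A diffeomorphism $f \in \Diff^+(M,T)$ satisfies $f(T)=T$, so it permutes the tori $T_i$, which are the components of $T$. Since $f$ is a diffeomorphism of $M$, it also permutes the boundary components $B_h$. It therefore permutes the components of $M$ cut along $T$, namely the $M_j$.

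We let $\Phi(f)$ act on $V$ by $M_j \mapsto f(M_j)$ and $* \mapsto *$, and on $E$ by $T_i \mapsto f(T_i)$ and $B_h \mapsto f(B_h)$. We then verify that this is a graph automorphism.

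\emph{Incidence is preserved.} If $B_h \subseteq \partial M_j$, then $f(B_h) \subseteq \partial f(M_j)$. If $T_i$ is adjacent to $M_j$ and $M_{j'}$, then $f(T_i)$ is adjacent to $f(M_j)$ and $f(M_{j'})$. Some care is needed when $M_j = M_{j'}$: a torus with both sides on the same component maps to a torus of the same kind, so loops go to loops.

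\emph{Decorations are preserved.} The restriction $f|_{M_j} \colon M_j \to f(M_j)$ is a diffeomorphism, so $\alpha(f(M_j)) = \alpha(M_j)$. Since $*$ is fixed, its decoration is preserved as well.

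Functoriality, $\Phi(fg) = \Phi(f)\Phi(g)$, is immediate. The kernel of $\Phi$ consists exactly of those $f$ fixing every $B_h$, $T_i$ and $M_j$ setwise. This is $\Diff^+(M,(B_h),(T_i),(M_j))$ by definition, which gives the first exact sequence.

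For the $\pi_0$ sequence, we first note that $\Aut(G)$ is finite and discrete. Isotopic elements of $\Diff^+(M,T)$, that is, those joined by a path in $\Diff^+(M,T)$, induce the same permutation: along a path $f_t$ in $\Diff^+(M,T)$, each $f_t(T_i)$ is some component of $T$ varying continuously, hence constant. The same argument applies to the $B_h$ and the $M_j$. So $\Phi$ factors through a homomorphism $\bar\Phi \colon \pi_0\Diff^+(M,T) \to \Aut(G)$.

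It remains to check exactness at $\pi_0\Diff^+(M,T)$, and this is the one point needing an argument. Let $[f] \in \ker\bar\Phi$. Then $\Phi(f)=\id$, so $f$ itself lies in $\Diff^+(M,(B_h),(T_i),(M_j))$, and the class of $f$ is in the image.

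Injectivity of $\pi_0 \Diff^+(M,(B_h),(T_i),(M_j)) \to \pi_0 \Diff^+(M,T)$ requires more care. Suppose $f, g \in \Diff^+(M,(B_h),(T_i),(M_j))$ are joined by a path $f_t$ in $\Diff^+(M,T)$. By the continuity argument above, every $f_t$ fixes each $B_h$, $T_i$ and $M_j$ setwise. So the path lies in the subgroup, and $f$ and $g$ are already in the same component there.

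Equivalently, $\Diff^+(M,(B_h),(T_i),(M_j))$ is a union of path components of $\Diff^+(M,T)$, being the preimage of a point under a continuous map to a discrete space. This gives the second exact sequence.
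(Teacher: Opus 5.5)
Your proposal is correct and follows the same route as the paper: the induced permutation of the $B_h$, $T_i$ and $M_j$ gives a homomorphism to $\Aut(G)$ whose kernel is $\Diff^+(M,(B_h),(T_i),(M_j))$ by definition. Since $\Aut(G)$ is discrete, this subgroup is a union of path components, so exactness survives passing to $\pi_0$. You simply spell out the incidence, decoration and path-component checks that the paper's three-line proof leaves implicit.
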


\begin{proof}
A diffeomorphism in $\Diff^+(M,T)$ permutes the $B_h$, the $T_i$ and the $M_j$, preserving incidence and the diffeomorphism types of the components, and hence induces an automorphism of $G$. The kernel of the resulting map $ \Diff^+(M,T) \rightarrow \Aut(G)$ is $\Diff^+(M,(B_h),(T_i),(M_j))$ by definition. Since $\Aut(G)$ is discrete, the sequence remains exact after taking $\pi_0$.
\end{proof}

\begin{definition}
We define
\begin{multline*}
\overline{\Diff}^+(M,(B_h),(T_i),(M_j)) \coloneqq \\
\{ f \in \Diff^+(M,(B_h),(T_i),(M_j)) \mid f \big| _{T_i} \colon T_i \rightarrow T_i \text{ is orientation preserving for every $i$} \}
\end{multline*}
\end{definition}

\begin{remark}
Since a diffeomorphism $f \in \Diff^+(M,(B_h),(T_i),(M_j))$ is orientation-preserving on $M$, it is orientation-preserving on a given $T_i$ if and only if its derivative is orientation-preserving on the normal bundle of $T_i \subseteq M$. Because $f$ preserves the components of $M$, this always holds except possibly when $T_i$ has the same $M_j$ on both of its sides or, equivalently, if it corresponds to a loop in $G$.
\end{remark}

\begin{definition}
Let $\overline{G} = (V,E,\alpha)$ be the $1$-dimensional $\Delta$-complex with decoration that is (the geometric realisation of) $G$.
\end{definition}

\begin{remark}
There is a short exact sequence
\[
1 \rightarrow (\Z/2)^{\ell} \rightarrow \Aut(\overline{G}) \rightarrow \Aut(G) \rightarrow 1
\]
where $\ell$ is the number of loops in $G$. A standard generator of $(\Z/2)^{\ell}$ is mapped to the automorphism of $\overline{G}$ that reverses the corresponding loop and is the identity on other simplices. The sequence (non-canonically) splits on the right: if we arbitrarily fix orientations for all loops, then an automorphism of $G$ can be uniquely lifted to an automorphism of $\overline{G}$ that respects the chosen orientations, determining a splitting homomorphism $\Aut(G) \rightarrow \Aut(\overline{G})$.
\end{remark}

\begin{proposition} \label{prop:aut-G-bar}
There are exact sequences
\[
1 \rightarrow \overline{\Diff}^+(M,(B_h),(T_i),(M_j)) \rightarrow \Diff^+(M,T) \rightarrow \Aut(\overline{G})
\]
\[
1 \rightarrow \pi_0 \overline{\Diff}^+(M,(B_h),(T_i),(M_j)) \rightarrow \pi_0 \Diff^+(M,T) \rightarrow \Aut(\overline{G})
\]
\end{proposition}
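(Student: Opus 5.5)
I will follow the template of the proof of \cref{prop:ses-aut-g}, replacing $G$ by its geometric realisation $\overline{G}$. The first task is to define a homomorphism $\Phi \colon \Diff^+(M,T) \to \Aut(\overline{G})$. Fix once and for all an orientation on each loop $T_i$ of $G$, that is, on each JSJ torus with the same component $M_j$ on both sides. Equivalently, fix a choice of which side of $T_i$ is the ``head'' of the edge.

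Let $f \in \Diff^+(M,T)$. As in \cref{prop:ses-aut-g}, $f$ permutes the $B_h$, $T_i$ and $M_j$ compatibly with incidence and decorations, giving an element of $\Aut(G)$. To lift this to $\Aut(\overline{G})$ we must also say how $f$ acts on the two half-edges of each edge. For a non-loop edge this is forced by the action on vertices. For a loop $T_i$ mapped by $f$ to a loop $T_{i'}$, the two half-edges of $T_i$ correspond to the two sides of $T_i$ in $M$, i.e.\ to the two co-orientations of $T_i$. Since $f$ is a diffeomorphism sending $T_i$ to $T_{i'}$, its derivative on normal bundles sends the co-orientations of $T_i$ bijectively to those of $T_{i'}$. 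This defines how the simplex of $T_i$ is mapped onto that of $T_{i'}$. I then check that composition is respected, which is immediate from the chain rule. Hence $\Phi$ is a homomorphism lifting the map $\Diff^+(M,T) \to \Aut(G)$ of \cref{prop:ses-aut-g}.

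Next I compute the kernel. If $\Phi(f)=\Id$, then in particular $f$ maps to the identity in $\Aut(G)$, so $f \in \Diff^+(M,(B_h),(T_i),(M_j))$. Moreover, for each loop $T_i$, $f$ preserves the co-orientation of $T_i$. By the remark preceding the definition of $\overline{G}$, since $f$ preserves the orientation of $M$, this is equivalent to $f|_{T_i}$ being orientation preserving. For non-loop tori this condition holds automatically, as the same remark notes. Therefore $\ker \Phi = \overline{\Diff}^+(M,(B_h),(T_i),(M_j))$. Conversely, every element of $\overline{\Diff}^+$ fixes all vertices and edges and preserves the co-orientations, so it acts trivially on $\overline{G}$. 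This gives the first exact sequence.

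For the second sequence, I use that $\Aut(\overline{G})$ is discrete and $\Phi$ is continuous. Continuity holds because the induced map on normal co-orientations is locally constant in the $C^\infty$ topology. So $\Phi$ is constant on path components and factors through $\pi_0\Diff^+(M,T)$. The kernel of the induced map on $\pi_0$ consists of the components lying in $\ker\Phi$. Since $\ker\Phi$ is a union of path components of $\Diff^+(M,T)$, the map $\pi_0\ker\Phi \to \pi_0\Diff^+(M,T)$ is injective with image equal to that kernel. This yields the second exact sequence.

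The only delicate point is making the action on half-edges well defined and continuous, specifically the identification of half-edges of a loop with the sides of $T_i$. I expect this to be routine once co-orientations are used.
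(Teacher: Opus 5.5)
Your proposal is correct and follows essentially the same route as the paper: fix, for each loop, a bijection between its two orientations and the two co-orientations (sides) of the torus $T_i$, so that each $f \in \Diff^+(M,T)$ induces an automorphism of $\overline{G}$, and then argue as in \cref{prop:ses-aut-g}. The paper leaves implicit two details that you spell out: the kernel identification via the remark on normal-bundle orientations, and the passage to $\pi_0$ using the discreteness of $\Aut(\overline{G})$.
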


\begin{proof}
For each $T_i$ that corresponds to a loop in $\overline{G}$, fix a bijection between the orientations of (the normal bundle of) $T_i$ and the orientations of the loop. Then any diffeomorphism in $\Diff^+(M,T)$ induces an automorphism of $\overline{G}$. The rest goes as in Proposition \ref{prop:ses-aut-g}.
\end{proof}

From now on we consider $\pi_0 \overline{\Diff}^+(M,(B_h),(T_i),(M_j))$.

\begin{definition}
Given a manifold with boundary $N$, define
\[
\Diff^+_{\pi_0 \partial}(N) := \{ f \in \Diff^+(N) \mid \pi_0 \partial f = \id \colon \pi_0 \partial N \rightarrow \pi_0 \partial N \}.
\]
That is, $f \in \Diff^+_{\pi_0 \partial}(N)$ if $f(S)=S$ for every connected component $S$ of $\partial N$.
\end{definition}

\begin{remark} \label{rem:diff-pi0-d}
There are exact sequences
\[
1 \rightarrow \Diff^+_{\pi_0 \partial}(N) \rightarrow \Diff^+(N) \rightarrow \Aut(\pi_0 \partial N)
\]
\[
1 \rightarrow \pi_0 \Diff^+_{\pi_0 \partial}(N) \rightarrow \pi_0 \Diff^+(N) \rightarrow \Aut(\pi_0 \partial N)
\]
where $\Aut(\pi_0 \partial N)$ is the symmetric group on the finite set $\pi_0 \partial N$.
\end{remark}

For every connected component $S$ of $\partial N$ there is a natural restriction map $\Diff^+_{\pi_0 \partial}(N) \rightarrow \Diff^+(S)$.

\begin{definition}
Define a diagram $D = D(M,(T_i),(M_j))$
in the category of topological groups as follows. The objects of $D$ are $\Diff^+_{\pi_0 \partial}(M_j)$ and $\Diff^+(T_i)$ for every $j$ and $i$. There is a morphism $\Diff^+_{\pi_0 \partial}(M_j) \rightarrow \Diff^+(T_i)$, given by restriction, if $T_i$ is a boundary component of $M_j$ (or two morphisms, if $M_j$ is on both sides of $T_i$).

Let $\pi_0 D$ be the diagram in the category of groups obtained from $D$ by applying $\pi_0$ to every object and morphism.
\end{definition}

\begin{proposition} \label{prop:diff-lim}
We have $\overline{\Diff}^+(M,(B_h),(T_i),(M_j)) \cong \lim D(M,(T_i),(M_j))$.
\end{proposition}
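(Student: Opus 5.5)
We will construct mutually inverse homomorphisms of topological groups between $\overline{\Diff}^+(M,(B_h),(T_i),(M_j))$ and $\lim D$. Recall that the limit of a diagram of topological groups is the subgroup of $\prod_j \Diff^+_{\pi_0\partial}(M_j) \times \prod_i \Diff^+(T_i)$, with the subspace topology, consisting of tuples $((f_j),(g_i))$ compatible with all the restriction morphisms. Concretely, for each $T_i$ bounding $M_j$ (once or twice), the restriction of $f_j$ to the corresponding boundary copy of $T_i$ must equal $g_i$, under the identification of that copy with $T_i$.

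\textbf{Map to the limit.} Let $f \in \overline{\Diff}^+(M,(B_h),(T_i),(M_j))$. Since $f(M_j)=M_j$, it restricts to a diffeomorphism $f_j$ of $M_j$, where $M_j$ is viewed as the compact manifold obtained by cutting $M$ along $T$. We check $f_j \in \Diff^+_{\pi_0\partial}(M_j)$. The boundary components of $M_j$ are copies of the $B_h$ and sides of the $T_i$. The $B_h$ and $T_i$ are preserved individually, and if $M_j$ meets $T_i$ on both sides, the two sides are not swapped because $f|_{T_i}$ is orientation preserving and $f$ is orientation preserving on $M$, hence on the normal bundle (by the preceding remark). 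Setting $g_i = f|_{T_i}$, the tuple is compatible by construction. The resulting map is a continuous homomorphism, since restriction is continuous in the $C^\infty$ topology. It is injective, since the $M_j$ cover $M$.

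\textbf{Inverse.} Given a compatible tuple, define $f$ on $M$ by $f|_{M_j}=f_j$. This is well defined on the $T_i$ by compatibility, and it is a bijective homeomorphism preserving every $B_h$, $T_i$, $M_j$, orientation preserving on each $T_i$ and on $M$.

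\textbf{Main obstacle: smoothness across the $T_i$.} Two diffeomorphisms that agree on the common boundary torus need not glue to a smooth map, since normal derivatives may disagree. The fix we would try first is to build the model so that this cannot happen. Fix once and for all bicollars $T_i\times[-1,1]$, and interpret $\Diff^+_{\pi_0\partial}(M_j)$ and $\Diff^+(T_i)$ as diffeomorphisms that are product-like on these collars, i.e.\ of the form $g_i\times\id$ near each boundary. Do the same for $\overline{\Diff}^+(M,\dots)$, requiring product form on the bicollars.

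This replacement does not change homotopy types, by the standard collar-straightening argument: the inclusion of collar-preserving diffeomorphisms is a homotopy equivalence. With this convention the gluing is smooth, the homomorphism is continuous, and the two maps are mutually inverse topological group isomorphisms, which gives the claimed homeomorphism $\overline{\Diff}^+(M,(B_h),(T_i),(M_j)) \cong \lim D$.

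Alternatively, one may keep genuine diffeomorphisms and impose agreement of all normal jets in the definition of $D$, which gives the same conclusion.
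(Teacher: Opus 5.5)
Your proposal follows the same route as the paper: restrict to the pieces to get a cone over $D$, glue a compatible family back together for the universal property, and use that the $M_j$ cover $M$ for uniqueness. The difference is that you confront smoothness across the tori, which the paper's proof passes over when it asserts that compatible diffeomorphisms ``can be glued together to form a diffeomorphism of $M$''.

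With the literal definitions that assertion is false. Let $T_i\times[0,1]\subseteq M_j$ be half of a smooth bicollar of $T_i$ in $M$, and let $\phi$ be a diffeomorphism of $[0,1]$ with $\phi(0)=0$, $\phi'(0)=2$ and $\phi=\id$ near $1$. Take $(x,t)\mapsto(x,\phi(t))$ on the collar, extend it by the identity on $M_j$, and take the identity on every other piece and torus. This is a compatible tuple, but its glued map is not differentiable along $T_i$. So the restriction map $\overline{\Diff}^+(M,(B_h),(T_i),(M_j))\to\lim D$ is injective but not surjective.

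A modification is therefore genuinely needed. Either of yours works: the collared model, or the jet-compatibility condition, which leaves the left-hand side untouched and alters only $D$. You also check continuity of the gluing map, which the paper omits.

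You should state explicitly that what you prove is an isomorphism for the modified groups (resp.\ the modified diagram), not for the groups as literally defined. This suffices for the paper's purposes because the modification does not change homotopy types, in particular $\pi_0$. Only $\pi_0$ is used later in \cref{prop:mcg-lim}, and its gluing steps need the same collar adjustment.
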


\begin{proof}
There are restriction maps $\overline{\Diff}^+(M,(B_h),(T_i),(M_j)) \rightarrow \Diff^+_{\pi_0 \partial}(M_j)$ and $\overline{\Diff}^+(M,(B_h),(T_i),(M_j)) \rightarrow \Diff^+(T_i)$ for every $j$ and $i$, which commute with the morphisms in $D$. We need to show that this system satisfies the universal property of limits. 

If there is a system of maps from some topological group $X$ to $D$, then for every $x \in X$, we get a diffeomorphism in $\Diff^+_{\pi_0 \partial}(M_j)$ for every $j$. For each $i$, the restrictions to $T_i$ of the diffeomorphisms on the two neighbouring components (or, if $T_i$ has the same $M_j$ on both of its sides, the restrictions of the diffeomorphism on $M_j$ to the two boundary components corresponding to $T_i$) are equal. Hence all of the diffeomorphisms can be glued together to form a diffeomorphism of $M$, which is in $\overline{\Diff}^+(M,(B_h),(T_i),(M_j))$. A diffeomorphism of $M$ whose restriction to $M_j$ agrees with the given element of $\Diff^+_{\pi_0 \partial}(M_j)$ for every $j$ is obviously unique. So, by applying this gluing for each $x \in X$, we get a unique map $X \rightarrow \overline{\Diff}^+(M,(B_h),(T_i),(M_j))$, which is a morphism of topological groups.
\end{proof}

Since $\pi_0$ does not preserve limits, an additional step will be needed in the case of $\pi_0 \overline{\Diff}^+(M,(B_h),(T_i),(M_j))$.
In particular, applying a collar twist near a torus $T_i$, given by an element of $\pi_1 \Diff^+(T_i)$, to a diffeomorphism in $\overline{\Diff}^+(M,(B_h),(T_i),(M_j))$ has no effect on the isotopy classes of its restrictions to the $M_j$ and $T_{i'}$ (see Lemma \ref{lem:iota-properties} (c)). Showing that this is the only kind of change that cannot be detected by $\lim \pi_0 D$ will be the main step in relating this limit to $\pi_0 \overline{\Diff}^+(M,(B_h),(T_i),(M_j))$ (see Proposition \ref{prop:mcg-lim}). We first set up some notation.

\begin{definition} \label{def:iota-adj}
Choose one side of $T_i$ in $M$. Choose a small collar neighbourhood on that side and identify it with $T_i \times [0, 1]$ (such that $T_i = T_i \times \{ 0 \}$). We get a map (independent of the choice of collar neighbourhood and identification)
\[
\iota_i \colon \pi_1 \Diff^+(T_i) \rightarrow \pi_0 \Diff^+_{T_i \times \{ 0, 1 \} }(T_i \times [0, 1]) \rightarrow \pi_0 \overline{\Diff}^+(M,(B_h),(T_i),(M_j))
\]
by taking the adjunction and extending a diffeomorphism of $T_i \times [0, 1]$ by the identity on $M \setminus T_i \times [0, 1]$.
\end{definition}

\begin{lemma} \label{lem:iota-properties}
The map $\iota_i$ has the following properties.
\begin{enumerate}[(a)]
\item $\Image(\iota_i)$ is independent of which side of $T_i$ is chosen to define $\iota_i$.
\item $\Image(\iota_i) \leq \pi_0 \overline{\Diff}^+(M,(B_h),(T_i),(M_j))$ is a normal subgroup.
\item The composition $\pi_1 \Diff^+(T_i) \xra{\iota_i} \pi_0 \overline{\Diff}^+(M,(B_h),(T_i),(M_j)) \rightarrow \pi_0 \Diff^+_{\pi_0 \partial}(M_j)$ is trivial for every $j$.
\end{enumerate}
\end{lemma}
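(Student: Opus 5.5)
The plan is to check each of the three properties by working with explicit representatives. For $\gamma \in \pi_1 \Diff^+(T_i)$, represent $\gamma$ by a loop $\gamma_t$, $t\in[0,1]$, based at the identity. Then $\iota_i(\gamma)$ is represented by the collar twist
\[ \tau_\gamma(x,t) = (\gamma_t(x), t) \]
on $T_i \times [0,1]$, extended by the identity on the rest of $M$.

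For (a), I would write the two collars on either side of $T_i$ as a single bicollar $T_i \times [-1,1]$. The twist on $T_i\times[0,1]$ is isotopic to the twist on $T_i\times[-1,0]$ by sliding the support of the twist across $T_i$. Concretely, use the family $(x,t)\mapsto(\gamma_{\phi_s(t)}(x),t)$, where $\phi_s$ is a family of reparametrisations moving the interval on which $\gamma$ is traversed from $[0,1]$ to $[-1,0]$. For this to be an isotopy within $\overline{\Diff}^+(M,(B_h),(T_i),(M_j))$, every stage must preserve each $T_{i'}$ and each $M_j$, and act orientation-preservingly on $T_i$. The issue is that an intermediate stage acts on $T_i$ by some $\gamma_{s}$, which is not the identity but does preserve $T_i$ setwise. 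It is also orientation preserving, since it is isotopic to the identity, and it preserves the components $M_j$. So the isotopy stays in the group. The orientation conventions have to be checked: the parametrisation of $\gamma$ has to match when it is read from either side, so that the same class is obtained rather than its inverse. Even if the sign flipped, the image would still be the same subgroup, because $\pi_1\Diff^+(T_i)$ is a group. This bookkeeping is the only delicate point in (a).

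For (b), take $f \in \overline{\Diff}^+(M,(B_h),(T_i),(M_j))$. First isotope $f$ within this group so that near $T_i$ it is a product $f|_{T_i}\times \id$ in collar coordinates; this uses uniqueness of collars. Then
\[ f\tau_\gamma f^{-1} = \tau_{\gamma'}, \qquad \gamma'_t = f|_{T_i}\,\gamma_t\, f|_{T_i}^{-1}. \]
Here we use that $f$ fixes $T_i$ and preserves the chosen side, since it preserves each $M_j$ and is orientation preserving on $T_i$, hence on its normal bundle. In the loop case this is exactly why the bar group is needed. So $f\iota_i(\gamma)f^{-1} = \iota_i(\gamma') \in \Image(\iota_i)$, which gives normality.

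For (c), the restriction of $\tau_\gamma$ to a component $M_j$ not adjacent to the chosen collar is the identity. For the adjacent $M_j$, the restriction is the twist on a collar of one of its boundary components. In $\pi_0\Diff^+_{\pi_0\partial}(M_j)$, where the boundary is only required to be preserved setwise, this twist is isotopic to the identity. The isotopy is $\tau^{(s)}(x,t)=(\gamma_{\max(s,t)}(x),t)$, suitably smoothed. Equivalently, unwind the loop: at stage $s$ the boundary component is moved by $\gamma_s$, which is allowed in $\Diff^+_{\pi_0\partial}(M_j)$. At $s=1$ we reach $\gamma_1=\id$ throughout, so the restriction is trivial. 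I expect the main obstacle to be the careful smoothing and the sign conventions in (a); (b) and (c) are routine once the collar normal form is in place.
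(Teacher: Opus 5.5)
Your proposal is correct and follows the paper's proof essentially step by step. In (a) the paper uses exactly your sliding isotopy $p_t(x,s)=(u(t+s)(x),s)$ on $T_i\times[-t,1-t]$ to get $\iota_i=\iota_i'\circ(-\id)$. In (b) it puts $f$ in collar normal form $f_0\times\id$ and conjugates. In (c) it unwinds the twist on the component containing the collar; the only cosmetic difference is that it uses the shift $(x,s)\mapsto(u(t+s)(x),s)$ on $T_i\times[0,1-t]$ rather than your $\max(s,t)$ reparametrisation, so no corner-smoothing is needed.
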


\begin{proof}
(a) Identify a collar neighbourhood on the other side of $T_i$ with $T_i \times [-1, 0]$, and let $\iota'_i$ be the map obtained by using that neighbourhood. Given a loop $u \colon [0,1] \rightarrow \Diff^+(T_i)$ representing $[u] \in \pi_1 \Diff^+(T_i)$, we define the isotopy $p_t \colon M \rightarrow M$, $t \in [0,1]$, by setting $p_t(x,s) = (u(t+s)(x),s)$ for $(x,s) \in T_i \times [-t, 1-t]$, and $p_t$ is the identity outside $T_i \times [-t, 1-t]$. Then $p_t$ is an isotopy in $\overline{\Diff}^+(M,(B_h),(T_i),(M_j))$ between representatives of $\iota_i([u])$ and $\iota'_i(-[u])$, showing that $\iota_i = \iota'_i \circ (-\id)$, and hence $\Image(\iota_i) = \Image(\iota'_i)$.

(b) Suppose that $[u] \in \pi_1 \Diff^+(T_i)$ and $[f] \in \pi_0 \overline{\Diff}^+(M,(B_h),(T_i),(M_j))$. By applying an isotopy, we can assume that $[f]$ is represented by a diffeomorphism $f$ such that $f \big| _{T_i \times [0,1]} = f_0 \times \id$, where $f_0 = f \big| _{T_i} \colon T_i \rightarrow T_i$. Then $[f] \circ \iota_i([u]) \circ [f]^{-1}$ is represented by the diffeomorphism that is given by $(x,t) \mapsto (f_0 \circ u(t) \circ f_0^{-1}(x),t)$ for $(x,t) \in T_i \times [0, 1]$ and is the identity outside $T_i \times [0, 1]$. Hence $[f] \circ \iota_i([u]) \circ [f]^{-1} = \iota_i([f_0uf_0^{-1}]) \in \Image(\iota_i)$.

(c) Suppose that $[u] \in \pi_1 \Diff^+(T_i)$. If the chosen collar neighbourhood $T_i \times [0, 1]$ is not in $M_j$, then $\iota_i([u])$ restricts to the identity on $M_j$ by construction. If $T_i \times [0, 1]$ is in $M_j$, then let $p_t \colon M_j \rightarrow M_j$ be given by $p_t(x,s) = (u(t+s)(x),s)$ for $(x,s) \in T_i \times [0, 1-t]$ and the identity outside $T_i \times [0, 1-t]$. Then $p_t$ is an isotopy in $\Diff^+_{\pi_0 \partial}(M_j)$ between the restriction of $\iota_i([u])$ and $\id$.
\end{proof}

\begin{definition}
For every $i$ choose a collar neighbourhood $T_i \times [0, 1]$ as above, and define $\iota_i$. By choosing small enough neighbourhoods, we can assume that they are pairwise disjoint, and then the elements in the images of different $\iota_i$ maps will commute with each other. Hence we obtain a map $\sum_i \iota_i \colon \bigoplus_i \pi_1 \Diff^+(T_i) \rightarrow \pi_0 \overline{\Diff}^+(M,(B_h),(T_i),(M_j))$. It follows from Lemma \ref{lem:iota-properties} (b) that $\Image(\sum_i \iota_i)$ is a normal subgroup. We will denote the cokernel of $\sum_i \iota_i$ by
\[
\pi_0 \overline{\Diff}^+(M,(B_h),(T_i),(M_j)) \Big/ \bigoplus_i \pi_1 \Diff^+(T_i).
\]
\end{definition}

While the map $\sum_i \iota_i$ depends on which side of each $T_i$ is chosen, by Lemma \ref{lem:iota-properties} (a) its cokernel $\pi_0 \overline{\Diff}^+(M,(B_h),(T_i),(M_j)) / \bigoplus_i \pi_1 \Diff^+(T_i)$ is independent of all choices. By the definition of $\pi_0 \overline{\Diff}^+(M,(B_h),(T_i),(M_j)) \Big/ \bigoplus_i \pi_1 \Diff^+(T_i)$ we have the following proposition.

\begin{proposition} \label{prop:diff-coker}
There is an exact sequence
\begin{multline*}
\bigoplus_i \pi_1 \Diff^+(T_i) \xra{\sum_i \iota_i} \pi_0 \overline{\Diff}^+(M,(B_h),(T_i),(M_j)) \rightarrow \\
 \rightarrow \pi_0 \overline{\Diff}^+(M,(B_h),(T_i),(M_j)) \Big/ \bigoplus_i \pi_1 \Diff^+(T_i) \rightarrow 1
\end{multline*}
where $\pi_1 \Diff^+(T_i) \cong \pi_1 \Diff^+(T^2) \cong \Z^2$ for every $i$.
\end{proposition}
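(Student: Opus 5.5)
The exact sequence in \cref{prop:diff-coker} is essentially tautological, so the plan is to unwind the definition of the cokernel and then verify the single non-formal claim, namely the identification $\pi_1 \Diff^+(T_i) \cong \Z^2$.

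\emph{Exactness.} By \cref{lem:iota-properties}~(b), the image of each $\iota_i$ is a normal subgroup of $\pi_0 \overline{\Diff}^+(M,(B_h),(T_i),(M_j))$. The images of different $\iota_i$ commute, because they are supported on the pairwise disjoint collars. Hence $\sum_i \iota_i$ is a well-defined homomorphism from the direct sum, and its image is the product of these commuting normal subgroups, which is again normal. So the quotient group is defined, and we take the right-hand map to be the quotient map. That map is surjective, and its kernel is by definition $\Image(\sum_i \iota_i)$, which gives exactness at both terms.

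\emph{The fundamental group of $\Diff^+(T^2)$.} Use the standard fibration
\[
\Diff^+_{x}(T^2) \to \Diff^+(T^2) \to T^2
\]
given by evaluation at a basepoint $x$. The translation action of $T^2 = \R^2/\Z^2$ on itself gives a section, and this splits the fibration up to homotopy, so that
\[
\Diff^+(T^2) \simeq T^2 \times \Diff^+_{x}(T^2).
\]
Next, the based group $\Diff^+_{x}(T^2)$ has contractible components; this is the Earle--Eells result that the identity component of the diffeomorphism group of a surface of genus at least one is $T^2$ in genus one, or alternatively it follows from Smale-type arguments. Its $\pi_0$ is $\SL(2,\Z)$. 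Combining these, the identity component $\Diff^+_0(T^2)$ is homotopy equivalent to $T^2$, and so $\pi_1 \Diff^+(T^2) \cong \pi_1(T^2) \cong \Z^2$, generated by the two rotation loops.

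\emph{Main obstacle.} The only non-formal input is the contractibility of $\Diff_{x,0}(T^2)$, i.e.\ of the identity component of $\Diff^+_{x}(T^2)$. I would cite it rather than prove it.
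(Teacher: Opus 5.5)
Your proposal is correct and matches the paper. The paper also treats exactness as immediate from the definition of the cokernel of $\sum_i \iota_i$, with normality coming from Lemma~\ref{lem:iota-properties}~(b) and commutation coming from the disjoint collars. It simply takes $\pi_1 \Diff^+(T^2) \cong \Z^2$ as standard. Your justification of that isomorphism via the evaluation fibration and Earle--Eells is fine, although citing $\Diff_0(T^2) \simeq T^2$ already gives it directly, so the detour through the based group is unnecessary.
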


We can now prove the analogue of Proposition \ref{prop:diff-lim} for mapping class groups.

\begin{proposition} \label{prop:mcg-lim}
We have
\[
\pi_0 \overline{\Diff}^+(M,(B_h),(T_i),(M_j)) \Big/ \bigoplus_i \pi_1 \Diff^+(T_i) \cong \lim \pi_0 D(M,(T_i),(M_j)).
\]
\end{proposition}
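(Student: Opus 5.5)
We compare the natural map on $\pi_0$ with the limit. By Proposition \ref{prop:diff-lim}, the restriction maps from $\overline{\Diff}^+(M,(B_h),(T_i),(M_j))$ to the objects of $D$ are compatible with the morphisms of $D$. Applying $\pi_0$ therefore gives a homomorphism
\[
\Phi \colon \pi_0 \overline{\Diff}^+(M,(B_h),(T_i),(M_j)) \rightarrow \lim \pi_0 D(M,(T_i),(M_j)).
\]
By Lemma \ref{lem:iota-properties} (c), each $\iota_i$ composed with each restriction to $\pi_0 \Diff^+_{\pi_0\partial}(M_j)$ is trivial. The restriction to $\pi_0\Diff^+(T_{i'})$ is also trivial, since $\iota_i([u])$ is the identity on $T_{i'}$. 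So $\Phi$ kills $\Image(\sum_i\iota_i)$ and descends to the cokernel. It remains to show that the induced map is surjective and that $\ker\Phi = \Image(\sum_i \iota_i)$.

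\textbf{Surjectivity.} Let $([f_j],[g_i])$ be a compatible family in $\lim \pi_0 D$. Choose representatives $f_j \in \Diff^+_{\pi_0\partial}(M_j)$. For each $T_i$, the two restrictions of the adjacent $f_j$'s to $T_i$ are both isotopic to $g_i$, hence isotopic to each other. Using a collar $T_i\times[0,1]$ inside one adjacent piece, modify that $f_j$ by an isotopy supported in the collar so that its boundary restriction matches the other side exactly. This realises the isotopy between the boundary restrictions, and it does not change $[f_j]$ in $\pi_0\Diff^+_{\pi_0\partial}(M_j)$. Since we only need to match isotopy classes, we may also arrange, via the collar, that both sides agree with a product near $T_i$, so that the glued map is smooth. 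Doing this for all $i$ with disjoint collars and then gluing as in Proposition \ref{prop:diff-lim} gives a preimage.

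\textbf{Kernel.} This is the main step. Suppose $f \in \overline{\Diff}^+(M,(B_h),(T_i),(M_j))$ and each restriction $f|_{M_j}$ is isotopic to the identity in $\Diff^+_{\pi_0\partial}(M_j)$ via an isotopy $H^j_t$. The isotopies on neighbouring pieces need not agree on $T_i$. Restricting $H^j_t$ to a boundary component $T_i$ gives a loop in $\Diff^+(T_i)$, since it starts and ends at $f|_{T_i}$ and $\id$ and the two sides share these endpoints. More precisely, the two restricted paths from $f|_{T_i}$ to $\id$ differ by a loop $u_i\in\pi_1\Diff^+(T_i)$.

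I would then compose $f$ with $\iota_i(u_i)^{-1}$, adjusting in the collar on one side. The adjusted piecewise isotopies then agree on every $T_i$, after reparametrising in collars, and glue to an isotopy from $f\circ\prod_i\iota_i(u_i)^{-1}$ to $\id$ inside $\overline{\Diff}^+$. Hence $[f]\in\Image(\sum_i\iota_i)$.

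The delicate points are the following. The collar modification must be made precisely: an isotopy on $M_j$ whose boundary trace is a path $\gamma$ can be replaced, by inserting $\gamma^{-1}\cdot\gamma'$ in the collar, with one whose trace is any other path $\gamma'$ with the same endpoints, at the cost of the loop $\gamma^{-1}\gamma'$ appearing as an $\iota_i$ term. One must also check that the sign and side conventions are consistent, which Lemma \ref{lem:iota-properties} (a) handles. Finally, smoothness at the gluing needs collars in which everything is a product, which I would arrange first by a standard isotopy.
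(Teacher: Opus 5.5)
Your proposal is correct and follows essentially the same route as the paper. Existence comes from isotoping one side in a collar so that the boundary restrictions agree, then gluing; uniqueness modulo $\bigoplus_i \pi_1 \Diff^+(T_i)$ comes from comparing the two boundary traces of the piecewise isotopies on each $T_i$, forming the resulting loop, and absorbing it via $\iota_i$ together with a collar modification of the isotopy on one side. The only difference is packaging: the paper checks the universal property of the limit directly, whereas you show the induced homomorphism is surjective with kernel $\Image(\sum_i \iota_i)$, which is equivalent.
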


\begin{proof}
By applying $\pi_0$ to the limit cone of $D$ and \cref{prop:diff-lim}, we get a collection of homomorphisms $\pi_0 \overline{\Diff}^+(M,(B_h),(T_i),(M_j)) \rightarrow \pi_0 \Diff^+_{\pi_0 \partial}(M_j)$ and $\pi_0 \overline{\Diff}^+(M,(B_h),(T_i),(M_j)) \rightarrow \pi_0 \Diff^+(T_i)$ for every $j$ and $i$, which commute with the morphisms in $\pi_0 D$. These induce well-defined homomorphisms from $\pi_0 \overline{\Diff}^+(M,(B_h),(T_i),(M_j)) / \bigoplus_i \pi_1 \Diff^+(T_i)$ to $\pi_0 \Diff^+_{\pi_0 \partial}(M_j)$ (by Lemma \ref{lem:iota-properties} (c)) and to $\pi_0 \Diff^+(T_i)$ (by Definition \ref{def:iota-adj}).

If there is a system of homomorphisms from some group $X$ to $\pi_0 D$, then for every $x \in X$ we get a representative diffeomorphism in $\Diff^+_{\pi_0 \partial}(M_j)$ for every $j$ such that for each $i$ the two restrictions to $T_i$ are isotopic. For each $i$, we can choose one of the components $M_j$ neighbouring $T_i$, and isotope the diffeomorphism of $M_j$ on a collar neighbourhood of $T_i$, to make its restriction to $T_i$ equal the restriction of the diffeomorphism on the other side of $T_i$. Then the diffeomorphisms can be glued together to a diffeomorphism of $M$. Thus we get an element in $\pi_0 \overline{\Diff}^+(M,(B_h),(T_i),(M_j))$, and hence in $\pi_0 \overline{\Diff}^+(M,(B_h),(T_i),(M_j)) / \bigoplus_i \pi_1 \Diff^+(T_i)$. We need to show that the latter is uniquely determined.

Suppose we have two elements of $\pi_0 \overline{\Diff}^+(M,(B_h),(T_i),(M_j)) / \bigoplus_i \pi_1 \Diff^+(T_i)$, represented by diffeomorphisms $f,g \in \overline{\Diff}^+(M,(B_h),(T_i),(M_j))$, such that $f \big| _{M_j}$ is isotopic to $g \big| _{M_j}$ for every $j$. Choose an isotopy $p_{j,t} \colon M_j \rightarrow M_j$, $t \in [0,1]$ from $p_{j,0} = f \big| _{M_j}$ to $p_{j,1} = g \big| _{M_j}$. For a $T_i$, if the neighbouring components are $M_{j_1}$ and $M_{j_2}$, let $P_{i,t} = p_{j_2,1-t} \big| _{T_i} \circ (p_{j_1,1-t} \big| _{T_i})^{-1} \colon T_i \rightarrow T_i$, $t \in [0,1]$, which is a loop in $\Diff^+(T_i)$ based at $\id$, and hence represents an element $[P_i] \in \pi_1 \Diff^+(T_i)$. (If $T_i$ has the same $M_j$ on both of its sides, then replace $p_{j_1,1-t} \big| _{T_i}$ and $p_{j_2,1-t} \big| _{T_i}$ with the restrictions of $p_{j,1-t}$ to the corresponding two boundary components of $M_j$.) Define $g' \in \overline{\Diff}^+(M,(B_h),(T_i),(M_j))$ by composing $g$ with $\iota_i([P_i])$ (where the $M_{j_1}$ side of $T_i$ is used to define $\iota_i$), for every $i$.

Define isotopies $q_{j,t} \colon M_j \rightarrow M_j$, $t \in [0,1]$ as follows. For every $i$ such that $M_j$ contains the previously chosen collar neighbourhood $T_i \times [0,1]$, let $q_{j,t}(x,s) = (P_{i,1+s-t}(x),s)$ for $(x,s) \in T_i \times [0,t]$, and let $q_{j,t}$ be the identity outside these neighbourhoods $T_i \times [0,t]$. Let $p'_{j,t} = q_{j,t} \circ p_{j,t}$, it is an isotopy from $f \big| _{M_j}$ to $g' \big| _{M_j}$, for every $j$. Moreover, for every $T_i$ with neighbouring components $M_{j_1}$ and $M_{j_2}$, we have $p'_{j_1,t} \big| _{T_i} = q_{j_1,t} \big| _{T_i} \circ p_{j_1,t} \big| _{T_i} = P_{i,1-t} \circ p_{j_1,t} \big| _{T_i} =  p_{j_2,t} \big| _{T_i} = p'_{j_2,t} \big| _{T_i}$ for all $t \in [0,1]$. This means that the isotopies $p'_{j,t}$ for all $j$ can be glued together, forming an isotopy from $f$ to $g'$ in $\overline{\Diff}^+(M,(B_h),(T_i),(M_j))$. By construction, $g$ and $g'$ represent the same element in $\pi_0 \overline{\Diff}^+(M,(B_h),(T_i),(M_j)) / \bigoplus_i \pi_1 \Diff^+(T_i)$. Therefore $f$ and $g$ represent the same element in $\pi_0 \overline{\Diff}^+(M,(B_h),(T_i),(M_j)) / \bigoplus_i \pi_1 \Diff^+(T_i)$, as required.
\end{proof}

The following chains summarise the relationship between the diffeomorphism or mapping class group of $M$ and those of its JSJ components:
\[
\xymatrix{
\Diff^+(M) & \Diff^+(M,T) \ar@{_{(}->}[l] & \overline{\Diff}^+(M,(B_h),(T_i),(M_j)) \cong \lim D(M,(T_i),(M_j)) \ar@{_{(}->}[l]
}
\]
\[
\xymatrix@!C=4cm{
\pi_0 \Diff^+(M) & \pi_0 \Diff^+(M,T) \ar@{>>}[l] & \pi_0 \overline{\Diff}^+(M,(B_h),(T_i),(M_j)) \ar@{_{(}->}[l]  \ar@{>>}[d] \\
 & & \lim \pi_0 D(M,(T_i),(M_j)) \cong \pi_0 \overline{\Diff}^+(M,(B_h),(T_i),(M_j)) / \bigoplus_i \pi_1 \Diff^+(T_i). \quad \quad \quad \quad \quad \quad \quad \quad \quad \quad \quad \quad \quad \quad \quad \quad \quad
}
\]
The individual steps were described in Propositions \ref{prop:diff-mt-fibr}, \ref{prop:aut-G-bar} and \ref{prop:diff-lim} for the diffeomorphism groups, and in Propositions \ref{prop:diff-mt-pi0}, \ref{prop:aut-G-bar}, \ref{prop:diff-coker} and \ref{prop:mcg-lim} for the mapping class groups (see also Remark \ref{rem:diff-pi0-d}).

\section{Mapping class groups of reducible 3-manifolds}\label{sec-reducible}
This section focuses on the mapping class groups of compact, orientable, reducible 3-manifolds.
First we describe what is known about how to obtain the mapping class group of a reducible 3-manifold from that of its prime summands.
Then we consider the kernel of the map $\rho \colon \pi_0 \Diff(M) \to \Out(\pi_1(M))$ for $M$ reducible. This leads us to consider an exact sequence of Laudenbach, for closed, orientable 3-manifolds, and we describe how a splitting for it.

\subsection{The mapping class group of reducible 3-manifolds in terms of its summands}\label{subsec-reducible-in-terms-summands}

Let $M$ be a compact, orientable, reducible 3-manifold. We describe what is known about $\pi_0\Diff(M)$ in terms of the mapping class groups of its irreducible summands.

The literature on diffeomorphism groups of reducible 3-manifolds began with an announcement by C\'esar de S\'a--Rourke \cite{CesardeSaRourke1979}, that did not contain proofs.  According to McCullough~\cite{McCullough-Warsaw-notes}, they were ultimately not able to provide the proofs of the results they had announced.
Further work on the programme was done by Hendriks--Laudenbach~\cite{HendriksLaudenbach1984} and Hendriks--McCullough~\cite{HendriksMcCullough1987}. An unfinished preprint by Hatcher, available on his website, suggested an alternative perspective. Recently Boyd--Bregman--Steinebrunner~\cite{boyd2026primedecompositionfibresequence} described a new fibre sequence for $\BDiff(M)$.  Much of this work deals with the homotopy type of $\Diff(M)$.

We focus on the mapping class group of $M$.
A complete description of the mapping class group is not known, but we do know a generating set.
The following result was stated as \cite{CesardeSaRourke1979}, and proven by McCullough in \cite{McCullough-Warsaw-notes}.

\begin{theorem}\label{conjecture:MCG-reducible-generators}
    Every mapping class in $\pi_0\Diff_\partial (M)$ is a composition of:
    \begin{enumerate}[(i)]
        \item     slide diffeomorphisms;
        \item transpositions of repeated prime factors;
        \item flips of $S^1 \times S^2$ factors;
 \item Gluck twists on the $S^1 \times S^2$ factors; and
\item mapping classes in $\pi_0 \Diff_{D_i \cup \partial P_i}(P_i)$, where the $P_i$ are the irreducible factors of $M$ and $D_i \subseteq P_i$ is a 3-disc.
    \end{enumerate}
\end{theorem}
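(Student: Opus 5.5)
The plan is to fix a reduction system of spheres and use it to normalise an arbitrary diffeomorphism. Write $M = P_1 \# \cdots \# P_n \# (S^1\times S^2)^{\# k}$, and realise this by a standard model. Take a punctured $3$-sphere $W$, the complement of $n+2k$ open balls. Glue a punctured copy $P_i \sm \mathring{D}_i$ to $n$ of the boundary spheres. Glue the remaining $2k$ spheres in pairs, which produces the $S^1\times S^2$ summands. Boundary components of $M$ lie in the $P_i$. Let $\Sigma\subseteq M$ be the union of the $n$ separating spheres $\partial D_i$ together with the $k$ nonseparating spheres coming from the paired boundary spheres of $W$. Given $f\in\Diff_\partial(M)$, the goal is to compose $f$ with moves of types (i)--(iv) until it preserves $\Sigma$ and each piece. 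Once that holds, $f$ restricts to elements of $\pi_0\Diff_{D_i\cup\partial P_i}(P_i)$ on the summands, which are type (v), and to a diffeomorphism of $W$ fixing its boundary spheres.

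\textbf{Step 1: isotope $f(\Sigma)$ onto $\Sigma$ up to slides.} Both $\Sigma$ and $f(\Sigma)$ are reduction systems cutting $M$ into the same punctured pieces. Put $f(\Sigma)$ transverse to $\Sigma$ and run the standard innermost-disc argument, as in Milnor's uniqueness proof of prime decompositions. An innermost circle of $f(\Sigma)\cap\Sigma$ bounds a disc in a sphere of $\Sigma$. Surgering $f(\Sigma)$ along that disc and using irreducibility of the $P_i\sm\mathring D_i$ (every sphere there bounds a ball or is parallel to $\partial D_i$) reduces intersections.

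The main obstacle is here. The surgery is not realised by an isotopy when the disc it cuts off contains a summand. In that case one must instead move that summand across the sphere, and this is exactly a slide diffeomorphism: dragging the punctured $P_j$, or one foot of an $S^1\times S^2$ handle, along an arc in $M$. I would handle this by induction on the number of intersection curves. At each step I would show that either an isotopy removes the curve, or precomposing with a suitable slide does. To make this precise, I would pass to the universal-cover-free viewpoint of Laudenbach's normal form for sphere systems, which reduces the question to arcs and spheres in the punctured $S^3$.

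\textbf{Step 2: fix pieces and sphere parametrisations.} Once $f(\Sigma)=\Sigma$, $f$ permutes the pieces. Diffeomorphic irreducible summands may be permuted; composing with transpositions of repeated prime factors, type (ii), makes $f$ fix each $P_i$ piece. The pairs of spheres coming from $S^1\times S^2$ summands may be permuted or swapped within a pair. Permutations of handles are again transpositions. Swapping the two ends of one handle is a flip, type (iii). Now $f$ maps each sphere of $\Sigma$ to itself. By Smale's theorem $\pi_0\Diff^+(S^2)=0$, so $f$ can be isotoped to be the identity on $\Sigma$, and then the identity on a neighbourhood of $\Sigma$. At this point $f$ is determined by its restrictions to the pieces, up to rotations in the collar of each sphere, i.e.\ sphere twists coming from $\pi_1\SO(3)=\Z/2$.

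\textbf{Step 3: the pieces.} On $W$, the restriction is a diffeomorphism of the punctured $3$-sphere that is the identity on its boundary spheres. By Cerf's theorem (\cref{thm:MCGS3}, via $\pi_0\Diff_\partial(D^3)=0$) together with the fibration restricting to the boundary spheres, this mapping class group is generated by twists around the boundary spheres. A twist along a nonseparating sphere of $\Sigma$ is a Gluck twist, type (iv). A twist along $\partial D_i$ can be absorbed into the restriction to $P_i$ as an element of $\pi_0\Diff_{D_i\cup\partial P_i}(P_i)$, type (v). The restrictions to the $P_i$ are likewise type (v). Gluing back gives the required factorisation of $f$.
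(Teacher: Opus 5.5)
The paper gives no proof of this statement. It attributes it to C\'esar de S\'a--Rourke and the proof to McCullough's Warsaw notes, so your proposal has to stand on its own.

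\textbf{Steps 2 and 3 are essentially fine.} Once $f(\Sigma)=\Sigma$, transpositions and flips fix the pieces and the sides of each sphere, and Smale's theorem together with uniqueness of collars lets you make $f$ the identity near $\Sigma$. Also, $\pi_0\Diff_\partial(W)$ is indeed generated by twists about the boundary spheres: fill them in one at a time and use Cerf and $\pi_1(W)=1$. Each such twist is either a Gluck twist or can be absorbed into type (v). One caveat: invoking $\pi_0\Diff^+(S^2)=0$ needs $f$ to be orientation-preserving. That is automatic if $\partial M\neq\emptyset$. For closed $M$ it is a restriction the statement itself tacitly needs, since every listed generator preserves orientation.

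\textbf{The genuine gap is Step 1.} This step carries essentially all the content of the theorem, and what you give there is a plan, not an argument. Milnor's innermost-disc argument works by \emph{surgering} $f(\Sigma)$, which changes its isotopy class. It shows the complementary pieces agree, not that $f(\Sigma)$ and $\Sigma$ differ by a diffeomorphism of a prescribed kind. The irreducible pieces are harmless, as you note.

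The hard case is an innermost disc $E\subseteq\Sigma$ lying in the punctured-sphere piece $f(W)$, with boundary spheres of $f(W)$ on both sides of $E$. No isotopy removes $\partial E$. You assert that a ``suitable slide'' does, but you do not say which summand is slid, along which arc, or what complexity drops. This is not a formality. A slide drags a whole summand, and with it every sphere of the system that its arc meets, so it typically creates new intersection circles with $\Sigma$. There is then no reason for $|f(\Sigma)\cap\Sigma|$ to decrease, and naming Laudenbach's normal form does not supply the mechanism.

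What you need is a lemma saying that any two standard reduction systems are related, up to isotopy, by a product of slides and permutations of summands. Here a standard system is one cutting $M$ into a punctured $S^3$ and once-punctured prime factors. Proving this lemma is most of the theorem.

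One way to get it is to pass to $\pi_1$. A standard system, with a basepoint in the simply connected piece, determines a decomposition $\pi_1(M)\cong\pi_1(P_1)*\cdots*\pi_1(P_n)*F_k$ together with a free basis up to inversions. Slides, flips and transpositions realise the Fouxe-Rabinovitch generators other than factor automorphisms, namely partial conjugations, transvections, inversions and permutations. Because factor automorphisms stabilise such data and normalise the subgroup generated by the other generators, that subgroup already acts transitively. You would then still need a Laudenbach-type homotopy-implies-isotopy result for sphere systems to return to geometry.

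The alternative is a genuinely geometric induction with a complexity that you prove decreases. Either way, this step must be proved, not asserted.
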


To understand this statement, we define the diffeomorphism involved.
First we fix notation and describe a model for $M$.
Let \[M=P_1 \# P_2 \# \cdots \# P_n \# (S^2\times S^1)^{\#g}\]
be the prime decomposition of $M$.
An explicit model for $M$ is given by taking a 3-sphere with $n+2g$ open balls removed, denoted by $B$, and attaching $P_i\setminus \mathring{D}_i$ to the boundary components $F_1,\dots,F_n$ created by removing $n$ of the open balls, and $g$ copies of $S^2\times I$ to the remaining $2g$ boundary components, $F_{n+1},\dots,F_{n+2g}$. The latter attachments are done so as to get an orientable 3-manifold.
Let $S_j \times I$ denote the $j$th copy of $S^2 \times I$. We assume that $S_j \times \{0\}$ is glued to $F_{n+2j-1}$, and $S_j \times \{1\}$ is glued to $F_{n+2j}$.

Now we define slide, transposition, and flip diffeomorphisms.

\begin{definition}\label{defn:slide-diffeo}
We define \emph{slide diffeomorphisms}.   Let $P_i$ be one of the prime components of $M$ and form $\widehat{M}$ by removing $P_i\sm D_i$ and gluing in a 3-ball $E$ to $F_i$.  Let $a$ be an oriented arc in $\widehat M$ such that $E\cap a$  consists of the two endpoints of $a$.  Now use $a$ to define a point-pushing isotopy that moves the centre of $E$ along the arc $a$ and then back to itself.  By isotopy extension this defines a diffeomorphism of $\widehat{M}$ to itself, and, by an argument analogous to that of proving that connected sum is well-defined, we can assume that this diffeomorphism is the identity on $E$.  Hence we can cut out $E$, replace it by $P_i\sm D_i$ and extend by the identity to obtain the \emph{slide diffeomorphism of} $P_i$ \emph{around} $a$.

There is a similar notion of a slide diffeomorphism obtained by gluing $E$ to $F_k$, for some $k = n+1,\dots,2g$  corresponding to an $S^1 \times S^2$ factor, and sliding $E$ around an arc in the resulting $\widehat{M}$, and back to itself, as before.
\end{definition}

\begin{definition}
    We define the \emph{transposition diffeomorphism} for a pair of prime factors $P_{i_1}$ and $P_{i_2}$ such that there is a diffeomorphism $\psi \colon P_{i_1} \cong P_{i_2}$ sending $D_{i_1}$ to $D_{i_2}$. Remove $P_{i_1}$ and $P_{i_2}$ from $M$, fill in the boundary component $F_{i_1}$ with a 3-ball $E_1$, and $F_{i_2}$ with another 3-ball $E_2$.  Consider an ambient isotopy of $S^3$ that switches the positions of $E_1$ and $E_2$. This results in a diffeomorphism of $\varphi\colon B \to B$ that fixes all boundary components apart from $F_{i_1}$ and $F_{i_2}$, which are switched. Extend by the identity everywhere apart from on $P_{i_1}$ and $P_{i_2}$. Send $P_{i_1} \sm \mathring{D}_{i_1}$ to $P_{i_2} \sm \mathring{D}_{i_2}$ using $\psi$, and send $P_{i_2} \sm \mathring{D}_{i_2}$ to $P_{i_1} \sm \mathring{D}_{i_1}$ using $\psi^{-1}$.
    \end{definition}

\begin{definition}
    We define \emph{flips of $S^1 \times S^2$ factors}. Remove $S_j \times I$ from $M$, fill in the boundary component $F_{n+2j-1}$ with a 3-ball $E_1$, and $F_{n+2j}$ with another 3-ball $E_2$.  Consider an ambient isotopy of $S^3$ that switches the positions of $E_1$ and $E_2$. This results in a diffeomorphism of $\varphi\colon B \to B$ that fixes all boundary components apart from $F_{n+2j-1}$ and $F_{n+2j}$, which are switched. Extend by the identity everywhere apart from on $S_j \times I$. We may assume that $\varphi$ determines the map $(x,t) \mapsto (-x,1-t)$ on $S_j \times \{0,1\}$. This is because the two 2-spheres are switched, and because they are glued to $B$ with maps of degree $(-1)^t$, in order to have an oriented manifold. We can therefore extend using the formula $(x,t) \mapsto (-x,1-t)$ on all of $S_j \times I$.
\end{definition}

\begin{remark}
This is essentially the element $(1,1,0) \in \pi_0 \Diff(S^1 \times S^2) \cong (\Z/2)^3$, the composition of the reflections in both of the factors, in the $j$th $S^1 \times S^2$ summand. We could have considered the $S^1 \times S^2$ factors on an equal footing with the irreducible summands in the model for $M$, and then we would not need to list the flips and Gluck twists separately. But, if we did this, we would miss the slide diffeomorphisms from the last paragraph of \cref{defn:slide-diffeo}. So it is more convenient to do it this way.
\end{remark}

Finally, we discuss the diffeomorphisms coming from each prime factor. Here we include $S^1 \times S^2$ factors, and so the Gluck twists are included here. In particular, note that the groups $\pi_0 \Diff_{D_i \cup \partial P_i}(P_i) = \pi_0 \Diff_\partial(P_i \sm \mathring{D}_i)$ appear in the statement of \cref{conjecture:MCG-reducible-generators}, rather than simply $\pi_0\Diff_{\partial P_i}(P_i)$, which is what one can hope to be able to compute for prime 3-manifolds from our survey thus far.
The two mapping class groups are related as follows.
The fibre sequence
\[\Diff_{D_i \cup \partial P_i}(P_i) \to \Diff_{\partial P_i}(P_i) \to \Emb(D^3,\mathring{P}_i)\]
yields an exact sequence, which is a portion of the long exact sequence in homotopy groups of the fibration:
\[\pi_1\Emb(D^3,\mathring{P}_i) \xrightarrow{\partial} \pi_0 \Diff_{D_i \cup \partial P_i}(P_i) \to \pi_0 \Diff_{\partial P_i}(P_i) \to \{1\}.\]
The additional diffeomorphisms in $\pi_0 \Diff_{D_i \cup \partial P_i}(P_i)$, that are in the image of $\partial$ and so die in $\pi_0 \Diff_{\partial P_i}(P_i)$, are point-pushing maps and sphere twists around $\partial D_i$.
We record this in the following proposition, before defining sphere twists.

\begin{proposition}
    The kernel of the map $\pi_0\Diff_{D_i \cup \partial P_i}(P_i) \to \pi_0\Diff_{\partial P_i}(P_i)$ is generated by a sphere twist along $\partial D_i$ and point-pushing diffeomorphisms.
\end{proposition}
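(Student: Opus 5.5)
The plan is to use the exact sequence displayed just before the statement,
\[\pi_1\Emb(D^3,\mathring{P}_i) \xrightarrow{\partial} \pi_0 \Diff_{D_i \cup \partial P_i}(P_i) \to \pi_0 \Diff_{\partial P_i}(P_i) \to \{1\}.\]
The kernel in question is then exactly $\partial(\pi_1\Emb(D^3,\mathring{P}_i))$, and it suffices to find generators of $\pi_1\Emb(D^3,\mathring{P}_i)$ and identify their images under $\partial$.

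First I would compute $\pi_1\Emb(D^3,\mathring{P}_i)$ via the fibration obtained by evaluating at the centre of the disc. Restricting an embedding to its centre and its derivative there gives a map $\Emb(D^3,\mathring{P}_i) \to \mathrm{Fr}(\mathring{P}_i)$ to the (oriented) frame bundle. By shrinking discs radially, this map is a homotopy equivalence, with $\GL^+(3,\R)\simeq \SO(3)$ replacing the frames. Since $P_i$ is orientable, $T\mathring P_i$ is trivial, so $\mathrm{Fr}(\mathring P_i)\simeq \mathring P_i\times \SO(3)$. Hence
\[\pi_1\Emb(D^3,\mathring{P}_i)\cong \pi_1(P_i)\times \Z/2,\]
with the first factor given by loops moving the centre along a loop $\gamma$ with a parallel-transported frame, and the $\Z/2$ given by a full $2\pi$ rotation of the frame at a fixed centre.

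Next I would compute $\partial$ on each type of generator by the standard recipe: lift the loop of embeddings to a path of ambient diffeomorphisms of $P_i$ fixing $\partial P_i$ via isotopy extension, and take the endpoint. This endpoint lies in $\Diff_{D_i\cup\partial P_i}(P_i)$ after a small correction near $D_i$.

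For a loop of the first type, the endpoint is by definition the point-pushing diffeomorphism of $D_i$ around $\gamma$, in the sense of the slide construction in \cref{defn:slide-diffeo}. For the rotation generator, isotopy extension supported in a collar $S^2\times[0,1]$ of $\partial D_i$ gives a diffeomorphism rotating $S^2\times\{t\}$ by angle $2\pi t$ about an axis. This is precisely the sphere twist along $\partial D_i$ (\cref{def:sphere twist}). Since $\partial$ is a homomorphism and its image is the kernel, the kernel is generated by these images.

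The main point needing care is the identification $\Emb(D^3,\mathring P_i)\simeq \mathrm{Fr}(\mathring P_i)$, together with the check that the ambient lift of the frame-rotation loop can be taken to be the collar twist. I would handle this by first isotoping the given loop of embeddings into a loop of linear (radial-shrink) embeddings in a chart, where the lift can be written down explicitly. One should also note that the chosen frame along $\gamma$ only matters up to the rotation generator, so changing it multiplies the point-push by a sphere twist. This does not affect the generating set.
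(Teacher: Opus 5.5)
Your proposal is correct and follows the paper's argument: the paper also takes the kernel to be the image of $\partial\colon \pi_1\Emb(D^3,\mathring{P}_i)\to\pi_0\Diff_{D_i\cup\partial P_i}(P_i)$, coming from the fibration $\Diff_{D_i\cup\partial P_i}(P_i)\to\Diff_{\partial P_i}(P_i)\to\Emb(D^3,\mathring{P}_i)$, and then identifies that image as generated by point-pushes and the sphere twist about $\partial D_i$. Your frame-bundle computation $\pi_1\Emb(D^3,\mathring{P}_i)\cong\pi_1(P_i)\times\Z/2$ makes explicit what the paper leaves implicit. One small correction: the frame carried along $\gamma$ should be constant in a fixed trivialisation rather than parallel-transported, since a parallel-transported frame need not close up; your final paragraph already absorbs this ambiguity.
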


\begin{definition}\label{def:sphere twist}
    For an embedded 2-sphere $S$ in $M$, define the \emph{sphere twist about $S$} as follows. Recall that $\pi_1(\SO(3)) \cong \mathbb{Z}/2$ is generated by a loop $\ell \colon [0, 1] \to \SO(3)$ which rotates $\mathbb{R}^3$ about an axis by a full turn. Identify $S$ with $S^2 \subseteq \mathbb{R}^3$ and define a diffeomorphism supported on a bicollar neighbourhood $S \times I \subseteq M$ by $r(s, t) = (\ell(s) \cdot t, t)$, to obtain the \emph{sphere twist about} $S$, denoted $T_S$.
\end{definition}

 Whether or not the sphere twists and point-pushing maps are nontrivial in $\pi_0\Diff_{D_i \cup \partial P_i}(P_i)$ depends on $P_i$, and it requires some work to determine precisely which 3-manifolds and which point pushes are nontrivial.

The sphere twists here correspond in $M$ to sphere twists on separating spheres in $M$.
We also have sphere twists on $S_j \times \{1/2\}$ for $j=1,\dots,g$, the Gluck twists on each of the $S^1 \times S^2$ factors.

\subsection{The kernel of $\rho$ }\label{subsec:kernel-fo-rho}

In this section we examine the kernel of the map which takes mapping classes to outer automorphisms of the fundamental group. For closed, orientable 3-manifolds other than $S^3$ and $\RP^3$, this kernel is generated entirely by sphere twists, as in \cref{thm:kernel-rho}.  This leads us to the Laudenbach short exact sequence, which we will study in \cref{subsection-laudenbach-seq}.   We will describe how to extend \cref{thm:kernel-rho} to all compact, orientable 3-manifolds.

Let $M$ be a compact, orientable 3-manifold. Recall the homomorphism
\[\rho \colon \pi_0\Diff(M) \to \Out(\pi_1(M))\]
from \eqref{eqn-MGC to out} arising from isotoping to fix a basepoint, and then taking the induces action on $\pi_1(M)$.
Let
\[\rho^+ \colon \pi_0\Diff^+(M) \to \Out(\pi_1(M))\]
denote its restriction to the orientation-preserving mapping class group of $M$.

We survey work examining this homomorphism; in particular, we explore the extent to which $\rho$ is an isomorphism and its splittings.
Recall (\cref{def:sphere twist}) that the sphere twist $T_S$ about an embedded 2-sphere $S \subseteq M$ is the isotopy class of the diffeomorphism supported on a product $S \times I \subseteq M$ that rotates the slices $S \times \{t\}$ according to a loop generating $\pi_1(\operatorname{SO}(3)) \cong \mathbb{Z}/2$. Similarly, for a properly embedded 2-disc $D \subseteq M$, define the \emph{twist about $D$} to be the isotopy class of the diffeomorphism supported on a product $D \times I \subseteq M$ which rotates the slices $D \times \{t\}$ according to a generator of $\pi_1(\operatorname{SO}(2)) \cong \mathbb{Z}$. Here we are identifying $D$ with $D^2 \subseteq \mathbb{R}^2$ and taking the generator of $\pi_1(\operatorname{SO}(2)) \cong \mathbb{Z}$  to be the loop corresponding to the rotation of $\mathbb{R}^2$ about the origin by one full turn.

Seven types of diffeomorphisms which are seen to give elements of $\ker(\rho)$ are:
\begin{enumerate}
    \item sphere twists, about some embedded sphere $S \subseteq M$;
    \item twists along properly embedded discs $D \subseteq M$;
    \item permuting two boundary spheres;
    \item sliding a boundary sphere around a loop with endpoints on the sphere;
 \item reflecting each fibre of an $I$-bundle over a surface about its midpoint;
 \item reflection of $S^3$;
 \item the diffeomorphism $\sigma_-$ of $\RP^3= L(2,1)$ (\cref{sec-lens spaces}).
\end{enumerate}

\begin{theorem}[\cite{Hatcher_Wahl_MCG3Manifold}, Prop.~2.1]\label{prop:generators of kernel of action of MCG on Out(pi1)}
Let $M$ be a compact, orientable 3-manifold.
\begin{enumerate}[(i)]
\item The kernel of $\rho$ is generated by the seven families of diffeomorphisms listed.
\item  The kernel of $\rho^+$ is generated by the families of diffeomorphisms (1)--(4).
\item If moreover $M$ is closed, then the kernel of $\rho$ is generated by the families of diffeomorphisms (1), (6), and (7).
\item If $M$ is closed, then the kernel of $\rho^+$ is generated by sphere twists as in (1).
\item If $M$ is closed, irreducible, and not equal to $S^3$ or $\RP^3$, then $\rho$ is injective.
\item If $M$ is closed and irreducible, then $\rho^+$ is injective.
\end{enumerate}
\end{theorem}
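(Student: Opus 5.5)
The strategy is to reduce everything to the irreducible pieces, using the sphere system of the prime decomposition. Parts (iii)--(vi) will be deduced from (i)--(ii) by discarding the generators that cannot occur. Fix a reduced sphere system $\mathcal S \subseteq M$: the separating spheres of the prime decomposition, one nonseparating sphere in each $S^1\times S^2$ summand, and spheres around boundary sphere components. Let $f$ represent a class in $\ker\rho$.

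\textbf{Step 1: make $f$ preserve $\mathcal S$.} I would isotope $f$ so that $f(\mathcal S)=\mathcal S$. Such systems are unique up to isotopy and to the slide moves generated by families (3)--(4), via Laudenbach's normal form arguments. The aim is that, after composing with slides, permutations of boundary spheres and the twists in (1)--(2), $f$ fixes each sphere of $\mathcal S$ setwise.

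\textbf{Step 2: use that $f_*$ is inner.} Since $f_*$ is inner on $\pi_1(M)=\ast_i\pi_1(P_i)\ast F_g$, the Kurosh subgroup theorem and uniqueness of free factors show that $f$ cannot permute nondiffeomorphic-type factors nontrivially. It also acts on each $\pi_1(P_i)$ by a conjugation. After further slides we may assume $f$ restricts to each punctured piece $P_i^\circ$ inducing an inner automorphism rel a point.

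\textbf{Step 3: treat each piece.} For irreducible, boundary irreducible Haken $P_i$, Waldhausen's \cref{thm:Waldhausen} shows $f|_{P_i}$ is isotopic to the identity, except when $P_i$ is an $I$-bundle, where the fibre reflection (5) appears. Compressible boundary is handled by cutting along discs, which produces the disc twists (2). For closed non-Haken prime $P_i$ we use the Gabai--Meyerhoff--Thurston \cref{thm:GMT} for hyperbolic pieces, Boileau--Otal and the Seifert fibred results of \cref{sec-Seifert fibred}, and the elliptic results of \cref{sec-elliptic} together with McCullough's $\pi_0$-Smale \cref{pi0Smale}. These give injectivity of $\rho$, except for $S^3$ and $\RP^3$, which contribute the reflections (6) and $\sigma_-$ (7) by \cref{thm:MCGS3} and \cref{prop:mcg-L21}.

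\textbf{Step 4: reassemble.} With $f$ now the identity on each piece, the remaining ambiguity is supported in collars of the spheres of $\mathcal S$. This ambiguity comes from $\pi_1\SO(3)$ (for spheres) and $\pi_1\SO(2)$ (for disc collars), giving exactly twists of types (1) and (2). This proves (i).

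For (ii), types (5)--(7) are orientation reversing, so only (1)--(4) remain. When $M$ is closed there are no boundary spheres or discs, so (2)--(5) disappear, giving (iii) and (iv). For (v) and (vi), irreducibility means every sphere bounds a ball, so sphere twists are trivial by Cerf's theorem. Only (6) and (7) survive, and they occur only for $S^3$ and $\RP^3$ and are orientation reversing.

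\textbf{Main obstacle.} I expect the hardest step to be Step 3 for closed non-Haken irreducible pieces. There injectivity of $\rho$ relies on deep geometric inputs: Gabai--Meyerhoff--Thurston for hyperbolic pieces, and case-by-case results for small Seifert fibred spaces. I would simply cite these results rather than reprove them.
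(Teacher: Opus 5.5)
Your overall architecture matches the paper's proof, which is essentially a chain of citations. It reduces to the prime pieces, then treats the irreducible pieces case by case: Waldhausen, McCullough--Miller for compressible boundary, Gabai--Meyerhoff--Thurston, the Seifert fibred results, and McCullough's $\pi_0$-Smale theorem, with Hong--McCullough supplying (5)--(7). The paper, however, takes the reducible-to-irreducible step wholesale from McCullough's Theorem~1.5 (or the Hatcher--Wahl appendix). Your attempt to carry that step out in Steps~1--2 has a genuine gap.

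Your justification in Step~1, that reduced sphere systems are unique ``up to isotopy and the moves (3)--(4)'', is false. Reduced systems are unique only up to diffeomorphism, and the diffeomorphisms needed include slides of prime summands and of $S^1\times S^2$ handles (\cref{defn:slide-diffeo}). These act on $\pi_1(M)$ by partial conjugations, which are not inner. For example, take $P\#Q\#R$ with aspherical summands and the standard system around a central punctured $S^3$. Any diffeomorphism preserving the system and each piece induces a free product of factor automorphisms, because the central piece is simply connected. A slide of $P$ around a nontrivial loop in $Q$ is not of this form modulo $\Inn$, since a nontrivial free factor is its own normaliser. So the slide carries the system to a non-isotopic one.

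Consequently, as soon as you ``compose with slides'' you leave $\ker\rho$, and Step~2 can no longer use that $f_*$ is inner. Steps~3--4 would then only show that $f$ lies in the group generated by slides, twists and mapping classes of the pieces. That is McCullough's generating set for all of $\pi_0\Diff(M)$ (\cref{conjecture:MCG-reducible-generators}), not a description of the kernel.

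What is missing is a proof that $f\in\ker\rho$ already satisfies $f(\mathcal S)\sim\mathcal S$, with no slides. This needs two inputs:
\begin{enumerate}
\item A basepoint-preserving diffeomorphism inducing the identity on $\pi_1(M)$ also induces the identity on $\pi_2(M)$. This is the hard step of Laudenbach's argument, reproved in the Hatcher--Wahl appendix, and it gives $f(\mathcal S)\simeq\mathcal S$.
\item Laudenbach's theorem that homotopic sphere systems are isotopic.
\end{enumerate}
With these in place, the normaliser fact shows the induced factor automorphisms are inner (trivial if there are at least two nontrivial factors), and Step~2 needs no further slides.

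There are also smaller problems.
\begin{itemize}
\item In Step~4 the remaining ambiguity is not only $\pi_1\SO(3)$ and $\pi_1\SO(2)$. Passing from $P_i$, where Waldhausen and the other results apply, back to the punctured piece, the kernel of $\pi_0\Diff(P_i,D_i)\to\pi_0\Diff(P_i)$ also contains point-pushes. Globally, a point-push is a slide of the rest of $M$ around a loop in $P_i$. You must show that only central loops occur and that these come from $\pi_1\Diff(P_i)$ (fibre rotations for Seifert fibred pieces, Hatcher--Ivanov in the Haken case). When there is a single nontrivial factor, the noncentral point-pushes are exactly the boundary-sphere slides (4).
\item ``Cutting along discs'' for compressible boundary hides a substantial theorem (Luft for handlebodies, McCullough--Miller in general), which you should cite.
\item (ii) is not a formal consequence of (i). A product of two orientation-reversing generators is an orientation-preserving element of $\ker\rho$, and (i) alone does not place it in the subgroup generated by (1)--(4). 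Run the argument directly for orientation-preserving $f$, where (5)--(7) never arise. This is also the paper's logical order: (ii) comes from Hatcher--Wahl, and (i) is obtained by adding the Hong--McCullough generators.
\end{itemize}
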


\begin{proof}
Hatcher--Wahl \cite{Hatcher_Wahl_MCG3Manifold} describe how to extract (ii) from the literature. The necessity and sufficiency of the remaining three diffeomorphisms is from Hong--McCullough \cite[Section~4]{HongMcCullough2013}.
\begin{itemize}
    \item The statement for reducible $M$ follows from the statement for irreducible $M$ together with a result of McCullough \cite[Theorem 1.5]{McCulloughTopAlgAuts3mfds90}. An alternative proof can be found in \cite[Appendix]{Hatcher_Wahl_MCG3Manifold}. Note that for irreducible $M$ with (possibly empty) incompressible boundary, the statement says that $\rho$ is injective.
    \item Waldhausen \cite[Theorem 7.1]{Waldhausen1968} shows the result for Haken manifolds that are closed or have incompressible boundary (\cref{thm:Waldhausen} is a corollary of this combined with \cite[Corollary 6.5]{Waldhausen1968}).
    \item Irreducible $M$ with nonempty compressible boundary satisfy the statement by work of McCullough--Miller \cite[Theorem 6.2.1]{McCulloughMillerCompressibleBoundary86}.
    \item Closed hyperbolic manifolds are $K(\pi, 1)$s, so the proposition follows as a corollary of work of Gabai \cite{Gabai1997} (\cref{thm:Gabai 97 main thm 0.9}) and Gabai--Meyerhoff--Thurston \cite{GabaiMeyerhoffThurston2003} (\cref{thm:GMT}) for such $M$.
    \item Closed non-Haken Seifert manifolds with infinite fundamental group are $K(\pi, 1)$s, whence the statement follows.
    \item Closed non-Haken Seifert manifolds with finite fundamental group are spherical manifolds, and now the statement follows by work of McCullough \cite[Theorem 3.1]{McCullough2002} (\cref{pi0Smale}).
  \item According to \cite[Section~4]{HongMcCullough2013}, to deal with orientation-reversing diffeomorphisms it suffices to add the diffeomorphisms of type (5), (6), and (7).
\end{itemize}
\end{proof}

\subsection{The Laudenbach sequence}\label{subsection-laudenbach-seq}

We describe the Laudenbach sequence, and present a proof that it splits.
For the rest of the section, we additionally assume $M$ is closed, and restrict to orientation-preserving mapping classes $\pi_0\Diff^+(M)$. So $\ker(\rho^+)$ is entirely generated by sphere twists.

Let $\Twist(M)$ denote the subgroup of $\pi_0\Diff^+(M)$ generated by sphere twists; denote by $\Twistns(M)$ the subgroup generated only by those about non-separating spheres.
Observe that $\Twist(M)$ is a normal subgroup of $\pi_0\Diff(M)$: for any $f \in \pi_0\Diff(M)$ and a sphere twist $T_S \in \Twist(M)$, we have
\begin{equation} \label{eqn:twist subgroup is normal computation}
    fT_Sf^{-1} = T_{f(S)}.
\end{equation}
If $S$ and $S'$ are two embedded spheres, there is a homotopy, and hence isotopy~\cite{Laudenbach1974} of $S'$ to $S''$ that is disjoint from $S$. It follows that there is an isotopy of $T_{S}$ to $T_{S''}$. The latter has support disjoint from that of $T_{S'}$, and hence $T_S$ and $T_{S''}$ commute. Thus $\Twist(M)$ is abelian.

\begin{remark}\label{rem: twist subgroup}
   McCullough~\cite{McCulloughTopAlgAuts3mfds90} showed that $\Twist(M)\cong (\mathbb{Z} / 2)^d$ for some $d$, and Chen--Tshishiku \cite[Thm 2.4]{ChenTshishiku2025} explicitly compute this group for any closed, oriented 3-manifold $M$, where $d$ is easily read off from the prime decomposition of $M$. In particular $d$ depends on the number of prime components that are lens spaces.
\end{remark}

By \cref{prop:generators of kernel of action of MCG on Out(pi1)}, we have a short exact sequence
\begin{equation}\label{eqn:gen Laudenbach seq}
    1 \to \Twist(M) \to \pi_0\Diff^+(M) \to G \to 1,
\end{equation}
where $G \leq \Out(\pi_1(M))$ is defined to be the image of $\rho^+$. See \cite[Proposition 2.2]{Hatcher_Wahl_MCG3Manifold} for conditions which ensure $G = \Out(\pi_1(M))$.  This is the \emph{generalised Laudenbach sequence}.

\begin{remark}
Note that $\pi_0\Diff^+(M)$ is an index $\leq 2$ subgroup of $\pi_0\Diff(M)$. Alternatively we could replace $G$ with $G \times \Z/2$ in \eqref{eqn:gen Laudenbach seq} if $M$ admits an orientation-reversing diffeomorphism, to obtain a short exact sequence
\[ 1 \to \Twist(M) \to \pi_0\Diff(M) \to G \times \Z/2 \to 1.\]
In what follows we will restrict to studying $\pi_0\Diff^+(M)$.
\end{remark}

Let $M_n \coloneqq (S^1 \times S^2)^{\#n} $, the connected sum of $n$ copies of $S^1 \times S^2$. We have $\pi_1(M_n) \cong F_n$, the free group on $n$ letters. Work of Whitehead \cite{WhiteheadSetsElementsFreeGp36, WhiteheadEquivSetsFreeGp36} implies $\rho^+$ is surjective in this case; Laudenbach \cite{Laudenbach1973, Laudenbach1974} originally identified $\ker(\rho^+) = \Twist(M_n)$ and proved that this kernel is generated by the sphere twists about the $n$ core spheres $\{\ast\} \times S^2$ of connected summands. The above short exact sequence in this case is the original \emph{Laudenbach sequence}:
\[1 \to \Twist(M_n) \to \pi_0\Diff^+(M_n) \to \Out(F_n) \to 1.\]

In light of \cref{prop:generators of kernel of action of MCG on Out(pi1)}, we will not survey Laudenbach's proof. An overview can also be found in \cite[Section 2]{BrendleBroaddusPutman2023}. This follows the outline of Laudenbach's proof but simplifies one step: that a basepoint-preserving diffeomorphism of a closed oriented 3-manifold which induces the identity on $\pi_1$ also does so on $\pi_2$ (yet another proof of this statement is provided by Hatcher--Wahl \cite[Appendix]{Hatcher_Wahl_MCG3Manifold}).

\subsubsection{Splitting the generalised Laudenbach sequence}\label{subsubsec:splitting Laudenbach sequence}

Brendle--Broaddus--Putman \cite{BrendleBroaddusPutman2023} show that Laudenbach's sequence splits; moreover, they give an explicit description of the image of the splitting $\Out(F_n) \to \pi_0\Diff^+(M_n)$. Their proof generalises to show the following.

\begin{theorem}
\label{thm: generalised version of BBP23 main thm}
    The mapping class group of any closed, oriented 3-manifold $M$ decomposes as a semi-direct product
    \[\pi_0\Diff^+(M) \cong \Twistns(M) \rtimes H,\]
    where $H$ is a subgroup of $\pi_0\Diff^+(M)$.
\end{theorem}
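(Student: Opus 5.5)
Since $\Twistns(M)$ is a normal subgroup of $\pi_0\Diff^+(M)$ (by \eqref{eqn:twist subgroup is normal computation}, a diffeomorphism carries non-separating spheres to non-separating spheres), it suffices to construct a subgroup $H \leq \pi_0\Diff^+(M)$ with $H \cap \Twistns(M) = 1$ and $H \cdot \Twistns(M) = \pi_0\Diff^+(M)$. Following Brendle--Broaddus--Putman, I would construct $H$ as the kernel of a homomorphism $\Phi \colon \pi_0\Diff^+(M) \to \Twistns(M)$ (or into an abelian group containing it) that restricts to the identity on $\Twistns(M)$. The splitting is then $f \mapsto (\Phi(f), \Phi(f)^{-1} f)$, which gives $\pi_0\Diff^+(M) \cong \Twistns(M) \rtimes \ker \Phi$.

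To build $\Phi$, I would use the action of mapping classes on spin structures (or, more precisely, on framings of $TM$ up to homotopy), as in the proof of \cref{prop:S^1xS^2prop2}. Spin structures on $M$ form an $H^1(M;\Z/2)$-torsor. Since $M$ is closed and oriented, $TM$ is trivial, so fix a framing. Any orientation-preserving diffeomorphism $f$ moves this framing to another framing. On the $2$-skeleton, the difference between the two is a class in $H^1(M;\Z/2)$, which is essentially the difference of spin structures $f^*\mathfrak{s} - \mathfrak{s}$. This defines a crossed homomorphism $c \colon \pi_0\Diff^+(M) \to H^1(M;\Z/2)$ satisfying $c(fg) = c(f) + f\cdot c(g)$. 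Evaluating on a non-separating sphere twist $T_S$, the computation in \cref{prop:S^1xS^2prop2} shows that $c(T_S)$ is the class dual to $S$, namely the class which is nonzero exactly on loops meeting $S$ an odd number of times. Separating twists act trivially on $H^1$ and change no framings on loops. From these facts I would then check two things. First, $\Twistns(M)$ injects via $c$ into $H^1(M;\Z/2)$; this requires knowing $\Twistns(M)\cong(\Z/2)^g$ with basis given by the $g$ Gluck twists, which I would take from \cref{rem: twist subgroup} and Chen--Tshishiku. Second, $c$ restricted to $\Twistns(M)$ is a genuine homomorphism, because twists act trivially on $H^1$.

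The hardest step is that $c$ is only a crossed homomorphism, so its kernel is not a subgroup in general, and it is not obvious that a retraction exists at all. Following BBP, I would instead let $H$ be the stabiliser of a suitably chosen spin structure (or framing) $\mathfrak{s}$ under the action of $\pi_0\Diff^+(M)$. A stabiliser is automatically a subgroup. Since twists act trivially on $H^1$, the orbit of $\mathfrak{s}$ under $\Twistns(M)$ is $\mathfrak{s} + c(\Twistns(M))$, and $\Twistns(M)$ acts freely on it. So $H \cap \Twistns(M) = 1$. For $H\cdot\Twistns(M) = \pi_0\Diff^+(M)$ we need the whole orbit of $\mathfrak{s}$ to lie in $\mathfrak{s} + c(\Twistns(M))$. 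I would arrange this by choosing $\mathfrak{s}$ to be the restriction of the spin structure induced from a framing that extends over the irreducible summands (for instance, one that bounds on each $P_i \setminus \mathring D_i$), and then checking on the generators of \cref{conjecture:MCG-reducible-generators}: slides, transpositions, flips, Gluck twists, and the factor mapping classes.

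The main obstacle is the factor mapping classes and the slides, which can change $\mathfrak{s}$ along loops in the irreducible summands, where $c(\Twistns)$ has no component. If this happens, I would refine the setup: I would restrict the difference class to the quotient $H^1(M;\Z/2)\to H^1(M_g;\Z/2)$, the part coming from the $S^1\times S^2$ summands, via the collapse map $M \to M_g$. I would then take $H$ to be the stabiliser of the induced structure on that quotient, replacing the spin-structure set by its orbit space modulo $H^1$ of the irreducible part.
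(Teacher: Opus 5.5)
The subgroup you end up with is the paper's. The stabiliser of $\mathfrak{s}$ is exactly $\ker c=\ker\mathfrak{T}$, and the kernel of a crossed homomorphism is always a subgroup (it is the stabiliser of the base point of the associated affine action), so that worry is unfounded. The fact $H\cap\Twistns(M)=1$ is also fine.

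The gap is the step you flag yourself. $H\cdot\Twistns(M)=\pi_0\Diff^+(M)$ holds if and only if $\operatorname{im}c\subseteq c(\Twistns(M))$, and neither of your remedies secures this.

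Your first remedy fails on the factor mapping classes. Let $P$ be an irreducible summand and $f\in\pi_0\Diff_{D\cup\partial P}(P)$, extended by the identity. Then $c(f)$ restricted to loops in $P\sm\mathring{D}$ is $f^*(\mathfrak{s}|_P)-\mathfrak{s}|_P$, while every class in $c(\Twistns(M))$ vanishes on such loops. So you need a $\pi_0\Diff^+(P)$-invariant spin structure on $P$, which need not exist. For $P=\Sigma_2\times S^1$, restricting to $\Sigma_2\times\{\ast\}$ would give a $\pi_0\Diff^+(\Sigma_2)$-invariant spin structure on $\Sigma_2$, but the orbits there have sizes $6$ and $10$.

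Your second remedy fails because the part of $H^1(M;\Z/2)$ coming from the irreducible summands is not invariant. Sliding one foot of an $S^1\times S^2$ summand around a loop $\beta\subset P$ acts on $H_1(M;\Z/2)$ by $t\mapsto t+\beta$, hence on $H^1$ by $\alpha\mapsto\alpha+\alpha(\beta)\,t^*$. So neither the projection to $H^1(M_g;\Z/2)$ nor your quotient of the set of spin structures carries a $\pi_0\Diff^+(M)$-action.

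In fact the gap cannot be closed, because the statement fails in this generality. Let $M=P\#(S^1\times S^2)$ with $P=\Sigma_2\times S^1$, obtained from $P\sm(\mathring{D}_1\sqcup\mathring{D}_2)$ by gluing $\partial D_1$ to $\partial D_2$, and set $S_0=\partial D_1$.

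First, $\Twistns(M)\cong\Z/2$ and it is central. Surgery along $S_0$, together with the relation $T_S=T_{S_1}T_{S_2}$ for spheres cobounding a thrice-punctured $S^3$ and irreducibility of $P$, shows every sphere twist is $1$ or $T_{S_0}$. So a splitting amounts to a homomorphism $\chi\colon\pi_0\Diff^+(M)\to\Z/2$ with $\chi(T_{S_0})=1$.

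Second, push $D_1$ around framed loops in $P\sm\mathring{D}_2$. After identifying $\pi_1$ of the frame bundle via a framing $\sigma$, this gives a homomorphism $\Phi\colon\pi_1(P)\times\Z/2\to\pi_0\Diff^+(M)$ with $\Phi(1,1)=T_{S_0}$. For $h\in\Diff^+(P)$ fixing $D_1\cup D_2$ pointwise (extended by the identity over the handle) one has $h\,\Phi(\beta,\epsilon)\,h^{-1}=\Phi(h_*\beta,\epsilon+\mathfrak{T}_\sigma(h)(\beta))$.

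Third, write $\chi\Phi(\beta,\epsilon)=\mu(\beta)+\epsilon$. Conjugation invariance of $\chi$ forces $\mathfrak{T}_\sigma(h)=\mu+h^*\mu$ for all such $h$. That means the spin structure of $\sigma$ changed by $\mu$ is $\pi_0\Diff^+(P)$-invariant, which is impossible as shown above. So no complement exists for this $M$.

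The paper's proof has the same hole. It applies \cref{lem:crossed homomorphisms and split SES} to $\mathfrak{T}$, but $\mathfrak{T}$ takes values in $H^1(M;\Z/2)$, not in $\mathfrak{T}(\Twistns(M))$. The argument goes through when some framing gives $\operatorname{im}\mathfrak{T}\subseteq\mathfrak{T}(\Twistns(M))$. An example is $M_n$, where $\mathfrak{T}(\Twist(M_n))=H^1(M_n;\Z/2)$.
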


We present their proof in this level of generality. The main theorem of \cite{BrendleBroaddusPutman2023} then follows as a corollary of this proof in the case $M = M_n$. 

\begin{corollary}[\cite{BrendleBroaddusPutman2023}, Theorem A]\label{cor-BBP splitting}
    The mapping class group of $M_n$ splits as a semi-direct product
    \[\pi_0\Diff^+(M_n) \cong \Twist(M_n) \rtimes \Out(F_n),\]
    where the image of the splitting $\Out(F_n) \to \pi_0\Diff^+(M_n)$ is the stabiliser of the homotopy class of a trivialisation of the tangent bundle of $M_n$. Moreover, $\Twist(M_n) \cong H^1(M_n;\; \mathbb{Z}/2)$ as a $\pi_0\Diff^+(M_n)$-module.
\end{corollary}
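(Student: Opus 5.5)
The plan is to realise the splitting in \cref{cor-BBP splitting} through the action of $\pi_0\Diff^+(M_n)$ on the set $\mathcal{F}$ of homotopy classes of trivialisations of $TM_n$. Since $M_n$ is parallelisable, fix a trivialisation $\xi$; then $\mathcal{F}$ is a torsor over $[M_n,\SO(3)]$. Using the cell structure of $M_n$ (one $0$-cell, $n$ one-cells, $n$ two-cells, one three-cell) together with $\pi_1\SO(3)=\Z/2$, $\pi_2\SO(3)=0$ and $\pi_3\SO(3)=\Z$, this group is an extension of $H^1(M_n;\Z/2)$ by the $\Z$ coming from the top cell. A diffeomorphism $f$ acts by $f\cdot\zeta := (df)\circ\zeta\circ f^{-1}$, and this descends to isotopy classes. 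Let $H$ be the stabiliser of $[\xi]$. The aim is to show three things: $\Twist(M_n)\cap H=1$, $\Twist(M_n)\cdot H=\pi_0\Diff^+(M_n)$, and the identification of $\Twist(M_n)$ as a module. Given these, $\pi_0\Diff^+(M_n)\cong\Twist(M_n)\rtimes H$, and the Laudenbach sequence $1\to\Twist(M_n)\to\pi_0\Diff^+(M_n)\to\Out(F_n)\to1$ (with $\rho^+$ surjective by Whitehead) restricts to an isomorphism $H\cong\Out(F_n)$.

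\textbf{Step 1: sphere twists and framings.} By Laudenbach, $\Twist(M_n)$ is generated by the twists $T_{S_i}$ about the core spheres $S_i=\{*\}\times S^2$. I would compute $T_{S_i}\cdot\xi$ locally, exactly as in the spin-structure proof of \cref{prop:S^1xS^2prop2}. The twist is supported in $S_i\times I$, and along any loop $\gamma$ it rotates the frame by the generator of $\pi_1\SO(3)$ once for each (algebraic, mod $2$) intersection of $\gamma$ with $S_i$. On $2$-cells the effect is null because $\pi_2\SO(3)=0$. On the top cell the contribution should also vanish, since the twist is the image of a loop in $\SO(3)$ whose clutching is trivial in $\pi_3$; this needs a short check. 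It follows that $T_{S_i}$ shifts $[\xi]$ by the element of $H^1(M_n;\Z/2)$ Poincar\'e dual to $[S_i]$.

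Define $\Phi\colon\Twist(M_n)\to H^1(M_n;\Z/2)$ by $T_S\mapsto\mathrm{PD}[S]$. This map is surjective. It is also injective: a product of twists acting trivially on $[\xi]$ must have trivial dual class, and by the computation above that product then acts trivially on $\mathcal{F}$ in the $H^1$-component. Combining this with McCullough's bound $\Twist(M_n)\cong(\Z/2)^d$ with $d\le n$ gives $\Twist(M_n)\cong(\Z/2)^n\cong H^1(M_n;\Z/2)$. Equivariance follows from \eqref{eqn:twist subgroup is normal computation}: $fT_Sf^{-1}=T_{f(S)}$ and $\mathrm{PD}[f(S)]=(f^{-1})^*\mathrm{PD}[S]$. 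So $\Phi$ is an isomorphism of $\pi_0\Diff^+(M_n)$-modules, and in particular $\Twist(M_n)\cap H=1$.

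\textbf{Step 2: transitivity (the main obstacle).} For arbitrary $f$ we need $f\cdot[\xi]=t\cdot[\xi]$ for some $t\in\Twist(M_n)$; then $t^{-1}f\in H$. Step 1 lets us correct any $H^1$-discrepancy. The hard part is that the $\Z$ top-cell component of the difference class $f\cdot\xi-\xi$ must vanish. I would handle this with a diffeomorphism-natural invariant of framings that detects the $\Z$-component, for example the Hirzebruch-type defect $h(\zeta)=p_1(W;\zeta)-3\sigma(W)$ for a compact $4$-manifold $W$ with $\partial W=M_n$ (or the Kirby--Melvin degree invariant). Changing $\zeta$ on the top cell by $k\in\pi_3\SO(3)$ changes $h$ by a nonzero multiple of $k$. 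By naturality, $h(f\cdot\xi)=h(\xi)$: glue $W$ to $M_n$ using $f$, and note that $f$ is orientation-preserving. The twist shifts computed in Step 1 do not change $h$, since they only alter the framing in its $H^1$-component. Hence the $\Z$-component of $f\cdot\xi-\xi$ is zero, and $f\cdot[\xi]$ lies in the $\Twist$-orbit of $[\xi]$.

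Putting the steps together: $H$ is a complement to the normal subgroup $\Twist(M_n)$. This gives the semidirect product, shows that the image of the splitting $\Out(F_n)\cong H\to\pi_0\Diff^+(M_n)$ is the stabiliser of the framing class, and identifies $\Twist(M_n)\cong H^1(M_n;\Z/2)$ as a module.
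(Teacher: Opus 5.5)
Your proposal is correct. It shares Step 1 with the paper but handles the crucial $\pi_3\SO(3)\cong\Z$ ambiguity by a genuinely different argument.

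\textbf{How the paper argues.} It builds the twisting crossed homomorphism $\mathfrak{T}$ and checks $\mathfrak{T}(T_S)=\mathrm{PD}[S]$; this is exactly your Step 1. It then applies the algebraic splitting lemma for crossed homomorphisms to get $\pi_0\Diff^+(M_n)=\Twist(M_n)\rtimes\ker\mathfrak{T}$. Here $\ker\mathfrak{T}$ is precisely the stabiliser of the spin structure underlying $[\sigma_0]$. To upgrade this to the stabiliser of the framing class itself, the paper notes that the derivative crossed homomorphism, restricted to $\ker\mathfrak{T}\cong\Out(F_n)$, lands in the $\pi_3\SO(3)\cong\Z$ part. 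It must therefore vanish, because $\Out(F_n)$ has torsion abelianisation.

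\textbf{How your route differs.} You take the stabiliser $H$ of $[\xi]$ as the candidate complement. You kill the $\Z$-ambiguity with a $\Diff^+$-invariant integer, the Hirzebruch defect. Its invariance comes from re-identifying $\partial W$ via $f$, and its independence of $W$ from Novikov additivity and the signature theorem.

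\textbf{What each route buys.} Your route imports 4-dimensional input: relative $p_1$, the signature theorem, and the fact that a change by $k\in\pi_3\SO(3)$ shifts $h$ by a nonzero multiple of $k$. The paper needs only a one-line group-theoretic fact. In exchange, your argument never uses anything about $\Out(\pi_1)$. It therefore shows that for every closed oriented $3$-manifold the stabiliser of a framing class equals the stabiliser of its spin structure. In particular, the complement $\ker\mathfrak{T}$ in the paper's general semidirect product theorem always admits the ``stabiliser of a trivialisation'' description.

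\textbf{Three tidy-ups.}
\begin{enumerate}
\item Define $\Phi(t)$ intrinsically, as the shift of the spin structure of $\xi$ under $t$, rather than by its values on generators. It is a homomorphism because twists act trivially on $H^1(M_n;\Z/2)$, hence by translations on spin structures.
\item Your ``injectivity'' sentence does not actually prove injectivity. Injectivity comes from the count: $\Twist(M_n)$ is a quotient of $(\Z/2)^n$ by Laudenbach, and $\Phi$ surjects onto $(\Z/2)^n$.
\item There is no canonical splitting of $[M_n,\SO(3)]\to H^1(M_n;\Z/2)$, so phrase Step 2 as follows. First use a twist to match spin structures; the residual difference then lies in the image of $\pi_3\SO(3)$, which $h$ detects. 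Since $h(T_{S_i}\cdot\xi)=h(\xi)$ already follows from naturality of $h$, the ``short check'' on the top cell in Step 1 is unnecessary.
\end{enumerate}
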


The usefulness of the general case, \cref{thm: generalised version of BBP23 main thm}, may be limited without an explicit description of the subgroup $H \leq \pi_0\Diff^+(M)$ appearing in the statement, analogous to $H = \Out(F_n)$ in the case $M = M_n$. Fully generalising to a splitting of \eqref{eqn:gen Laudenbach seq} seems delicate, since there are (separating) sphere twists that are homotopic but not isotopic to the identity \cite{FriedmanWittHomotopyNotIsotopy86, HendriksApplicationsObstruction77} (note that all separating sphere twists are trivial in $M_n$, so $\Twist(M_n) = \Twistns(M_n)$).

Central to Brendle--Broaddus--Putman's proof is the notion of a \emph{crossed homomorphism}. For two groups $G$ and $H$, with $G$ acting on $H$ on the right by conjugation (denoted by $h \mapsto h^g$ for $g \in G$, $h \in H$), a crossed homomorphism is a set map $\lambda \colon G \to H$ such that
\[\lambda(g_1g_2) = \lambda(g_1)^{g_2}\lambda(g_2).\]
Note that if the $G$-action is trivial, then this is simply a homomorphism. The following lemma is standard (cf.~\cite[Proposition 2.1]{BrownCohomologyofGroups1stEd82}); a proof is given in \cite[Lemma 3.1]{BrendleBroaddusPutman2023}.

\begin{lemma}\label{lem:crossed homomorphisms and split SES}
    Let $A$ be an abelian normal subgroup of a group $G$ with $G$ acting on $A$ on the right by conjugation. Let $Q = {G}/{A}$. Then the short exact sequence
    \[1 \to A \to G \to Q \to 1\]
    splits if and only if there exists a crossed homomorphism $\lambda \colon G \to A$ that restricts to the identity on $A$.

    Moreover, if such a $\lambda$ exists, then we can choose a splitting $Q \to G$ whose image is $\ker(\lambda)$, whence $G = A \rtimes \ker(\lambda)$.
\end{lemma}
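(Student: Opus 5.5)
We prove both directions directly, as in \cite[Proposition 2.1]{BrownCohomologyofGroups1stEd82}. Write $\pi \colon G \to Q$ for the quotient map. Write the group law of $G$ multiplicatively. Use the right action $a^g = g^{-1} a g$, which preserves $A$ since $A$ is normal.

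\emph{Splitting gives a crossed homomorphism.} Suppose $s \colon Q \to G$ is a homomorphic section of $\pi$. Every $g \in G$ then factors uniquely as $g = a \cdot s(\pi(g))$ with $a \in A$. Define $\lambda(g) := g\, s(\pi(g))^{-1} \in A$; this clearly restricts to the identity on $A$. To check the cocycle identity, write $s_i = s(\pi(g_i))$ and use that $s$ is a homomorphism, so $s(\pi(g_1g_2)) = s_1 s_2$. Then
\[\lambda(g_1g_2) = g_1 g_2 s_2^{-1} s_1^{-1} = g_1 \lambda(g_2) s_1^{-1} = \bigl(g_1 s_1^{-1}\bigr)\bigl(s_1 \lambda(g_2) s_1^{-1}\bigr).\]
Rearranging this into the required form $\lambda(g_1)^{g_2}\lambda(g_2)$ is the delicate point. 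The issue is whether the conjugation should be by $g_2$ or by $g_1$, and whether it should be on the left or the right.

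\emph{Conventions.} The cleanest fix is to take $\lambda(g) := s(\pi(g))^{-1} g$ instead. Then
\[\lambda(g_1 g_2) = s_2^{-1} s_1^{-1} g_1 g_2 = s_2^{-1} \lambda(g_1) g_2 = \bigl(s_2^{-1}\lambda(g_1) s_2\bigr)\bigl(s_2^{-1} g_2\bigr).\]
Since $g_2 = s_2 \lambda(g_2)$, we have $g_2^{-1} \lambda(g_1) g_2 = \lambda(g_2)^{-1} \bigl(s_2^{-1}\lambda(g_1)s_2\bigr)\lambda(g_2)$. Because $A$ is abelian, this equals $s_2^{-1}\lambda(g_1)s_2$. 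Hence $\lambda(g_1g_2) = \lambda(g_1)^{g_2}\lambda(g_2)$, as required. Moreover $\ker \lambda = \{g : g = s(\pi(g))\} = s(Q)$.

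\emph{Crossed homomorphism gives a splitting.} Conversely, let $\lambda$ be a crossed homomorphism restricting to the identity on $A$.

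First, $K := \ker\lambda$ is a subgroup. If $\lambda(g_1) = \lambda(g_2) = 1$, the cocycle identity gives $\lambda(g_1g_2) = 1^{g_2}\cdot 1 = 1$. Taking $g_1 = g_2 = 1$ gives $\lambda(1) = 1$. Then $1 = \lambda(g^{-1}g) = \lambda(g^{-1})^{g}\lambda(g)$, so $\lambda(g^{-1}) = 1$ whenever $\lambda(g) = 1$.

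Second, $K \cap A = 1$, because $\lambda|_A = \id$.

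Third, $AK = G$. Given $g$, set $k := \lambda(g)^{-1} g$. The identity $\lambda(ag) = a^{g}\lambda(g)$ for $a \in A$ gives
\[\lambda(k) = \bigl(\lambda(g)^{-1}\bigr)^{g}\lambda(g) = g^{-1}\lambda(g)^{-1} g \,\lambda(g).\]
This is not obviously $1$. This is the step I expect to need care. Try instead $k := g\lambda(g)^{-1}$. The cocycle identity gives
\[\lambda(k) = \lambda(g)^{\lambda(g)^{-1}} \lambda\bigl(\lambda(g)^{-1}\bigr) = \lambda(g)\lambda(g)^{-1} = 1,\]
using that $A$ is abelian, so conjugation by elements of $A$ acts trivially on $A$. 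Then $g = k\lambda(g) \in KA = AK$.

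Hence $\pi|_K \colon K \to Q$ is an isomorphism, and its inverse is a splitting $Q \to G$ with image $\ker\lambda$. This gives $G = A \rtimes \ker\lambda$. The forward direction above also shows that every splitting arises this way.
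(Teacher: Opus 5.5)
Your argument is correct and is the standard one that the paper defers to; the paper gives no proof of its own, citing \cite[Proposition 2.1]{BrownCohomologyofGroups1stEd82} and \cite[Lemma 3.1]{BrendleBroaddusPutman2023}. In that argument, a section $s$ gives $\lambda(g)=s(\pi(g))^{-1}g$, which satisfies the right-action cocycle identity with $\ker\lambda=s(Q)$, and conversely $g=\bigl(g\lambda(g)^{-1}\bigr)\lambda(g)$ exhibits $\ker\lambda$ as a complement to $A$. In a final write-up, delete the two abandoned attempts: $g\,s(\pi(g))^{-1}$ is a crossed homomorphism for the left conjugation action rather than the right one, and $\lambda(g)^{-1}g$ need not lie in $\ker\lambda$.
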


Let $M$ be a closed, oriented 3-manifold. We will construct the \emph{twisting crossed homomorphism}
\[\mathfrak{T} \colon \pi_0\Diff^+(M) \to \Twistns(M).\]

Let $\mathrm{Fr}(TM)$ denote the principle $\GL^+(3,\R)$-bundle of oriented frames of $TM$, whose elements are pairs $(p, \tau)$ for $p \in M$ a point and $\tau \colon \mathbb{R}^3 \to T_pM$ an orientation-preserving linear isomorphism. The right action of $\GL^+(3,\R)$ on $\mathrm{Fr}(TM)$ is by pre-composition.

An \emph{oriented trivialisation} of the tangent bundle $TM$ (which is trivial, since $M$ is 3-dimensional and oriented) is a continuous section $\sigma \colon M \to \mathrm{Fr}(TM)$. Let $\mathrm{Triv}(M)$ be the set of all such sections. Let $C(M, \GL^+(3,\R))$ denote the topological group of continuous maps $M \to \GL^+(3,\R)$, which acts simply transitively on $\mathrm{Triv}(M)$ on the right via
\begin{eqnarray*}
    \mathrm{Triv}(M) \times C(M, \GL^+(3,\R)) & \to & \mathrm{Triv}(M) \\
    (\sigma, \phi) & \mapsto & \big( p \mapsto \sigma(p)\cdot\phi(p)\big),
\end{eqnarray*}
where $\sigma(p) \cdot \phi(p)$ denotes the composition of $\phi(p) \in \GL^+(3,\R)$ with the orientation-preserving linear isomorphism $\sigma(p) \colon \mathbb{R}^3 \to T_pM$ (we use a dot to denote this composition, evoking products of matrices).

The derivative of any $f \in \mathrm{Diff}^+(M)$ has an induced map
\begin{eqnarray*}
    ({\rm D}f)_* \colon \mathrm{Fr}(TM) & \to & \mathrm{Fr}(TM) \\
    \big(\tau \colon \mathbb{R}^3 \to T_{f(p)}M\big) & \mapsto & \big(({\rm D}_pf) \circ \tau \colon \mathbb{R}^3 \to T_{f(p)}M \big).
\end{eqnarray*}
Using this, we define two actions of $\mathrm{Diff}^+(M)$:
\begin{itemize}
    \item a right action on $\mathrm{Triv}(M)$ by
    \begin{eqnarray*}
        \mathrm{Triv}(M) \times \mathrm{Diff}^+(M) & \to & \mathrm{Triv}(M) \\
        (\sigma, f) & \mapsto & \sigma^f \coloneqq \big({\rm D}f^{-1}\big)_* \circ \sigma \circ f;
    \end{eqnarray*}
    \item a right action on $C(M, \GL^+(3,\R))$ by
    \begin{eqnarray*}
        C(M, \GL^+(3,\R)) \times \mathrm{Diff}^+(M) & \to & C(M, \GL^+(3,\R)) \\
        (\phi, f) & \mapsto & \phi^f \coloneqq \phi \circ f.
    \end{eqnarray*}
\end{itemize}
One may verify that these two actions are related to the action of $C(M, \GL^+(3,\R))$ on $\mathrm{Triv}(M)$ by the formula
\[(\sigma \cdot \phi)^f = \sigma^f \cdot \phi^f.\]

Now fix an oriented trivialisation $\sigma_0 \in \mathrm{Triv}(M)$. Since $C(M, \GL^+(3,\R))$ acts on $\mathrm{Triv}(M)$ simply transitively, we have that for any $f \in \mathrm{Diff}^+(M)$ there exists a unique $\phi_f \in C(M, \GL^+(3,\R))$ such that
\[\sigma_0^f = \sigma_0 \cdot \phi_f.\]
Define
\begin{eqnarray*}
    \mathcal{D} \colon \mathrm{Diff}^+(M) & \to & C(M, \GL^+(3,\R)) \\
    f & \mapsto & \phi_f^{-1},
\end{eqnarray*}
which can be verified to be a crossed homomorphism by a short computation.

We now pass to homotopy, adopting the following notation.
\begin{itemize}
    \item for $f \in \Diff^+(M)$, let $[f] \in \pi_0\Diff^+(M)$ denote its isotopy class;
    \item for $\sigma \in \mathrm{Triv}(M)$, let $[\sigma]$ denote its homotopy class and denote the set of these by $\mathrm{HTriv}(M)$;
    \item for $\phi \in C(M, \mathrm{GL}^+_3(M))$, let $[\phi]$ denote its homotopy class in $\left[M, \mathrm{GL}^+_3(M)\right]$; this set inherits a group structure from $C(M, \mathrm{GL}^+_3(M))$.
\end{itemize}
Maintaining the same notation for the above actions after passing to homotopy, we have the relationship
\[([\sigma] \cdot [\phi])^{[f]} = [\sigma]^{[f]} \cdot [\phi]^{[f]}\]
for each $f \in \pi_0\Diff^+(M)$, $[\sigma] \in \mathrm{HTriv}(M)$, and $[\phi] \in \left[M, \mathrm{GL}^+_3(M)\right]$. The crossed homomorphism $\mathcal{D}$ therefore descends to a crossed homomorphism
\[\mathfrak{D} \colon \pi_0\Diff^+(M) \to \left[M, \GL^+(3,\R)\right],\]
which we call the \emph{derivative crossed homomorphism}.

Now, we have $\pi_1(\GL^+(3,\R)) \cong \mathbb{Z}/2$, so composing $\mathfrak{D}$ with the $\pi_1$-functor yields the \emph{twisting crossed homomorphism}
\[\mathfrak{T} \colon \pi_0\Diff^+(M) \to \mathrm{Hom}(\pi_1(M), \mathbb{Z}/2) = H^1(M; \mathbb{Z}/2).\]

Since the subgroup $\Twist(M) \leq \pi_0\Diff^+(M)$ acts trivially on $\pi_1(M)$, the reader may check (from the definition of crossed homomorphism) that the restriction of $\mathfrak{T}$ to $\Twist(M)$ is a \textit{bona fide} group homomorphism.

\begin{lemma}\label{lem:twisting crossed homo on sphere twists}
    Let $S$ be an embedded 2-sphere in $M$. Then $\mathfrak{T}(T_S)$ is the Poincar\'e dual to $[S] \in H_2(M; \mathbb{Z}/2)$.
\end{lemma}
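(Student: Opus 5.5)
The plan is to compute $\mathfrak{T}(T_S)$ directly from the definitions: evaluate it on an arbitrary loop $\gamma$ in $M$, and compare the answer with the mod~2 intersection number $\gamma\cdot S$, which is exactly how the Poincar\'e dual of $[S]\in H_2(M;\Z/2)$ pairs with $[\gamma]\in H_1(M;\Z/2)$.

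First we unwind $\mathfrak{T}$. Choose a representative $f$ of $T_S$ supported in a bicollar $S\times I$, as in \cref{def:sphere twist}. Since $f$ fixes $\pi_1(M)$ (indeed $f$ is the identity off $S\times I$), the map $\phi_f\colon M\to \GL^+(3,\R)$ is defined by $\sigma_0^f=\sigma_0\cdot\phi_f$. Outside $S\times I$ we have $f=\id$, so $\phi_f\equiv \id$ there. The class $\mathfrak{T}(T_S)\in\mathrm{Hom}(\pi_1(M),\Z/2)$ sends a loop $\gamma$ to the class of the loop $\phi_f^{-1}\circ\gamma$ in $\pi_1(\GL^+(3,\R))\cong\Z/2$. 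Since $\mathfrak{T}$ restricted to $\Twist(M)$ is a homomorphism and only homotopy classes matter, we may work up to homotopy throughout.

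Next we choose the trivialisation conveniently near $S$. Any two trivialisations differ by some $\psi\colon M\to\GL^+(3,\R)$, and the formula $(\sigma\cdot\psi)^f=\sigma^f\cdot\psi^f$ together with $\psi^f=\psi\circ f\simeq\psi$ shows that changing $\sigma_0$ conjugates $\phi_f$ pointwise, up to homotopy. This does not change the induced map on $\pi_1$ into the abelian group $\Z/2$. So we may take $\sigma_0$ on $S\times I\subseteq\R^3\times I$ to be the restriction of the standard frame of $\R^3$ (after identifying $S$ with $S^2\subseteq \R^3$) together with the $I$-direction, in some homotopic form. With this choice, on the slice $S\times\{t\}$ the diffeomorphism is the rotation $\ell(t)\in \SO(3)$, and its derivative in the standard frame is the constant matrix $\ell(t)$, up to correction terms coming from the $t$-dependence. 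Hence, up to homotopy, $\phi_f(s,t)=\ell(t)^{\pm1}$ composed with a matrix relating the $\partial_t$ component.

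The main obstacle is controlling that $\partial_t$-shear term. The plan is to homotope it away by straight-line scaling of the off-diagonal block, since it is a unipotent upper-triangular correction and the unipotent matrices form a contractible set. After this, $\phi_f$ restricted to an arc crossing $S\times I$ once traces the loop $\ell^{\pm1}$, which is the generator of $\pi_1(\SO(3))\cong\pi_1(\GL^+(3,\R))$. Away from the collar the map is constant at the identity.

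Finally, put a loop $\gamma$ transverse to $S$. Each crossing of $S\times I$ contributes the generator of $\Z/2$, and the rest of $\gamma$ contributes nothing. So $\mathfrak{T}(T_S)(\gamma)=\#(\gamma\cap S)\bmod 2=\langle PD[S],[\gamma]\rangle$, which proves the lemma. As a consistency check, if $S$ is separating then $[S]=0$ and both sides vanish.
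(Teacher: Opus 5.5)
The survey does not prove this lemma itself; it quotes it from Brendle--Broaddus--Putman. Your argument is correct: it is the natural direct computation. After a homotopy, $\phi_f^{-1}$ becomes the collapse map $M\to I/\partial I\cong S^1$ onto the collar coordinate, followed by the loop $\ell\colon S^1\to\SO(3)$. So each transverse crossing of $S$ contributes the generator of $\pi_1(\SO(3))\cong\Z/2$.

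Three points should be tightened.

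First, the local frame. Model the collar as the shell $\{x\in\R^3 : 1\le |x|\le 2\}$ with $t=|x|-1$, and use the standard frame of $\R^3$. As written, ``the standard frame of $\R^3$ together with the $I$-direction'' consists of four vectors.

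Second, the shear term. With $f(x)=\ell(t)x$ one gets $Df_x=\ell(t)(I+N_x)$, where $N_x=\ell(t)^{-1}\ell'(t)\,x\,x^{T}/|x|$. Since $\ell^{-1}\ell'$ is skew, $N_x^2=0$. This makes your straight-line removal of the shear term explicit, and the homotopy is stationary near $\partial(S\times I)$ because $\ell'$ vanishes there.

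Third, two facts you use without proof both follow from $[S^2,\SO(3)]=\pi_2(\SO(3))=0$:
\begin{itemize}
\item that some global oriented trivialisation restricts to this frame on the collar;
\item your claim $\psi\circ f\simeq\psi$. For this, homotope $\psi$ to be constant on the collar, where $f$ is supported.
\end{itemize}
Alternatively, for independence of $\sigma_0$ it suffices that $(\psi\circ f)_*=\psi_*\circ f_*=\psi_*$ on $\pi_1$. This holds because $T_S$ acts trivially on $\pi_1(M)$ and $\pi_1(\GL^+(3,\R))$ is abelian.
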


Now, choosing a generating set of $\Twistns(M)$ such that the corresponding spheres are linearly independent in $H_2(M; \mathbb{Z}/2)$, \cref{lem:twisting crossed homo on sphere twists} implies the following.

\begin{corollary}\label{cor:twisting crossed homo is an iso on Twistns}
    The twisting crossed homomorphism $\mathfrak{T}$ restricts to an isomorphism of the subgroup $\Twistns(M) \leq \pi_0\Diff^+(M)$ onto its image.
\end{corollary}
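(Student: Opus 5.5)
The plan is to show that $\mathfrak{T}|_{\Twistns(M)}$ is injective. We already know it is a homomorphism, since $\Twist(M)$ acts trivially on $\pi_1(M)$. So it suffices to exhibit a generating set of $\Twistns(M)$ whose images under $\mathfrak{T}$ are linearly independent over $\Z/2$, and to check that the group is elementary abelian of the right rank.

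First, recall from \cref{rem: twist subgroup} and the discussion before it that $\Twist(M)$ is abelian and every sphere twist has order at most two: $T_S^2$ is the twist by the square of the generator of $\pi_1(\SO(3))\cong\Z/2$, hence trivial. Thus $\Twistns(M)$ is an $\mathbb{F}_2$-vector space, spanned by the classes $T_S$ for $S$ ranging over non-separating spheres.

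Next, reduce to a finite spanning set using homology. By \cref{lem:twisting crossed homo on sphere twists}, $\mathfrak{T}(T_S)=\mathrm{PD}[S]$, and a non-separating sphere has $[S]\neq 0$ in $H_2(M;\Z/2)$. I would show that the span of the non-separating sphere classes in $H_2(M;\Z/2)$ is exactly the subspace spanned by the core spheres $S_1,\dots,S_g$ of the $S^1\times S^2$ summands. These have $[S_j]$ linearly independent, as witnessed by dual circles meeting $S_j$ once. The key claim is that every twist $T_S$ about a non-separating sphere equals a product of the $T_{S_j}$ and twists about separating spheres. If we only know that $\Twistns(M)$ is generated by the $T_{S_j}$ modulo separating twists, we get a problem: separating twists lie in $\ker\mathfrak{T}$, since their classes vanish in homology, but they need not lie in $\Twistns(M)$.

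The cleaner route is therefore to choose a basis directly. Pick non-separating spheres $S^{(1)},\dots,S^{(r)}$ such that the $T_{S^{(k)}}$ form an $\mathbb{F}_2$-basis of $\Twistns(M)$. Then argue that one can choose them with $[S^{(k)}]$ linearly independent. If some relation $\sum_{k\in K}[S^{(k)}]=0$ holds, use the fact (Laudenbach-type homotopy-implies-isotopy plus cut-and-paste/sphere surgery) that $\prod_{k\in K}T_{S^{(k)}}$ then equals a product of separating twists. Since this element lies in $\Twistns(M)$ and is a nontrivial basis combination, we would want to exhibit it as trivial or reduce the basis. This is the step I expect to be the main obstacle: controlling how twists about homologous non-separating spheres differ. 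I would try first to show directly that two non-separating spheres that are $\Z/2$-homologous have twists differing by a product of separating twists, and that such a product, when it lies in $\Twistns$, is already expressible through fewer non-separating generators, so that the basis can be pruned.

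Granting a basis with homologically independent spheres, $\mathfrak{T}$ sends it to the independent vectors $\mathrm{PD}[S^{(k)}]$. Hence $\mathfrak{T}|_{\Twistns(M)}$ is injective, and so it is an isomorphism onto its image.
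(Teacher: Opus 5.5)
\textbf{There is a genuine gap.} Your strategy is the same as the paper's: pick generators of $\Twistns(M)$ whose spheres are linearly independent in $H_2(M;\Z/2)$, then apply $\mathfrak{T}(T_S)=\mathrm{PD}[S]$. The paper's proof consists of exactly this assertion. You, however, never produce such generators; you grant them. Given the lemma, granting them is granting the corollary. For a basis $\{T_{S^{(k)}}\}$ of the $\mathbb{F}_2$-vector space $\Twistns(M)$, the homomorphism $\mathfrak{T}|_{\Twistns(M)}$ is injective if and only if the classes $[S^{(k)}]$ are independent.

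Your pruning step cannot close the gap. Suppose $\sum_{k\in K}[S^{(k)}]=0$ with $K\neq\emptyset$. Then $\prod_{k\in K}T_{S^{(k)}}$ is a nonzero combination of basis vectors, hence a nontrivial element of $\Twistns(M)\cap\ker\mathfrak{T}$. It cannot be shown trivial, and no change of basis removes it: injectivity simply fails.

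\textbf{The missing step is also false in general.} Your worry that separating twists ``need not lie in $\Twistns(M)$'' is backwards: they do lie in it. Let $S$ be a non-separating sphere and $\Sigma$ a separating sphere disjoint from $S$. Then $S'=S\#_b\Sigma$ is non-separating with $[S']=[S]$, and $S,S',\Sigma$ cobound a thrice-punctured $S^3$. Rotating that region by $2\pi$ about an axis through the three punctures, and undoing the rotation in collars, gives $T_ST_{S'}T_\Sigma=1$. Hence $T_\Sigma=T_ST_{S'}\in\Twistns(M)\cap\ker\mathfrak{T}$.

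So once $M$ has an $S^1\times S^2$ summand, $\mathfrak{T}|_{\Twistns(M)}$ can only be injective if all such separating twists are trivial. If there is no such summand, there are no non-separating spheres and the statement is vacuous.
\begin{itemize}
\item For $M_n$ all separating twists are trivial and $\Twist(M_n)$ is generated by the core-sphere twists, whose classes are independent. So the Brendle--Broaddus--Putman case goes through.
\item It fails in general. Take $M=P\#Q\#(S^1\times S^2)$ with $P,Q$ closed hyperbolic. According to the computation of $\Twist(M)$ by McCullough and Chen--Tshishiku (see \cref{rem: twist subgroup}), $\Twist(M)\cong(\Z/2)^2$ here. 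So the twist about the sphere cutting off $P$ is nontrivial. Yet $\Twistns(M)=\Twist(M)$ and $\mathfrak{T}(\Twist(M))\cong\Z/2$, because the only spherical class is that of the core sphere.
\end{itemize}

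No argument along your lines, or the paper's, can prove the statement as written. An extra hypothesis is needed, for instance that all separating sphere twists in $M$ are trivial.
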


Since $\mathfrak{T}$ is a crossed homomorphism, the isomorphism in \cref{cor:twisting crossed homo is an iso on Twistns} is actually one of $\pi_0\Diff^+(M)$-modules, where $\pi_0\Diff^+(M)$ acts on its normal subgroup $\Twistns(M)$ by conjugation.

We now have all the ingredients to deduce the claimed splitting. Put $Q \coloneqq {\pi_0\Diff^+(M)}/{\Twistns(M)}$ and apply \cref{lem:crossed homomorphisms and split SES} to the twisting crossed homomorphism $\mathfrak{T}$ to deduce that the short exact sequence
\[1 \to \Twistns(M) \to \pi_0\Diff^+(M) \to Q \to 1\]
splits, and moreover that we can choose a splitting $Q \to \pi_0\Diff^+(M)$ whose image is $\ker(\mathfrak{T})$; now
\[\pi_0\Diff^+(M) \cong \Twistns(M) \rtimes \ker(\mathfrak{T}),\]
as required.

\subsubsection{Explicit description of splitting in the case $M = M_n$}\label{subsubsec:geom description of splitting when M = M_n}

Specialise to the case $M = M_n$. Since $\Twist(M_n)$ is generated by the twists around the core spheres of the connected summands, we have just proved that Laudenbach's sequence splits:
\[\pi_0\Diff^+(M_n) = \Twist(M_n) \rtimes \Out(F_n).\]
In addition, it follows from the proof that $\Twist(M_n) \cong H^1(M_n; \mathbb{Z}/2)$ as $\pi_0\Diff^+(M_n)$-modules.

In this case, we can obtain a concrete description of the splitting. Fix an oriented trivialisation $\sigma_0 \in \mathrm{Triv}(M_n)$, and let $\mathfrak{T}$ be the associated twisting crossed homomorphism. By construction, any element of $\pi_0\Diff^+(M_n)$ which stabilises the homotopy class $[\sigma_0]$ is contained in $\ker(\mathfrak{T})$. It turns out that the converse is also true: $\ker(\mathfrak{T})$ fixes $[\sigma_0]$. To prove this, one can use the fact that the abelianisation of $\Out(\pi_1(M_n)) = \Out(F_n)$ is torsion, and conclude that the restriction of the derivative crossed homomorphism
\[\mathfrak{D} \colon \pi_0\Diff^+(M_n) \to \left[M_n, \GL^+(3,\R)\right]\]
to $\ker(\mathfrak{T})$ is trivial.

We can thus obtain an explicit description of the image of the splitting $\Out(F_n) \to \pi_0\Diff^+(M_n)$: for each $\theta$ in $\Out(F_n)$ there is a unique preimage that lies in the stabiliser of $[\sigma_0]$.

\section{Finiteness properties 
}\label{section:finiteness-properties}

We survey some finiteness properties which mapping class groups of 3-manifolds are known to enjoy.

\subsection{Survey of finiteness properties}
Throughout the subsection 3-manifolds are assumed to be compact and orientable. Nonempty boundary is permitted.

\begin{definition}\label{def:type FL, duality group, type F}
    We say that a group $G$ is:
    \begin{itemize}
        \item of \emph{type FL} if there is a finite-length resolution of $\mathbb{Z}$ as a $\mathbb{Z}G$-module by free finitely-generated $\mathbb{Z}G$-modules;
        \item a ($\mathbb{Z}$-)\emph{duality group} if there exists a (right) $\mathbb{Z}G$-module $D$ and $n \in \mathbb{Z}_{\geq 0}$ such that for all $k$ and all $\mathbb{Z}G$-modules $M$, we have natural isomorphisms
        \[H^k(G;M) \cong H_{n-k}(G;D \otimes_{\mathbb{Z}} M)\]
        (in this case say that the \emph{dimension} of $G$ is $n$);
        \item \emph{geometrically finite} (or \emph{of type F}) if there exists an aspherical finite CW-complex with fundamental group $G$, i.e.~there is a finite $K(G, 1)$-complex.
    \end{itemize}
    For any of these properties $P$, we say that $G$ is \emph{virtually P} if it has a finite-index subgroup which is $P$ (and we say $G$ is of \emph{type VFL} if it is virtually of type \emph{FL}).
\end{definition}

If a group $G$ is finitely presented, then we can give useful topological interpretations of being of type \emph{FL} or a duality group.

\begin{proposition}\label{prop:type FL and duality topologically for G f.p.}
\leavevmode
    \begin{enumerate}[(i)]
        \item A group $G$ is geometrically finite if and only if it is finitely presented and of type \emph{FL}.
        \item Suppose $G$ is finitely presented. Let $X$ be a compact $m$-manifold with nonempty boundary which is a $K(G, 1)$, and let $\widetilde{X}$ be its universal cover. Then $G$ is a duality group if and only if for some $q$ we have (reduced) homology $H_i(\partial \widetilde{X}) = 0$ for all $i \neq q$ and $H_q(\partial \widetilde{X})$ is torsion-free. In this case, the dimension of $G$ is $m-q-1$.
    \end{enumerate}
\end{proposition}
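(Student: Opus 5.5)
For part (i), I would argue both implications through the cellular chain complex of a model of $K(G,1)$. Suppose first that $G$ is geometrically finite, and let $X$ be a finite aspherical CW-complex with $\pi_1(X)\cong G$. Then $G$ is finitely presented, because the $2$-skeleton of $X$ is a finite presentation complex for $G$. The cellular chain complex $C_*(\widetilde X)$ is a resolution of $\Z$ by free $\Z G$-modules, since $\widetilde X$ is contractible. Each term is finitely generated, because $X$ has finitely many cells, and the resolution has finite length, because $X$ is finite dimensional. So $G$ is of type FL.

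For the converse, I would start from a finite presentation complex $Y$ of $G$. I would then kill higher homotopy by attaching finitely many cells in each dimension, up to some dimension $n$. The key step is to use the projective/free finite-length resolution, together with Schanuel's lemma, to show two things. First, at the top stage $\pi_n$ of the $n$-skeleton (which equals $H_n$ of the universal cover) is a finitely generated projective $\Z G$-module. Second, it is in fact stably free, because type FL gives an Euler characteristic argument.

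Adding wedge summands of $n$-spheres makes this module free. Attaching finitely many $(n+1)$-cells along a basis then gives a finite $K(G,1)$. This is the standard Eilenberg--Ganea/Wall argument. The main obstacle is this stably-free step and the low-dimensional case $n=2$, which needs care. Here I would cite Brown's book rather than redo it.

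For part (ii), $G$ is of type FL, since the compact manifold $X$ is a finite $K(G,1)$. By Bieri--Eckmann, $G$ is a duality group of dimension $n$ exactly when $H^k(G;\Z G)=0$ for $k\ne n$ and $H^n(G;\Z G)$ is torsion-free; the dualising module is $D=H^n(G;\Z G)$. Since $X$ is compact, I would compute $H^k(G;\Z G)\cong H^k_c(\widetilde X)$. Lefschetz duality for the noncompact $m$-manifold $\widetilde X$ with boundary then gives $H^k_c(\widetilde X)\cong H_{m-k}(\widetilde X,\partial\widetilde X)$.

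Because $\widetilde X$ is contractible, the long exact sequence of the pair gives $H_{m-k}(\widetilde X,\partial\widetilde X)\cong \widetilde H_{m-k-1}(\partial\widetilde X)$, in reduced homology. So the vanishing condition says that reduced $H_i(\partial\widetilde X)$ vanishes except in the single degree $q=m-n-1$, where it must be torsion-free. This gives dimension $n=m-q-1$.

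The only subtle point in (ii) is orientation. If $X$ is non-orientable, Lefschetz duality needs twisted coefficients. However, $\widetilde X$ is simply connected and hence orientable, so the untwisted duality on $\widetilde X$ suffices.
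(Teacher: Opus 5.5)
Your proposal is correct and is essentially the argument behind the paper's proof, which simply cites Serre (Prop.~10) for (i) and Bieri--Eckmann for (ii). Your Wall/Eilenberg--Ganea construction, in which Schanuel's lemma makes the top module $\pi_n$ stably free and a wedge of $n$-spheres makes it free, is the standard argument behind Serre's result. Your chain $H^k(G;\Z G)\cong H^k_c(\widetilde X)\cong H_{m-k}(\widetilde X,\partial\widetilde X)\cong \widetilde H_{m-k-1}(\partial\widetilde X)$, combined with the Bieri--Eckmann criterion for groups of type FP, is exactly their proof. The only quibble is that the case $n=2$ needs no special care: the wedge trick yields a model of dimension $n+1$, and you never need one of dimension exactly $\mathrm{cd}\,G$.
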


\begin{proof}
The first item is \cite[Prop.~10]{SerreCohomologieGroupesDiscrets71}, while the second was proved in \cite{BieriEckmannDualityGroups73}.
\end{proof}

The above finiteness properties for mapping class groups of 2-manifolds were established by the mid-1980s: McCool \cite{McCoolFPSubgroupsofAutFn75} showed finite presentability, Harvey \cite{HarveyGeometricStructureSurfaceMCGs79, HarveyBoundaryStructureModularGp81} showed they were of type \emph{VFL}, and Harer \cite{HarerVCDofMCGSurface86} proved they are virtual duality groups (and computed their \emph{virtual cohomological dimension} (cf. \cite[Ch. VIII, Section 11]{BrownCohomologyofGroups1stEd82}).

Moving to mapping class groups of 3-manifolds, we have the following result of McCullough.

\begin{theorem}[\cite{McCullough1991}]\label{thm:Haken MCGs are virtually geom finite}
    Let $M$ be a compact, orientable, irreducible, Haken 3-manifold. Then $\pi_0\Diff(M)$ is finitely presented and of type VFL $($and hence is virtually geometrically finite$)$.

    If additionally $M$ is closed or boundary-irreducible, then $\pi_0\Diff(M)$ is a virtual duality group.
\end{theorem}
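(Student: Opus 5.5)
We outline how we would approach McCullough's theorem, using the tools assembled earlier. The first step is to reduce to the boundary-irreducible case. If $\partial M$ is compressible, we cut $M$ along a maximal system of compressing discs (a characteristic compression body). A standard argument (McCullough--Miller) shows that $\pi_0\Diff(M)$ is then an extension built from the mapping class group of the compression body and the mapping class groups of the boundary-irreducible pieces. Finite presentability and type VFL are both preserved under extensions and under passage to finite-index overgroups. Hence it suffices to treat $M$ irreducible and boundary irreducible, provided the compression-body part can also be handled. We would handle that part by relating it to automorphisms of free groups and surface mapping class groups, which are known to be VFL and finitely presented by McCool and Harvey.

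In the boundary-irreducible case, \cref{thm:Waldhausen} identifies $\pi_0\Diff(M)$ with $\Aut_\partial(\pi_1(M))/\Inn(\pi_1(M))$, so the problem becomes algebraic. Next we use the characteristic submanifold (JSJ) of Jaco--Shalen--Johannson, together with the analysis of \cref{sec: JSJ}. A finite-index subgroup of $\pi_0\Diff(M)$ preserves each JSJ piece and each JSJ torus or annulus. After passing to the finite-index subgroup mapping to $\Aut(\overline G)$ trivially (\cref{prop:aut-G-bar}), we obtain the exact sequences of \cref{prop:diff-coker} and \cref{prop:mcg-lim}. These present the group as an extension of a subgroup of $\lim \pi_0 D$ by a quotient of the free abelian group $\bigoplus_i \pi_1\Diff^+(T_i)\cong \Z^{2m}$.

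The pieces then contribute as follows. Simple pieces have finite mapping class groups by \cref{thm:Johannson-finiteness-Haken}. Seifert fibred and $I$-bundle pieces have mapping class groups that are, up to finite index and extensions by finitely generated abelian groups, mapping class groups of 2-orbifolds and surfaces (\cref{thm:computing-Out-pi1-Haen-SFS}). These in turn are finitely presented, VFL and virtual duality groups by Harvey and Harer. The limit $\lim\pi_0 D$ is then a fibre product of such groups over the groups $\pi_0\Diff^+(T^2)\cong \GL(2,\Z)$ or finite groups, with the gluing constraint along each torus. Extensions of VFL groups by VFL groups are VFL, so once finite presentability of the fibre product is established, the first statement follows. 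For the duality statement we use that extensions of duality groups are duality groups (Bieri--Eckmann), applied to a torsion-free finite-index subgroup.

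We expect the main obstacle to be the fibre product step. Fibre products of finitely presented groups need not be finitely presented, and the condition that restrictions to $T_i$ agree in $\GL(2,\Z)$ is exactly such a fibre product. Our first attempt would exploit that, on each Seifert piece, the image in $\pi_0\Diff(T_i)$ is very restricted: the fibre slope must be preserved, so the image lies in a virtually abelian group (upper triangular matrices). On the other side of a torus, the image from a simple piece is finite. This should let us pass to a finite-index subgroup where the restriction maps to the tori have virtually abelian images. The fibre product then becomes a virtually direct product up to finitely generated abelian kernels, and finite presentability and VFL follow directly.
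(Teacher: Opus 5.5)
The paper gives no proof of this theorem beyond the citation to McCullough, so your sketch has to stand on its own. Its overall architecture is the right one: the characteristic compression body, then the characteristic submanifold with Seifert fibred, $I$-bundle and simple pieces. But several load-bearing steps fail as stated.

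\textbf{The compression-body part.} It cannot be handled by ``relating it to $\Out(F_n)$ and surface mapping class groups''. For a handlebody $V_g$ with $g\ge 2$:
\begin{enumerate}
\item The surjection $\pi_0\Diff(V_g)\to\Out(F_g)$ has as kernel the twist group generated by disc twists, which is not finitely generated (Luft, McCullough).
\item $\pi_0\Diff(V_g)$ embeds in $\mathrm{Mod}(\partial V_g)$ with infinite index, and finite presentability and VFL are not inherited by such subgroups.
\end{enumerate}
A genuinely new idea is needed here. The one McCullough uses is that the disc complex is contractible. The mapping class group acts on it with finitely many orbits of simplices, and the stabilisers are understood inductively by cutting along discs, so a Brown-type criterion applies.

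\textbf{Virtual torsion-freeness.} The claim ``extensions of VFL groups by VFL groups are VFL'' is false, because virtual torsion-freeness is not inherited when the kernel has torsion. Reduce Deligne's non-residually-finite central extension of $\mathrm{Sp}(2g,\Z)$ ($g\ge 2$) modulo $4\Z$. This gives a $\Z/4$-by-$\mathrm{Sp}(2g,\Z)$ group in which every finite-index subgroup contains an element of order $2$. Nothing in your sketch shows that the kernels you extend by are torsion-free; these are the image of $\bigoplus_i\pi_1\Diff^+(T_i)$ and the vertical twist groups of the Seifert pieces. So you need a separate construction of a torsion-free finite-index subgroup of $\pi_0\Diff(M)$ compatible with all the restriction maps. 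Your Bieri--Eckmann argument for the duality statement presupposes exactly this.

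\textbf{The fibre product step.} Your resolution is not correct as stated: virtually abelian images do not make a fibre product virtually a direct product. For a surjection $\phi\colon F_2\to\Z$, the fibre product $\{(a,b)\in F_2\times F_2 : \phi(a)=\phi(b)\}$ is finitely generated but not finitely presented. What is true is that on each JSJ torus the images from the two sides meet in a finite subgroup of $\SL(2,\Z)$. This holds because the fibre slopes of adjacent Seifert pieces differ, and simple pieces have finite image. Hence $\lim\pi_0 D$ contains $\prod_j\ker_j$ with finite index, where $\ker_j$ is the kernel of restricting from $M_j$ to its JSJ tori. You would still have to prove that each $\ker_j$ is finitely presented, VFL, and (in the incompressible case) a virtual duality group. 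Kernels of maps onto infinite virtually abelian groups need not even be finitely generated, so this needs a real analysis of the relative mapping class groups of the Seifert fibred and $I$-bundle pieces, which is missing.

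\textbf{Non-toroidal boundary.} \cref{sec: JSJ} assumes toroidal boundary. For incompressible boundary of higher genus, the characteristic submanifold has annular frontier meeting $\partial M$, and the diffeomorphisms move $\partial M$. So the limit and twist bookkeeping must be redone with annuli in place of tori.
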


In some cases, the stronger finiteness property of actually being finite holds, as mentioned already in \cref{thm:Johannson-finiteness-Haken}. We recall that theorem here for convenience.

\begin{theorem}[Johannson, \cite{Johannson1979}]
    Let $M$ be a compact, orientable, irreducible, boundary irreducible, and Haken 3-manifold.
Suppose also that every incompressible annulus or torus in $M$ is boundary parallel. Then $\pi_0 \Diff(M)$ is finite. 
\end{theorem}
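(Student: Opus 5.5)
The plan is to reduce, via Waldhausen's \cref{thm:Waldhausen}, to a finiteness statement about $\Aut_\partial(\pi_1(M))/\Inn(\pi_1(M))$, and then prove that statement geometrically: first by hyperbolising $M$, and in the remaining cases by using the Seifert fibred results of \cref{sec-Seifert fibred}. Since $M$ is irreducible, boundary irreducible and Haken, \cref{thm:Waldhausen} gives an injection (indeed an isomorphism) $\rho\colon \pi_0\Diff(M)\to \Aut_\partial(\pi_1(M))/\Inn(\pi_1(M))$. So it suffices to show that this group of peripheral-structure-preserving outer automorphisms is finite. I would split into two cases according to the output of the hyperbolisation and geometrisation theorems of \cref{sec: JSJ}.

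\emph{Seifert fibred case.} Suppose first that $M$ is Seifert fibred. The absence of essential tori and annuli, together with boundary irreducibility, should leave only a short list. Closed $M$ must have base $S^2$ with three exceptional fibres, where finiteness is already recorded in \cref{subsec:SFS-non-Haken-case} (McCullough's finiteness of $\Out(\pi_1(M))$). If $\partial M\neq\emptyset$, vertical annuli over essential arcs in the base are essential. This rules out everything except $S^1\times D^2$, which is excluded by boundary irreducibility, and perhaps degenerate cases that I would check by hand.

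\emph{Hyperbolic case.} Otherwise I would put a hyperbolic structure on $M$. Torus boundary components become cusps, and I would try to make the higher genus boundary components totally geodesic. The concrete route is to form the double $DM=M\cup_{\partial M}M$. Because $\partial M$ is incompressible, $DM$ is Haken, and by the standard argument acylindricity plus atoroidality of $M$ make $DM$ atoroidal. After excluding Seifert fibred exceptions, hyperbolisation makes $DM$ hyperbolic, with the reflection $r$ swapping the two copies acting as an isometry whose fixed set is totally geodesic. Mostow rigidity (\cref{thm-Mostow rigidity}, with Prasad's extension in the cusped case) then shows $\Out(\pi_1(DM))$ is finite.

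Next I would define the doubling homomorphism $\pi_0\Diff(M)\to\pi_0\Diff(DM)\cong\Out(\pi_1(DM))$, which sends $f$ to $f\cup f$, and show it has trivial (or at least finite) kernel. This step is the main obstacle. Suppose $f\cup f$ acts on $\pi_1(DM)$ as conjugation by $g$. Since $f\cup f$ preserves the subgroup $\pi_1(M)$, which injects by incompressibility of $\partial M$, the element $g$ normalises a conjugate of $\pi_1(M)$. I would then argue that $\pi_1(M)$ is a quasiconvex subgroup of the hyperbolic group $\pi_1(DM)$ equal to its own commensurator. For the closed-boundary case I would use that $\pi_1(M)$ is the stabiliser of a component of the preimage of $M$ in $\H^3$, bounded by totally geodesic planes, and that acylindricity rules out extra symmetries. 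It would follow that $g$ can be taken in $\pi_1(M)$. Then $f_*$ is inner on $\pi_1(M)$, so $f$ is isotopic to the identity by \cref{thm:Waldhausen}, and $\pi_0\Diff(M)$ embeds in a finite group.

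If the doubling argument becomes awkward, especially for mixed torus and higher genus boundary, I would fall back on the direct version: a Mostow--Prasad rigidity theorem for hyperbolic manifolds with totally geodesic boundary and cusps. It shows that every peripheral-preserving homotopy equivalence is homotopic to an isometry, and $\Isom(M)$ is finite since $M$ has finite volume.
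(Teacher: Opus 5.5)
The paper gives no proof of this statement; it only cites Johannson. Johannson's argument is intrinsically topological (it goes through his characteristic submanifold theory), and the paper remarks that he used it to \emph{deduce} finiteness of $\Out(\pi_1(M))$ in cases where that is algebraically hard. Your route runs the other way. \cref{thm:Waldhausen} reduces everything to finiteness of peripheral outer automorphisms, which you obtain from geometrisation and Mostow--Prasad rigidity, with McCullough's finiteness result covering the Seifert fibred case. This is the standard post-Thurston proof: much shorter than Johannson's, but resting on hyperbolisation for Haken manifolds. Your Seifert fibred case analysis is correct.

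One step fails as written. You double along all of $\partial M$, but then every torus component of $\partial M$ becomes a closed, incompressible, non-peripheral torus in $DM$. So $DM$ is toroidal rather than hyperbolic, and $\Out(\pi_1(DM))$ typically contains infinite-order Dehn twists along that torus; this already happens for every hyperbolic knot exterior. Your atoroidality argument only handles tori meeting the doubled surface in annuli, and cannot dispose of the doubled tori themselves.

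The fix is to double only along the components of $\partial M$ of genus $\geq 2$, leaving the torus components as cusps of $DM$. Then $DM$ is atoroidal. It is also not Seifert fibred: an incompressible surface of genus $\geq 2$ in a Seifert fibred space is horizontal, which would make $M$ an $I$-bundle, and $I$-bundles contain essential vertical annuli. Hence $DM$ is hyperbolic by the toroidal-boundary hyperbolisation theorem. If there are no such components, skip the double and apply Mostow--Prasad to $M$ itself.

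Separately, quasiconvexity and the commensurator are more than you need, and your justification for them is vague. What you actually use is that $\pi_1(M)$ is self-normalising in $\pi_1(DM)$, and this follows from Bass--Serre theory. Suppose $g$ normalises the vertex group $G_v=\pi_1(M)$ and $gv\neq v$. Then $G_v$ fixes the first edge of the geodesic from $v$ to $gv$, so $\pi_1(S)\to\pi_1(M)$ is an isomorphism for some doubled component $S$. This forces $M\cong S\times I$, which contains essential annuli, a contradiction.

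With these repairs, $(f\cup f)_*=c_g$ forces $g\in\pi_1(M)$, so $f_*$ is inner and $f$ is isotopic to the identity by \cref{thm:Waldhausen}. Thus $\pi_0\Diff(M)$ embeds in the finite group $\Out(\pi_1(DM))$, and the totally geodesic structure coming from the reflection is never needed.
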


Beyond looking at the mapping class group, one can ask about finiteness properties of the homotopy type of $\BDiff(M)$, the classifying space of the diffeomorphism group.
The first result in this direction that we discuss is due to Hatcher--McCullough.

\begin{theorem}[\cite{HatcherMcCulloughFinitenessBDiffM97}]
    Let $M$ be a compact, orientable, irreducible 3-manifold and let $R$ be a nonempty union of connected components of $\partial M$, including all the compressible ones. Then $\BDiff_R(M)$ has the homotopy type of an aspherical finite CW-complex.  In particular, $\BDiff_{\partial}(M)$ is homotopy finite, i.e.~has the homotopy type of a finite CW-complex..
\end{theorem}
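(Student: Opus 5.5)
The plan is to realise $\BDiff_R(M)$, up to homotopy, as a finite aspherical complex in two steps: first show that $\BDiff_R(M)$ is aspherical, then show that its fundamental group $\pi_0\Diff_R(M)$ is geometrically finite.

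\textbf{Step 1: asphericity.} I would show that $\Diff_0$, the identity component of $\Diff_R(M)$, is contractible. Since $R\neq\emptyset$, $M$ has nonempty boundary and is therefore Haken by \cref{remark:non-boundary-Haken}~\eqref{item:nbH}. When $\partial M$ is incompressible I would invoke the Hatcher--Ivanov theorem, $\Diff_\partial(M)\simeq\hAut_\partial(M)$, together with asphericity of $M$ from \cref{lemma:kpi1}. Fixing $R\neq\emptyset$ pointwise kills the centraliser contribution, so the identity component of the relative homotopy automorphisms is contractible. To pass from fixing all of $\partial M$ to fixing only $R$, I would use the restriction fibration
\[
\Diff_{\partial}(M)\to\Diff_R(M)\to\Diff(\partial M\setminus R),
\]
together with the known homotopy types of diffeomorphism groups of surfaces. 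Their identity components are contractible except for tori, spheres, and discs, and no sphere components occur because of irreducibility. The torus factors are absorbed because rotations cannot extend once $R$ is fixed.

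When $\partial M$ is compressible, every compressible component lies in $R$ by hypothesis. I would cut along a maximal system of compressing discs, or equivalently use the disc-space contractibility results of Hatcher, to reduce to the incompressible case. This gives $\BDiff_R(M)\simeq K(\pi_0\Diff_R(M),1)$.

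\textbf{Step 2: geometric finiteness.} It remains to show that $\pi_0\Diff_R(M)$ is geometrically finite. By \cref{prop:type FL and duality topologically for G f.p.}(i), this is equivalent to being finitely presented and of type FL. McCullough's \cref{thm:Haken MCGs are virtually geom finite} gives finite presentation and type VFL. To upgrade VFL to FL I need torsion-freeness, since a torsion-free group that is virtually of type FL has finite cohomological dimension and is of type FL by Serre's theorem. So the key claim is that $\pi_0\Diff_R(M)$ is torsion-free when $R\neq\emptyset$. I would try to prove this by Nielsen-realisation-type arguments: a finite-order mapping class should be realised by a finite group action fixing $R$ pointwise. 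Such an action is then trivial by the local linearity of smooth finite actions near a fixed surface, combined with the fact that it fixes a 2-dimensional set and preserves orientation near $R$.

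\textbf{Main obstacle.} I expect torsion-freeness, that is, the realisation step, to be the hardest part. For Haken manifolds I would attempt it by induction along a hierarchy, in the spirit of \cref{thm:that-implies-wladhausens-thm}. The idea is to make the finite-order class preserve an equivariant hierarchy and then argue piece by piece, with the 3-balls at the bottom handled by the Alexander trick. The compressible-boundary case in Step 1 is a secondary technical difficulty.
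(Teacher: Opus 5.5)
Your Step 1 makes the same reduction as the paper: components of $\Diff_R(M)$ are contractible by Hatcher and Ivanov, so everything rests on $\pi_0\Diff_R(M)$ being geometrically finite. Step 2, however, does not establish geometric finiteness, for two independent reasons.

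\textbf{First gap: Serre's theorem gives FP, not FL.} Even granting torsion-freeness, Serre's theorem only gives $\mathrm{cd}\,\pi_0\Diff_R(M)<\infty$. Combined with virtual FP, that yields type FP, not type FL. Passing from FP to FL requires the projective module at the end of a finite projective resolution to be stably free. Equivalently, a class in $\tilde K_0(\Z G)$ must vanish; for finitely presented $G$ this is the Wall finiteness obstruction of the finitely dominated $K(G,1)$. This class is not controlled by a finite-index subgroup being FL, and whether torsion-free VFL groups are FL is not known in general. So your argument would at best produce a finitely dominated aspherical complex, not a finite one. In addition, \cref{thm:Haken MCGs are virtually geom finite} is stated for $\pi_0\Diff(M)$, not for the relative group $\pi_0\Diff_R(M)$, so even your VFL input needs justification.

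\textbf{Second gap: torsion-freeness is not proved, and the proposed route is circular.} By your own local-linearity argument, a finite action that realises a torsion class and fixes $R$ pointwise must be the identity. So ``realisation rel $R$'' is equivalent to the torsion-freeness you want, and no such relative realisation theorem is available. The known realisation results (Zieschang--Zimmermann, Zimmermann) concern $\pi_0\Diff(M)$, and the actions they produce may rotate torus components of $R$. They also say nothing about the kernel of $\pi_0\Diff_R(M)\to\pi_0\Diff(M)$. That kernel is generated by collar twists coming from $\pi_1\Diff(R)$, essentially from the torus components of $R$, and torsion must be excluded there as well. Finally, an ``equivariant hierarchy'' presupposes the very action you are trying to construct.

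\textbf{What is missing.} The missing ingredient is structural rather than cohomological. Hatcher--McCullough analyse $M$ via its characteristic (JSJ) submanifold. They show that $\pi_0\Diff_R(M)$ is obtained by a finite sequence of group extensions whose building blocks are free abelian groups and finite-index subgroups of relative mapping class groups of compact surfaces. Each building block is torsion-free with a finite $K(\pi,1)$, and having a finite $K(\pi,1)$ is preserved under extensions. Torsion-freeness and a finite aspherical model for $\BDiff_R(M)$ therefore come out at once, sidestepping both the realisation problem and the $\tilde K_0$ obstruction.
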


The final sentence in this statement is a positive answer, in the case $M$ is oriented and irreducible, to a conjecture of  Kontsevich \cite[Problem 3.48]{KirbyProblemList97}, who asked whether $\BDiff_{\partial}(M)$ is homotopy finite for every compact 3-manifold with nonempty boundary~$M$.

Irreducible 3-manifolds with nonempty boundary are Haken, so asphericity of $\BDiff_{\partial}(M)$ was known by previous work of Hatcher \cite{Hatcher1976, Hatcher1983} and Ivanov \cite{Ivanov1976}. Therefore,  the proof of Hatcher--McCullough's result can be reduced to proving that the mapping class group $\pi_0(\Diff_{\partial}(M))$ is geometrically finite; their result can hence be seen as a refinement of McCullough's virtual geometrical finiteness \cref{thm:Haken MCGs are virtually geom finite} in this setting.

In fact, recent work of Boyd--Bregman--Steinebrunner \cite[Theorem 6.1]{BoydBregmanSteinebrunnerModuliSpacesFinite24} proved
Kontsevich's conjecture for orientable 3-manifolds.

\begin{theorem}[\cite{BoydBregmanSteinebrunnerModuliSpacesFinite24}]
   Let $M$ be a compact, orientable, 3-manifold with nonempty boundary. Then $\BDiff_{\partial}(M)$ is homotopy finite.
\end{theorem}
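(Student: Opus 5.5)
The plan is to reduce to the irreducible case, which the paper already covers, by passing to the prime decomposition of $M$ and running a fibration argument over spaces of separating spheres. The irreducible input is the Hatcher--McCullough theorem stated just above. It says that for an irreducible $P$ and a nonempty union $R$ of boundary components containing all compressible ones, $\BDiff_R(P)$ is a finite aspherical complex. Write $M = P_1 \# \cdots \# P_n \# (S^1\times S^2)^{\#g}$ with $\partial M\neq\emptyset$. Following the model of \cref{subsec-reducible-in-terms-summands}, fix a system $\Sigma$ of disjoint separating spheres that cuts $M$ into a punctured $3$-sphere $B$, the punctured irreducible summands $P_i\sm\mathring D_i$, and the pieces $S^2\times I$.

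The first step is to set up a fibre sequence
\[\Diff_\partial(M,\Sigma)\to\Diff_\partial(M)\to \Emb(\Sigma,M)/\Diff(\Sigma)\]
and replace the base by the space of all such \emph{reducing sphere systems}. The right object here is a contractible (or at least homotopy finite) complex of sphere systems in the style of Hatcher's sphere complex. We then form the Borel construction $\BDiff_\partial(M)\simeq \mathcal{S}(M)/\!/\Diff_\partial(M)$. Homotopy finiteness follows if three conditions hold. First, the action has finitely many orbits of simplices. Second, each simplex stabiliser has homotopy finite classifying space. Third, $\mathcal{S}(M)$ is contractible and finite dimensional.

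Each stabiliser is, up to finite index (permutations of spheres) and up to $\SO(3)$-type factors coming from the spheres, a product. Its factors are $\Diff_\partial$ of the pieces obtained by cutting. These pieces are punctured irreducible manifolds, each having nonempty boundary including the new sphere, so Hatcher--McCullough applies with $R=\partial$. The other pieces are punctured $3$-spheres, for which Hatcher's theorem (\cref{thm-hatcher smale}) gives $\Diff_\partial$ of a punctured $S^3$ homotopy equivalent to a finite product of $\Omega\SO(3)$'s, or better, something whose classifying space is a finite product of $\SO(3)$'s. Finite group extensions preserve homotopy finiteness only rationally, so I would instead arrange the simplices to be ordered or labelled, so that the stabilisers fix each sphere.

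The main obstacle is the base: proving that the space of (labelled) reducing sphere systems in $M$ rel $\partial M$ is contractible and that the quotient is finite. Infinitely many isotopy classes of systems exist, so finiteness must come from cocompactness of the $\Diff_\partial(M)$-action rather than from finiteness of the complex. My first attempt would adapt Hatcher's surgery argument for contractibility of sphere complexes, using the nonempty boundary to kill the $\pi_1$ obstructions. Cocompactness would come from uniqueness of prime decomposition (Kneser--Milnor), which yields only finitely many orbits of maximal systems. A residual subtlety is that the Borel quotient is not automatically finite when the complex is infinite dimensional, so I would restrict to a finite-dimensional subcomplex of systems with bounded number of spheres.
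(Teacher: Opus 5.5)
Your overall architecture is sound: realise $\BDiff_\partial(M)$ as the Borel construction on a contractible, finite-dimensional complex of sphere systems with finitely many orbits, and hence as a finite homotopy colimit of classifying spaces of stabilisers. But the two steps that carry the real content fail as written.

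First, Hatcher--McCullough does not apply to the pieces $P_i\sm\mathring{D}_i$. For $P_i\neq S^3$ these pieces are not irreducible, since the boundary-parallel sphere bounds no ball. For non-maximal systems the pieces are reducible connected sums, so you would also need an induction on the number of summands, and its base case is exactly this one. If $\partial P_i\neq\emptyset$ you can repair this with the fibration $\Diff_{\partial P_i\cup D_i}(P_i)\to\Diff_{\partial P_i}(P_i)\to\Emb^+(D^3,\mathring{P}_i)\simeq\mathrm{Fr}(P_i)$, where $\mathrm{Fr}$ is the oriented frame bundle. This exhibits $\BDiff_\partial(P_i\sm\mathring{D}_i)$ as a bundle with finite fibre over the Hatcher--McCullough space $\BDiff_{\partial P_i}(P_i)$. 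However, $M$ may have closed irreducible summands, for example $M=(S^1\times D^2)\# L(5,2)$. Then $\BDiff_\partial(P_i\sm\mathring{D}_i)\simeq\mathrm{Fr}(P_i)/\!/\Diff^+(P_i)$ is governed by the diffeomorphism group of a closed prime manifold, about which no relative theorem says anything. Moreover $\BDiff(P_i)$ itself is usually not finite; for a lens space, $\Isom(P_i)$ contains a torus. Proving finiteness here is the genuinely new input. It is where the cited proof \cite{BoydBregmanSteinebrunnerModuliSpacesFinite24} draws on geometrisation and the Smale-type theorems of the earlier sections. For elliptic or hyperbolic $P_i$ one has $\Diff(P_i)\simeq\Isom(P_i)$, and isometries act freely on frames, so the Borel construction is the closed manifold $\mathrm{Fr}(P_i)/\Isom^+(P_i)$. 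The other closed summands need the Haken and Seifert fibred results (Hatcher--Ivanov, McCullough--Soma) together with the JSJ analysis. Your proposal has nothing in place of this.

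Second, your stabiliser analysis does not give finiteness, and the proposed fix breaks the argument. The stabiliser of $\Sigma$ in $\Diff_\partial(M)$ preserves the spheres only setwise, and it can permute them or swap their sides. Transpositions of repeated closed summands and flips of $S^1\times S^2$ summands are supported away from $\partial M$, so they lie in this stabiliser. Consequently $B\mathrm{Stab}(\Sigma)$ is a homotopy pullback of classifying spaces of the pieces over copies of $B\mathrm{O}(3)$, further twisted by a finite group. Since $B\SO(3)$ and $BF$ for finite $F\neq 1$ are not homotopy finite, ``a product up to $\SO(3)$-factors and finite index'' proves nothing.

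Ordering or labelling the spheres does not help. The complex of ordered simplices of a contractible complex need not be contractible; for a single edge it is a circle. And no $\Diff_\partial(M)$-invariant labelling exists when summands repeat, so the Borel construction would no longer compute $\BDiff_\partial(M)$. What you must show instead is that the $\mathrm{O}(3)$'s cancel between adjacent pieces and that the finite groups act freely on finite-dimensional factors. For example, by Hatcher's theorem, the group of diffeomorphisms of $S^3$ minus $k+1$ open balls that fixes one boundary sphere pointwise and the others setwise has classifying space $\simeq\mathrm{UConf}_k(\R^3)$. Similarly, $\BDiff_{\partial P}(P,D^3)\simeq\mathring{P}/\!/\Diff_{\partial P}(P)$. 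It is through such identifications that the permutations and the $\mathrm{O}(3)$'s are absorbed. Note also that $\Diff_\partial(S^3\sm\sqcup_{k+1}\mathring{D}^3)\simeq\Omega\bigl(\mathrm{Conf}_k(\R^3)\times\SO(3)^k\bigr)$, not a product of copies of $\Omega\SO(3)$.
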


They then applied this theorem to deduce the following finiteness result.

\begin{theorem}[\cite{BoydBregmanSteinebrunnerModuliSpacesFinite24}, Theorem 6.12]
    Let $M$ be a compact, orientable, 3-manifold. Then $\BDiff(M)$ is of finite type, i.e.~homotopy equivalent to a CW complex with finite $n$-skeleton for each $n$.
\end{theorem}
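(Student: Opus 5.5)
The plan is to deduce the statement from the earlier finiteness result of Boyd--Bregman--Steinebrunner that $\BDiff_{\partial}(N)$ is homotopy finite for every compact, orientable $N$ with nonempty boundary. The basic device is to remove a disc. Write $N := M \sm \mathring{D}$ for an embedded 3-disc $D \subseteq M$ (for $M$ closed; if $\partial M \neq \emptyset$, take $D$ in the interior). There is a fibration
\[
\Diff_{\partial M \cup D}(M) \to \Diff_{\partial M}(M) \to \Emb(D^3,\mathring{M}),
\]
and $\Emb(D^3,\mathring{M}) \simeq \mathrm{Fr}(T\mathring{M})$ is a finite complex. Delooping gives a fibration
\[
\Emb(D^3,\mathring M)\to \BDiff_{\partial}(N) \to \BDiff_{\partial M}(M).
\]
Since $\Diff_{\partial M\cup D}(M)\cong\Diff_\partial(N)$, this is the sequence we will use.

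The key steps, in order, are as follows.

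First, I would reduce from $\BDiff(M)$ to $\BDiff_{\partial M}(M)$. The fibration
\[
\BDiff_{\partial M}(M)\to\BDiff(M)\to B(\text{image in }\Diff(\partial M))
\]
has base a union of components of $\BDiff(\partial M)$ for a closed surface $\partial M$. For surfaces this base has finite type, being built from $B$ of mapping class groups of surfaces, which are of type VFL/F by Harvey and McCool, together with $B\Isom_0$ of compact Lie groups. Finite type is preserved in fibrations once the fibre and base are of finite type and the base is connected with appropriate $\pi_1$-finiteness, by a Serre spectral sequence argument with local coefficients. So this step is routine once the group-level finiteness of the base's $\pi_1$ is checked.

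Second, and this is the main step, I would show $\BDiff_{\partial M}(M)$ is of finite type from the fibration above. Here the total space $\BDiff_\partial(N)$ is homotopy finite and the fibre $\Emb(D^3,\mathring M)\simeq \mathrm{Fr}(TM)$ is a closed manifold. Finite type does not naively pass from the total space to the base, so I would instead argue via a second fibration. Extending diffeomorphisms over the disc identifies $\BDiff_{\partial M}(M)$ with the homotopy orbits of $\BDiff_\partial(N)$ under the action of $\Diff(S^2)\simeq \mathrm O(3)$ on the new boundary sphere, using Smale's theorem and the collar. Concretely,
\[
\BDiff_\partial(N)\to \BDiff_{\partial M}(M, D)\to B\mathrm{SO}(3)\times(\text{point-moving part}),
\]
whose base has finite type. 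Fibrations with finite-type fibre and simply connected finite-type base have finite-type total space, by the Serre spectral sequence with finitely generated $E_2$ page, using homotopy-finite implies finite type. The remaining map $\BDiff_{\partial M}(M,D)\to\BDiff_{\partial M}(M)$ has fibre $\Emb(D^3,\mathring M)$, again finite. I would then need to upgrade from finitely generated homology to finite skeleta. For this, the main obstacle is controlling $\pi_1=\pi_0\Diff(M)$: I would use that it is finitely presented, via the Hatcher--McCullough result stated earlier in this section, and then apply Wall's criterion. That criterion says a space with finitely presented $\pi_1$ and $H_*(\,\cdot\,;\Z[\pi_1])$ finitely generated in each degree has finite type. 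Checking the local-coefficient version of the spectral sequence argument, with coefficients in $\Z[\pi_0\Diff(M)]$, is where I expect most care is needed.
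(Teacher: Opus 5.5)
Your overall plan, deriving the statement from homotopy finiteness of $\BDiff_\partial$ via the disc-removal fibration $\Emb(D^3,\mathring M)\to\BDiff_\partial(N)\to\BDiff_{\partial M}(M)$, matches how the survey attributes the result. However, the step that carries all the content is not handled correctly.

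What you need is a lemma: if $F\to E\to B$ is a fibre sequence with $B$ connected, $F$ of finite type with finitely many components, and $E$ of finite type, then $B$ is of finite type. This is the space-level analogue of the fact that if $N\trianglelefteq G$ and both are of type $F_\infty$, then $G/N$ is of type $F_\infty$.

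Your detour through $\BDiff_{\partial M}(M,D)$ and $B\mathrm{SO}(3)$ does not avoid this lemma. The final map $\BDiff_{\partial M}(M,D)\to\BDiff_{\partial M}(M)$ is again of the form ``total space and fibre finite, hence base finite''. (Also, if $D$ is preserved setwise, its fibre is $\Emb(D^3,\mathring M)/\Diff(D^3)\simeq \mathring M$, not $\Emb(D^3,\mathring M)$.)

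The tool you propose for this step is false as stated. Take $X=K(\pi,1)$ with $\pi$ finitely presented but not of type $FP_3$, e.g.\ Stallings' group. Then $H_*(X;\Z[\pi])=H_*(\widetilde X)$ is $\Z$ in degree $0$ and zero otherwise, yet $X$ has no model with finite $3$-skeleton.

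Relatedly, $\Z[\pi]$ is not Noetherian, so finitely generated $\Z[\pi]$-modules do not form a Serre class. A Serre spectral sequence with $\Z[\pi_0\Diff(M)]$-coefficients therefore cannot transport finite generation back to the base. The ``good'' direction (fibre and base to total space) should likewise be proved cell by cell, attaching copies of $D^n\times F$ over the cells of $B$, not via the spectral sequence.

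A correct route uses Wall's \emph{relative} criterion:
\begin{enumerate}
\item $\pi_1B$ is finitely presented by the long exact sequence; you do not need Hatcher--McCullough for this.
\item Suppose $B^{(n)}$ is a finite $n$-skeleton of a model for $B$, with $n\geq 2$. Then $\pi_{n+1}(B,B^{(n)})\cong\pi_{n+1}(E,E|_{B^{(n)}})$.
\item This group is finitely generated over $\Z[\pi_1E]$, hence over $\Z[\pi_1B]$. The reason is that $E|_{B^{(n)}}\to E$ is an $n$-connected map between spaces of finite type; look at the mapping cone of the induced map of finitely generated free chain complexes.
\item Hence $B$ has a model with finite $(n+1)$-skeleton, and induction finishes.
\end{enumerate}
With this lemma, the closed case follows at once from your first fibration.

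For $\partial M\neq\emptyset$ your first step also has a gap. Let $\mathcal D\subseteq\Diff(\partial M)$ be the image of the restriction map, which is a union of path components. The base of $\BDiff_\partial(M)\to\BDiff(M)\to B\mathcal D$ has fundamental group
$\Gamma=\Image\bigl(\pi_0\Diff(M)\to\pi_0\Diff(\partial M)\bigr)$.
This subgroup is usually of infinite index; for a handlebody it is the handlebody group. The results of McCool and Harvey on surface mapping class groups say nothing about such subgroups, since finiteness properties do not pass to infinite-index subgroups.

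In fact, granting homotopy finiteness of $\BDiff_\partial(M)$ and the two fibration lemmas, $\BDiff(M)$ is of finite type if and only if $\Gamma$ is of type $F_\infty$. So this ``routine'' check is precisely the content of the boundary case, and it requires genuine input about $\Gamma$.
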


Note that in general $\BDiff_{\partial}(M)$ is not aspherical, in contrast to the irreducible case handled by Hatcher--McCullough. For example, Kalliongis--McCullough \cite{KalliongisMcCulloughIsotopies3Manifolds96} found 3-manifolds $M$ for which $\pi_2(\BDiff_{\partial}(M)) \cong \pi_1(\Diff_{\partial}(M))$ is not finitely generated.

 \subsection{Finite presentation of reducible 3-manifold mapping class groups}\label{subsec:finite presentation of 3-mfd MCGs}

Now we discuss finite presentations of mapping class groups of reducible $3$-manifolds in detail, assuming the prime factors have finitely presented mapping class groups.  Throughout the subsection, let $M$ be a compact, orientable, reducible $3$-manifold, with possibly nonempty boundary.  Our main focus is the following theorem of Hatcher and McCullough.

    \begin{theorem}[{\cite[Thm.~4.1]{HatcherMcCullough1990}}]
    \label{thm-finite presentation}
        Let $M$ be a 3-manifold with nontrivial prime decomposition. If the mapping class group of each irreducible prime factor is finitely presented, then $\pi_0\mathrm{Diff}(M)$ is also finitely presented.
    \end{theorem}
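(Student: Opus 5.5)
The plan is to follow the Hatcher--McCullough strategy: let $\pi_0\Diff(M)$ act on a simply connected complex built from systems of reducing spheres, and apply Brown's criterion for presentations of groups acting on complexes. The criterion says a group acting cellularly on a simply connected complex, with finitely many orbits of cells in dimensions $\le 2$, finitely presented vertex stabilisers and finitely generated edge stabilisers, is finitely presented. Write $M = P_1\#\cdots\# P_n \#(S^1\times S^2)^{\#g}$ with the model from \cref{subsec-reducible-in-terms-summands}. First I would cap off any boundary spheres with balls. Doing so changes the mapping class group only by the extensions recorded by point-pushing, sphere twists and permutations of boundary spheres. These terms are finitely generated abelian, or finite, or of the form $\pi_1$ of a configuration-type space, so finite presentability passes across the resulting short exact sequences; I will assume this reduction.

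The complex I would use is the following. Vertices are isotopy classes of \emph{reduction systems}: collections of disjoint embedded spheres $\Sigma\subseteq M$ that cut $M$ into pieces, each of which is a punctured $P_i$, or a punctured $S^3$ (at most one per system, or in a normalised form). Simplices correspond to nested systems. Contractibility, or at least simple connectivity, would be proved by a Hatcher-style surgery argument. Given two reduction systems in general position, surger innermost discs of intersection. Laudenbach's result that homotopic spheres are isotopic, together with irreducibility of the $P_i$ (which lets us discard spheres bounding balls), shows that each surgery moves along edges of the complex. This gives connectivity. Running the same argument on $1$-parameter families of systems gives simple connectivity.

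For the cocompactness and stabiliser conditions, there are finitely many orbits of simplices in the relevant dimensions, since up to diffeomorphism a reduction system is determined by the combinatorics of how the pieces are glued: a finite tree or graph with decorations drawn from the finite list of prime factors. The stabiliser of a vertex $\Sigma$ is handled as follows. Up to the finite group permuting pieces and spheres, and the finitely generated abelian group of sphere twists and collar twists (the $\Z/2$ from $\pi_1\SO(3)$), it is built from the groups $\pi_0\Diff_\partial$ of the punctured pieces. For the $P_i$ pieces, the fibration with base $\Emb(D^3,\mathring P_i)$ (or the configuration space of several discs) exhibits $\pi_0\Diff_{\partial}(P_i\sm\sqcup D)$ as an extension of $\pi_0\Diff(P_i)$, which is finitely presented by hypothesis, by a quotient of $\pi_1$ of a framed configuration space of $P_i$. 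That $\pi_1$ is finitely presented because $\pi_1(P_i)$ is finitely presented (it is a compact manifold group) and braid-type extensions preserve this. For the punctured $S^3$ pieces, the groups are finite by \cref{thm:MCGS3} and Smale. Extensions and finite-index overgroups of finitely presented groups are finitely presented, so vertex stabilisers are finitely presented, and edge stabilisers are similarly finitely generated.

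The main obstacle is simple connectivity of the sphere-system complex. The surgery argument must be made equivariant-free but \emph{canonical} enough that loops of systems can be filled. I would first try Hatcher's normal-form approach, putting one system in normal form relative to a fixed maximal one and reducing intersection complexity monotonically, and then check that each elementary move lies in a contractible star.
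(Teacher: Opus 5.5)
Your framework matches the paper's: Brown's criterion, capping off boundary spheres as in \cref{prop-MCG finite presentation no boundary 2-spheres}, and finiteness of orbits from the finitely many combinatorial types of sphere systems. However, the vertex-stabiliser step is missing its key input. The pieces' mapping class groups, corrected by the finite permutation group and the twists about the spheres of $\Sigma$, do give a finitely presented group $\pi_0\Diff(M,\Sigma)$, and it surjects onto the stabiliser of $[\Sigma]$ by isotopy extension. But the twists only account for the ambiguity in \emph{gluing}. The kernel of $\pi_0\Diff(M,\Sigma)\to\pi_0\Diff(M)$ is something else: it is the image of $\pi_1$ of the space of sphere systems isotopic to $\Sigma$. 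It consists of $\Sigma$-preserving diffeomorphisms that are, a priori, isotopic to the identity only through isotopies dragging $\Sigma$ around a loop. So the stabiliser is a \emph{quotient} of a finitely presented group by an uncontrolled normal subgroup, not an extension, and finite presentability does not pass to such quotients. For edge stabilisers, where you only need finite generation, this is harmless; for vertex stabilisers it is not. The paper fills exactly this hole with \cref{lem:isigma_injective} (Hatcher--McCullough, Lemma~3.4). After dividing by $\Twist(M)$, which is finite and acts trivially on $S(M)$ (\cref{twisttriv}), the gluing map $i_\Sigma$ is injective. This rests on Hendriks--McCullough and Laudenbach: a $\Sigma$-preserving diffeomorphism isotopic to the identity is, modulo sphere twists, isotopic to it through $\Sigma$-preserving diffeomorphisms. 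You need this or an equivalent statement. The same issue appears in milder form in your configuration-space step: the image of $\pi_1\Diff(P_i)$ in $\pi_1\Emb$ must be finitely normally generated, which the paper gets from centrality.

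Second, you claim simple connectivity for a complex other than $S(M)$, so \cref{simplyconnectedsim} does not apply, and your surgery sketch leaves your complex. Suppose $S_1$ comes from surgering $S\in\Sigma$ along an innermost disc of another system. Then $(\Sigma\setminus S)\cup S_1$ is not nested with $\Sigma$, and the natural intermediates also fail. The union $\Sigma\cup S_1$ acquires an extra punctured-$S^3$ piece, breaking your normalisation whenever $\Sigma$ already has one. The system $\Sigma\setminus S$ can merge two prime pieces. So surgery moves are not edges of your complex, and ``the same argument on $1$-parameter families'' does not fill loops. Reduction systems do buy you something: each piece holds a single prime, so stabilisers need no induction. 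The cost is a connectivity theorem you must prove separately, for example by comparing your poset with $S(M)$. The paper instead uses the full sphere complex, where simple connectivity is Hatcher--McCullough's Theorem~1.1. It pays for this by letting cut pieces contain several primes, which it handles by induction on the number of prime summands.
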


We present their proof.  This uses the following condition proved by K.~Brown.

    \begin{theorem}[{\cite[Thm.~4]{Brown1984Presentations}}]
    \label{thm-kbrown}
        Let $G$ be a group that acts simplicially on a simply-connected simplicial complex such that each vertex stabiliser is finitely presented, each edge stabiliser is finitely generated, and the quotient has finite 2-skeleton. Then $G$ is a finitely presented group.
    \end{theorem}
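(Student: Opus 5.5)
The plan is to realise $G$ as the fundamental group of a Borel construction and then show that this fundamental group is already carried by a finite $2$-complex. Let $X$ be the simply-connected simplicial complex on which $G$ acts, and form $X_G := EG \times_G X$. The projection $X_G \to BG$ is a fibration with fibre $X$. Since $X$ is connected and simply connected, the long exact sequence gives $\pi_1(X_G) \cong G$. We may replace $X$ by its $2$-skeleton $X^{(2)}$ without affecting this, because $X^{(2)}$ is still simply connected and $G$-invariant. So from now on $X$ is $2$-dimensional and its quotient $X/G$ is a finite complex.

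Next I would compute $\pi_1(X_G)$ using the skeletal filtration of $X$. Over each $G$-orbit of simplices $G\sigma$ we have $EG \times_G G\sigma \cong BG_\sigma \times \sigma$, where $G_\sigma$ is the setwise stabiliser. Hence $X_G$ is obtained in three stages. First take the disjoint union of $BG_v$ over orbit representatives of vertices. Then, for each of the finitely many orbits of edges, glue in $BG_e \times [0,1]$ (or the appropriate quotient if $G_e$ swaps the endpoints). Finally, for each orbit of $2$-simplices, glue in $BG_f \times \Delta^2$. I would sidestep inversion issues by subdividing only the edges. This changes neither simple connectivity nor the finiteness of the quotient, and the new vertex and edge stabilisers are finite-index overgroups or subgroups of old edge stabilisers, so they stay finitely generated. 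Crucially, no new stabilisers of $2$-simplices enter in a way that matters.

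The finiteness step is a van Kampen argument. For $\pi_1$ it suffices to use the following pieces:
\begin{itemize}
\item the $2$-skeleton of $BG_v$, which can be taken to be a finite presentation complex since $G_v$ is finitely presented;
\item the $1$-skeleton of $BG_e$ times the edge, which is a finite wedge of circles times an interval since $G_e$ is finitely generated;
\item only the $0$-skeleton of $BG_f$ times $\Delta^2$, that is, one $2$-cell per orbit of $2$-simplices.
\end{itemize}
I need to check that discarding the higher cells does not change $\pi_1$. For the edges, the only relations imposed are that the two images of each generator of $G_e$ agree, and since the maps $G_e \to G_v$ are homomorphisms, doing this on a generating set gives the same normal closure. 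Higher cells of $BG_e \times I$ or $BG_f \times \Delta^2$ have dimension at least $3$, or are attached along already-null loops. The result is a finite $2$-complex $Y$ together with a map $Y \to X_G$ that is an isomorphism on $\pi_1$, so $G \cong \pi_1(Y)$ is finitely presented.

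I expect the main obstacle to be the bookkeeping in that comparison. One part is showing rigorously that truncating each $BG_\sigma$ to the stated skeleton preserves $\pi_1$ of the glued space. A stage-by-stage application of van Kampen to pushouts along the maps $BG_e \to BG_v$ should handle this. The other part is handling $2$-simplices whose stabilisers permute vertices, where the correct local model is $EG \times_{G_f} \Delta^2$ rather than a product. For the latter I would first try to note that $EG_f \times_{G_f} \Delta^2 \to BG_f$ is a homotopy equivalence. After choosing a basepoint cell, this contributes exactly one new $2$-cell per orbit, attached along the boundary loop of the triangle composed with the edge-path data, and that is all $\pi_1$ sees.
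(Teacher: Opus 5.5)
The problem is in how you handle inversions, which you do by subdividing edges. The rest of the strategy is sound and is the standard topological route. You identify $G$ with $\pi_1(EG\times_G X)$ and rebuild each orbit piece $EG\times_{G_\sigma}\sigma$ using a model with finite $(2-\dim\sigma)$-skeleton. The paper gives no proof of its own; it only cites Brown, whose argument is combinatorial: he writes down an explicit presentation of $G$ from stabilisers and orbit data.

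Suppose $t\in G$ inverts an edge $e=\{u,tu\}$. Subdividing $e$ creates a midpoint vertex whose stabiliser is the setwise stabiliser $G_e$. As you say, $G_e$ is only finitely generated, and in general it is not finitely presented. But your vertex step needs a finite presentation $2$-complex for \emph{every} vertex stabiliser. Truncating $BG_e$ to its $1$-skeleton at the midpoint does not fix this. The half-edges have stabiliser $G_e^0=G_u\cap G_{tu}$, which has index $2$ in $G_e$. Take generators $t,x_1,\dots,x_k$ of $G_e$, with the $x_i$ generating $G_e^0$. Then this edge orbit contributes $G_u*\langle t\rangle$ to $\pi_1$ instead of $G_u*_{G_e^0}G_e$, because the relations of $G_e$ involving $t$ are lost. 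For non-inverted edges the subdivision is harmless, but it is also unnecessary.

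The missing idea is that $G_u*_{G_e^0}G_e$ is finitely presented relative to $G_u$ even when $G_e$ is not finitely presented. Put $t_0=t^2\in G_e^0$ and $\phi(x)=txt^{-1}$. Since $G_e^0$ is normal of index $2$, $G_e$ is the quotient of $G_e^0*\langle t\rangle$ by the relations $t^2=t_0$ and $txt^{-1}=\phi(x)$ for $x\in G_e^0$. In the amalgam, the relations of $G_e^0$ already hold in $G_u$. Hence
\[
G_u*_{G_e^0}G_e\cong\langle G_u,\,t\mid t^2=t_0,\ tx_it^{-1}=\phi(x_i)\ (1\le i\le k)\rangle,
\]
where $t_0$ and $\phi(x_i)$ are read as elements of $G_e^0\le G_u$. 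So do not subdivide. The inverted-edge piece $EG\times_{G_e}[0,1]$ is the mapping cylinder of the double cover $BG_e^0\to BG_e$, glued to $BG_u$ along $BG_e^0$. For $\pi_1$ you can replace it by one $1$-cell $t$ and $k+1$ $2$-cells attached to the finite presentation complex of $G_u$. With this change the rest of your van Kampen bookkeeping goes through: edge pieces built on finite generating sets, and one $2$-cell per orbit of $2$-simplices via $EG\times_{G_f}\Delta^2\simeq BG_f$. This yields a finite $2$-complex with fundamental group $G$.
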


    Hence to prove \cref{thm-finite presentation} it is (more than) sufficient to do the following.
    \begin{enumerate}[(a)]
        \item Construct a simply-connected simplicial complex $C$ and a simplicial action of $\pi_0\Diff(M)$ on $C$.
        \item Show that the quotient of $C$ by the action of $\pi_0\Diff(M)$ is finite.
        \item Show that the stabiliser of each simplex of $C$ is finitely presented.
    \end{enumerate}

We will deal with each of these in turn, hence completing the proof of \Cref{thm-finite presentation}.  More precisely, we first perform steps (a) and (b), and then provide an inductive argument on the number of prime summands in our 3-manifold to establish (c).  Starting this induction is precisely where we need the assumption that each prime factor has finitely presented mapping class group.

\subsubsection{The sphere complex $S(M)$}  We first deal with (a), and we begin by defining the relevant simplicial complex.

\begin{definition}
\label{spheresimplicial}
   The \emph{sphere complex} $S(M)$ of $M$ is a simplicial complex whose vertices are isotopy classes of embedded 2-spheres in $M$ that do not bound 3-balls---such spheres are called \emph{essential}. A collection of such isotopy classes $[S_0],\dots,[S_n]$ spans an $n$-simplex if and only if they are pairwise distinct and have pairwise disjoint representatives. In this case we call the set of embedded representatives $\Sigma\subseteq M$ a \emph{sphere system} representing the simplex.
\end{definition}

There is then an eminent candidate for an action of $\pi_0\Diff(M)$ on $S(M)$.  Given $[\varphi]\in S(M)$ and $v\colon S^2\hookrightarrow M$ a vertex, we set $[\varphi]\cdot v=\varphi\circ v$.  Since vertices are only isotopy classes of embeddings, this gives a well-defined action on vertices.  Furthermore, since diffeomorphisms of $M$ preserve disjointness of embedded spheres, it follows that this action extends to a simplicial action on $S(M)$.

      \begin{theorem}[{\cite[Thm~1.1]{HatcherMcCullough1990}}]
      \label{simplyconnectedsim}
            The geometric realisation of $S(M)$ is simply-connected.
        \end{theorem}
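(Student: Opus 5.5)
We prove simple connectivity of $|S(M)|$ by a surgery argument in the spirit of Hatcher's proof that sphere complexes of $M_n$ are contractible, restricted to the 1- and 2-dimensional level. First, $S(M)$ is nonempty and connected. Nonemptiness holds because $M$ has a nontrivial prime decomposition, so some reducing sphere is essential. For connectivity, take two essential spheres $S,S'$ and isotope them to meet transversely in a minimal number of circles. If $S\cap S'=\emptyset$ they are adjacent (or equal). Otherwise choose a circle $c$ of $S\cap S'$ that is innermost on $S'$, bounding a disc $D\subseteq S'$ with interior disjoint from $S$. Surgering $S$ along $D$ gives two spheres $S_1,S_2$, each disjoint from $S$ after a small push-off and each meeting $S'$ in fewer circles. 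If both bounded balls, then $S$ would bound a ball or the intersection could be reduced by isotopy; the ball-union argument uses irreducibility of the complement pieces, essentially Alexander's theorem in $S^3$ minus balls. So at least one $S_i$ is essential. Inducting on $|S\cap S'|$ gives an edge path from $S$ to $S'$.

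Next, we show every edge loop $S^{(0)},S^{(1)},\dots,S^{(k)}=S^{(0)}$ is null-homotopic. Fix an auxiliary essential sphere $S$ in general position with all the $S^{(j)}$. The plan is to reduce the total intersection number $\sum_j |S\cap S^{(j)}|$ while keeping the loop in the same homotopy class. Choose a disc $D\subseteq S$ that is innermost with respect to $\bigcup_j S^{(j)}$. This works because consecutive spheres $S^{(j)},S^{(j+1)}$ are disjoint, so $D$ can be chosen with interior disjoint from the whole union, with boundary on a single $S^{(j)}$. Surgering $S^{(j)}$ along $D$ yields an essential sphere $\widehat S^{(j)}$ (by the same argument as above) that is disjoint from $S^{(j)}$, $S^{(j-1)}$ and $S^{(j+1)}$, since $D$ avoids them. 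Hence $S^{(j-1)},S^{(j)},S^{(j+1)},\widehat S^{(j)}$ span triangles: $\{S^{(j-1)},S^{(j)},\widehat S^{(j)}\}$ and $\{S^{(j)},S^{(j+1)},\widehat S^{(j)}\}$. Replacing $S^{(j)}$ by $\widehat S^{(j)}$ is therefore a homotopy of the loop across two 2-simplices, and it strictly decreases the intersection count. When all $S^{(j)}$ are disjoint from $S$, the loop lies in the star of the vertex $[S]$, which is a cone, and so the loop is null-homotopic.

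The main obstacle is the essentiality claim after surgery. We must show that when $S^{(j)}$ is cut along $D$, at least one resulting sphere is essential and can be chosen consistently. We also must handle the case where the chosen piece is isotopic to a neighbour or to $S^{(j)}$ itself, which degenerates the simplices; that case is harmless, since the move is then trivial. The essentiality claim needs care with boundary spheres of $M$ and with pieces bounding punctured balls. We would use uniqueness of prime decomposition (Kneser–Milnor), via the model with punctured $S^3$, to argue that if both surgered spheres bounded balls then $S^{(j)}$ would too. A second technical point is that vertices are isotopy classes, so we must check that minimal-position representatives of the loop can be realised simultaneously with pairwise disjointness of consecutive spheres. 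For this we appeal to the normal form of spheres (Laudenbach's homotopy-implies-isotopy for spheres).
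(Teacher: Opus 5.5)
Your strategy is the same as the paper's sketch of Hatcher--McCullough. You prove connectivity by surgering along innermost discs. For a loop, you fix one sphere, repeatedly replace a vertex $S^{(j)}$ by an essential surgered sphere disjoint from $S^{(j\pm1)}$ (a homotopy across two triangles) to lower the total number of intersection circles, and stop once the loop lies in the star of the fixed sphere.

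\textbf{The choice of $D$ is wrong as written.} Non-consecutive spheres $S^{(j)}, S^{(l)}$ may intersect, so the traces $S\cap S^{(j)}$ and $S\cap S^{(l)}$ can cross on $S$. Suppose $S\cap S^{(0)}$ and $S\cap S^{(2)}$ are two circles crossing in two points, and $S\cap S^{(1)}$, $S\cap S^{(3)}$ form another such pair elsewhere on $S$. Then every disc in $S$ bounded by a single trace circle has an arc of another trace in its interior. So consecutive disjointness does not give a disc whose interior misses the whole union.

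What you actually use is weaker: that $\mathring{D}$ misses $S^{(j-1)}\cup S^{(j)}\cup S^{(j+1)}$. This does hold if you take $D$ of minimal area, for a fixed metric on $S$, among all discs in $S$ bounded by circles of $S\cap\bigcup_i S^{(i)}$. Indeed, suppose $\partial D\subseteq S^{(j)}$. Any circle of $S\cap(S^{(j-1)}\cup S^{(j)}\cup S^{(j+1)})$ meeting $\mathring{D}$ is disjoint from $\partial D$, hence lies in $\mathring{D}$ and bounds a strictly smaller disc, a contradiction. The surgered sphere may then meet non-adjacent spheres, which is harmless. With this repair the loop argument goes through.

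\textbf{Your two technical points are easier than you suggest.} Neither needs minimal position, normal forms, or Kneser--Milnor.

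First, essentiality. Suppose surgery of an essential sphere along a disc gave two spheres bounding balls $B', B''$. These have disjoint boundaries, and $M\neq S^3$ since $M$ is reducible, so the balls are disjoint or nested. In the first case the original sphere bounds $B'\cup B''$ joined by the surgery slab, which is a ball. In the second case it is isotopic into the interior of the outer ball, so it bounds a ball by Alexander's theorem. Hence one surgered sphere is always essential, in any transverse position.

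Second, representing the loop by a cyclic sequence of embedded spheres with consecutive ones disjoint. Choose representatives edge by edge, transporting each new disjoint pair by an ambient isotopy so that it matches the previous sphere. The final mismatch is between two isotopic representatives of the base vertex. Bridge it by a chain of spheres in that isotopy class with consecutive ones disjoint: take small time steps along the isotopy, interleaved with normal push-offs. This chain maps to a constant path in $S(M)$.

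The paper's sketch avoids this bookkeeping by taking the loop based at $S_*$ with $S_0=S_k=S_*$, and reducing intersections with $S_*$ itself.
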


In fact it was shown by Hatcher and Wahl~\cite[Section 3]{HatcherWahl2005} that if $M$ is not an irreducible manifold nor a punctured irreducible manifold, then $S(M)$ is contractible.

\begin{proof}[Proof sketch]
To see that $S(M)$ is connected, we observe that one can construct a path from a chosen basepoint vertex $S_*$ to any vertex $[S]$ by performing a sequence of surgeries along circles in $S\cap S_*$ to build a path from $[S]$ to another vertex $[S_1]$ such that $S_1\cap S_*=\emptyset$, and then concatenating with the edge $[S_*,S_1]$.

A loop in $S(M)$ is determined by a sequence of spheres $(S_0,\dots,S_k)$ such that $S_i \cap S_{i+1} = \emptyset$ for $i=0,\dots,k-1$ and $S_0 = S_k = S_*$.
Hatcher--McCullough construct a null homotopy by using surgery to introduce intermediate vertices,  $S_i'$, such that $S_i'$ has fewer circles of intersection with $S_*$ than $S_i$, and also that $S_i'$ is still disjoint from $S_{i \pm 1}$. The resulting path is homotopic to the original. Repeating this process, by downward induction on the total number of circles of intersection of all $S_i$ with $S_*$, gives rise to a homotopic path with every vertex corresponding to a sphere that is disjoint from $S_*$, which is in turn null homotopic.
We refer the reader to \cite[Thm~1.1]{HatcherMcCullough1990} or \cite[Thm.~3.1]{HatcherWahl2005} for further details.
\end{proof}

We have now established (a).

\subsubsection{The quotient of $S(M)$}

We now work to establish (b), i.e.\ that the quotient $S(M)/(\pi_0\Diff(M))$ is finite.  For this it is useful to work with the explicit model for our manifold $M$, as in \cref{sec-reducible}.  Let $M=P_1\#\cdots\#P_n\#(S^2\times S^1)^{\#g}$ be the prime decomposition of $M$ and fix a 3-ball $D_i\subseteq P_i$ for each $i$. The explicit model for $M$ is given by taking a 3-sphere with $n+2g$ open 3-balls removed, $B$, and attaching $P_i\setminus \mathring{D}_i$ to the first $n$ boundary components of $B$ and $g$ copies of $S^2\times I$ to the remaining $2g$ boundary components of $B$.

 Slide diffeomorphisms (see \cref{sec-reducible})  allowed Hatcher--McCullough to prove the following, which is (b).

    \begin{proposition}[{\cite[Prop.~2.2]{HatcherMcCullough1990}}]
        \label{quotientfinite}
            The quotient $S(M)/\pi_0\Diff(M)$ is finite.
    \end{proposition}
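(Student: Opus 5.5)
The plan is to classify simplices of $S(M)$ up to the action by combinatorial data, and to show that only finitely many data values occur. Let $\Sigma = S_0 \cup \dots \cup S_k$ be a sphere system representing a simplex. Cutting $M$ along $\Sigma$ gives finitely many components $N_1,\dots,N_r$. Capping each boundary sphere with a 3-ball turns each $N_l$ into a closed-up manifold $\widehat N_l$, and $M$ is recovered by connected sums and self-sums ($S^1\times S^2$ summands) along a graph. Record the following decorated graph $\Gamma(\Sigma)$:
\begin{itemize}
\item one vertex for each $N_l$, decorated by the multiset of prime factors of $\widehat N_l$ (up to orientation-preserving diffeomorphism) and by which components of $\partial M$ lie in $N_l$;
\item one edge for each $S_i$, joining the components on its two sides.
\end{itemize}

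The first step is to bound $\Gamma(\Sigma)$. By uniqueness of prime decompositions (Kneser--Milnor), the prime factors of the $\widehat N_l$, together with one $S^1\times S^2$ for each independent cycle of the graph, reassemble to the prime factors of $M$. So the decorations draw from a fixed finite list. Since the $S_i$ are essential and pairwise non-isotopic, a standard counting argument (Kneser's finiteness / Euler characteristic argument) bounds the number of spheres, say $k+1 \le 3(n+g)+|\pi_0\partial M|$. Any $\widehat N_l$ that is a $3$-ball with at most two boundary spheres would force an inessential or duplicated sphere. Hence only finitely many decorated graphs occur.

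The second step, which I expect to be the main obstacle, is to show that two sphere systems $\Sigma,\Sigma'$ with isomorphic decorated graphs differ by a diffeomorphism of $M$. Write each $\widehat N_l$ as its prime decomposition. Using uniqueness of prime decompositions together with the fact that the connected sum is well defined (independent of the choice of discs, by isotopy of balls), build diffeomorphisms $N_l \to N'_l$ which match the boundary spheres as prescribed by the graph isomorphism. Orientation of gluings is handled as in the definition of flips, with reflections on $S^2$ used where needed. Then glue these diffeomorphisms along the spheres to obtain $\varphi\in\Diff(M)$ with $\varphi(\Sigma)=\Sigma'$. The delicate points are the following:
\begin{itemize}
\item the boundary spheres of $N_l$ can be distributed among the prime summands of $\widehat N_l$ in different ways;
\item a sphere can bound a punctured $S^3$ region whose position matters.
\end{itemize}
I would dispose of both with slide diffeomorphisms. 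Sliding a puncture across summands, as in \cref{defn:slide-diffeo}, shows that all placements of boundary spheres inside a given $\widehat N_l$ are equivalent up to diffeomorphism. So it suffices to move the balls to a standard position in the model $B$ with summands attached.

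Finally, vertices and higher simplices are treated together, since a simplex is just a sphere system. The map from orbits of simplices to isomorphism classes of decorated graphs is therefore injective, by the second step, and its image is finite, by the first step. This gives finiteness of $S(M)/\pi_0\Diff(M)$, and in particular of its $2$-skeleton, as needed for \cref{thm-kbrown}.
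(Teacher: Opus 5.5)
Your argument is correct, but it is organised differently from the paper's sketch.

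\emph{The paper's route.} Following Hatcher--McCullough, the paper works extrinsically in the fixed model $M=B\cup\bigcup_i(P_i\sm\mathring D_i)\cup\bigcup_j S_j\times I$. A composition of slide diffeomorphisms moves an arbitrary sphere system $\Sigma$ into the punctured $3$-sphere $B$. Spheres in $B$ are determined up to isotopy by the partition of $\partial B$ they induce, so only finitely many simplices have representatives in $B$.

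\emph{Your route.} You show that the decorated dual graph $\Gamma(\Sigma)$ determines the orbit. The ingredients are uniqueness of oriented prime decomposition for each capped piece $\widehat N_l$, and homogeneity of ordered configurations of disjoint balls in a connected $3$-manifold. The latter is all your ``sliding punctures'' needs to deliver, so no standard position in $B$ is required. The final ingredient is connectedness of $\Diff^+(S^2)$: it lets you isotope the piece on one side of each $S_i$, within a collar, so that its restriction to $S_i$ agrees with the piece on the other side before gluing. You should state this step explicitly. If every piece is chosen orientation-preserving, no reflections are ever needed.

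\emph{Comparison.} Your approach gives an explicit combinatorial parametrisation of the orbits. It also avoids the geometric lemma that $\Sigma$ can be slid off the prime pieces. The price is that you need an a priori bound on the number of spheres in a simplex, which the paper gets for free from the partition count in $B$.

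\emph{Minor points.} Your Euler characteristic count does give such a bound. However, the explicit inequality you wrote can fail when $\partial M$ has several sphere components, unless ball summands are counted in $n$: boundary-parallel spheres plus a pants-type system in the planar part can exceed it. Also, collars $S^2\times I$ between a sphere of $\Sigma$ and a sphere of $\partial M$ are a legitimate exception to your ``at most two boundary spheres'' claim. Neither point affects finiteness. Finally, your decorations name specific components of $\partial M$, which a diffeomorphism may permute, so the graph class is not an orbit invariant. The conclusion should read: each isomorphism class of decorated graphs lies in a single orbit, and there are finitely many classes.
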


\begin{proof}[Proof sketch]
Let $\Sigma$ be an embedded sphere system representing an $n$-simplex in $S(M)$. Using the model for $M$ above, Hatcher--McCullough show that there is a diffeomorphism $\varphi \in \Diff(M)$ given by a composition of slide diffeomorphisms, such that $\varphi(\Sigma)\subseteq B$.  Since there are only finitely many isotopy classes of embedded 2-spheres in $B$, specified by partitions of the boundary components, it follows that the action on $S(M)$ by $\pi_0\Diff(M)$ has finitely many orbits.
\end{proof}

 \subsubsection{Preliminaries for (c)}  We now work towards establishing (c), which is the trickiest of the three properties.  To that end we start by establishing some preliminary results.

    \begin{lemma}\label{lem:sesfiniteness}
        Let
        \[
        1\to N\to G\to Q\to 1
        \]
        be a short exact sequence of groups.  If $N$ is finitely generated and $G$ is finitely presented then $Q$ is finitely presented.  If $N$ is finitely presented then $G$ is finitely presented if and only if $Q$ is finitely presented.
    \end{lemma}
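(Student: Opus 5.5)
We work directly with presentations, treating the two statements separately. Throughout, write $\pi\colon G\to Q$ for the quotient map and identify $N$ with $\ker \pi$.

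For the first statement, suppose $G=\langle X\mid R\rangle$ with $X,R$ finite, and $N$ is generated by finitely many elements $n_1,\dots,n_k\in G$. Choose words $w_1,\dots,w_k$ in $X$ representing the $n_j$. We claim that
\[
Q\cong\langle X\mid R\cup\{w_1,\dots,w_k\}\rangle .
\]
To check this, note that the group on the right is $G$ modulo the normal closure of $\{n_1,\dots,n_k\}$. This normal closure is contained in $N$, since $N$ is normal. Conversely, it contains $N$, because $N$ is generated by the $n_j$. Hence the normal closure equals $N$, and the right-hand side is $G/N\cong Q$. Since $X$ and $R\cup\{w_j\}$ are finite, $Q$ is finitely presented. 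This proves the first statement, and also the implication "$G$ finitely presented $\Rightarrow$ $Q$ finitely presented" in the second, because a finitely presented $N$ is in particular finitely generated.

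For the remaining implication, suppose $N=\langle Y\mid S\rangle$ and $Q=\langle Z\mid T\rangle$ are finite presentations. For each $z\in Z$ choose a lift $\tilde z\in G$. For each relator $t\in T$, the element $t(\tilde Z)$ lies in $\ker\pi=N$, so we may choose a word $u_t$ in $Y$ with $t(\tilde Z)=u_t$ in $G$. Since $N$ is normal, for each $z\in Z$, $y\in Y$ and $\epsilon=\pm1$ we may choose a word $v_{z,y,\epsilon}$ in $Y$ with $\tilde z^{\epsilon}y\tilde z^{-\epsilon}=v_{z,y,\epsilon}$ in $G$. Define
\[
\Gamma=\bigl\langle Y\cup Z \;\big|\; S,\ t\,u_t^{-1}\ (t\in T),\ z^{\epsilon}yz^{-\epsilon}v_{z,y,\epsilon}^{-1}\bigr\rangle .
\]
All the relators hold in $G$ under $y\mapsto y$, $z\mapsto\tilde z$, so there is a homomorphism $\Gamma\to G$. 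It is surjective: the $\tilde z$ map onto generators of $Q$, and $Y$ generates $N$.

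The main step is injectivity. Let $N'\le\Gamma$ be the subgroup generated by $Y$. The conjugation relators make $N'$ normal in $\Gamma$. Killing $N'$ turns the relators $t u_t^{-1}$ into $t$ and kills everything else, so $\Gamma/N'\cong\langle Z\mid T\rangle=Q$. We therefore get a commutative diagram of extensions
\[
1\to N'\to\Gamma\to Q\to 1 \quad\text{over}\quad 1\to N\to G\to Q\to 1,
\]
with the identity on $Q$. The map $N'\to N$ is surjective. It is also injective, because $N'$ is a quotient of $\langle Y\mid S\rangle$ (the relators $S$ hold in $\Gamma$), and the composite $\langle Y\mid S\rangle\to N'\to N$ is the given isomorphism. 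So $N'\to N$ is an isomorphism, and the five lemma for groups shows $\Gamma\to G$ is an isomorphism. Hence $G$ is finitely presented.
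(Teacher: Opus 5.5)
Your proof is correct and complete. The paper states this lemma without proof, as a standard fact used in the Hatcher--McCullough argument, and what you give is the standard argument: kill a finite generating set of $N$ for the first part; for the converse, adjoin the lifted relators of $Q$ (corrected by words in $Y$) together with conjugation relators, then compare the resulting extension with the given one. The two delicate points are handled correctly. Including both signs $\epsilon=\pm1$ makes $\langle Y\rangle$ normal in $\Gamma$, and the factorisation $\langle Y\mid S\rangle\to N'\to N$ gives injectivity of $N'\to N$, which is what the short five lemma needs.
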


   Next one observes  that we do not need to consider the case where $\partial M$ contains any 2-spheres, by the following proposition.  

\begin{proposition}[{\cite[Prop.~2.3]{HatcherMcCullough1990}}]\label{prop-MCG finite presentation no boundary 2-spheres}
    Let $M$ be a compact 3-manifold, and suppose $S$ is a 2-sphere boundary component of $M$. Let $\widehat{M}$ be the manifold obtained from $M$ by filling the $S^2$-boundary with a 3-ball $E$. Then $\pi_0\mathrm{Diff}(M,S)$ is finitely presented if and only if $\pi_0\mathrm{Diff}(\widehat{M})$ is finitely presented.
\end{proposition}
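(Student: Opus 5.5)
The plan is to compare both groups with $\pi_0\Diff(\widehat{M},E)$, the mapping classes of $\widehat M$ preserving the ball $E$ setwise, and to use \cref{lem:sesfiniteness} at each comparison. Note that $\pi_0\Diff(M,S)$ means diffeomorphisms of $M$ taking the boundary sphere $S$ to itself.

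\textbf{Step 1: identify $\pi_0\Diff(M,S)$ with $\pi_0\Diff(\widehat M,E)$.} Restriction gives a map $\Diff(\widehat M,E)\to\Diff(M,S)$. It is surjective up to isotopy because every diffeomorphism of $S^2$ extends over $D^3$ by coning. Its kernel consists of diffeomorphisms supported in $E$ that are the identity on $S$. By Cerf's theorem $\pi_0\Diff_{S^2}(D^3)=0$ (see the proof of \cref{thm:MCGS3}), together with the fibration over $\Diff(S^2)\simeq \mathrm{O}(3)$, this yields an isomorphism on $\pi_0$. I expect the $\pi_1\Diff(S^2)$ ambiguity to die because $\pi_1\mathrm{O}(3)$ loops extend over $E$ by rotations.

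\textbf{Step 2: compare $\pi_0\Diff(\widehat M,E)$ with $\pi_0\Diff(\widehat M)$.} Since $\Diff(\widehat M)$ acts transitively on $\Emb(D^3,\mathring{\widehat M})$ up to orientation, we have a fibration
\[\Diff(\widehat M,E)\to\Diff(\widehat M)\to \Emb(D^3,\mathring{\widehat M})/\mathrm{O}(3)\simeq \mathring{\widehat M}\]
(after taking the derivative at the centre). The long exact sequence gives
\[\pi_1(\widehat M)\to \pi_0\Diff(\widehat M,E)\to \pi_0\Diff(\widehat M)\to 1,\]
where the first map is point-pushing of $E$. Here $\pi_1(\widehat M)$ is finitely presented, being the fundamental group of a compact manifold. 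Let $N$ be its image, a quotient of $\pi_1(\widehat M)$, and hence finitely generated.

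\textbf{Step 3: deduce the two directions.} If $\pi_0\Diff(\widehat M,E)$ is finitely presented, the first clause of \cref{lem:sesfiniteness} gives that $\pi_0\Diff(\widehat M)$ is finitely presented. For the converse we need $N$ to be finitely presented, which is the main obstacle, since a quotient of a finitely presented group need not be finitely presented. The first approach I would try is to identify the kernel of the point-pushing map, namely the image of $\pi_1\Diff(\widehat M)\to\pi_1(\widehat M)$ under evaluation, as central. Evaluation images are central: the Gottlieb subgroup lies in the centre of $\pi_1$. Then $N\cong\pi_1(\widehat M)/Z$ with $Z$ central.

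To show $Z$ is finitely generated, I would use the classification to argue that $Z$ is a subgroup of the centre of $\pi_1(\widehat M)$. Apart from the cases with Seifert fibred prime summands, this centre is trivial or finite. In the remaining cases it is a finitely generated abelian group, since it is a subgroup of $\Z^k\times$ finite, which is Noetherian. Then $N$ is finitely presented by the second clause of \cref{lem:sesfiniteness}, and applying that clause again to $1\to N\to\pi_0\Diff(\widehat M,E)\to\pi_0\Diff(\widehat M)\to1$ gives finite presentability of $\pi_0\Diff(\widehat M,E)$, and hence of $\pi_0\Diff(M,S)$.

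If the centrality argument proves awkward, the fallback is to avoid it entirely. One would instead write down an explicit presentation: take generators of $\pi_0\Diff(\widehat M)$, lift them, add point-pushes along the generators of $\pi_1$, and take as relations the lifted relators (each equal to some point-push) and the conjugation relations.
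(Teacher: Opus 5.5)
Your proposal is correct and follows essentially the same route as the paper: the evaluation fibration over $\widehat M$ (you use the ball $E$, the paper a point $e_0 \in E$), the identification of $\pi_0\Diff(M,S)$ with the corresponding subgroup of $\pi_0\Diff(\widehat M)$, centrality of the kernel of the point-pushing map, and two applications of \cref{lem:sesfiniteness}. Your Gottlieb-subgroup justification of centrality and your classification-based argument that this central kernel is finitely generated fill in steps the paper only asserts; the fallback explicit presentation is not needed.
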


\begin{proof}[Proof sketch]
    Fix a point $e_0\in \mathrm{int} E$. The fibration $\mathrm{Diff}(\widehat{M})\to \widehat{M}$ given by the action of $\Diff(M)$ on $e_0$ induces a long exact sequence and we analyse the last three terms:
\[\pi_1\widehat{M}\overset{\alpha}{\to} \pi_0\mathrm{Diff}(\widehat{M},e_0)\cong \pi_0\mathrm{Diff}(M,S)\to \pi_0\mathrm{Diff}(\widehat{M})\to 1.\]
Since $\widehat{M}$ is compact, $\pi_1\widehat{M}$ is finitely presented.
Furthermore, the kernel of $\alpha$ is central in $\pi_1(\widehat{M})$, so it is a finitely generated abelian group. The proof is completed by appealing to \Cref{lem:sesfiniteness} twice, first using the short exact sequence
\[
1\to \ker(\alpha)\to \pi_1\widehat{M}\to \pi_1\widehat{M}/\ker(\alpha) \to 1
\]
and then again using the short exact sequence
\[
1\to \pi_1\widehat{M}/\ker(\alpha) \to \pi_0\mathrm{Diff}(M,S)\to \pi_0\mathrm{Diff}(\widehat{M})\to 1.\qedhere
\]
\end{proof}

Recall that the group of isotopy classes of sphere twists (\cref{def:sphere twist}) is denoted by $\mathrm{Twist}(M)$.
The group $\Twist(M)$ fixes the homotopy class of any loop in $M$, so its action on $\pi_1(M)$ is trivial. In fact, so is its action on the sphere complex.

    \begin{proposition}
    \label{twisttriv}
      The group $\mathrm{Twist}(M)$ acts trivially on $S(M)$. Therefore, the action of $\pi_0\mathrm{Diff}(M)$ on $S(M)$ induces an action of $\pi_0\mathrm{Diff}(M)/\mathrm{Twist}(M)$ on $S(M)$.
    \end{proposition}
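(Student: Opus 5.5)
The plan is to show that each sphere twist $T_S$ fixes every vertex $[S']$ of $S(M)$. Since the action is simplicial, fixing all vertices means fixing all simplices, so $T_S$ acts trivially on $S(M)$. $\mathrm{Twist}(M)$ is generated by sphere twists, so the whole group then acts trivially. The induced action of the quotient $\pi_0\mathrm{Diff}(M)/\mathrm{Twist}(M)$ follows formally, because $\mathrm{Twist}(M)$ is a normal subgroup of $\pi_0\Diff(M)$ by \eqref{eqn:twist subgroup is normal computation}.

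To fix a vertex, let $S$ and $S'$ be embedded spheres, with $S'$ essential. As in the argument showing $\mathrm{Twist}(M)$ is abelian, I would first replace $S'$ by an isotopic sphere $S''$ disjoint from $S$. The intended route is Laudenbach's result \cite{Laudenbach1974} that homotopic embedded spheres are isotopic, combined with surgery of $S'$ along innermost circles of $S\cap S'$.

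Once $S''$ is disjoint from $S$, choose the bicollar $S\times I$ supporting $T_S$ thin enough to miss $S''$. Then $T_S(S'')=S''$ pointwise, so $[T_S]\cdot[S']=[T_S(S'')]=[S'']=[S']$, and the vertex is fixed.

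The main obstacle is the disjoining step. The surgeries themselves do not preserve the isotopy class of $S'$: surgering along an innermost disc $D\subseteq S$ splits $S'$ into two spheres. To avoid relying only on the cited isotopy statement, I would argue directly, using a chosen innermost disc $D$, that one of the two pieces bounds a ball, or else that $S'$ is homotopic to the piece disjoint from $D$. If this case analysis becomes messy, a fallback is to use that $T_S$ is isotopic to the identity on $M\setminus S$ up to the support of the twist and to compute $T_S(S')$ directly. Here $T_S$ rotates each intersection circle by a full turn, which should give an isotopy back to $S'$ by untwisting, since $\pi_1\SO(3)$ acting on a disc in $S'$ dies after coning.
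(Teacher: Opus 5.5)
Your main route depends on isotoping $S'$ off $S$. That step is false in general, so the proposal has a genuine gap. Having disjoint representatives is exactly the edge relation of $S(M)$: if any two essential spheres could be made disjoint, any two vertices of $S(M)$ would be adjacent.

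For a concrete counterexample, let $M=(S^1\times S^2)\#(S^1\times S^2)$ with core spheres $\Sigma_1,\Sigma_2$ dual to $a,b\in\pi_1(M)=F(a,b)$. Let $S'$ be the image of $\Sigma_1$ under a diffeomorphism realising $a\mapsto a$, $b\mapsto a^2b$. Then $[S']=\pm([\Sigma_1]-2[\Sigma_2])\in H_2(M;\Z)$. Suppose two nonseparating spheres are disjoint. Either their union separates $M$, and their classes agree up to sign. Or there are loops meeting each sphere once and missing the other, which forces their classes to form a basis of $H_2(M;\Z)\cong\Z^2$. Neither holds for $\Sigma_1,S'$, so $S'$ cannot be isotoped off $\Sigma_1$. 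The step you borrow from the argument that $\Twist(M)$ is abelian is therefore not valid in this generality.

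Your proposed repair fails for the same reason. After surgering $S'$ along an innermost disc of $S$, both pieces can be essential, and then $S'$ is homotopic to neither of them (suitably based, it is their sum in $\pi_2$). If one of your two alternatives always held, induction on the number of circles of $S\cap S'$, together with Laudenbach's theorem, would make every $S'$ disjoint from $S$, contradicting the example.

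The paper keeps $S'$ fixed and decomposes the twist instead. Surger the twisting sphere $S$ along a disc $D\subseteq S'$ with $\partial D\subseteq S\cap S'$ and $\mathring{D}\cap S=\emptyset$. This yields spheres $S_1,S_2$, each meeting $S'$ in fewer circles, with $S,S_1,S_2$ cobounding a thrice-punctured $3$-sphere. The circle action on that region gives $T_S=T_{S_1}T_{S_2}$ in $\pi_0\Diff(M)$. By induction, $T_S$ is a product of twists about spheres disjoint from $S'$, each of which fixes $S'$ pointwise. Surgery is harmless for the twisting sphere because twists satisfy this product relation, whereas isotopy classes of spheres satisfy no such relation. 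That is the idea your main route is missing.

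Your fallback could become a genuinely different proof, but not as written. Untwisting the annuli $S'\cap(S\times I)$ produces a \emph{homotopy} from $T_S(S')$ to $S'$, not an isotopy. The homotopy exists because the loop of full turns, composed with the inclusion of a circle into $S$, is null-homotopic in the free loop space of $S^2$, whose fundamental group $\pi_2(S^2)\cong\Z$ is torsion-free. You would then need Laudenbach's theorem that homotopic embedded $2$-spheres are isotopic. That version is short, but it replaces the elementary relation $T_S=T_{S_1}T_{S_2}$ with Laudenbach's much deeper theorem.
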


    \begin{proof}[Proof sketch]
       This follows from \cite[Lemma 3.1.1]{McCullough1985}. Let $S$ be an essential sphere in $M$. It suffices to show that $\mathrm{Twist}(M)$ acts trivially on $S$. Let $T_{S_0}\in \mathrm{Twist}(M)$ be a sphere twist about a sphere $S_0$. The intersection $S\cap S_0$ consists of finitely many circles. Performing surgery along an innermost circle in $S_0$ yields two spheres $S_1$ and $S_2$ such that $S\cap S_1=\emptyset$ and $S\cap S_2$ has one fewer intersection circle than $S\cap S_0$. Furthermore the sphere twist $T_{S_0}$ can be decomposed as a product of two sphere twists $T_{S_1}T_{S_2}$. Replicating this argument gives a decomposition of $T_{S_0}$ into a product of sphere twists around spheres all disjoint from $S$, and so $T_{S_0}$ acts trivially on $S$ as required.
    \end{proof}

    \begin{corollary}\label{cor: stabiliser FP iff stab/twist is}
    For each simplex $\sigma\in S(M)$ the stabiliser $\mathrm{Stab}_{\pi_0\mathrm{Diff}(M)}(\sigma)$ is finitely presented if and only if $\mathrm{Stab}_{\pi_0\mathrm{Diff}(M)/\mathrm{Twist}(M)}(\sigma)$ is finitely presented.
    \end{corollary}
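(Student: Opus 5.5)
The plan is to compare the two stabilisers through a short exact sequence with kernel $\Twist(M)$, and then apply \cref{lem:sesfiniteness}. Write $G := \pi_0\Diff(M)$ and $\bar G := G/\Twist(M)$, and let $q \colon G \to \bar G$ be the quotient map. By \cref{twisttriv}, $\Twist(M)$ acts trivially on $S(M)$, so the $G$-action factors through $\bar G$. Hence for each simplex $\sigma$,
\[
\mathrm{Stab}_G(\sigma) = q^{-1}\big(\mathrm{Stab}_{\bar G}(\sigma)\big).
\]
In particular $\Twist(M) \leq \mathrm{Stab}_G(\sigma)$, and $q$ restricts to a surjection $\mathrm{Stab}_G(\sigma) \to \mathrm{Stab}_{\bar G}(\sigma)$ whose kernel is $\Twist(M)$. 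This gives the short exact sequence
\[
1 \to \Twist(M) \to \mathrm{Stab}_G(\sigma) \to \mathrm{Stab}_{\bar G}(\sigma) \to 1.
\]
Note that $\Twist(M)$ is normal, by \eqref{eqn:twist subgroup is normal computation}.

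The remaining input is that $\Twist(M)$ is finitely presented. By \cref{rem: twist subgroup} (McCullough), $\Twist(M) \cong (\Z/2)^d$ for some finite $d$, so it is a finite group and hence finitely presented. If one wants to avoid relying on that remark in the boundary case, there is a fallback. $\Twist(M)$ is abelian, as argued before \cref{rem: twist subgroup}, and each generator has order dividing $2$, since $\pi_1\SO(3) \cong \Z/2$. It is also generated by twists along finitely many spheres, namely one from each of the finitely many orbits of the action on vertices given by \cref{quotientfinite}, together with conjugates. The cleanest route, though, is simply to cite the finiteness $(\Z/2)^d$.

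Finally, apply the second clause of \cref{lem:sesfiniteness} with $N = \Twist(M)$ finitely presented. It says that $\mathrm{Stab}_G(\sigma)$ is finitely presented if and only if $\mathrm{Stab}_{\bar G}(\sigma)$ is, which is exactly the corollary.

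The only potential obstacle is justifying that $\Twist(M)$ is finitely generated, in particular when $\partial M \neq \emptyset$. The cited result of McCullough covers this, so I expect no real difficulty.
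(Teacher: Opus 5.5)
Your proof is correct and follows the paper's proof. Both use the short exact sequence $1\to\Twist(M)\to\mathrm{Stab}_{\pi_0\Diff(M)}(\sigma)\to\mathrm{Stab}_{\pi_0\Diff(M)/\Twist(M)}(\sigma)\to 1$ from \cref{twisttriv}, the fact that $\Twist(M)\cong(\Z/2)^d$ is finitely presented, and the second clause of \cref{lem:sesfiniteness}. Your fallback sketch would not by itself give finite generation, since the normal closure of finitely many elements need not be finitely generated, so relying on McCullough's finiteness result, as you ultimately do, is the right call.
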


    \begin{proof}
        Let $\sigma\in S(M)$ be a simplex.  By \Cref{twisttriv} we have a short exact sequence
        \[
        1\to \mathrm{Twist}(M)\to \mathrm{Stab}_{\pi_0\mathrm{Diff}(M)}(\sigma)\to \mathrm{Stab}_{\pi_0\mathrm{Diff}(M)/\mathrm{Twist}(M)}(\sigma)\to 1.
        \]
        Now note that $\Twist(M)\cong (\Z/2)^{d}$ is finitely presented.  Hence by \Cref{lem:sesfiniteness} the result follows.
    \end{proof}

\subsubsection{Finiteness of stabilisers}  We now establish (c), completing the proof of \Cref{thm-finite presentation}.

By \Cref{prop-MCG finite presentation no boundary 2-spheres} we may assume that $M$ has no 2-sphere boundary components. Let $\sigma\in S(M)$ represented by the sphere system $\Sigma$. Cutting $M$ along $\Sigma$ decomposes $M$ into a sequence of submanifolds $M_1,\dots,M_m$ (here by cutting along $\Sigma$ we mean the compact manifold whose interior is $M\setminus \Sigma$). Since we assumed that there are no boundary 2-spheres, each sphere in $\Sigma$ corresponds to two boundary spheres after cutting.

 Let $\pi_0\overline{\mathrm{Diff}_{\pi_0\partial}(M_j)}$ denote the subgroup generated by elements of \[\pi_0\mathrm{Diff}(M_j)/\mathrm{Twist}(M_j)\] that send components of $\partial M_j$ to themselves, and restrict to degree 1 maps on each 2-sphere boundary component. Define a homomorphism
\begin{equation}\label{eqn:iSigma}
    i_\Sigma\colon \prod_j \pi_0\overline{\mathrm{Diff}_{\pi_0\partial}(M_j)}\to \pi_0\mathrm{Diff}(M)/{\mathrm{Twist}(M)}
\end{equation}
by choosing representatives that restrict to the identity on each spherical boundary component and fitting them together to form a diffeomorphism of $M$. More precisely, for each $m$-tuple of elements represented by $(\phi_1,\dots,\phi_m)$, we can assume that $\phi_j$ restricts to the identity on each spherical component so they can be glued together along these sphere components using the identity map to obtain a diffeomorphism of $M$.

\begin{lemma}[{\cite[Lemma 3.4]{HatcherMcCullough1990}}]\label{lem:isigma_injective}
    The map $i_\Sigma$ is injective.
\end{lemma}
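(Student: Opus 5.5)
To show that $i_\Sigma$ is injective, I take an element of the kernel and show that each of its factors is trivial. Suppose $(\phi_1,\dots,\phi_m)$, with each $\phi_j$ restricting to the identity on the spherical boundary components, glues to a diffeomorphism $\phi$ of $M$ whose class in $\pi_0\Diff(M)/\Twist(M)$ is trivial. Then $\phi$ is isotopic to a product of sphere twists $\tau$. I replace $\phi$ by $\tau^{-1}\phi$, so that $\tau^{-1}\phi\simeq \id$ by an isotopy $F_t$. The goal is to push this isotopy into one that preserves $\Sigma$ setwise, modulo twists.

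\emph{Step 1: reduce to twists supported away from $\Sigma$.} Following the argument of \cref{twisttriv}, I decompose each sphere twist into a product of twists about spheres disjoint from $\Sigma$. To do this I surger along innermost circles of intersection with $\Sigma$, using $T_{S_0}=T_{S_1}T_{S_2}$ as in that proof. After this, $\tau$ is a product of twists about spheres lying in the interiors of the pieces $M_j$, or parallel to components of $\Sigma$. Twists of the first kind lie in the image of $\prod_j\Twist(M_j)$, which is already quotiented out in each factor. A twist parallel to a component $S\subseteq\Sigma$ can be realised as a twist on a collar of the corresponding boundary sphere of either adjacent piece. Such a twist restricts to a sphere twist in $\Twist(M_j)$ (it is isotopic rel boundary only up to that twist), so it is also trivial in the factor.

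\emph{Step 2: isotope $\Sigma$ back.} Now $\phi$ is (after Step 1) isotopic to the identity in $M$, say via $F_t$. The family $F_t(\Sigma)$ is a loop of sphere systems based at $\Sigma$. I use the Laudenbach/Hatcher type result that the space of sphere systems isotopic to $\Sigma$ has $\pi_1$ generated by motions of the spheres: rotations of individual spheres and permutations. A permutation cannot occur because $\phi$ fixes each component of $\Sigma$ with degree one. Rotations of a sphere $S$ contribute a loop in $\SO(3)$, whose nontrivial element is a sphere twist about $S$, which is again killed as in Step 1. Modifying $F_t$ by these motions, I obtain an isotopy $F'_t$ with $F'_t(\Sigma)=\Sigma$ for all $t$. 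I then further isotope so that $F'_t|_\Sigma=\id$, using $\pi_1\Diff^+(S^2)=\pi_1\SO(3)$ and absorbing the resulting sphere twists into the quotient as before.

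\emph{Step 3: conclude.} An isotopy fixing $\Sigma$ pointwise restricts to an isotopy of each $\phi_j$ to the identity in $\Diff_{\pi_0\partial}(M_j)$. Hence each $[\phi_j]$ is trivial in $\pi_0\overline{\Diff_{\pi_0\partial}(M_j)}$, and $i_\Sigma$ is injective.

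The main obstacle is Step 2, namely controlling $\pi_1$ of the space of embeddings of $\Sigma$ (equivalently, showing that an isotopy from $\phi$ to the identity can be made to preserve $\Sigma$ modulo twists). I would first try to handle it via the fibration $\Diff(M,\Sigma)\to\Diff(M)\to\Emb(\Sigma,M)/\Diff(\Sigma)$ and Hatcher's results on spaces of essential spheres.
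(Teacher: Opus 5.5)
Your Steps 1 and 3 are the same reduction the paper uses. You push the twists off $\Sigma$ by the surgery argument of \cref{twisttriv} and absorb them into the factors. You then note that an isotopy preserving $\Sigma$ restricts to isotopies of the $\phi_j$.

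The gap is Step 2, which carries the entire content of the lemma. The ``Laudenbach/Hatcher type result'' you invoke there is not an available theorem, and as stated it is false. It is first of all ill-posed: rotations of a component of $\Sigma$ are nontrivial loops only in the parametrised space $\Emb(\Sigma,M)$, whereas permutations are loops only in the unparametrised space $\Emb(\Sigma,M)/\Diff(\Sigma)$. More seriously, neither fundamental group is generated by such motions. For the core sphere $S=\{1\}\times S^2\subseteq S^1\times S^2$, the sweep $\theta\mapsto\{\theta\}\times S^2$ has infinite order in $\pi_1$ of the space of spheres isotopic to $S$, since it acts as a deck translation on the lifts of $S$ to $\R\times S^2$. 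It is not a product of rotations. In general $\pi_1$ of the space of sphere systems also contains the image of $\pi_1\Diff(M)$.

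Such loops are harmless in themselves, because composing $F_t$ with a loop of diffeomorphisms does not change its endpoints. But they show what you actually need. Let $\mathcal{S}_\Sigma$ be the component of $\Sigma$ in $\Emb(\Sigma,M)/\Diff(\Sigma)$, and consider the fibration $\Diff(M,\Sigma)\to\Diff(M)\to\mathcal{S}_\Sigma$. You need the image of the connecting map $\pi_1\mathcal{S}_\Sigma\to\pi_0\Diff(M,\Sigma)$, that is, the kernel of $\pi_0\Diff(M,\Sigma)\to\pi_0\Diff(M)$, to be generated by twists about spheres disjoint from $\Sigma$. This is required on the part preserving components and sides.

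That requirement is just a restatement of the injectivity of $i_\Sigma$. So Step 2 as written assumes what is to be proved. Your suggestion to analyse this fibration ``via Hatcher's results on spaces of essential spheres'' does not point to any result that computes this connecting map.

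The paper supplies exactly this missing input. Hatcher--McCullough deduce, using \cite[Thm.~2]{HendriksMcCullough1987} together with \cite{Laudenbach1973}, that once the twists have been removed, the isotopy from $\phi$ to the identity can be replaced by one that preserves $\Sigma$. Without that theorem, or a genuine computation of the connecting map above, your argument does not close.
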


\begin{proof}[Proof sketch]
If $(\phi_1,\dots,\phi_m)\in \ker(i_\Sigma)$, let $\phi$ be the diffeomorphism of $M$ obtained by gluing together representatives of the mapping classes $\phi_j$ (chosen to restrict to the identity on each spherical boundary component). Then $\phi$ is isotopic to a product of sphere twists in $\Twist(M)$ which by \cref{twisttriv} can be assumed to be about spheres disjoint from $\Sigma$, i.e.~each sphere twist lies in $\Twist(M_j)$ for some $j$. Since $\phi_j \in \pi_0\overline{\mathrm{Diff}_{\pi_0\partial}(M_j)}$, we can change $(\phi_1,\dots,\phi_m)$ by these rotations and assume $\phi$ is isotopic to the identity. Hatcher--McCullough show, using {\cite[Thm.~2]{HendriksMcCullough1987}} and \cite{Laudenbach1973}, that there exists an isotopy that preserves $\Sigma$, so each $\phi_j$ is isotopic to the identity as required.
\end{proof}

\begin{proof}[Proof of \cref{thm-finite presentation}]

Since we have already shown (a) and (b) (\Cref{simplyconnectedsim} and \Cref{quotientfinite}), all that is left is to prove (c).  By \Cref{prop-MCG finite presentation no boundary 2-spheres} it suffices to prove this when $M$ has no spherical boundary components or, equivalently, that $M$ has no 3-ball prime summands.  Furthermore, we can assume that $M$ has at least two prime summands, since the result is vacuous for $M$ irreducible, and we have already shown in \cref{thm:S^1xS^2_MCG} that the mapping class group of $S^1\times S^2$ is finitely generated.  By \cref{cor: stabiliser FP iff stab/twist is}, it suffices to establish (c) for the action of $\pi_0\mathrm{Diff}(M)/\mathrm{Twist}(M)$ on $S(M)$.

Let $\sigma$ be represented by the sphere system $\Sigma$ and consider the map $i_\Sigma$ from \eqref{eqn:iSigma}. By construction the image of $i_\Sigma$ lies in $\mathrm{Stab}_{\pi_0\mathrm{Diff}(M)/\mathrm{Twist}(M)}(\sigma)$ and we claim it has finite index. To show this consider $h$ in the stabiliser. Then up to isotopy $h$ can be chosen such that $h(\Sigma)=\Sigma$ \cite[Lemma 3.1]{HatcherMcCullough1990}. Passing to a finite index subgroup of the stabiliser we can additionally require that $h$ preserves the orientation of each sphere in $\Sigma$ and does not reverse sides. It follows that $h$ preserves each $M_j$ and is isotopic to the identity on each boundary sphere, so $h$ is in the image of $i_\Sigma$. By \Cref{lem:isigma_injective} it is therefore enough to show that the domain of $i_\Sigma$ is finitely presented, i.e.\ for each $j$ we claim that $\pi_0\overline{\mathrm{Diff}_{\pi_0\partial}(M_j)}$ is finitely presented. This follows by induction on the number of prime factors of $M$, since the spheres of $\Sigma$ are essential, so $M_j$ has fewer prime factors than $M$. Hence by induction $\pi_0{\mathrm{Diff}(M_j)}$ is finitely presented and hence $\pi_0\overline{\mathrm{Diff}_{\pi_0\partial}(M_j)}$ is also finitely presented (since $\Twist(M)$ is an abelian normal subgroup). This establishes (c) and thus, by \Cref{thm-kbrown}, the proof of \Cref{thm-finite presentation} is complete.
\end{proof}

\section{Further topics
}
Other than direct computation of mapping class groups, there are many other interesting questions one can ask about diffeomorphism groups and their topology. In this section we showcase a sample of these and survey known results.

\subsection{Homological stability}\label{subsec-HS of MCGs}

We give an overview of a homological stability result by Hatcher and Wahl \cite{Hatcher_Wahl_MCG3Manifold}.

\begin{definition}
A sequence of groups and homomorphisms
$$\begin{tikzcd}
    G_0\ar{r}{\phi_0}&G_1\ar{r}{\phi_1}&G_2\ar{r}{\phi_2}& \ldots
\end{tikzcd}$$
is said to satisfy \textit{homological stability} if, for all \(k\), the induced maps \(H_k(G_n)\xrightarrow{\cong} H_k(G_{n+1})\) are isomorphisms for \(n\) sufficiently large with respect to \(k\).
\end{definition}

Knowing that a family of groups exhibits homological stability is an invaluable tool for examining the general behaviour of the entire family. It is most useful when used in conjunction with stable homology.

\begin{definition}
    Let \(\{G_n\}\) be a sequence of groups with associated inclusions. Then \(G_\infty \coloneqq \bigcup_i^\infty G_i\) is defined to be the limit of these groups, and we say that \(H_k(G_\infty)\) is the \textit{stable homology} of \(\{G_n\}\).
\end{definition}

Thus homological stability tells us that \(H_k(G_n) \cong H_k(G_\infty)\) in a range of degrees which increases as \(n\) increases.

Many sequences of groups and spaces are known to satisfy homological stability -- see \cite{Wahl2023} for a comprehensive introduction. Notably, Harer proved homological stability for mapping class groups of surfaces \cite{Harer1985}, and Madsen--Weiss computed the stable homology, showing that stably the rational cohomology ring is a polynomial algebra on Miller--Morita--Mumford classes, as conjectured by Mumford~\cite{MadsenWeiss2007}.

Let \(N\) be a compact, oriented \(3\)-manifold with nonempty boundary.
In \cite{Hatcher_Wahl_MCG3Manifold}, Hatcher and Wahl consider \(3\)-manifolds of the form \(N_n^P:= N\# P^{\# n}\), where \(P\) is compact and oriented.
Choose a component \(\partial_0 N \subseteq\partial N\), and fix a compact subsurface \(R\) of \(\partial N\) which contains \(\partial_0 N\). Now consider the mapping class group of diffeomorphisms of \(N_n^P\) that fix \(R\) pointwise, $\pi_0 \Diff_R(N_n^P)$ (denoted $\Gamma_n^P(N,R)$ in \cite{Hatcher_Wahl_MCG3Manifold}). Since $R$ is fixed pointwise, the mapping classes under consideration are orientation-preserving.
We can obtain \(N_{n+1}^P\) from \(N_n^P\) by taking a copy of \(P\), removing a disc, and identifying the resulting boundary sphere with a disc in \(\partial_0 N\). This gives an inclusion map \(N_n^P \hookrightarrow N_{n+1}^P\). Via this inclusion, we can extend diffeomorphisms on \(N_n^P\) to diffeomorphisms on \(N_{n+1}^P\) by taking them to be the identity away from the image of the inclusion. This induces a map on mapping class groups
\[\phi_n\colon\pi_0 \Diff_R(N_n^P) \to \pi_0 \Diff_R(N_{n+1}^P).
\]
The natural inclusion of the spaces \(N_n^P \hookrightarrow N_{n+1}^P\) imply that the maps \(\phi_n\) are also inclusions by \cite[Proposition~\(2.3\)]{Hatcher_Wahl_MCG3Manifold}.
Hatcher and Wahl proved that this sequence exhibits homological stability.

\begin{theorem}[{\cite[Theorem \(1.1\)]{Hatcher_Wahl_MCG3Manifold}}]\label{thm:3-Manifold-Stab}
    For any compact, oriented \(3\)-manifolds \(N\) and \(P\), and compact subsurface \(R\) of \(\partial N\) as above, the sequence \(\{\pi_0 \Diff_R(N_n^P), \phi_n\}\) satisfies homological stability, i.e. \[H_k(\pi_0 \Diff_R(N_n^P)) \xrightarrow{\cong} H_k(\pi_0 \Diff_R(N_{n+1}^P))\] for \(n>2k+2\).
\end{theorem}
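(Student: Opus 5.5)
The plan is the standard Quillen-type homological stability argument: a spectral sequence built from a highly connected simplicial complex on which $\Gamma_n := \pi_0 \Diff_R(N_n^P)$ acts. The complex will be a variant of the sphere complex $S(M)$ used for finite presentation. Take $X_n$ to be the complex whose vertices are isotopy classes of embedded copies of $P$ minus a ball. Equivalently, a vertex is a separating sphere $S$ in $N_n^P$ cutting off a summand diffeomorphic to $P \setminus \mathring{D}^3$, together with an arc from $S$ to $\partial_0 N$ (for $P = S^1\times S^2$, a nonseparating sphere with an arc, or a pair of such spheres). The arcs are needed so that stabilisers are exactly the smaller groups. A $p$-simplex is a collection of $p+1$ such objects with pairwise disjoint representatives whose complement is connected and contains $R$. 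The group $\Gamma_n$ acts simplicially on $X_n$.

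The key steps, in order, are as follows.

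\begin{enumerate}[(1)]
\item Show that $\Gamma_n$ acts transitively on $p$-simplices for each $p \le n-1$. This follows from uniqueness of prime decomposition (Milnor) together with the slide and transposition diffeomorphisms of \cref{conjecture:MCG-reducible-generators}.
\item Identify the stabiliser of a $p$-simplex with $\Gamma_{n-p-1}$, embedded via the stabilisation maps. Cutting along the spheres and the neighbourhood of the arcs recovers $N_{n-p-1}^P$ with the relevant boundary fixed. Injectivity of the stabilisation maps (\cite[Proposition~2.3]{Hatcher_Wahl_MCG3Manifold}) is used here, together with the fact that a diffeomorphism fixing the simplex can be isotoped rel the simplex. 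The latter is analogous to \cref{lem:isigma_injective}, using that $R$ is fixed so sphere twists along $\partial D_i$ are absorbed into the complementary piece.
\item Prove that $X_n$ is $\lfloor (n-2)/2 \rfloor$-connected.
\item Run the spectral sequence for the action of $\Gamma_n$ on the augmented chain complex of $X_n$. By (1) and (2), $E^1_{p,q} = H_q(\Gamma_{n-p-1})$. The $d^1$ differentials alternate between the stabilisation map and zero, because conjugating face inclusions agree up to elements of $\Gamma_n$ commuting with $\Gamma_{n-p-1}$. Induction on $k$ then gives surjectivity for $n \ge 2k+1$ and injectivity for $n > 2k+2$.
\end{enumerate}

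The main obstacle is step (3). My approach would be to first prove the full complex of essential spheres with arcs is contractible, following Hatcher's surgery argument. The proof sketch of \cref{simplyconnectedsim} gives the idea: any finite family of spheres is surgered along circles of intersection with a fixed system to flow toward a cone point, and arcs are handled by Hatcher's arc-complex techniques. Next, I would pass to the subcomplex of "$P$-summand" vertices using a bad-simplex / join argument. Each sphere system cuts $N_n^P$ into pieces, and counting how many $P$-summands are separated yields connectivity roughly $(n-3)/2$. The hardest part is the requirement that simplices have connected complements containing $R$, and handling $P = S^1\times S^2$, where nonseparating spheres behave differently. I expect that linking-argument bookkeeping to dominate the work, and to determine the slope $2k+2$ in the stable range.
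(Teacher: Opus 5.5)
The survey gives no proof of this statement; it quotes Hatcher--Wahl. Your outline has the same overall shape as their argument: a highly connected complex of tethered $P$-summands, transitivity, stabilisers equal to earlier groups, and the spectral sequence with $d^1$ alternating between stabilisation and zero. The gap is step (2). It is false for the complex you define, and fixing it changes the complex, which step (3) must then handle.

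Your vertices are unparametrised pairs $(S,a)$, and your simplices are unordered.

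First, any mapping class supported inside the cut-off copy of $P\sm\mathring{D}^3$ fixes $(S,a)$. Such classes are in general nontrivial in $\Gamma_n=\pi_0\Diff_R(N^P_n)$: for $P=T^3$ they surject onto $\SL(3,\Z)$ via their action on the $\Z^3$ free factor of $\pi_1$. So even the pointwise stabiliser of a $p$-simplex is roughly $\Gamma_{n-p-1}\times\pi_0\Diff_\partial(P\sm\mathring{D}^3)^{p+1}$. Your ``equivalently'' discards exactly the data that would remove this factor.

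Second, the arcs are defined only up to isotopy with endpoints free in $\partial_0N$, and homotopic tethers are isotopic by the light bulb trick. Hence a transposition of two summands, supported away from $R$, maps an edge $\{v_1,v_2\}$ to itself while interchanging $v_1$ and $v_2$. So stabilisers do not fix simplices pointwise, and $E^1_{p,q}$ is the homology of a larger group with sign-twisted coefficients.

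Consequently neither $E^1_{p,q}=H_q(\Gamma_{n-p-1})$ nor your description of $d^1$ holds, and the induction in step (4) cannot start.

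To repair this, change the complex in two ways:
\begin{enumerate}[(a)]
\item Let a vertex be an isotopy class of \emph{embeddings} of a fixed model: $P\sm\mathring{D}^3$ with a tether, i.e.\ with a half-ball attached along a disc in $\partial_0N$.
\item Let $p$-simplices be isotopy classes of \emph{ordered} disjoint systems of these. For instance, put the tether endpoints on a fixed arc $J\subseteq\partial_0N\subseteq R$; every element of $\Gamma_n$ fixes $J$ pointwise and so preserves this order.
\end{enumerate}
With these choices, the stabiliser of a simplex is the mapping class group of the complement relative to $R$ and the new boundary. This is $\Gamma_{n-p-1}$, using \cite[Proposition~2.3]{Hatcher_Wahl_MCG3Manifold}. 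A transposition supported near two tethered summands then supplies the commuting conjugating element needed for the $d^1$ computation.

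Step (3) must now be proved for this marked, ordered complex. Your sketch (surgery towards contractibility, then a bad-simplex count) only treats unmarked spheres. The missing link is that the marked complex is a complete join over the unmarked one, because markings of disjoint summands can be chosen independently. Weak Cohen--Macaulayness therefore transfers from the unmarked complex to the marked one, and then to the complex of ordered simplices. The join-complex lemmas in Hatcher--Wahl exist to supply exactly this transfer.
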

The question of calculating the stable homology of this family of mapping class groups is still open. For the moduli space of the full diffeomorphism group, an analogous homological stability result appears in the thesis of Lam \cite{Lam2015}.

\subsection{Nielsen realisation}\label{subsec-Nielsen}
We introduce the Nielsen realisation problem in its various forms, and survey results for geometric, irreducible and reducible 3-manifolds.

\subsubsection{Formulating the question}

There are many different incarnations of Nielsen realisation and we restrict our attention to the version concerned with mapping class groups of closed, orientable 3-manifolds.
In this context, a finite subgroup $G$ of $\pi_0(\Diff(M))$ is called \emph{realisable} if there exists a lift of $G$ to a subgroup of $\Diff(M)$ as illustrated below.
\[
\xymatrix@R=3em@C=3em{
  & \Diff(M) \ar[d] \\
  G \ar@{_{(}->}[r] \ar@{-->}[ur]^{\exists ?} & \pi_0\Diff(M)
}
\]
The Nielsen realisation problem asks which finite subgroups $G \leq  \pi_0\Diff(M)$ are realisable.

It was answered for surfaces by Kerckhoff~\cite{Kerckhoff1983} in the 1980s, where he showed that every finite subgroup of the mapping class group of a closed hyperbolic surface admits a lift. It was this paper that first used the term `Nielsen realisation theorem', citing a problem posed by Nielsen in his 1942 paper~\cite{Nielsen1942}, which asked whether every finite cyclic subgroup of the mapping class group of a surface is realisable. It is worth noting that Nielsen worked on and posed many similar questions, explaining why the modern interpretation of `Nielsen realisation' takes many forms.

In comparison to the case for surfaces, in higher dimensions it is known that not all subgroups of the mapping class group are realisable. Indeed Raymond and Scott \cite{RaymondScott1997} construct torus bundles in dimension $2k-1$ ($k\geq 2$) for which order $k$ cyclic subgroups of the mapping class group are not realisable. We restrict ourselves to 3-manifolds.

Closer to the original formulation of Nielsen, we may also consider the map $\pi_0\Diff(M)\to \Out(\pi_1(M))$ from \eqref{eqn-MGC to out}. For finite subgroups in the image of this map, one may also ask if they are realisable by a subgroup of $\Diff(M)$. For the many families of 3-manifolds for which $\pi_0\Diff(M)\cong \Out(\pi_1(M))$, these formulations are equivalent.

The question can also be asked for $G$ a finite subgroup of the topological mapping class group $\pi_0\Homeo(M)$, where a subgroup is realisable if there is a lift to $\Homeo(M)$. By combining work of Pardon \cite{Pardon2021} and Edwards--Kirby \cite{EdwardsKirby1971} this is equivalent to the smooth realisation problem for orientable 3-manifolds (see \cite[Thm 1.2]{ChenTshishiku2025}).

\subsubsection{Nielsen realisation for irreducible 3-manifolds}
For irreducible 3-manifolds the literature often focuses on the realisation problem for finite subgroups of diffeomorphisms up to homotopy, e.g.~\cite[Section 2]{Pardon2021}. In the cases where homotopy implies isotopy (see \cref{ss:sym-M}) this is equivalent to considering finite subgroups of the mapping class group.

\emph{Haken Seifert fibred.} In this case, $\pi_0\Diff(M)\cong \Out(\pi_1(M))$ (\cref{thm:Waldhausen}) and for $G$ a finite subgroup of $\Out(\pi_1(M))$ there is a complete obstruction to realisation in $H^3(G, Z(\pi_1(M))$. This is because a lift of $G$ corresponds to the existence of an extension of $\pi_1(M)$ by $G$ corresponding to the abstract kernel of $\iota\colon G\hookrightarrow \Out(\pi_1(M))$. When $M$ is Haken and Seifert fibred, this obstruction is shown to be trivial by Heil-Tollefson~\cite{HeilTollefson1978} for $G\cong \Z / 2$ and Zieschang--Zimmerman~\cite{ZieschangZimmermann1979} in the case of general finite $G$. The proof of Zieschang--Zimmerman uses the theory of crystallographic groups on the universal cover of $M$.

\emph{Haken non-Seifert-fibred.} In this case Nielsen realisation holds for all finite subgroups of the mapping class group by work of Zimmerman \cite{Zimmermann1982}. His proof combines the JSJ decomposition with results of Gabai--Meyerhoff--Thurston for hyperbolic manifolds (\cref{sec-hyperbolic}), and is an extension of the techniques in his work with Zieschang, and Bass--Serre theory.

In the case where $M$ is irreducible but not Haken, it follows that the JSJ decomposition is trivial and thus $M$ is geometric.

\emph{Hyperbolic.} In this case the diffeomorphism group is discrete by Gabai's theorem that $\Diff_0(M)\simeq \{*\}$ \cite{Gabai1997}. It follows that $\Diff(M)\cong\pi_0\Diff(M)$ so every subgroup of the mapping class group is realisable.

\emph{Elliptic.} McCullough showed that the natural isomorphism from the isometry group $\Isom(M)\to \pi_0\Diff(M)$ has a section \cite[Thm 3.4]{McCullough2002}. Therefore in this case all subgroups of the mapping class group are realisable by isometries.\\

The irreducible manifolds not yet covered are non-Haken Seifert fibred spaces with infinite fundamental groups and the Nielsen realisation problem as stated above cannot be recovered from the literature for these manifolds. They are aspherical, so it is plausible that the obstruction from the Haken case will be a complete obstruction once again. The generalised Smale conjecture holds by McCullough--Soma \cite[Thm.~9.3]{McCulloghSoma2013}, 
so the result would also follow from realisability on the level of isometry groups. Moreover  by Meeks--Scott we have that any finite group action on $M$ leaves invariant some Riemannian metric coming from one of the eight geometries of geometrisation \cite[Thm.~2.1]{MeeksScott1986}. This can be viewed as a geometric variant of the Nielsen realisation problem.

\subsubsection{Nielsen realisation for reducible 3-manifolds}
More recently, there has been work on Nielsen realisation for reducible 3 manifolds. Recall that when $M$ is reducible we have the short exact sequence
\begin{equation*}
    1 \to \Twist(M) \to \pi_0\Diff(M) \to \Out(\pi_1(M))
\end{equation*}
where $\Twist(M)$ is generated by twists around embedded 2-spheres and is diffeomorphic to $(\mathbb{Z}/2)^d$ for some $d$ (\cref{rem: twist subgroup}).

Chen and Tshishiku~\cite{ChenTshishiku2025} proved the following realisation theorem for finite subgroups generated by sphere twists in $\Twist(M)$.

\begin{theorem}{\cite[Main Theorem]{ChenTshishiku2025}}
     Let $M$ be a closed, oriented 3-manifold M and $G$ a non-trivial subgroup of $\Twist(M)$. Then $G$ is realisable by diffeomorphisms if and only if $G$ is cyclic and $M$ is diffeomorphic to a connected sum of lens spaces.
\end{theorem}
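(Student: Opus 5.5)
The theorem has two directions, and I would prove them separately.

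\emph{Sufficiency.} Suppose $M$ is a connected sum of lens spaces and $G=\langle T_S\rangle\cong\Z/2$ is generated by a single nontrivial sphere twist $T_S$. I would realise $T_S$ by an explicit involution. The model is the circle action used in the example after \cref{thm:kernel-rho}. Each lens space $L(p,q)$ carries a circle action from its Seifert fibration (\cref{sec-Seifert fibred}), rotating the cores of the two solid tori. Removing a ball around a fixed point of a suitable rotation subgroup leaves a boundary sphere on which the action is rotation.

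For a separating sphere $S$ splitting $M$ into $M'\#M''$, I would build a diffeomorphism of order two that acts on one side as the time-$\pi$ map of such a rotation, conjugated appropriately. This should be arranged compatibly around $S$, so that on a collar of $S$ the map interpolates a half-rotation against the identity on the other side. Its square is then the full $2\pi$ rotation on the collar, which is trivial. To identify the isotopy class with $T_S$, I would use the circle action on the lens space side to unwind the construction.

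I should flag a subtlety here. The order-two lift must be a genuine involution, so it has to be an equivariant connected sum of involutions of the summands. The twist class then comes from the mismatch of rotations on the glued spheres. That is the construction I would try; checking that it lands on the nontrivial class $T_S$ uses the computation of $\Twist(M)$ in \cref{rem: twist subgroup}.

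\emph{Necessity.} Suppose $G\le\Twist(M)$ is nontrivial and lifts to a finite group $\tilde G\le\Diff(M)$. The key steps, in order, are as follows.

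First, by \cref{prop:generators of kernel of action of MCG on Out(pi1)}, elements of $\Twist(M)$ act trivially on $\pi_1(M)$, and also on $\pi_2(M)$ and on homology. Hence $\tilde G$ acts homotopically trivially.

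Second, apply the equivariant sphere theorem (Meeks--Yau, Dunwoody) to obtain a $\tilde G$-invariant system of essential spheres realising the prime decomposition. Cutting along it and capping off gives finite group actions on the prime summands, each homotopic to the identity.

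Third, on an aspherical summand or on $S^1\times S^2$, a finite action homotopic to the identity extends over the twist only in the trivial way. This should use the $\Diff_0$ computations (\cref{thm-weak Smale}) and the nontriviality of the Gluck twist via spin structures (\cref{prop:S^1xS^2prop2}). A nontrivial twist forces some summand to admit an action restricting to a rotation on a boundary sphere, with a rotation axis passing through it. This forces fixed circles whose neighbourhoods tile the summand. I would argue that only lens spaces admit such actions compatibly, since a homotopically trivial finite action with circle fixed set is an isometry, by \cref{pi0Smale} and geometrisation.

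Fourth, to get cyclicity: two commuting independent twists would give a $(\Z/2)^2$ acting with each element having fixed circles meeting the same spheres. A local linear-algebra argument at a common invariant sphere in $\SO(3)$ shows that the rotations must share an axis, which forces the twist classes to coincide.

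\emph{Main obstacle.} I expect the third and fourth steps to be the hardest: extracting the twist class from the local rotation data of an equivariant decomposition, and excluding non-lens summands. There I would first try to track the twisting crossed homomorphism $\mathfrak{T}$ equivariantly, using an invariant trivialisation averaged over $\tilde G$ near the fixed sets.
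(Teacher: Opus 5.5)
The survey gives no proof of this statement; it is quoted from Chen--Tshishiku. Your proposal therefore has to stand on its own, and it has a genuine gap at the start of the sufficiency direction. With lens spaces in the survey's sense ($p\ge 2$), $\Twist(M)$ is \emph{trivial} when $M$ is a connected sum of lens spaces, so the nontrivial $T_S$ you set out to realise does not exist. Here is why. Since $H_2(M;\Z)=0$, every embedded sphere separates. A separating twist $T_S$ is the product of the twists about spheres cutting off the individual punctured summands on one side of $S$ (rotate the punctured $S^3$ about an axis through all its holes). Each of these twists is trivial: the circle action $\lambda\cdot[z_1,z_2]=[z_1,\lambda z_2]$ on $L(p,q)$ fixes the circle $\{z_2=0\}$ and turns the boundary of a small invariant ball about a fixed point through one full rotation, so the generator of $\pi_1\SO(3)$ extends over the punctured lens space. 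This is exactly the circle action you propose to use to ``unwind'' your involution, and it unwinds $T_S$ itself to the identity. The statement only has content, and is only true, if $S^1\times S^2=L(0,1)$ is allowed as a summand, contrary to the survey's convention. The nontrivial twists are then products of Gluck twists about nonseparating spheres, which your construction never addresses. The missing idea is that these \emph{are} realisable. Let $\sigma$ be the rotation of $S^2$ by $\pi$ about a horizontal axis; then $\sigma r_\theta\sigma=r_{-\theta}$, so $T\circ(\Id\times\sigma)$ is an involution of $S^1\times S^2$ isotopic to the Gluck twist $T$. Equivariant connected sums at fixed points, with $\pi$-rotations on the remaining summands, then realise every nontrivial element of $\Twist(M)$. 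Consequently your Step 3, which tries to show via spin structures that a Gluck twist on an $S^1\times S^2$ summand cannot be realised, aims at a false statement: spin structures detect the mapping class, not whether it has a finite-order representative.

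Several necessity steps also fail as written.
\begin{itemize}
\item \emph{Step 1.} Acting trivially on $\pi_1$, $\pi_2$ and homology does not make a diffeomorphism homotopic to the identity. The Gluck twist is a counterexample (see Gluck's Hopf invariant argument, recalled in \cref{sec-S^1xS^2}), so you cannot conclude that $\tilde G$ acts homotopically trivially.
\item \emph{Step 2.} $\tilde G$ may permute the pieces, and the invariant system given by the equivariant sphere theorem may contain nonseparating spheres, after which the $S^1\times S^2$ summands are no longer pieces. So you do not obtain actions on the individual prime summands.
\item \emph{Step 3.} \cref{pi0Smale} concerns $\pi_0$ of isometry groups of elliptic manifolds; it does not show that a finite action is conjugate to an isometric one.
\item \emph{Step 4.} Commuting rotations of $S^2$ need not share an axis: the $\pi$-rotations about two orthogonal axes generate a Klein four-group in $\SO(3)$. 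So no local argument at a single invariant sphere can force cyclicity, and a global argument is needed to exclude $G\cong\Z/2\times\Z/2$.
\end{itemize}
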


Since $\Twist(M)\cong (\mathbb{Z} /2)^d$ it follows that a single $\mathbb{Z}/2$ summand may be realisable, depending on the prime components of $M$, but a finite subgroup generated by commuting twists can never be realised.

For reducible manifolds, the Nielsen realisation problem for finite subgroups of $\Out(\pi_1(M))$ that lie in the image of $\pi_0\Diff(M)$ is still open.

 \bibliographystyle{alpha}
 \bibliography{literature}

\end{document}